\makeatletter  \@addtoreset{equation}{section} \makeatother
\newtheorem{defi}{Definition}[section]
\newtheorem{lem}{Lemma}[section]
\newtheorem{theo}{Theorem}[section]
\newtheorem{cor}{Corollary}[section]
\newtheorem{pro}{Proposition}[section]
\newtheorem{rem}{Remark}[section]
\DeclareMathOperator{\N}{\mathbb{N}}
\DeclareMathOperator{\R}{\mathbb{R}}
\DeclareMathOperator{\C}{\mathbb{C}}
\DeclareMathOperator{\T}{\mathbb{T}}
\thanks{C.G. has been supported by the European Research Council ERC-StG-852741 (CAPA), the MINECO--Feder (Spain) research grant number RTI2018--098850--B--I00, and the Junta de Andaluc\'ia (Spain) Project
FQM 954,  and { J. M.  has been partially supported by PID2020-112881GB-I00 and Severo Ochoa and Maria de Maeztu program for centers CEX2020-001084-MMTM2016--75390 (Mineco, Spain)}}
\subjclass[2010]{ 35Q35, 35Q86, 76U05, 35B32, 35P30}
\keywords{3D quasi-geostrophic equations, periodic solutions, bifurcation theory, eigenvalue problems}
\begin{document}

\title[Time  periodic doubly connected solutions for the 3D quasi-geostrophic model]{Time  periodic doubly connected solutions for \\the 3D quasi-geostrophic model}

\author[C. Garc\'ia]{Claudia Garc\'ia}
\address{ Departament de Matem\`atiques i Inform\`atica, Universitat de Barcelona, 08007 Barcelona, Spain \& Research Unit ``Modeling Nature'' (MNat), Universidad de Granada, 18071 Granada, Spain}
\email{ claudiagarcia@ub.edu}
\author[T. Hmidi]{Taoufik Hmidi}
\address{NYUAD Research Institute, New York University Abu Dhabi, PO BOX 129188, Abu Dhabi, United Arab Emirates. Univ Rennes, CNRS, IRMAR-UMR 6625, F-35000 Rennes, France}
\email{thmidi@univ-rennes1.fr}
\author[J. Mateu]{Joan Mateu}
\address{Departament de Matem\`atiques, Universitat Aut\`onoma de Barcelona, 08193 Bellaterra; Barcelona, Catalonia}
\email{mateu@mat.uab.cat}

\date{\today}

\begin{abstract}
In this paper, we construct time periodic doubly connected  solutions for the 3D quasi-geostrophic model in the patch setting. More specifically, we prove the existence of nontrivial $m$-fold doubly connected rotating patches bifurcating  from a {\it generic} doubly connected revolution shape domain with higher symmetry  $m\geq m_0$ and $m_0$ is large enough. The linearized matrix operator at the equilibrium state is with variable and singular coefficients and its spectral analysis is performed  via  the approach devised in \cite{GHM} where a suitable symmetrization has been introduced.  New difficulties emerge due to the interaction between the surfaces making the spectral problem richer and involved. 
\end{abstract}

\maketitle

\tableofcontents

\section{Introduction}

The quasi-geostrophic motion describes the evolution of a large scale fluid with rapid background rotation and strong stratification. The derivation comes from the primitive equations, that is, the 3D inhomogeneous Euler equations by taking into account the stratification effects and the Earth's rotation. After considering the Boussinesq approximation to the primitive equations and, formally, assuming vanishing Rossby and Froude numbers one arrives to the 3D quasi--geostrophic model describing the evolution of the potential vorticity $q$ which is merely advected by the fluid:
\begin{eqnarray}   \label{equation-model}
       \left\{\begin{array}{ll}
          	\partial_t q+u\partial_1 q+v\partial_2 q=0, & (t,x)\in[0,+\infty)\times\mathbb{R}^3, \\
         	 \Delta \psi=q,& \\
         	 	u=-\partial_2 \psi, \ v=\partial_1 \psi,& \\
         	 q(0,x)=q_0(x).& 
       \end{array}\right.
\end{eqnarray}
We refer to \cite{B-B,Charv,Desjardins-Grenier,Iftimie2,Ped} for the formal derivation of the above model. The second equation involving the standard Laplacian of $\R^3$ can be inverted  using the Green's function  leading to the following representation of the stream function $\psi$,
\begin{equation}\label{Stream-F}
\psi(t,x)=-\frac{1}{4\pi}\bigintsss_{\R^3}\frac{q(t,y)}{|x-y|}dA(y),
\end{equation}
where $dA$ denotes the usual Lebesgue measure. The velocity field $(u,v,0)$  is solenoidal and can be recovered  from $q$ through the Biot--Savart law,
$$
U(t,x):=(u,v)(t,x)=\frac{1}{4\pi}\bigintsss_{\R^3}\frac{(x_1-y_1, x_2-y_2)^{\perp}}{|x-y|^3}q(t,y)dA(y),
$$
{where  $(x_1,x_2)^\perp=(-x_2,x_1)\in\R^2.$} Let us point out that although the velocity field is planar, that is, its third component is vanishing, it strongly depends on the third variable $x_3$. From  the homogeneity of the kernel in  Biot-Savart law, Yudovich theory \cite{Yudovich} can be  implemented as in the two-dimensional case leading to global in time solutions and uniqueness for $q_0\in L^1\cap L^\infty.$
{
Existence can be proved following closely the argument in \cite[Chapter 8]{MB} for the vorticity equation. For the uniqueness one resorts to \cite{NPS}, where one proves uniqueness in $ L^1\cap L^\infty$ for the continuity equation in higher dimensions with velocity field given by the convolution with a kernel whose gradient is a singular integral plus a multiple of a delta function. Our kernel $K$ fulfils this last condition and, furthermore, it is divergence free which concludes that the associated continuity and transport equations are the same. The special form $K=L(\nabla N)$ being $N$ the fundamental solution of the Laplacian  is necessary to adapt the argument in \cite{NPS}.}

This allows to deal in particular with  discontinuous solutions in the patch form for this model. More specifically, if the initial datum takes the patch form \begin{equation}\label{patch-intro}
q_0={\bf 1}_D,
\end{equation}
with $D$ being a bounded domain of $\R^3$, then by virtue of the transport equation of \eqref{equation-model}, the solution keeps this structure for any time, that is,  $q(t,\cdot)={\bf 1}_{D(t)}$. The contour dynamics equation modeling the evolution of the boundary of $D_t$ agrees with
\begin{equation}\label{contour-dynam-intro}
(\partial_t \gamma_{t}-(U,0)(t,\gamma_t))\cdot n(\gamma_t)=0,
\end{equation}
where $\gamma_t$ is a parametrization of the boundary  $\partial D(t)$ and $n(\gamma_t)$ is a normal vector to the boundary at the point $\gamma_t$. 

The motivation to study patch solutions for \eqref{equation-model} comes from the 2D Euler equations. The persistence of the regularity of the boundary in the class $\mathscr{C}^{1,\alpha}$, with $\alpha\in(0,1)$,  { for 2D Euler equation} was obtained in \cite{B-C,Chemin,Serf}, and the ill-posedness for $\mathscr{C}^2$ was recently achieved in \cite{Kiselev}. The dynamics of the boundary is rich and undergoes complex behavior from filamentation to the emergence of ordered structures.  There are  explicit steady patch solutions for the 2D Euler equations such as  Rankine vortex (the circular patch) and the Kirchhoff ellipses \cite{Kirchhoff}. However, a lot of implicit time-periodic solutions have been discovered  along the past few decades using bifurcation theory, desingularization techniques and variational tools. For more details we refer to \cite{ Burbea,CastroCordobaGomezSerrano, DeemZabusky, DelaHozHmidiMateuVerdera, DHHM, G-KVS, Gar-21, GH22, GHS,GPSY, HH21, HHHM-15, HMW, HW21, HmidiHozMateuVerdera, HmidiMateu, H-M, HM16, HmidiMateuVerdera, HMV15} and the references therein. These results have been extended to other 2D active scalar equations such as the generalized surface quasi-geostrophic model or the quasi-geostrophic shallow water equations, see for instance \cite{Cas0-Cor0-Gom, Cas-Cor-Gom, C-C-GS-2,DelaHoz-Hassainia-Hmidi, D-H-R, G-S-19, Hassa-Hmi, HHH-18}.
A recent exploration on time quasi-periodic solutions based on KAM theory  has been set up for  active scalar  equations in \cite{ BHM22,HHM21,HR22,HR21}.  
Coming back to the 3D quasi-geostrophic model, the persistence of the $\mathscr{C}^{1,\alpha}$ boundary regularity, with $\alpha\in(0,1)$, of the patch was recently obtained in \cite{CMOV}. Moreover, this system enjoys a rich set of  stationary  solutions. Actually any revolution shape domain about the vertical axis is a stationary solution.
Hence, it is of great importance to check  for this Hamiltonian system whether  time periodic solutions may exist on the vicinity of some stationary solutions with revolution shape form. Their structures take the patch  form rotating about the vertical axis with constant angular velocity, that is,\begin{equation}\label{evolution-intro}
D(t)=\mathcal{R}_{\Omega t}D(0), \quad \mathcal{R}_{\Omega t}(x_1,x_2,x_3)=(e^{i\Omega t}(x_1,x_2),x_3).
\end{equation}
In the literature we can find explicit solutions in the ellipsoid form and  some numerical experiments on the existence of these structures have been also achieved, see for instance  \cite{D-S-R, Dristchel2, Dristchel,Rein-Drit, Rein}. Very recently, the authors have explored in  \cite{GHM} the existence of time periodic solutions around  simply-connected revolution shape domain. They obtained m-fold non trivial structures around generic domains. In the current paper, we intend to continue this  program  by analyzing a similar problem but with different topological structure. More precisely,  we shall investigate  periodic solutions around doubly connected revolution shape domains.
This is equivalent to find two bounded domains $D_1$ and $D_2$ of $\R^3$, with $D_{2}$ embedded in $D_1$ ($D_2\Subset D_{1}$)  such that
\begin{equation}\label{sol-intro}
q(t,\cdot)={\bf 1}_{D(t)},\quad D(t)=\mathcal{R}_{\Omega t}D, \quad  D=D_{1}\backslash D_{2},
\end{equation}
in a small neighborhood of  the following stationary solution
\begin{equation}\label{stat-intro}
q_0={\bf 1}_{D_0}, \quad D_0=D_{0,1}\backslash D_{0,2}, \quad D_{0,2}\Subset D_{0,1},
\end{equation}
where $D_{0,j}$ is a revolution shape domain with a particular structure. In the same spirit of  \cite{GHM}, the way we parametrize the domain $D$ is a key point for the analysis and spherical coordinates type are privileged, that is for each $j\in\{1,2\},$\begin{align}\label{param-intro}
\partial D_j=&\left\{\big(r_j(\phi,\theta) e^{i\theta},d_j\cos(\phi)\big),\quad (\phi,\theta)\in[0,\pi]\times\T  \right\},
\end{align}
for $d_1>d_2>0$ and with
\begin{align}\label{f-intro}
r_j(\phi,\theta)=&r_{0,j}(\phi)+f_j(\phi,\theta)\quad\hbox{with} \quad f_j(\phi,\theta)=\sum_{n\geqslant 1}f_{j,n}(\phi)\cos(n\theta).
\end{align}
Note that for $f_j=0$ we recover the parametrization of the  revolution shape domain  $D_{0,j}$ covering by this way a large class of stationary solutions of the type \eqref{stat-intro}. Let us point out that due to the above expression for $f_j$ we are assuming that  the horizontal sections of $D_j$ are  at least one-fold.  In addition, we shall work with the natural Dirichlet  boundary conditions
\begin{align*}
r_{0,j}(0)=r_{0,j}(\pi)=f_j(0,\theta)=f_j(\pi,\theta)=0,\quad j=1,2.
\end{align*}
Inserting now the ansatz \eqref{sol-intro} in \eqref{contour-dynam-intro}, we find that the associated relative stream function is constant at the boundary, meaning that,
\begin{align*}
\psi\big(r_j(\phi,\theta)e^{i\theta},d_j\cos(\phi)\big)-\tfrac{\Omega}{2}r_j^2(\phi,\theta)=&m_j(\Omega,f_1,f_2)(\phi),\quad (\phi,\theta)\in[0,\pi]\times\T,
\end{align*}
where $m_j(\Omega,f_1,f_2)$ depends only on $\phi$  
 and given
by  the average in $\theta$ of the left hand side, 
\begin{align}\label{meanT-intro}
m_j(\Omega,f_1,f_2)(\phi)=\frac{1}{2\pi}\int_0^{2\pi}\left\{\psi\big(\gamma_j(\phi,\theta)\big)-\tfrac{\Omega}{2}r^2_j(\phi,\theta)\right\}d\theta.
\end{align}
Let us define the functional ${F}=({F}_1,{F}_2)$ through
\begin{align}\label{nonlinearfunction2-intro}
{F}_j(\Omega,f_1,f_2)(\phi,\theta)=&\psi\big(r_j(\phi,\theta)e^{i\theta},d_j\cos(\phi)\big)-\tfrac{\Omega}{2}r^2_j(\phi,\theta)-m_j(\Omega,f_1,f_2)(\phi),
\end{align}
where the stream function takes in view of \eqref{Stream-F} the form 
\begin{align*}
\psi(Re^{i\theta},z)=&-\frac{d_1}{4\pi}\bigintsss_{0}^{\pi}\bigintsss_0^{2\pi}\bigintsss_0^{r_1(\varphi,\eta)}\frac{\sin(\varphi)rdrd\eta d\varphi}{|(re^{i\eta},d_1\cos(\varphi))-(Re^{i\theta},z)|}\\
&+\frac{d_2}{4\pi}\bigintsss_{0}^{\pi}\bigintsss_0^{2\pi}\bigintsss_0^{r_2(\varphi,\eta)}\frac{\sin(\varphi)rdrd\eta d\varphi}{|(re^{i\eta},d_2\cos(\varphi))-(Re^{i\theta},z)|}\cdot
\end{align*}
Then, the problem of finding rotating doubly connected patches agrees to find the roots of the nonlinear and nonlocal functional $F$, that is, to study the equation 
\begin{equation}\label{Formu-st1-intro}
{F}(\Omega,f_1,f_2)(\phi,\theta)=0, \quad \forall (\phi,\theta)\in[0,\pi]\times\T.
\end{equation}
Since $F$ is vanishing at $\phi\in\{0,\pi\}$ due to the Dirichlet conditions and motivated by the structure of the linearized operator around the stationary solution, we find it convenient  to filter the singularities of the poles at the nonlinear level by working with   the modified functional 
\begin{align}\label{Ftilde-intro}
\tilde{F}(\Omega,f_1,f_2)(\phi,\theta)=&\big(\tilde{F}_1(\Omega,f_1,f_2)(\phi,\theta),\tilde{F}_2(\Omega,f_1,f_2)(\phi,\theta)\big)\\
=&\left(\frac{F_1(\Omega,f_1,f_2)(\phi,\theta)}{r_{0,1}(\phi)}, \frac{F_2(\Omega,f_1,f_2)(\phi,\theta)}{r_{0,2}(\phi)} \right).\nonumber
\end{align}
The aim of this work is to study the existence of nontrivial solutions by perturbing a large family of stationary revolution shape domains of the type \eqref{stat-intro}. For this goal some conditions listed below  on the initial profiles  $\{r_{0,j},j=1,2\}$ are required and denoted in what follows by {\bf (H)}:
\begin{itemize}
\item[{\bf(H1)}] {\it{(Regularity assumption)}} For $j=1,2$,  ${ r_{0,j}}\in \mathscr{C}^{2}([0,\pi])$.
\item[{\bf(H2)}] {\it{(Non-self intersection)}} There exists $C>0$ such that
$$
\forall\,\phi\in[0,\pi],\quad  C^{-1}\sin\phi\leq { r_{0,j}}(\phi)\leq C\sin(\phi).
$$
\item[{\bf(H3)}] {\it{(Equatorial symmetry)}} ${ r_{0,j}}$ is symmetric with respect to $\phi=\frac{\pi}{2}$, i.e., 
$${ r_{0,j}}\left({\pi}-\phi\right)={r_{0,j}}\left(\phi\right),\,\forall\, \phi\in[0,{\pi}].
$$
\item[{\bf(H4)}] {\it (Interfaces separation)} There exists $\delta>0$ such that  
\begin{align*}
 \forall \phi,\varphi\in [0,\pi],\quad { (r_{0,1}(\phi)-r_{0,2}(\varphi))^2}+(d_1\cos(\phi)-d_2\cos(\varphi))^2\geqslant \delta,
 \end{align*}
\end{itemize}
supplemented with the strong condition
\begin{equation}\label{assump-omega-intro}
\sup_{\phi\in[0,{\pi}]} \tfrac{{U_0^2\big(r_{0,2}(\phi),0,d_2\cos(\phi)\big)}}{r_{0,2}(\phi)}:=\overline\Omega_2<
\inf_{\phi\in[0,{\pi}]} \tfrac{U_0^2\big(r_{0,1}(\phi),0,d_1\cos(\phi)\big)}{r_{0,1}(\phi)}:=\overline\Omega_1,
\end{equation}
where $U_0^2$ is the second component of the velocity field associated to the revolution shape domain.

We shall come back later in Section \ref{Pro-assu} to these assumptions with some comments on their usefulness.
The main result of this paper, is the following informally stated theorem. For a more detailed and complete  version we refer to Theorem \ref{theorem}.
\begin{theo}\label{theo-intro}
Let $r_{0,j}$ satisfy  ${\bf (H)}$ and \eqref{assump-omega-intro},
{ for $j=1,2.$}
 Then there is   $m_0\in\N$ such that for any $m\geq m_0$ there exists a curve of nontrivial rotating doubly connected solutions to \eqref{sol-intro} taking   the form \eqref{param-intro}--\eqref{f-intro}, where the horizontal sections of the domains $D_j$  are $m$-fold symmetric.
 \end{theo}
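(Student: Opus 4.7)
The plan is to apply the Crandall--Rabinowitz bifurcation theorem to the functional $\tilde F$ introduced in \eqref{Ftilde-intro}, with $\Omega$ playing the role of the bifurcation parameter and $(f_1,f_2)$ the unknown perturbation of the trivial profile $(0,0)$. First I would fix suitable Banach spaces encoding the required symmetries: $m$-fold periodicity in $\theta$ (so only modes $n\in m\N$ appear in the expansion \eqref{f-intro}), the equatorial symmetry in $\phi$ inherited from {\bf(H3)}, and the Dirichlet-type vanishing $f_j(0,\theta)=f_j(\pi,\theta)=0$. A natural choice is a product of weighted H\"older classes $X_m^\alpha\times X_m^\alpha$ of such functions, with target space in which the singularity at the poles has been absorbed via the division by $r_{0,j}(\phi)$ built into \eqref{Ftilde-intro}.

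Second, I would verify that $\tilde F$ is well defined and of class $\mathscr{C}^1$ on a neighbourhood of $(\Omega,0,0)$. The delicate point is to check that, thanks to {\bf(H2)} and the pole cancellation, $F_j/r_{0,j}$ extends continuously up to $\phi\in\{0,\pi\}$ and stays in the chosen H\"older class. Hypothesis {\bf(H4)} guarantees that the kernel associated to the cross-interaction between the two surfaces is smooth, so only the self-interaction integrals carry singularities, and these are handled by the parametrix approach used in \cite{GHM}.

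Third, and this is the heart of the argument, I would analyse the linearization $\mathcal{L}(\Omega):=D_{(f_1,f_2)}\tilde F(\Omega,0,0)$. After expanding perturbations in Fourier series in $\theta$, $\mathcal{L}(\Omega)$ decouples mode by mode into a family of $2\times 2$ matrix operators $\mathcal{L}_n(\Omega)$ acting on pairs $(h_{1,n},h_{2,n})$ of functions of $\phi$. Each $\mathcal{L}_n(\Omega)$ has the structure \emph{multiplication by a diagonal symbol plus a compact perturbation}: the diagonal entry on the $j$-th component looks, to leading order, like $\Omega-\frac{U_0^2(r_{0,j}(\phi),0,d_j\cos\phi)}{r_{0,j}(\phi)}+\lambda_{n,j}(\phi)$, with $\lambda_{n,j}$ produced by the principal value of the self-interaction kernel on the $j$-th surface, while the off-diagonal entries come from the smooth cross-kernel and decay in $n$. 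Following \cite{GHM}, I would symmetrize $\mathcal{L}_n(\Omega)$ in order to recast it as a self-adjoint operator on a weighted $L^2$ space, so that min--max furnishes a real discrete spectrum. The separation hypothesis \eqref{assump-omega-intro} is precisely what prevents the spectra attached to the inner and outer surfaces from colliding, and should guarantee that for each $n=m$ large enough there is a unique $\Omega_m$ at which $\mathcal{L}_m(\Omega_m)$ has a one-dimensional kernel, while $\mathcal{L}_n(\Omega_m)$ remains invertible for every $n\in m\N\setminus\{m\}$.

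The main difficulty is exactly this spectral step: the $2\times 2$ coupling is genuinely new with respect to \cite{GHM}, and the off-diagonal interaction must be shown to be a relatively compact perturbation of the diagonal part, with a size bounded uniformly in $n$, so that the large-$n$ asymptotics of the eigenvalues still decouples into two independent branches, one per surface. Once simplicity of the kernel is established, the transversality $\partial_\Omega \mathcal{L}_m(\Omega_m)[v_\star]\notin \Img \mathcal{L}_m(\Omega_m)$ reduces to an explicit computation, since $\partial_\Omega \tilde F$ is, up to the pole weight, essentially minus half of $r^2_{0,j}$ corrected by its $\theta$-average. Combining Fredholmness of index zero with the one-dimensional kernel and the transversality property, the Crandall--Rabinowitz theorem delivers a local $\mathscr{C}^1$ curve of nontrivial $m$-fold doubly connected rotating patches bifurcating from $(\Omega_m,0,0)$, as required.
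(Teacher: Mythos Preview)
Your outline follows the paper's architecture (Crandall--Rabinowitz applied to $\tilde F$ on $X_m^\alpha\times X_m^\alpha$, mode-by-mode decoupling, symmetrization on a weighted $L^2$), but two of your key heuristics diverge from what the paper actually does, and one of them hides a genuine gap.

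First, on the spectral step: you propose to treat the off-diagonal coupling as a small compact perturbation so that the spectrum ``decouples into two independent branches, one per surface.'' The paper does \emph{not} decouple. Instead it keeps the full coupled operator $\mathcal{T}^n_\Omega$ and exploits the positivity of \emph{all} four kernels $K_{i,j}^n$: the convex cone $\{h_1\geq 0,\ h_2\leq 0\}$ is invariant, and a direct Krein--Rutman-type argument shows the largest eigenvalue $\lambda_n(\Omega)$ is simple with eigenfunction components of opposite constant sign. The role of hypothesis \eqref{assump-omega-intro} is also different from what you write: it is not about preventing ``spectra from colliding'' but about guaranteeing that for $\Omega\in(\overline\Omega_2,\overline\Omega_1)$ both weights $\nu_{1,\Omega},\nu_{2,\Omega}$ are strictly positive, so that the measures $d\mu_j$ in \eqref{measure-intro} are positive and the symmetrization goes through. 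Monotonicity of $\lambda_n(\Omega)$ in $n$ (via monotonicity of the hypergeometric kernels) and in $\Omega$ then pin down a unique $\Omega_m$ with $\lambda_m(\Omega_m)=1$, and strict monotonicity of $(\Omega_m)$ rules out kernel contributions from other modes $n\in m\mathbb{N}\setminus\{m\}$.

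Second, and more seriously, your claim that transversality ``reduces to an explicit computation'' underestimates the main new obstacle. One computes $\partial_\Omega\partial_{(f_1,f_2)}\tilde F(\Omega,0,0)(h_1,h_2)=-(h_1,h_2)$, and pairing against the kernel eigenfunction $H_m^\star=(h_{1,m}^\star,h_{2,m}^\star)$ yields
\[
\int_0^\pi |h_{1,m}^\star|^2\sin\phi\, r_{0,1}^2\,d\phi\;-\;\tfrac{d_2}{d_1}\int_0^\pi |h_{2,m}^\star|^2\sin\phi\, r_{0,2}^2\,d\phi.
\]
The sign here is indefinite---the underlying quadratic form is \emph{hyperbolic}, not elliptic as in the single-surface case of \cite{GHM}---so nonvanishing is not automatic. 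The paper closes this gap by a contradiction argument: assuming the expression vanishes, one uses the kernel decay estimate \eqref{estim-asym} on the second eigen-equation together with $\Omega_m\to\overline\Omega_1>\overline\Omega_2$ to show that for $m$ large the mass of $H_m^\star$ must concentrate on the first component, forcing both components to vanish. Without this mass-concentration step your transversality check is incomplete.
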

 Before outlining the main ideas of the proof, we should emphasize that the set of domains subject to the conditions {\bf (H)} together with the spectral one  \eqref{assump-omega-intro} is non empty. This point will be carefully  discussed later in Section \ref{sec-examples} where we show in particular their validity for ellipsoid revolution shape domains or any of their smooth perturbations. Another class of well-separated revolution shape domains  is  also provided.

In the following we shall sketch the main ideas of  the proof of Theorem \ref{theo-intro} and give some insights  on   the assumption \eqref{assump-omega-intro} where it is needed. Basically, we implement bifurcation theory by applying  Crandall-Rabinowitz theorem to $\tilde{F}$. This was successfully performed in \cite{GHM}  with patches surrounded by only one surface around smooth revolution shapes.  The goal is to find nontrivial  roots for the nonlinear functional $\tilde{F}$ introduced in \eqref{Ftilde-intro} as a bifurcation from the trivial one $(0,0)$. The linearized operator  of $\tilde{F}$ around the trivial solution  is with variable coefficients and   no longer scalar as with only one surface due to the interaction between the two surfaces. Therefore the spectral study sounds  at this level extremely   subtle and very  sensitive to the geometry of the stationary domains. One of the  delicate key point in the spectral study is to identify a set of parameters $\Omega$ where the kernel is simple together with the transversality assumption. We succeed to construct such kind of set using the largest eigenvalues of some stratified $2\times2$ matrix self-adjoint compact operators. Indeed, 
using  Fourier series expansion $\displaystyle{h_j(\phi,\theta)=\sum_{n\geqslant 1}h_{j,n}(\phi)\cos(n\theta)}$, one finds in view of \eqref{Ftilde-intro}  the following expression for the linearized operator,
\begin{align}\label{lin-op-intro}
\partial_{f_1,f_2}\tilde{F}_i(\Omega,0,0)(h_1,h_2)(\phi,\theta)=&(-1)^{i-1}\nu_{i,\Omega}(\phi)\sum_{n\geqslant 1}\Big(h_{i,n}(\phi)-\mathcal{T}^n_{i,\Omega}(h_{1,n},h_{2,n})(\phi)\Big)\cos(n\theta),
\end{align}
where $\nu_{i,\Omega}$ is related to the surface of  the stationary patch according to the formula,
\begin{align}\label{nu-streamfunction-intro}
\nu_{\Omega,i}(\phi)=(-1)^{i-1}\left(\tfrac{1}{r_{0,i}(\phi)} ({\partial_1} \psi)\big({r_{0,i}(\phi),0, }d_i\cos(\phi)\big)-\Omega\right).
\end{align}
For more details, see Remark \ref{rema-1}.
Notice that, with respect to some aspects, we find it more convenient in view of Lemma \ref{nuOmega12} to recast the operator $\mathcal{T}^n_{i,\Omega}$ in terms of Gauss hypergeometric functions as follows
\begin{align}\label{Tnomega2-intro}
\mathcal{T}^{n}_{i,\Omega} (h_1,h_2)(\phi)=&d_1(-1)^{i-1}\int_0^{\pi} \tfrac{H_{i,1}^n(\phi,\varphi)}{\nu_{i,\Omega}(\phi)}h_1(\varphi)d\varphi+d_2(-1)^{i}\int_0^\pi \tfrac{H_{i,2}^n(\phi,\varphi)}{\nu_{i,\Omega}(\phi)}h_2(\varphi)d\varphi,
\end{align}
with
\begin{align*}
H_{i,j}^n(\phi,\varphi)=&\tfrac{2^{2n-1}\left(\frac12\right)^2_{n}}{(2n)!}\frac{\sin(\varphi){r_{0,i}^{n-1}(\phi)r_{0,j}^{n+1}(\varphi)}}{\left[R_{i,j}(\phi,\varphi)\right]^{n+\frac12}} F_n\left(\tfrac{4{r_{0,i}(\phi)r_{0,j}(\varphi)}}{R_{i,j}(\phi,\varphi)}\right),
\end{align*}
where $F_n$ refers to the Gauss hypergeometric function 
$$
F_n(x)=F\left(n+\tfrac{1}{2},n+\tfrac{1}{2};2n+1;x\right), \quad  x\in[0,1), 
$$
and $R_{i,j}$ is defined as
\begin{align*}
R_{i,j}(\phi,\varphi)=&({ r_{0,i}(\phi)+r_{0,j}(\varphi)})^2+(d_i\cos(\phi)-d_j\cos(\varphi))^2.
\end{align*}
By using \eqref{lin-op-intro}, we observe that solving  the kernel equation of $\partial_{f_1,f_2}\tilde{F}_i(\Omega,0,0)$ is equivalent to the following eigenvalue problem
$$
\exists n\geqslant 1,\exists (h_{1},h_2)\neq(0,0)\quad\hbox{s.t}\quad \quad\mathcal{T}_{i,\Omega}^n(h_{1},h_2)(\phi)=h_i(\phi),  \forall  i=1,2.
$$
{Therefore, we can ensure that the kernel is non trivial if $1$ is an eigenvalue of $\mathcal{T}_{i,\Omega}^n$ for some $n\geqslant1.$ In that way, the kernel study reduces to an eigenvalue problem and one has to perform a deep study about the eigenvalues of $\mathcal{T}_{i,\Omega}^n$ in order to see whether $1$ can be in the discrete spectrum by suitable adjustment of $\Omega$.  The key point to conduct the spectral study is to extract first  a compact self-adjoint   structure of $\mathcal{T}_{i,\Omega}^n$ with respect to a suitable   Hilbert space. For this aim, we introduce the following Borel measure}
\begin{equation}\label{measure-intro}
d\mu_j(\varphi)=\sin(\varphi)r_{0,j}^2(\varphi)\nu_{j,\Omega}(\varphi)d\varphi,\quad j=1,2.
\end{equation}
To get a positive measure we need  to assume the spectral assumption \eqref{assump-omega-intro} and impose the constraint  $\Omega\in(\overline\Omega_2,\overline\Omega_1)$. It follows that  $\mathcal{T}_{i,\Omega}^n$ can be written in the form
\begin{align}\label{TnomegaPP2-intro}
\mathcal{T}_{1,\Omega}^n (h_1,h_2)(\phi)=&d_1\int_0^{\pi} K_{1,1}^n(\phi,\varphi)h_1(\varphi)d\mu_1(\varphi)-d_2\int_0^\pi {K_{1,2}^n(\phi,\varphi)}h_2(\varphi)d\mu_2(\varphi),\\
\nonumber\mathcal{T}_{2,\Omega}^n (h_1,h_2)(\phi)=& d_2\int_0^{\pi}K_{2,2}^n(\phi,\varphi) h_2(\varphi)d\mu_2(\varphi)-d_1\int_0^\pi K_{2,1}^n(\phi,\varphi)h_1(\varphi)d\mu_1(\varphi),
\end{align}
with
\begin{align}\label{K-kernel-intro}
K_{i,j}^n(\phi,\varphi)=\frac{H_{i,j}^n(\phi,\varphi)}{\nu_{i,\Omega}(\phi)\nu_{j,\Omega}(\varphi)\sin(\varphi){r_{0,j}^2}(\varphi)}.
\end{align}
Now, we consider the vectorial weighted Hilbert space $\mathbb{H}_{\Omega}$ endowed  with the inner product
\begin{align*}
\nonumber H=(h_1,h_2), \widehat{H}=(\widehat{h}_1,\widehat{h}_2),\quad \langle H,\widehat{H}\rangle_{\Omega}&=\int_0^\pi h_1(\varphi)\widehat{h}_1(\varphi)d\mu_1(\varphi)+{\tfrac{d_2}{d_1}}\int_0^\pi h_2(\varphi)\widehat{h}_2(\varphi)d\mu_2(\varphi).
\end{align*}

Let us point out that the kernel $K_{i,j}^n$ is symmetric and one may check according to Lemma \ref{lem-ssym} that $\mathcal{T}_{i,\Omega}^n:\mathbb{H}_{\Omega}\to \mathbb{H}_{\Omega}$ is a self-adjoint Hilbert-Schmidt operator. As a consequence of the spectral theorem, the eigenvalues form a countable family of real numbers and the largest eigenvalue denoted in what follows by  $\lambda_n(\Omega)$ will play a central role in our study.  Actually, we show that it is simple and the components of any  associated nonzero 
 eigenfunctions $(h_1,h_2)$  are with  constant and  opposite signs. We emphasize that this property  can be formally interpreted in the framework of  the classical  Krein-Rutman Theorem (KRT). In fact,  we can show that the convex cone $\mathscr{C}^+:=\big\{(h_1,h_2)\in \mathbb{H}_{\Omega} , h_1\geqslant0, h_2\leqslant 0\big\}$ is stable under the action of $\mathcal{T}_{i,\Omega}^n$, that is, $\mathcal{T}_{i,\Omega}^n(\mathscr{C}^+)\subset \mathscr{C}^+$, which follows from the positivity of all   the kernels in \eqref{K-kernel-intro}. However this approach fails with the function space $\mathbb{H}_{\Omega}$ because the cone  $\mathscr{C}^+$ is not solid due to its empty  interior,  and this is an essential property in (KRT). To remedy to this defect a possibility is to strengthen the regularity of  the function spaces and work for instance in the H\"older class $\mathscr{C}^{1,\alpha}(0,\pi)$ subject to the Dirichlet boundary. This approach could work provided that some technical issues are fixed.  However and instead of  this implicit approach,  we shall   check the aforementioned spectral structure for the largest eigenvalue  through direct and more constructive arguments, see Proposition \ref{prop-operatorV2}.  It is worthy to point out that we are also able to track some aspects on the dynamics of  the largest eigenvalue with respect o $n$ and $\Omega$. We get in particular   through some  monotonicity properties that     for large $n$, there exists   a unique $\Omega_n$ such that $\lambda_n(\Omega_n)=1$. As a byproduct we  deduce that the kernel for $\mathcal{T}_{i,\Omega_n}^n$ is one-dimensional and the monotonicity of the sequence $\{\Omega_n, n\geqslant n_0\}$ allows in turn to get that the complete linearized operator restricted to the  $m$ fold symmetry  is also one-dimensional. We observe that the sequence $\{\Omega_n, n\geqslant n_0\}$ is strictly increasing and converges to $\overline\Omega_1$ introduced in \eqref{assump-omega-intro}.

Finally, let us mention that all the spectral analysis is performed in the weaker space $L^2_{\mu_\Omega}$ but the nonlinear functional $\tilde{F}$ will be defined in a more regular H\"older spaces, and then we will need to perform a bootstrap argument to prove that the eigenfunctions (the elements of the kernel) are in fact enough smooth  and  satisfy the  Dirichlet boundary conditions, for more details see Section \ref{Reg-EigenF}. As to the transversality assumption required in Crandall-Rabinowitz theorem, it will be analyzed in Proposition \ref{prop-transversal}. It is more delicate to establish compared to the case of one interface raised in \cite{GHM} because the quadratic form induced by the operator $\mathcal{T}_{i,\Omega_n}^n$ is not elliptic but hyperbolic. This issue is fixed using the specific structure of the normalized  eigenfunction denoted by $h^\star_n=(h_{1,n}^\star,h_{2,n}^\star)$ by showing that for large $n$ the mass is almost concentrated on the first component $h_{1,n}^\star$.   Concerning the regularity of the nonlinear functional $\tilde{F}$, it will be studied in Section \ref{sec-regularity} through some arguments relevant to potential theory applied to operators with kernels deformed via  spherical type coordinates. This allows to achieve the proof of the general statement in Theorem \ref{theorem}.

{The paper is organized as follows. In Section \ref{Sec-contour} we give the derivation of the functional $\tilde{F}$ together with a discussion about the assumptions {\bf (H)} and the definition of the function spaces. Later in Section \ref{Linearized operator}, we shall explore the expression of the linearized operator around the trivial solution  in terms of  Gauss Hypergeometric functions. Section \ref{sec-spectral} is the main important one, which aims to extract enough properties on the  spectral problem associated to the operator   $\mathcal{T}_{i,\Omega}^n$. We shall in particular  perform  a rich  study on  the  largest eigenvalue and detail its behavior and asymptotic with respect to the bifurcation parameter $\Omega$ and the mode $n$. The regularity of the associated eigenfunctions will also be carefully analyzed through a  bootstrap argument. The Fredholm structure of the linearized operator and the transversality condition will be  studied in this section.  As to Section \ref{sec-regularity}, it will be devoted to the regularity aspects of the nonlinear functional  $\tilde{F}$. In Section \ref{result}  we state  the  main result { of} the current paper together with a complete proof and  a discussion around  the validity of the spectral assumption \eqref{assump-omega-intro}. Finally, Appendices \ref{Ap-spfunctions} and \ref{Ap-bif} provide useful properties about the Gauss Hypergeometric function and the statement of the Crandall-Rabinowitz theorem.}

\section{Contour dynamics equations}\label{Sec-contour}
We shall focus on the derivation of special time periodic solutions  to \eqref{equation-model}. Let us consider 
 an initial data $ q_0={\bf{1}}_{D}$ with $D$ being a bounded  domain   of $\R^3$, called a  patch. Then this structure is conserved by the motion and one gets for any time $t\geq0$
\begin{align}\label{VP}
q(t,x)={\bf{1}}_{D(t)}(x),
\end{align}
for some bounded domain $D(t)$.  This domain is propelled  by its own  induced velocity  and the boundary is subject to   the contour dynamics equation formulated by Deem and Zabusky for Euler equations in two dimensions \cite{DeemZabusky}.  For the current model, we refer to \cite{GHM} for the equation derivation when the boundary of $\partial D(t)$ is given by only one smooth connected surface. The same approach works  for domains with finitely many connected components of the boundary.  Here, we shall only discuss the case of two interfaces corresponding to $D_t=D_{1,t}\backslash D_{2,t}$ with the strict embedding $D_{2,t}\Subset D_{1,t}$ and the boundary of each piece $D_{j,t}$ is a connected smooth  surface that can be parametrized    by  a smooth function $\gamma_{j,t}: (\phi,\theta)\in\T^2\mapsto \gamma_{j,t}(\phi,\theta)\in \R^3$, with $\T$ the one--dimensional torus. Since the boundary is transported by the flow, then we end up with 
\begin{equation}\label{Gene-eq}
\big(\partial_t\gamma_{j,t}-(U,0)(t,\gamma_{j,t})\big)\cdot n(\gamma_{j,t})=0,
\end{equation}
where  $n(\gamma_{j,t})$ is a normal vector to the boundary at the point $\gamma_{j,t}$ and $(U,0)=(u,v,0)$ is the velocity field generated by the patch { $q_t=1_{D(t)}$} through Biot-Savart law 
\begin{align}\label{BioS-Form} 
\nonumber (U,0)(t,x\big)&=\frac{1}{4\pi}\int_{D_{1,t}}\frac{(x-y)^\perp}{|x-y|^3}dy-\frac{1}{4\pi}\int_{D_{2,t}}\frac{(x-y)^\perp}{|x-y|^3}dy\\
&=\frac{1}{4\pi}\int_{\partial D_{1,t}}\frac{n^\perp(y)}{|x-y|}d\sigma(y)-\frac{1}{4\pi}\int_{\partial D_{2,t}}\frac{n^\perp(y)}{|x-y|}d\sigma(y),
\end{align}
where we use the notation  $x^\perp=(-x_2,x_1,0)\in\R^3$ for $x=(x_1,x_2,x_3)\in\R^3$ and  $d\sigma$ denotes the Lebesgue  surface measure of $\partial D_{j,t}$. We want to emphasize  one of the main feature of the three dimensions related to  the structure of  stationary solutions. In fact,  this class   is  very rich compared to the two dimensional case as it was stated in \cite{GHM}. More precisely, we established the following result. 
\begin{lem}[\cite{GHM}]\label{Prop-stat}
Let $D$ be a revolution shape smooth domain, that is, $D$ is invariant by any  rotation around the vertical $x_3$-axis, then $q(t)={\bf{1}}_{D}$ defines a stationary solution for \eqref{equation-model}.
\end{lem}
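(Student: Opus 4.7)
The plan is to reduce the claim to a boundary tangency statement and then exploit the rotational symmetry twice: once on the stream function and once on the outward normal. Since the candidate solution $q(t,\cdot)=\mathbf{1}_D$ is time-independent, $\partial_t q=0$, so the transport equation reduces to $u\,\partial_1\mathbf{1}_D+v\,\partial_2\mathbf{1}_D=0$. Because $\nabla\mathbf{1}_D=-n\,d\sigma_{\partial D}$ in the distributional sense, this identity is equivalent to the condition that the horizontal velocity $(u,v)$ is orthogonal to the horizontal part of the outward normal along $\partial D$, i.e., the three–dimensional velocity $(u,v,0)$ is tangent to $\partial D$.

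Next I would establish that the stream function $\psi$ inherits the rotational symmetry of $D$. For any rotation $R_\alpha$ about the $x_3$-axis, the Green's formula \eqref{Stream-F} together with $R_\alpha(D)=D$ and the invariance of Lebesgue measure and of $|x-y|$ under $R_\alpha$ yields $\psi\circ R_\alpha=\psi$. Hence $\psi$ depends only on the cylindrical variables $(r,x_3)$ with $r=\sqrt{x_1^2+x_2^2}$, and writing $\psi(x)=\Psi(r,x_3)$ one computes
\begin{equation*}
(u,v)=(-\partial_2\psi,\partial_1\psi)=\frac{\partial_r\Psi(r,x_3)}{r}\,(-x_2,x_1),
\end{equation*}
so the horizontal velocity is purely azimuthal at every point.

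Finally, at any smooth point of $\partial D$, rotational invariance of $D$ forces the azimuthal vector $(-x_2,x_1,0)$ to be tangent to $\partial D$; equivalently the horizontal part of the outward normal $(n_1,n_2)$ is parallel to $(x_1,x_2)$. Combined with the previous display, this gives $u\,n_1+v\,n_2=\frac{\partial_r\Psi}{r}(-x_2 n_1+x_1 n_2)=0$ on $\partial D$, which is exactly the distributional identity needed. The only minor technical point is making the argument rigorous when $\partial D$ meets the rotation axis $\{r=0\}$; there the factor $(-x_2,x_1)$ vanishes and $(u,v)=0$, so the tangency is automatic, while away from the axis the reasoning above applies verbatim. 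I expect no serious obstacle: the key observation is simply that axisymmetry of $D$ produces an axisymmetric $\psi$ whose Biot--Savart velocity is azimuthal, and azimuthal velocities are automatically tangent to any surface of revolution.
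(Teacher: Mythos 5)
Your proof is correct: reducing stationarity to the tangency condition $(u,v,0)\cdot n=0$ on $\partial D$, deducing axisymmetry of $\psi$ from \eqref{Stream-F} and the rotation invariance of $D$ and of $|x-y|$, and concluding that the purely azimuthal velocity $\tfrac{\partial_r\Psi}{r}(-x_2,x_1)$ is automatically tangent to a surface of revolution (with the axis handled by $\nabla_h\psi=0$ there) is exactly the standard argument. The present paper states this lemma without proof, citing \cite{GHM}, and your route coincides with the one used there, so nothing further is needed.
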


\subsection{Rigid  time-periodic patches}
In what follows we intend to describe the equations for rotating patches which are rigid time periodic solutions where the the initial patch is rotating uniformly around the vertical  $x_3-$axis  that will be its axis of symmetry.
We shall in particular derive two equivalent equations   using the velocity field and the stream function.

Assume that the solution  is given by  a  patch $q(t)={\bf{1}}_{D_t}$ rotating uniformly  about the $x_3-$axis with a constant angular velocity $\Omega\in\R.$ This means that   $D_t=\mathcal{R}_{\Omega t}D$, with $\mathcal{R}_{\Omega t}$ being the rotation of angle $\Omega t$ around the vertical axis. Then inserting this ansatz   into the equation \eqref{Gene-eq} yields to the stationary equation governing the domain $D$
$$
\big((U,0)(x)-\Omega x^\perp\big)\cdot \vec{n}(x)=0, \quad \forall\, x\in \partial D.
$$
This equation 
means also that all the  horizontal sections $D_{x_3}:=\{ y\in\R^2,\, (y,x_3)\in D\}$ rotate with the same  angular velocity $\Omega$. Therefore these sections satisfy the constraint
\begin{equation}\label{Eq-sect1}
\big(U(x)-\Omega x^\perp\big)\cdot \vec{n}_{D_{x_3}}(x_h)=0, \quad x_h=(x_1,x_2)\in \partial D_{x_3}, \ x_3\in\R,
\end{equation}
where $\vec{n}_{D_{x_3}}$ denotes a normal vector to the planar curve $\partial D_{x_3}$. Next we shall specify these equations in the particular case of doubly connected domains seen before along the derivation of \eqref{BioS-Form}. In this case, we shall impose   $D=D_1\backslash D_2$ with the strict embedding $D_2\Subset D_1$ and the boundary of each piece $D_j$ is a connected smooth  surface that can be parametrized as follows \begin{align}\label{param}
\partial D_j=&\left\{\gamma_j(\phi,\theta):=\big(r_j(\phi,\theta) e^{i\theta},d_j\cos(\phi)\big);\,  0\leqslant \theta\leqslant 2\pi, 0\leqslant \phi\leq \pi\right\},
\end{align}
where $r_j:[0,\pi]\times\mathbb{T}\rightarrow [0,\infty)$ satisfies the Dirichlet boundary condition 
$$r_j(0,\theta)=r_j(\pi,\theta)=0
$$ and periodic in the variable $\theta$. The real numbers   $d_1$ and $d_2$ are fixed ans satisfies $d_1>d_2$. We assume that the two surfaces don't intersect, which is guaranteed if and only if
$$
\inf_{\phi,\varphi\atop \theta,\eta}\big|r_1(\phi,\theta)e^{i\theta}-r_2(\varphi,\eta)e^{i\eta}|^2+\big|d_1\cos(\phi)-d_2\cos(\varphi)\big|^2>0.
$$ 
This condition is satisfied provided that
 \begin{align*}
 \forall \phi,\varphi\in [0,\pi],\,\forall\,\theta,\eta\in\mathbb{T},\quad (r_1(\phi,\theta)-r_2(\varphi,\eta))^2+(d_1\cos(\phi)-d_2\cos(\varphi))^2\geqslant \delta,
 \end{align*}
 for some $\delta>0.$  This is similar to the assumption {(\bf{H4})} imposed to the initial profiles $r_{0,j},$ where the lower bound constants are not necessary the  same.
Notice that the horizontal sections ${D_{x_3}}$ are delimited by two closed curves indexed by $\phi$  and a normal vector is given by 
\begin{align*}
\vec{n}(r_j(\phi,\theta)e^{i\theta})&=i\partial_\theta\gamma_j(\phi,\theta)\\
&=\big( i\partial_{\theta}r_j(\phi,\theta)-r_j(\phi,\theta)\big)e^{i\theta}.
\end{align*}
Therefore, the equations  \eqref{Eq-sect1} reduce to the following system
\begin{align}\label{vel-form-ini}
\textnormal{Re}\left[\left(U\big(\gamma_j(\phi,\theta)\big)-i\Omega r_j(\phi,\theta)e^{i\theta}\right)\big(i\partial_{\theta}r_j(\phi,\theta)+r_j(\phi,\theta)\big)e^{-i\theta}\right]=0,
\end{align}
for $j=1,2$ and $(\phi,\theta)\in[0,\pi]\times\T$.
Next, we intend to establish an  explicit form of $U$ using the parametrization $\gamma_j$. Indeed, using the second identity in \eqref{BioS-Form} we find
\begin{align}\label{U}
U\big(\gamma_1(\phi,\theta)\big)=&\frac{1}{4\pi}\bigintsss_{-d_1}^{d_1}\bigintsss_{\partial D_{1,y_3}}{\frac{n^\perp(y_h)dy_hdy_3}{|\gamma_1(\phi,\theta)-y|}}-\frac{1}{4\pi}\bigintsss_{-d_2}^{d_2}\bigintsss_{\partial D_{2,y_3}}{\frac{n^\perp(y_h)dy_hdy_3}{|\gamma_1(\phi,\theta)-y|}}\\
=&\frac{d_1}{4\pi}\bigintsss_{0}^{\pi}\bigintsss_0^{2\pi}\frac{\sin(\varphi)\big(\partial_{\eta}r_1(\varphi,\eta)+ir_1(\varphi,\eta)\big)e^{i\eta}}{|\gamma_1(\phi,\theta)-\gamma_1(\varphi,\eta)|}d\eta d\varphi\nonumber\\
&-\frac{d_2}{4\pi}\bigintsss_{0}^{\pi}\bigintsss_0^{2\pi}\frac{\sin(\varphi)\big(\partial_{\eta}r_2(\varphi,\eta)+ir_2(\varphi,\eta)\big)e^{i\eta}}{|\gamma_1(\phi,\theta)-\gamma_2(\varphi,\eta)|}d\eta d\varphi,\nonumber
\end{align}
and
\begin{align*}
U\big(\gamma_2(\phi,\theta)\big)
=&\frac{d_1}{4\pi}\bigintsss_{0}^{\pi}\bigintsss_0^{2\pi}\frac{\sin(\varphi)\big(\partial_{\eta}r_1(\varphi,\eta)+ir_1(\varphi,\eta)\big)e^{i\eta}}{|\gamma_2(\phi,\theta)-\gamma_1(\varphi,\eta)|}d\eta d\varphi\nonumber\\
&-\frac{d_2}{4\pi}\bigintsss_{0}^{\pi}\bigintsss_0^{2\pi}\frac{\sin(\varphi)\big(\partial_{\eta}r_2(\varphi,\eta)+ir_2(\varphi,\eta)\big)e^{i\eta}}{|\gamma_2(\phi,\theta)-\gamma_2(\varphi,\eta)|}d\eta d\varphi.\nonumber
\end{align*}
Consider the functionals   $\mathcal{I}_1$ and $\mathcal{I}_2$  
\begin{align}
\mathcal{I}_1(R,z)=&\frac{d_1}{4\pi}\bigintsss_{0}^{\pi}\bigintsss_0^{2\pi}\frac{\sin(\varphi)\big(\partial_{\eta}r_1(\varphi,\eta)+ir_1(\varphi,\eta)\big)e^{i\eta}}{|(Re^{i\theta},z)-\gamma_1(\varphi,\eta)|}d\eta d\varphi,\label{I1}\\
\mathcal{I}_2(R,z)=&\frac{d_2}{4\pi}\bigintsss_{0}^{\pi}\bigintsss_0^{2\pi}\frac{\sin(\varphi)\big(\partial_{\eta}r_2(\varphi,\eta)e+ir_2(\varphi,\eta)\big)e^{i\eta}}{|(Re^{i\theta},z)-\gamma_2(\varphi,\eta)|}d\eta d\varphi.\label{I2}
\end{align}
Then 
$$
U\big(\gamma_j(\phi,\theta)\big)=\mathcal{I}_1\big(\gamma_j(\phi,\theta)\big)-\mathcal{I}_2\big(\gamma_j(\phi,\theta)\big).
$$
Consequently, the system \eqref{vel-form-ini} becomes
\begin{align}\label{vel-form}
\textnormal{Re}\left[\left(\mathcal{I}_1\big(\gamma_j(\phi,\theta)\big)-\mathcal{I}_2\big(\gamma_j(\phi,\theta)\big)-i\Omega r_j(\phi,\theta)e^{i\theta}\right)\big(i\partial_{\theta}r_j(\phi,\theta)+r_j(\phi,\theta)\big)e^{-i\theta}\right]=0,
\end{align}
for $j=1,2$ and $(\phi,\theta)\in[0,\pi]\times\T$. Next, we shall write the equations for   rotating solutions close to a doubly connected  stationary solution  described in Lemma \ref{Prop-stat}. More precisely, we are looking for a parametrization in the form
\begin{align}\label{f}
r_j(\phi,\theta)=&r_{0,j}(\phi)+f_j(\phi,\theta)\quad\hbox{with} \quad f_j(\phi,\theta)=\sum_{n\geqslant 1}f_{j,n}(\phi)\cos(n\theta),\,j=1,2.
\end{align}
Remark that from the structure of the perturbation, we are implicitly assuming  that the domain $D_j$ is at least one-fold in the horizontal variable, that is,  symmetric with respect to the vertical plane $x_2=0$.
In addition, we shall work with the Dirichlet  boundary conditions,
\begin{align*}
r_{0,j}(0)=r_{0,j}(\pi)=f_j(0,\theta)=f_j(\pi,\theta)=0,\, j=1,2.
\end{align*}
Define the functional $F^{\bf v}=(F_1^{\bf v}, F_2^{\bf v})$ with
\begin{align*}
F_i^{\bf v}(\Omega,f_1,f_2)(\phi,\theta)=&\textnormal{Re}\left[\left(e^{-i\theta}\big[(\mathcal{I}_1-\mathcal{I}_2)\big(\gamma_i(\phi,\theta)\big)\big]-i\Omega r_i(\phi,\theta)\right)\big(i\partial_{\theta}r_i(\phi,\theta)+r_i(\phi,\theta)\big)\right].
\end{align*}
The superscript $\bf{v}$ refers to the velocity formulation and we use it to compare it later to the stream function formulation. Hence, the rotating doubly connected patches are solutions to 
\begin{equation}\label{Formu-v1}
F^{\bf v}(\Omega,f_1,f_2)(\phi,\theta)=0, \quad (\phi,\theta)\in[0,\pi]\times\T.
\end{equation}
We note that  Lemma \ref{Prop-stat} implies that $F^{\bf v}(\Omega,0,0)= 0$, for any $\Omega\in\R$.

The next task is to formulate the equations of doubly connected rotating patches in terms of  the stream function defined by \eqref{Stream-F}. First, 
 we can check that \eqref{vel-form} agrees with
\begin{align*}
 \partial_\theta \left\{\psi\big(r_j(\phi,\theta)e^{i\theta},d_j\cos(\phi)\big)-\tfrac{\Omega}{2}|r_j(\phi,\theta)|^2\right\}=0,\, j=1,2.
\end{align*}
Integrating in $\theta$ yields to the equations
\begin{align*}
\psi\big(r_j(\phi,\theta)e^{i\theta},d_j\cos(\phi)\big)-\tfrac{\Omega}{2}|r_j(\phi,\theta)|^2=&m_j(\Omega,f_1,f_2)(\phi),
\end{align*}
where $m_j(\Omega,f_1,f_2)$ is the average in $\theta$ of the  functionals and  given by  
\begin{align}\label{meanT}
m_j(\Omega,f_1,f_2)(\phi)=\frac{1}{2\pi}\int_0^{2\pi}\left\{\psi\big(\gamma_j(\phi,\theta)\big)-\tfrac{\Omega}{2}r^2_j(\phi,\theta)\right\}d\theta,\, j=1,2.
\end{align}
 Let us define the functional ${F}^{\bf  s}=({F}_1^{\bf  s},{F}_2^{\bf  s})$ through
\begin{align}\label{nonlinearfunction2}
{F}^{\bf  s}_j(\Omega,f_1,f_2)(\phi,\theta)=&\psi\big(r_j(\phi,\theta)e^{i\theta},d_j\cos(\phi)\big)-\tfrac{\Omega}{2}r^2_j(\phi,\theta)-m_j(\Omega,f_1,f_2)(\phi),
\end{align}
where the stream function takes in view of \eqref{Stream-F} the form 
\begin{align*}
\psi(Re^{i\theta},z)=&-\frac{d_1}{4\pi}\bigintsss_{0}^{\pi}\bigintsss_0^{2\pi}\bigintsss_0^{r_1(\varphi,\eta)}\frac{\sin(\varphi)rdrd\eta d\varphi}{|(re^{i\eta},d_1\cos(\varphi))-(Re^{i\theta},z)|}\\
&+\frac{d_2}{4\pi}\bigintsss_{0}^{\pi}\bigintsss_0^{2\pi}\bigintsss_0^{r_2(\varphi,\eta)}\frac{\sin(\varphi)rdrd\eta d\varphi}{|(re^{i\eta},d_2\cos(\varphi))-(Re^{i\theta},z)|}\cdot
\end{align*}
Thus the equation \eqref{Formu-v1} takes the form
\begin{equation}\label{Formu-st1}
{F}^{\bf  s}(\Omega,f_1,f_2)(\phi,\theta)=0, \quad \forall (\phi,\theta)\in[0,\pi]\times\T.
\end{equation}
Motivated by the Section \ref{Linearized operator} on the structure of the linearized operator, we find it convenient  to filter the singularities of the poles at the nonlinear level by working with   the modified functional 
\begin{align}\label{Ftilde}
\tilde{F}(\Omega,f_1,f_2)(\phi,\theta):=&\big(\tilde{F}_1(\Omega,f_1,f_2)(\phi,\theta),\tilde{F}_2(\Omega,f_1,f_2)(\phi,\theta)\big)\\
=&\left(\frac{F^{\bf  s}_1(\Omega,f_1,f_2)(\phi,\theta)}{r_{0,1}(\phi)}, \frac{F^{\bf  s}_2(\Omega,f_1,f_2)(\phi,\theta)}{r_{0,2}(\phi)} \right).\nonumber
\end{align}

\subsection{Profiles assumptions}\label{Pro-assu}
We intend to list the required assumptions on the profiles { $r_{0,1},r_{0,2}$} of stationary solutions   used in the ansatz \eqref{f}. For the sake of simplicity, we shall   throughout this paper refer to them by  {\bf (H)} :
{\begin{itemize}

\item[{\bf(H1)}] {\it{(Regularity assumption)}} For $j=1,2$,  ${ r_{0,j}}\in \mathscr{C}^{2}([0,\pi])$.

\item[{\bf(H2)}] 

{\it{(Non-self intersection)}} There exists $C>0$ such that
$$
\forall\,\phi\in[0,\pi],\quad  C^{-1}\sin\phi\leq { r_{0,j}}(\phi)\leq C\sin(\phi).
$$
\item[{\bf(H3)}] {\it{(Equatorial symmetry)}} ${ r_{0,j}}$ is symmetric with respect to $\phi=\frac{\pi}{2}$, i.e., 
$${ r_{0,j}}\left({\pi}-\phi\right)={r_{0,j}}\left(\phi\right),\,\forall\, \phi\in[0,{\pi}].
$$
\item[{\bf(H4)}] {\it (Interfaces separation)} There exists $\delta>0$ such that  
\begin{align*}
 \forall \phi,\varphi\in [0,\pi],\quad { (r_{0,1}(\phi)-r_{0,2}(\varphi))^2}+(d_1\cos(\phi)-d_2\cos(\varphi))^2\geqslant \delta.
 \end{align*}
  
\end{itemize}}
The assumptions {\bf(H2)}-{\bf(H4)}  correspond to the non-degeneracy of the geometric structure of the domain $D$. Actually, {\bf(H2)} implies in part  that the profile does not vanish inside $(0,\pi)$ and this means that each surface of $\partial D$ does not self intersect. The simple zero at $\phi=0,\pi$   is needed  to circumvent  shape singularities  of $D$ at the poles.  However,  {\bf(H4)} is needed to avoid the intersection between the two surfaces which should be  well-separated. We point out that without these assumptions, lot of results are expected to fail and we do not know whether or not our main result on the existence of periodic solutions persists.  The assumption {\bf(H3)} turns out to be a commodity  to restrict the analysis to $\phi\in[0,\tfrac\pi2]$ and can be relaxed. \\
Next we shall discuss  some consequences of {\bf (H)}  needed for later purposes.\\

\ding{70}  From {\bf(H1)}-{\bf(H2)} we have that $r_{0,i}'(0)>0$ { and  $r_{0,i}'(\pi)<0.$} Then, by continuity of the derivative  there exists $\delta>0$ such that {$|r_{0,i}'(\phi)>0|$ for $\phi\in[0,\delta]\cup[\pi-\delta,\pi]$.} 
Combining this with the mean value theorem, we deduce the arc-chord estimate: there exists $C>0$ such that for any $ \phi,\varphi\in[0,\pi], j=1,2$
\begin{equation}\label{Chord}
 C^{-1}(\phi-\varphi)^2\leqslant ({ r_{0,j}(\varphi)-r_{0,j}(\phi))^2}+d_j^2(\cos(\phi)-\cos(\varphi))^2\leqslant C(\phi-\varphi)^2.
\end{equation}

\ding{70} By {\bf(H1)}-{\bf(H2)} we deduce  that $\frac{\sin(\cdot)}{{r_{0,j}(\cdot)}}\in \mathscr{C}^1([0,\pi])$, and then { $\phi\in[0,\frac{\pi}{2}]\mapsto\frac{\phi}{{r_{0,j}}(\phi)}$
$\in \mathscr{C}^1([0,\frac{\pi}{2}])$}.

\ding{70}  For  $i,j\in\{1,2\}$ we define  \begin{align}\label{Rij}
R_{i,j}(\phi,\varphi)=&({ r_{0,i}(\phi)+r_{0,j}(\varphi)})^2+(d_i\cos(\phi)-d_j\cos(\varphi))^2.
 \end{align}
Then, from {\bf(H2)} and {\bf(H4)} one may find $ \overline\delta_{i,j}\in(0,1]$ such that
\begin{equation}\label{Chord-1}
\forall \phi,\varphi\in[0,\pi],\quad 0\leqslant \frac{4 {r_{0,i}(\phi) r_{0,j}(\varphi)}}{R_{i,j}(\phi,\varphi)}\leqslant \overline\delta_{i,j}
\end{equation}
with $$
\overline\delta_{i,i}=1 \quad \hbox{and  for }i\neq j,\quad  \overline\delta_{i,j}<1.
$$ 
Indeed, this is clear for $i=j$ since
\begin{align*}
 \frac{4{ r_{0,i}(\phi) r_{0,i}(\varphi)}}{R_{i,i}(\phi,\varphi)}&\leqslant{ \frac{4 r_{0,i}(\phi) r_{0,i}(\varphi)}{(r_{0,i}(\phi)+ r_{0,i}(\varphi))^2}}\leqslant 1.
\end{align*}
However, for $i\neq j$ we use {\bf(H4)} giving
\begin{align*}
 {R_{i,j}(\phi,\varphi)}&={{4  r_{0,i}(\phi)} { r_{0,j}(\varphi)}}+{({r_{0,i}(\phi)}-  {r_{0,j}(\varphi)})^2}+(d_i\cos\phi-d_j\cos\varphi)^2\\
 &\geqslant {4{ r_{0,i}(\phi) r_{0,j}(\varphi)}}+\delta.
\end{align*}
Consequently, from {\bf(H2)} combined with the monotonicity of $x\mapsto \frac{x}{x+\delta}$ we get
\begin{align*}
 \frac{4 {r_{0,i}(\phi) r_{0,j}(\varphi)}}{R_{i,j}(\phi,\varphi)}&\leqslant{\frac{4 r_{0,i}(\phi) r_{0,j}(\varphi)}{{4 r_{0,i}(\phi) r_j(\varphi)}+\delta}
 \leqslant  \tfrac{C^2}{C^2+\delta}:= \overline\delta_{i,j} <1.}
  \end{align*}

\subsection{Function spaces} 
The main purpose of this section  is to introduce the function spaces that will be used along this paper together with some of their basic properties.  
First, we recall the classical  H\"{o}lder spaces defined on an open nonempty  set $\mathscr{O}\subset \R^d$ as follows. Given  $\alpha\in (0,1)$ then 
$$
\mathscr{C}^{1,\alpha}(\mathscr{O})=\Big\{ f:\mathscr{O}\mapsto\R, \|f\|_{\mathscr{C}^{1,\alpha}}<\infty\Big\},
$$ 
with
$$
 \|f\|_{\mathscr{C}^{1,\alpha}}=\|f\|_{L^\infty}+\|\nabla f\|_{L^\infty}+\sup_{x\neq y\in \mathscr{O}}\frac{|\nabla f(x)-\nabla f(y)|}{|x-y|^\alpha}\cdot
$$
The space  $\mathscr{C}^{1,\alpha}(\mathscr{O})$ is a Banach  algebra, with 
$$
\|fg\|_{\mathscr{C}^{1,\alpha}}\leq C \|f\|_{\mathscr{C}^{1,\alpha}}\|g\|_{\mathscr{C}^{1,\alpha}}.
$$ 
On the other hand,  we may   identify  in the periodic setting the space $\mathscr{C}^{1,\alpha}(\T)$ with the space $\mathscr{C}_{2\pi}^{1,\alpha}(\R)$ of  $2\pi$--periodic functions that belongs to $\mathscr{C}^{1,\alpha}(\R)$. \\
Now, we shall introduce  the function spaces  used  in a crucial way in the regularity  study of the functional $\tilde{F}$ defined in \eqref{Ftilde}. For $\alpha\in(0,1)$ and $m\in\N^\star,$ set
\begin{equation}\label{space}
X_m^\alpha=\Big\{f\in \mathscr{C}^{1,\alpha}\big((0,\pi)\times\T\big) 
 , \, f\left(\phi,\theta\right)=\sum_{n\geqslant 1}f_n(\phi)\cos(nm\theta)\Big\},
\end{equation}
supplemented with the boundary and symmetry  conditions
\begin{equation}\label{spaceXL1}
\forall (\phi,\theta)\in[0,\pi]\times\T,\quad  f(0,\theta)=f(\pi,\theta)= 0\quad\hbox{and} \quad f\left(\pi-\phi,\theta\right)=f\left(\phi,\theta\right).
\end{equation}
This space is equipped with the same norm as $\mathscr{C}^{1,\alpha}((0,\pi)\times{\T}).$ 
Remark  that any element  $f\in\mathscr{C}^{1,\alpha}\big((0,\pi)\times\T\big) $ admits a continuous  extension up to the boundary and therefore  the assumptions  \eqref{spaceXL1} are  meaningful.  On the other hand, the Dirichlet boundary conditions  together with  Taylor's formula allow  to get a constant $C>0$ such that for   any $f\in X_m^\alpha$
\begin{align}\label{platitL1}
|f(\phi,\theta)|\leqslant& C\|f\|_{\textnormal{Lip}}\sin \phi,\nonumber\\
\partial_\theta f(0,\theta)=\partial_\theta f(\pi,\theta)=0&\quad\hbox{and}\quad |\partial_\theta f(\phi,\theta)|\leqslant C\|f\|_{\mathscr{C}^{1,\alpha}}\sin^\alpha(\phi).
\end{align}
The ball  in $X_m^\alpha$ centered in $0$ with radius $\varepsilon$ is denoted by $B_{X_m^\alpha}(\varepsilon)$.

\section{Linearized operator}\label{Linearized operator}
In this section, we shall describe the structure of the linearized operator around the stationary solutions. We find it convenient to  use the stream function formulation \eqref{Formu-st1}-\eqref{Ftilde}, and to alleviate the notation  we will  omit the superscript $\bf s$ from $F^{\bf s}$.  Two different useful  representations  for the linearized operator will be now discussed.

\subsection{Linearization at the equilibrium state}
The main target in this section is to detail  the structure of the  linearized operator of $\tilde{F}$ introduced in \eqref{Ftilde}--\eqref{nonlinearfunction2}  around the trivial solution $(\Omega,0,0)$. The following quantities are useful below
\begin{align}\label{Hcal}
\mathcal{H}_{i,j}^n(\phi,\varphi,\eta)=\frac{d_j}{4\pi r_{0,i}(\phi)}\frac{\sin(\varphi)r_{0,j}(\varphi)\cos(n\eta)}{\sqrt{A_{i,j}(\phi,\varphi,\eta)}},
\end{align}
and
\begin{align}\label{nuOmega}
\nu_{i,\Omega}(\phi)
=&(-1)^{i-1}\left(\int_{0}^{\pi}\int_0^{2\pi} \mathcal{H}^1_{i,1}(\phi,\varphi,\eta)d\eta d\varphi-\int_{0}^{\pi}\int_0^{2\pi}\mathcal{H}^1_{i,2}(\phi,\varphi,\eta)d\eta d\varphi   -\Omega\right),
\end{align}
with
\begin{align}
A_{i,j}(\phi,\varphi,\eta)
=&(r_{0,i}(\phi)-r_{0,j}(\varphi))^2+(d_i\cos(\phi)-d_j\cos(\varphi))^2+2{r_{0,i}(\phi)r_{0,j}(\varphi)}(1-\cos(\eta)).\label{A11}
\end{align}
\begin{pro}\label{Prop-MMlin1}
Let $j=1,2$ and consider two smooth   functions 
$$(\phi,\theta)\in[0,\pi]\times\T\mapsto h_j(\phi,\theta)=\sum_{n\geqslant 1}h_{j,n}(\phi)\cos(n\theta).
$$
Then, 
the Gateaux  derivative  of the functional $\tilde{F}$   introduced  in  \eqref{Ftilde} 
takes the form
\begin{align*}
\partial_{f_1,f_2}\tilde{F}_i(\Omega,0,0)(h_1,h_2)(\phi,\theta)=&(-1)^{i-1}\nu_{i,\Omega}(\phi)\sum_{n\geqslant 1}\cos(n\theta)\Big(h_{i,n}(\phi)-\mathcal{T}^n_{i,\Omega}(h_{1,n},h_{2,n})(\phi)\Big),
\end{align*}
with
\begin{align*}
\mathcal{T}^n_{i,\Omega}(h_{1,n},h_{2,n})(\phi)=&(-1)^{i-1}\int_0^\pi\int_0^{2\pi}\tfrac{\mathcal{H}^n_{i,1}(\phi,\varphi,\eta)}{\nu_{i,\Omega}(\phi)}h_{1,n}(\varphi)d\eta d\varphi\\
&+(-1)^{i}\int_0^\pi\int_0^{2\pi}\tfrac{\mathcal{H}^n_{i,2}(\phi,\varphi,\eta)}{\nu_{i,\Omega}(\phi)}h_{2,n}(\varphi)d\eta d\varphi,\nonumber
\end{align*}
and $\mathcal{H}^n_{i,j}$ is defined in \eqref{Hcal}.
\end{pro}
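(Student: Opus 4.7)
The plan is to compute the Gateaux derivative of each component $\tilde{F}_i = F_i^{\mathbf{s}}/r_{0,i}$ at $(f_1, f_2) = (0, 0)$ by unpacking the three pieces in the definition of $F_i^{\mathbf{s}}$ in \eqref{nonlinearfunction2}: the stream function evaluated on the surface, the centrifugal quadratic term, and the mean $m_i$. Writing $r_i = r_{0,i} + f_i$, the derivative of $\psi(r_i(\phi,\theta)e^{i\theta}, d_i\cos\phi)$ in a direction $(h_1, h_2)$ splits into a chain-rule contribution from moving the evaluation point in the radial direction, plus the variation of the volume integral defining $\psi$ caused by the perturbation of the patches $D_j$ through the upper limit $r_j(\varphi,\eta)$. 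The chain-rule piece equals $\partial_R\psi_0(r_{0,i}(\phi), d_i\cos\phi)\, h_i(\phi,\theta)$; by axisymmetry of the equilibrium stream function $\psi_0$, this radial derivative coincides with $(\partial_1\psi_0)(r_{0,i}(\phi), 0, d_i\cos\phi)$.

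For the domain-variation piece, differentiating $\int_0^{r_j(\varphi,\eta)}(\cdots)r\,dr$ with respect to the upper limit at $f_j = 0$ produces the boundary integral
\[
(-1)^j\frac{d_j}{4\pi}\int_0^\pi\int_0^{2\pi}\frac{\sin\varphi\, r_{0,j}(\varphi)\, h_j(\varphi,\eta)}{|(r_{0,j}(\varphi)e^{i\eta}, d_j\cos\varphi)-(r_{0,i}(\phi)e^{i\theta}, d_i\cos\phi)|}\,d\eta\, d\varphi.
\]
Shifting $\eta\mapsto\eta+\theta$, the squared denominator becomes $A_{i,j}(\phi,\varphi,\eta)$ by a direct expansion matching \eqref{A11}. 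Expanding $h_j(\varphi, \eta+\theta) = \sum_{n\geq 1}h_{j,n}(\varphi)\bigl[\cos(n\eta)\cos(n\theta) - \sin(n\eta)\sin(n\theta)\bigr]$ and using the evenness of $1/\sqrt{A_{i,j}(\phi,\varphi,\cdot)}$ in $\eta$, the sine terms integrate to zero, which leaves exactly the mode-by-mode kernels $\mathcal{H}^n_{i,j}$ from \eqref{Hcal}.

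It remains to collect the pieces. Every ansatz $h_j$ has vanishing $\theta$-average, and so do the chain-rule, quadratic, and domain-variation contributions obtained above; consequently $\partial_{f_1,f_2} m_i(\Omega, 0, 0)$ drops out at every Fourier mode $n\geq 1$. After dividing by $r_{0,i}(\phi)$, the coefficient multiplying $h_{i,n}(\phi)\cos(n\theta)$ is $r_{0,i}(\phi)^{-1}(\partial_1\psi_0)(r_{0,i}(\phi),0,d_i\cos\phi) - \Omega$; computing $(\partial_1\psi_0)$ at $\theta = 0$ via the Biot--Savart surface integral \eqref{BioS-Form} on the axisymmetric boundaries $\partial D_{0,j}$ and matching with $\mathcal{H}^1_{i,j}$ identifies this quantity as $(-1)^{i-1}\nu_{i,\Omega}(\phi)$, consistent with \eqref{nuOmega}. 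Factoring out $(-1)^{i-1}\nu_{i,\Omega}$ yields the stated formula with the operator $\mathcal{T}^n_{i,\Omega}$, and a careful sign bookkeeping reproduces the alternating pattern $(-1)^{i-1}$, $(-1)^i$ in its two terms. The main technical point I expect is justifying differentiation under the integral sign through the singular Newton kernel; this is legitimate thanks to the arc-chord estimate \eqref{Chord}, the separation \eqref{Chord-1}, and the regularity from {\bf(H1)}--{\bf(H2)}, which ensure that each $\mathcal{H}^n_{i,j}$ is integrable in $(\varphi, \eta)$ uniformly in $\phi$.
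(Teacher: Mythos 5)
Your proposal is correct and follows essentially the same route as the paper: split the Gateaux derivative into the chain-rule term (which, by axisymmetry, produces $r_{0,i}^{-1}(\partial_1\psi)(r_{0,i}(\phi),0,d_i\cos\phi)-\Omega$, i.e. $(-1)^{i-1}\nu_{i,\Omega}$ as in \eqref{nuOmega}--\eqref{nu-streamfunction}), the domain-variation boundary integrals with the Newton kernel giving the $\mathcal{H}^n_{i,j}$ after the shift $\eta\mapsto\eta+\theta$ and Fourier expansion, and the observation that $\partial_{f_j}m_i$ is a $\theta$-average and hence drops out. The paper's proof carries out the same computation, handling the identification of the chain-rule coefficient via an integration by parts in the Biot--Savart representation (as in Remark \ref{rema-1}), which is the step you describe as ``matching with $\mathcal{H}^1_{i,j}$.''
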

\begin{proof}
It suffices to check the following identities
\begin{align*}
\partial_{f_i} \tilde{F}_i(\Omega,0,0)h_i(\phi,\theta)=&(-1)^{i-1}\sum_{n\geqslant 1}\cos(n\theta)\left({\nu_{i,\Omega}(\phi)h_{i,n}(\phi)}-\int_{0}^{\pi}\int_0^{2\pi}\mathcal{H}_{i,i}^n(\phi,\varphi,\eta)h_{i,n}(\varphi)d\eta d\varphi\right),
\end{align*}
and for $i\neq j$ 
\begin{align*}
\partial_{f_j} \tilde{F}_i(\Omega,0,0)h_j(\phi,\theta)=&(-1)^j\sum_{n\geqslant 1}\cos(n\theta)\int_{0}^{\pi}\int_0^{2\pi}\mathcal{H}_{i,j}^n(\phi,\varphi,\eta)h_{j,n}(\varphi) d\eta d\varphi.
\end{align*}
First, we write in view of \eqref{Ftilde} 
\begin{align*}
\partial_{f_i} \tilde{F}_i(\Omega,0,0)h_i(\phi,\theta)=&\frac{1}{r_{0,i}(\phi)}\Big( h_i(\phi,\theta)(\nabla_h \psi)(r_{0,i}(\phi)e^{i\theta},d_i\cos(\phi))\cdot e^{i\theta}\\
&+(\partial_{f_i} \psi)h_i(r_{0,i}(\phi)e^{i\theta},d_i\cos(\phi))-\Omega r_{0,i}(\phi)h_i(\phi,\theta)-\partial_{f_i} m_i(\Omega,0,0)h_i(\phi)\Big),\end{align*}
and for $i\neq j$
\begin{align*}
\partial_{f_j} \tilde{F}_i(\Omega,0,0)h_j(\phi,\theta)=&\frac{1}{r_{0,i}(\phi)}\Big((\partial_{f_j} \psi)h_j(r_{0,i}(\phi)e^{i\theta},d_i\cos(\phi))-\partial_{f_j} m_i(\Omega,0,0)h_j(\phi)\Big).
\end{align*}
Remind that  the quantities { $\partial_{f_j} m_i$ } are  just the $\theta-$ average of the first part of the right-hand side  and therefore they  will not contribute on the total operator. 
We need to  compute the variations of $ \psi$ with respect to the shape of the patch,
\begin{align*}
(\partial_{f_i} \psi)h_i(r_{0,i}(\phi)e^{i\theta},d_i\cos(\phi))=&\tfrac{(-1)^id_i}{4\pi}\bigintsss_{0}^{\pi}\bigintsss_0^{2\pi}\frac{\sin(\varphi)r_{0,i}(\varphi)h_i(\varphi,\eta) d\eta d\varphi}{|(r_{0,i}(\varphi)e^{i\eta},d_i\cos(\varphi))-(r_{0,i}(\phi)e^{i\theta},d_i\cos(\phi))|}
\end{align*}
and 
\begin{align*}
(\partial_{f_j} \psi)h_j(r_{0,i}(\phi)e^{i\theta},d_i\cos(\phi))=&\tfrac{(-1)^jd_j}{4\pi}\bigintsss_{0}^{\pi}\bigintsss_0^{2\pi}\frac{\sin(\varphi)r_{0,j}(\varphi)h_j(\varphi,\eta) d\eta d\varphi}{|(r_{0,j}(\varphi)e^{i\eta},d_j\cos(\varphi))-(r_{0,i}(\phi)e^{i\theta},d_i\cos(\phi))|}\cdot\end{align*}
On the other hand,  one finds that $(\nabla_h \psi)(Re^{i\theta},z)\cdot e^{i\theta}=U(Re^{i\theta},z)\cdot (ie^{i\theta})$ (Here $\cdot $ denotes the standard inner product in $\R^2$ and $\nabla_h$ the horizontal gradient) and therefore
\begin{align*}
(\nabla_h \psi)(r_{0,i}(\phi)e^{i\theta},d_i\cos(\phi))\cdot e^{i\theta}=&-\frac{d_1}{4\pi}\bigintsss_{0}^{\pi}\bigintsss_0^{2\pi}\frac{\sin(\varphi)(\partial_{\eta}r_{0,1}(\varphi)e^{i\eta}+ir_{0,1}(\varphi)e^{i\eta})\cdot (-ie^{i\theta})}{|(r_{0,i}(\phi)e^{i\theta},d_i\cos(\phi))-(r_{0,1}(\varphi)e^{i\eta},d_1\cos(\varphi))|}d\eta d\varphi\\
&+\frac{d_2}{4\pi}\bigintsss_{0}^{\pi}\bigintsss_0^{2\pi}\frac{\sin(\varphi)(\partial_{\eta}r_{0,2}(\varphi)e^{i\eta}+ir_{0,2}(\varphi)e^{i\eta})\cdot (-ie^{i\theta})}{|(r_{0,i}(\phi)e^{i\theta},d_i\cos(\phi))-(r_{0,2}(\varphi)e^{i\eta},d_2\cos(\varphi))|}d\eta d\varphi\cdot
\end{align*}
Integration by parts combined with suitable identities lead to
\begin{align*}
(\nabla_h \psi)(r_{0,i}(\phi)e^{i\theta},d_i\cos(\phi))\cdot e^{i\theta}=&\frac{d_1}{4\pi}\bigintsss_{0}^{\pi}\bigintsss_0^{2\pi}\frac{\sin(\varphi)r_{0,1}(\varphi)\cos(\theta-\eta)}{|(r_{0,2}(\phi)e^{i\theta},d_2\cos(\phi))-(r_{0,1}(\varphi)e^{i\eta},d_1\cos(\varphi))|}d\eta d\varphi\\
&-\frac{d_2}{4\pi}\bigintsss_{0}^{\pi}\bigintsss_0^{2\pi}\frac{\sin(\varphi)r_{0,2}(\varphi)\cos(\theta-\eta)}{|(r_{0,2}(\phi)e^{i\theta},d_2\cos(\phi))-(r_{0,2}(\varphi)e^{i\eta},d_2\cos(\varphi))|}d\eta d\varphi.
\end{align*}
For more details about this formula, we refer to the proof of Proposition $3.1$ in \cite{GHM}.
Consequently, 
\begin{align*}
\partial_{f_i} \tilde{F}_i(\Omega,0,0)h_i(\phi,\theta)=&\tfrac{h_i(\phi,\theta)}{4\pi r_{0,i}(\phi)}\left({d_1}\bigintsss_{0}^{\pi}\bigintsss_0^{2\pi}\frac{\sin(\varphi)r_{0,1}(\varphi)\cos(\theta-\eta)}{|(r_{0,i}(\phi)e^{i\theta},d_i\cos(\phi))-(r_{0,1}(\varphi)e^{i\eta},d_1\cos(\varphi))|}d\eta d\varphi\right.\\
&\left.-{d_2}\bigintsss_{0}^{\pi}\bigintsss_0^{2\pi}\frac{\sin(\varphi)r_{0,2}(\varphi)\cos(\theta-\eta)}{|(r_{0,i}(\phi)e^{i\theta},d_i\cos(\phi))-(r_{0,2}(\varphi)e^{i\eta},d_2\cos(\varphi))|}d\eta d\varphi-\Omega\right)\\
&-\frac{d_i}{4\pi r_{0,i}(\phi)}\bigintsss_{0}^{\pi}\bigintsss_0^{2\pi}\frac{\sin(\varphi)r_{0,1}(\varphi)h_1(\varphi,\eta) d\eta d\varphi}{|(r_{0,1}(\varphi)e^{i\eta},d_1\cos(\varphi))-(r_{0,1}(\phi)e^{i\theta},d_1\cos(\phi))|},
\end{align*}
and for $i\neq j$
\begin{align*}
\partial_{f_j} \tilde{F}_i(\Omega,0,0)h_j(\phi,\theta)=&\frac{(-1)^jd_j}{4\pi r_{0,i}(\phi)}\bigintsss_{0}^{\pi}\bigintsss_0^{2\pi}\frac{\sin(\varphi)r_{0,j}(\varphi)h_j(\varphi,\eta) d\eta d\varphi}{|(r_{0,j}(\varphi)e^{i\eta},d_j\cos(\varphi))-(r_{0,i}(\phi)e^{i\theta},d_i\cos(\phi))|}\cdot
\end{align*}
Therefore,  to get the desired expressions for the partial derivatives  it suffices to  expand into Fourier series  the functions  $h_1$ and $h_2$ and use  the functions introduced in \eqref{Hcal} and \eqref{nuOmega}.
\end{proof}

\begin{rem}\label{rema-1}
From the expression of $\nu_{\Omega,i}$ detailed in \eqref{nuOmega} and the previous proof, we can relate this function to the  the stream function as follows
\begin{align}\label{nu-streamfunction}
\nu_{\Omega,i}(\phi)=(-1)^{i-1}\left(\frac{1}{r_{0,i}(\phi)} (\nabla_h \psi)(r_{0,i}(\phi)e^{i\theta},d_i\cos(\phi))\cdot e^{i\theta}-\Omega\right).
\end{align}
{One may check by the the rotation invariance that the right hand side is in fact independent on the angle $\theta.$ Therefore, with  $\theta=0$ we infer
\begin{align*}
\nu_{\Omega,i}(\phi)=(-1)^{i-1}\left(\frac{1}{r_{0,i}(\phi)} (\partial_1 \psi)\big(r_{0,i}(\phi),0,d_i\cos(\phi)\big)-\Omega\right).
\end{align*}
}
\end{rem}

\subsection{Representation with hypergeometric functions}
For $n\geqslant1$,  set 
$$
 F_n(x)=F\left(n+\tfrac{1}{2},n+\tfrac{1}{2};2n+1;x\right), \quad  x\in[0,1), 
$$
where the hypergeometric functions are described in the  Appendix \ref{Ap-spfunctions}. 
For $i,j=1,2$, we consider the functions
\begin{align}\label{HMM11}
H_{i,j}^n(\phi,\varphi)=&\tfrac{2^{2n-1}\left(\frac12\right)^2_{n}}{(2n)!}\frac{\sin(\varphi){r_{0,i}^{n-1}(\phi)r_{0,j}^{n+1}(\varphi)}}{\left[R_{i,j}(\phi,\varphi)\right]^{n+\frac12}} F_n\left(\tfrac{4{r_{0,i}(\phi)r_{0,j}(\varphi)}}{R_{i,j}(\phi,\varphi)}\right),
\end{align}
where $R_{i,j}$ is defined in \eqref{Rij}. The next goal is 
to  prove the following result.
\begin{lem}\label{nuOmega12}
The functions $\nu_{\Omega,i}, i=1,2$,  defined in \eqref{nuOmega} admit the following representation
\begin{align*}
\nu_{i,\Omega}(\phi)=(-1)^{i-1}\left(d_1\int_0^\pi H_{i,1}^1(\phi,\varphi)d\varphi-d_2\int_0^{\pi} H_{i,2}^1(\phi,\varphi)d\varphi-\Omega\right).
\end{align*}
In addition, the operator $\mathcal{T}^{n}_{i,\Omega}$ defined in Proposition  \ref{Prop-MMlin1} takes the form 
\begin{align}\label{Tnomega2}
\mathcal{T}^{n}_{i,\Omega} (h_1,h_2)(\phi)=&d_1(-1)^{i-1}\int_0^{\pi} \tfrac{H_{i,1}^n(\phi,\varphi)}{\nu_{i,\Omega}(\phi)}h_1(\varphi)d\varphi+d_2(-1)^{i}\int_0^\pi \tfrac{H_{i,2}^n(\phi,\varphi)}{\nu_{i,\Omega}(\phi)}h_2(\varphi)d\varphi.
\end{align}
\end{lem}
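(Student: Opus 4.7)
The plan is to reduce both identities to a single computation, namely the azimuthal Fourier coefficient
$I_n(\phi,\varphi):=\int_0^{2\pi}\cos(n\eta)/\sqrt{A_{i,j}(\phi,\varphi,\eta)}\,d\eta$.
Indeed, once $I_n$ is expressed in closed form, Fubini applied to $\int_0^\pi\int_0^{2\pi}\mathcal{H}_{i,j}^n\,d\eta\,d\varphi$ should give $d_j\int_0^\pi H_{i,j}^n(\phi,\varphi)\,d\varphi$. Inserting this into \eqref{nuOmega} specialised to $n=1$ yields the first formula, and substituting it into the expression for $\mathcal{T}^n_{i,\Omega}$ provided by Proposition \ref{Prop-MMlin1} delivers \eqref{Tnomega2}. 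Thus the whole lemma reduces to identifying $I_n$.

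To evaluate $I_n$, I would first use the half-angle identity and the definition \eqref{Rij} to rewrite
$A_{i,j}=R_{i,j}(\phi,\varphi)-4r_{0,i}(\phi)r_{0,j}(\varphi)\cos^2(\eta/2)=R_{i,j}\bigl(1-K^2\cos^2(\eta/2)\bigr)$
with $K^2:=4r_{0,i}r_{0,j}/R_{i,j}\in[0,1)$, a legitimate parameter thanks to \eqref{Chord-1}. The substitution $\eta=2\beta$ followed by $\beta=\pi/2-\alpha$, together with the symmetry of the integrand about $\beta=\pi/2$, converts $I_n$ into the classical Legendre-type integral
$\tfrac{4(-1)^n}{\sqrt{R_{i,j}}}\int_0^{\pi/2}\cos(2n\alpha)(1-K^2\sin^2\alpha)^{-1/2}\,d\alpha$. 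Expanding $(1-K^2\sin^2\alpha)^{-1/2}$ by the binomial series, inserting the Chebyshev expansion of $\sin^{2m}\alpha$ and exploiting orthogonality of cosines on $(0,\pi/2)$ kills every term with $m<n$ and selects a single Fourier mode in each remaining summand. After reindexing $m=n+l$, the surviving series should be identified, via the Pochhammer relation $(1/2)_k=(2k)!/(4^k k!)$, with the Gauss hypergeometric function $F(n+\tfrac12,n+\tfrac12;2n+1;K^2)=F_n(K^2)$ from the appendix.

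The main obstacle is this last combinatorial identification: one must verify that after rearrangement the coefficient of $K^{2l}$ takes the form $(n+\tfrac12)_l^2/[(2n+1)_l\,l!]$. This boils down to the simplification of the ratio $(2n+2l)!^2/[(n+l)!^2(2n+l)!]$ arising from $(1/2)_{n+l}$ and $\binom{2(n+l)}{l}$, and is essentially an exercise in duplication-type identities for Pochhammer symbols. Once this is granted, a short computation yields
$I_n(\phi,\varphi)=\tfrac{2\pi\,(1/2)_n}{n!}\,\tfrac{(r_{0,i}(\phi)r_{0,j}(\varphi))^n}{R_{i,j}^{n+1/2}(\phi,\varphi)}\,F_n\!\left(\tfrac{4r_{0,i}(\phi)r_{0,j}(\varphi)}{R_{i,j}(\phi,\varphi)}\right)$,
and the elementary identity $(1/2)_n/(2\cdot n!)=2^{2n-1}(1/2)_n^2/(2n)!$ shows that $d_j\int_0^{2\pi}\mathcal{H}_{i,j}^n(\phi,\varphi,\eta)\,d\eta$ coincides with $d_j H_{i,j}^n(\phi,\varphi)$ as defined in \eqref{HMM11}. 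Plugging this equality back into the formulas of Proposition \ref{Prop-MMlin1} then delivers both expressions stated in Lemma \ref{nuOmega12}.
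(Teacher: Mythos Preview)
Your argument is correct. The reduction to a single azimuthal integral $I_n$ is exactly the right organizational move, and your half-angle substitution followed by the binomial--Chebyshev expansion does produce the claimed hypergeometric representation. The combinatorial identification you flag as the ``main obstacle'' is indeed the only nontrivial step, and it goes through via the duplication identity $(2n+2l)!=(2n)!\,4^l(n+\tfrac12)_l(n+1)_l$, exactly as you anticipate.

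The paper takes a shorter but less self-contained route. Rather than passing to a Legendre-type integral via $\eta\mapsto 2\beta\mapsto \pi-2\alpha$, it factors $A_{i,j}=2r_{0,i}r_{0,j}\,(A-\cos\eta)$ with $A=(r_{0,i}^2+r_{0,j}^2+(d_i\cos\phi-d_j\cos\varphi)^2)/(2r_{0,i}r_{0,j})$ and invokes the ready-made formula of Lemma~\ref{Lem-integral} (proved in \cite{GHM}) for $\int_0^{2\pi}\cos(n\theta)(A-\cos\theta)^{-1/2}d\theta$; the observation $2/(1+A)=4r_{0,i}r_{0,j}/R_{i,j}$ then immediately yields $H_{i,j}^n$. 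In effect, the paper outsources your Pochhammer manipulation to an appendix lemma stated without proof. Your approach has the advantage of being entirely self-contained and of making the connection to complete elliptic integrals (the $n=0$ case of your Legendre form) transparent; the paper's approach is quicker once Lemma~\ref{Lem-integral} is granted, and keeps the variable $\eta$ throughout rather than halving it.
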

 \begin{proof}
  It suffices to use Lemma \ref{Lem-integral}, see \cite{GHM}. For the sake of clarity, let us compute one term, for instance the first term in $\nu_{1,\Omega}$:
  \begin{align*}
  \tilde{\nu}_{1,\Omega}(\phi):=&\int_0^\pi\int_0^{2\pi}\mathcal{H}_{1,1}^1(\phi,\varphi,\eta)d\eta d\varphi\\
  =&\frac{d_1}{4\pi r_{0,1}(\phi)}\int_0^\pi\int_0^{2\pi}\tfrac{\sin(\varphi)r_{0,1}(\varphi)\cos(n\eta)}{\sqrt{A_{1,1}(\phi,\varphi,\eta)}}d\eta d\varphi\\
   =&\frac{d_1}{4\pi r_{0,1}(\phi)}\int_0^\pi\int_0^{2\pi}\tfrac{\sin(\varphi)r_{0,1}(\varphi)\cos(n\eta)}{\sqrt{ r_{0,1}^2(\phi)+r_{0,1}^2(\varphi)+d_1^2(\cos\phi-\cos\varphi)^2-2r_{0,1}(\phi)r_{0,1}(\varphi)\cos(\eta) }}d\eta d\varphi.
  \end{align*}
 Set
  $$
  A:=\frac{r_{0,1}^2(\phi)+r_{0,1}^2(\varphi)+d_1^2(\cos\phi-\cos\varphi)^2}{2r_{0,1}(\phi)r_{0,1}(\varphi)},
  $$
we can write $\tilde{\nu}_{1,\Omega}$ as  
    \begin{align*}
  \tilde{\nu}_{1,\Omega}(\phi)
   =&\frac{d_1}{4\pi r_{0,1}(\phi)}\int_0^\pi\int_0^{2\pi}\tfrac{\sin(\varphi)r_{0,1}(\varphi)\cos(n\eta)}{\sqrt{2r_{0,1}(\phi)r_{0,\varphi)}}\sqrt{A-\cos(\eta) }}d\eta d\varphi.
  \end{align*}
  We observe that $A>1$ and hence we can apply Lemma \ref{Lem-integral} obtaining
    \begin{align*}
  \tilde{\nu}_{1,\Omega}(\phi)
   =&  \frac{d_1}{8 r_{0,1}(\phi)}\bigintsss_0^\pi \frac{\sin(\varphi)r_{0,1}(\varphi)}{\sqrt{2r_{0,1}(\phi)r_{0,\varphi)}} (1+A)^{\frac{\beta}{2}+n}}F\left(\frac{3}{2}, \frac32; 2n+1; \frac{2}{1+A}\right ) d\varphi.
  \end{align*}
  Now, using the expression of $A$ we find
  $$
  1+A=\frac{(r_{0,1}(\phi)+r_{0,1}(\varphi))^2+d_1^2(\cos\phi-\cos\varphi)^2}{2r_{0,1}(\phi)r_{0,1}(\varphi)},
  $$
  achieving
   \begin{align*}
  \tilde{\nu}_{1,\Omega}(\phi)=d_1\int_0^\pi H_{1,1}^1(\phi,\varphi)d\varphi,
  \end{align*}
  where the kernel is defined in \eqref{HMM11}.
 \end{proof}

Combining Lemma \ref{nuOmega12} with Proposition \ref{Prop-MMlin1} we obtain the following representation of the linearized operator.
\begin{cor}\label{cor-lin2}
Let $i=1,2$ and consider two smooth   functions 
$$(\phi,\theta)\in[0,\pi]\times\T\mapsto h_i(\phi,\theta)=\sum_{n\geqslant 1}h_{i,n}(\phi)\cos(n\theta).
$$
Then,
\begin{align*}
\partial_{f_1,f_2} \tilde{F}_i(\Omega,0,0)(h_1,h_2)(\phi,\theta)=&(-1)^{i-1}\nu_{i,\Omega}(\phi)\sum_{n\geqslant 1}\cos(n\theta)\Big(h_{i,n}(\phi)-\mathcal{T}^{n}_{i,\Omega} (h_{1,n},h_{2,n})(\phi)\Big),
\end{align*}
where $\nu_{i,\Omega}$ and $\mathcal{T}^{n}_{i,\Omega}$ are detailed in Lemma $\ref{nuOmega12}.$
\end{cor}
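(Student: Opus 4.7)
The corollary is a direct synthesis of Proposition \ref{Prop-MMlin1} and Lemma \ref{nuOmega12} already established in the excerpt, so the plan is simply to combine them and verify that constants and signs line up. From Proposition \ref{Prop-MMlin1} one already has
$$
\partial_{f_1,f_2}\tilde{F}_i(\Omega,0,0)(h_1,h_2)(\phi,\theta)=(-1)^{i-1}\nu_{i,\Omega}(\phi)\sum_{n\geqslant1}\cos(n\theta)\bigl(h_{i,n}(\phi)-\mathcal{T}^n_{i,\Omega}(h_{1,n},h_{2,n})(\phi)\bigr),
$$
with $\mathcal{T}^n_{i,\Omega}$ written as double integrals over $(\varphi,\eta)\in[0,\pi]\times\T$ against the kernels $\mathcal{H}^n_{i,j}$ of \eqref{Hcal} and with $\nu_{i,\Omega}$ built from the same kernels at $n=1$ via \eqref{nuOmega}. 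It remains only to replace each $(\eta,\varphi)$-integral by a single $\varphi$-integral against the hypergeometric kernel $H^n_{i,j}$ of \eqref{HMM11}, which is exactly what Lemma \ref{nuOmega12} provides.

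Concretely, the first step is to perform the inner $\eta$-integration. Writing $A_{i,j}(\phi,\varphi,\eta)=2\,r_{0,i}(\phi)r_{0,j}(\varphi)\bigl(A^\star_{i,j}(\phi,\varphi)-\cos\eta\bigr)$ with
$$
A^\star_{i,j}(\phi,\varphi):=\tfrac{r_{0,i}^2(\phi)+r_{0,j}^2(\varphi)+(d_i\cos\phi-d_j\cos\varphi)^2}{2\,r_{0,i}(\phi)r_{0,j}(\varphi)}>1,
$$
one invokes Lemma \ref{Lem-integral} from the appendix to evaluate $\int_0^{2\pi}\frac{\cos(n\eta)}{\sqrt{A^\star_{i,j}-\cos\eta}}\,d\eta$ as a hypergeometric factor whose argument is $\tfrac{2}{1+A^\star_{i,j}}$. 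The algebraic identity $1+A^\star_{i,j}=\tfrac{R_{i,j}(\phi,\varphi)}{2\,r_{0,i}(\phi)r_{0,j}(\varphi)}$ with $R_{i,j}$ as in \eqref{Rij} converts this argument into $\tfrac{4\,r_{0,i}(\phi)r_{0,j}(\varphi)}{R_{i,j}(\phi,\varphi)}$, precisely the argument of $F_n$ that appears in the kernel $H^n_{i,j}$ of \eqref{HMM11}. Collecting the normalising constant $\tfrac{2^{2n-1}(1/2)_n^2}{(2n)!}$ supplied by Lemma \ref{Lem-integral} together with the powers of $r_{0,i},r_{0,j}$ and the factor $d_j$ already present in $\mathcal{H}^n_{i,j}$ reproduces exactly the form \eqref{Tnomega2} of Lemma \ref{nuOmega12}, while the same computation specialised at $n=1$ gives the claimed representation of $\nu_{i,\Omega}$.

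The remaining bookkeeping is the sign pattern: Proposition \ref{Prop-MMlin1} carries an outer factor $(-1)^{i-1}$ and, inside $\mathcal{T}^n_{i,\Omega}$, a sign $(-1)^{i-1}$ on the $j=1$ contribution and $(-1)^i$ on the $j=2$ contribution, which matches \eqref{Tnomega2} verbatim. The argument of $F_n$ stays in $[0,\overline\delta_{i,j}]\subset[0,1)$ thanks to the chord-separation bound \eqref{Chord-1}, so the hypergeometric series converges and all identities are legitimate. No genuine obstacle is expected: the only computation needed is the $\eta$-integration already carried out in the proof of Lemma \ref{nuOmega12} for the $(1,1)$ entry, and it carries through verbatim for arbitrary $(i,j)\in\{1,2\}^2$ and every $n\geqslant1$.
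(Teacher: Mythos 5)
Your proposal is correct and follows essentially the same route as the paper: the paper's proof of Corollary \ref{cor-lin2} is precisely the one-line combination of Proposition \ref{Prop-MMlin1} with Lemma \ref{nuOmega12}, and the $\eta$-integration via Lemma \ref{Lem-integral} together with the identity $1+A^\star_{i,j}=\tfrac{R_{i,j}}{2r_{0,i}r_{0,j}}$ that you carry out is exactly the computation the paper already performs in proving Lemma \ref{nuOmega12}. Your sign bookkeeping and the convergence remark via \eqref{Chord-1} are consistent with the paper, so nothing is missing.
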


In what follows we shall discuss some properties of the functions $H_{i,j}^n$. Their proofs are a slight adaptation of  \cite[Lemma 3.1]{GHM} and will be briefly sketched. 
\begin{lem}\label{Lem-Hndecreasing}
For any $\varphi\neq\phi\in(0,\pi)$, the sequence
$
n\in \N^\star\mapsto H_{i,j}^n(\phi,\varphi)
$
is strictly decreasing.
Moreover, if we assume that {$r_{0,1},r_{0,2}$} satisfy {\bf{(H2)}}, then for any $0\leqslant\gamma<\beta\leqslant1 $
there exists  a  {constant $C>0$} such that  for any $ n\geqslant1$ and  $ \phi\neq \varphi\in(0,\pi)$
\begin{equation}\label{estim-asym}
|H_{i,j}^n(\phi,\varphi)|\leqslant C n^{-\gamma}\sin(\varphi) \tfrac{{ |r_{0,j}}(\varphi)|^{\frac12}}{{ |r_{0,i}(\phi)|^{\frac32}}}|\phi-\varphi|^{-{2\beta}}.
\end{equation}
\end{lem}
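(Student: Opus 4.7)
The plan is to factor the $n$-independent geometric prefactor out of $H^n_{i,j}$ and reduce both claims to estimates on the scalar
$$g_n(x) := \frac{((\tfrac12)_n)^2}{(2n)!}\, x^n F_n(x), \qquad x := \frac{4\, r_{0,i}(\phi)\, r_{0,j}(\varphi)}{R_{i,j}(\phi,\varphi)} \in [0, \overline\delta_{i,j}],$$
where the range of $x$ is provided by \eqref{Chord-1}. A short manipulation of \eqref{HMM11} yields
$$H^n_{i,j}(\phi,\varphi) = \frac{\sin\varphi\, r_{0,j}^2(\varphi)}{2\, r_{0,i}(\phi)\, R_{i,j}^{1/2}(\phi,\varphi)}\, g_n(x),$$
so that the first claim reduces to the strict monotonicity $g_n(x) > g_{n+1}(x)$ for $x \in (0,\overline\delta_{i,j}]$.

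For the monotonicity, inserting the power series for $F_n$ and applying the Pochhammer identities $(\tfrac12)_n(n+\tfrac12)_k = (\tfrac12)_{n+k}$ and $(2n)!(2n+1)_k = (2n+k)!$, with $m := n+k$, produces the positive-term expansion
$$g_n(x) = \sum_{m=n}^{\infty} \frac{((\tfrac12)_m)^2}{(n+m)!\,(m-n)!}\, x^m.$$
At $m=n$ the coefficient in $g_n$ is strictly positive while in $g_{n+1}$ it vanishes; for $m \geq n+1$ the ratio of the coefficient in $g_n$ to the one in $g_{n+1}$ equals $(n+m+1)/(m-n) > 1$. A term-by-term comparison on $(0,\overline\delta_{i,j}]$ therefore delivers $g_n(x) > g_{n+1}(x)$.

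For the pointwise bound, the key tool is Euler's integral representation
$$g_n(x) = \frac{1}{\pi}\int_0^1 \left[\frac{xt(1-t)}{1-xt}\right]^{n}\frac{dt}{\sqrt{t(1-t)(1-xt)}}.$$
A direct optimization yields $\max_{t\in[0,1]} \tfrac{xt(1-t)}{1-xt} = q(x) := \tfrac{1-\sqrt{1-x}}{1+\sqrt{1-x}}$, attained at $t_\star = (1+\sqrt{1-x})^{-1}$, whence
$$g_n(x) \leq q(x)^n\, F(\tfrac12,\tfrac12;1;x) \leq C\, q(x)^n\bigl(1 + |\log(1-x)|\bigr),$$
using the classical logarithmic behaviour of $F(\tfrac12,\tfrac12;1;\cdot)$ near $x=1$. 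The elementary inequality $\log(1-y)-\log(1+y) \leq -2y$ with $y=\sqrt{1-x}$ gives $q(x)^n \leq e^{-2n\sqrt{1-x}}$, and $e^{-s}s^\gamma \leq C_\gamma$ applied at $s = 2n\sqrt{1-x}$ produces $q(x)^n \leq C\, n^{-\gamma}(1-x)^{-\gamma/2}$. Absorbing the logarithmic factor via $|\log(1-x)| \leq C_\epsilon (1-x)^{-\epsilon}$ and choosing $\epsilon := \beta - \gamma/2 > 0$ (permissible since $\beta > \gamma$) finally yields $g_n(x) \leq C\, n^{-\gamma}\, (1-x)^{-\beta}$.

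To conclude \eqref{estim-asym}, the identity
$$1-x = \frac{(r_{0,i}(\phi)-r_{0,j}(\varphi))^2 + (d_i\cos\phi-d_j\cos\varphi)^2}{R_{i,j}(\phi,\varphi)}$$
combined with the arc-chord estimate \eqref{Chord} and the boundedness of $R_{i,i}$ (when $i=j$), or with {\bf(H4)} (when $i \neq j$), delivers $(1-x)^{-\beta} \leq C\, |\phi-\varphi|^{-2\beta}$ uniformly. The AM-GM inequality $R_{i,j} \geq 4 r_{0,i}(\phi) r_{0,j}(\varphi)$ together with $r_{0,j} \leq C\sin\varphi$ from {\bf(H2)} then bounds the geometric prefactor by $C\sin\varphi\, r_{0,j}^{1/2}(\varphi)/r_{0,i}^{3/2}(\phi)$, and \eqref{estim-asym} follows. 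The main obstacle is the third step: balancing the degenerating exponential decay $q(x)^n$ against the logarithmic blowup of $F(\tfrac12,\tfrac12;1;x)$ in order to extract the algebraic rate $n^{-\gamma}$; this is possible only when $\beta$ strictly dominates $\gamma/2$, and the stated hypothesis $\gamma < \beta$ is the clean sufficient condition.
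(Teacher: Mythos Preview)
Your argument is correct, and the overall architecture---factor out the geometric prefactor, reduce to a scalar function of $x$, and exploit Euler's integral representation---matches the paper's. There are, however, two points worth flagging.

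First, a harmless slip: the factorisation should read
\[
H^n_{i,j}(\phi,\varphi)=\frac{\sin\varphi\, r_{0,j}(\varphi)}{2\, r_{0,i}(\phi)\, R_{i,j}^{1/2}(\phi,\varphi)}\, g_n(x),
\]
with $r_{0,j}$ (not $r_{0,j}^2$). Your AM--GM step at the end is consistent with this corrected form, so the final bound is unaffected.

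Second, on method: the paper simply cites \cite[Lemma~3.1]{GHM} both for the strict monotonicity of $n\mapsto\mathcal{H}^n(x)$ and for the decay $\mathcal{H}^n(x)\lesssim n^{-\gamma}(1-x)^{-\beta}$, whereas you supply self-contained arguments for both. Your power-series proof of monotonicity (termwise comparison after the reindexing $m=n+k$) is cleaner and more elementary than going through the integral; and your Laplace-type bound---maximising $\tfrac{xt(1-t)}{1-xt}$, extracting $q(x)^n\le e^{-2n\sqrt{1-x}}$, and trading the exponential against $n^{-\gamma}$---makes explicit exactly how the constraint $\gamma<\beta$ enters (via the choice $\epsilon=\beta-\gamma/2>0$ absorbing the logarithm). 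The paper's route and yours coincide at the level of the $(1-x)^{-\beta}\lesssim|\phi-\varphi|^{-2\beta}$ step; the gain of your approach is that nothing is outsourced.
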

\begin{proof}
By virtue  of \eqref{HMM11} we may write
\begin{align*}
H_{i,j}^n(\phi,\varphi)=\tfrac{2^{2n-1}\Gamma^2\left(n+\frac12\right)}{(2n)!\pi}\frac{\sin(\varphi){| r_{0,j}(\varphi)|^{\frac12}}}{4^{n+\frac12}{ |r_{0,i}(\phi)|^{\frac32}}}x^{n+\frac12}F_n(x),
\end{align*}
with $x:=\frac{4{r_{0,i}(\phi)r_{0,j}(\varphi)}}{R_{i,j}(\phi,\varphi)}$ belongs to
$[0,1)$ provided that $\varphi\neq\phi$. Using the integral representation of hypergeometric functions \eqref{Ap-spfunctions-integ} and similarly to \cite[Lemma 3.1]{GHM} we obtain
\begin{align}\label{trax1}
\nonumber { H^n_{i,j}(\phi,\varphi)}=&\tfrac{2^{2n-1}\Gamma^2\left(n+\frac12\right)}{(2n)!\pi}\frac{\sin(\varphi){ |r_{0,j}(\varphi)|^{\frac12}}}{4^{n+\frac12}{|r_{0,i}(\phi)|^{\frac32}}}\tfrac{(2n)!}{\Gamma^2\left(n+\frac12\right)}x^{n+\frac12}\bigintsss_0^1 t^{n-\frac12}(1-t)^{n-\frac12}(1-tx)^{-n-\frac12}dt\\
=&\frac{1}{4\pi}
\frac{\sin(\varphi){|r_{0,j}(\varphi)|^{\frac12}}}
{{|r_{0,i}(\phi)|^{\frac32}}}
\mathcal{H}^n(x),
\end{align}
with the notation
$$
\mathcal{H}^n(x):= x^{n+\frac12}\int_0^1 t^{n-\frac12}(1-t)^{n-\frac12}(1-tx)^{-n-\frac12}dt.
$$
The monotonicity of    $n\mapsto \mathcal{H}^n(x)$ is established in \cite[Lemma 3.1]{GHM} implying that for any $\phi\neq\varphi$,  the sequence  $n\mapsto H_{i,j}^n(\phi,\varphi)$ is  strictly decreasing. It remains to prove the decay estimate of $H_{i,j}^n$ for large $n$.  This is also done in the proof of \cite[Lemma 3.1]{GHM} {except that  a slight typo without any consequence is discovered} there related to the term $|\phi-\varphi|^{-2\beta}$ that we shall correct here.  

Remark that according  \eqref{estim-asym}  and  \eqref{trax1} one has
$$
|H_{i,j}^n(\phi,\varphi)|\lesssim  \frac{\sin(\varphi)r_j(\varphi)^\frac12}{r_i(\phi)^\frac32} |\mathcal{H}^n(x)|.
$$
On the other hand, it is shown in the proof of \cite[Lemma 3.1]{GHM}  that for any $1\geqslant\beta>\gamma\geqslant0$
\begin{align*}
|\mathcal{H}^n(x)|\lesssim{n^{-\gamma} (1-x)^{-\beta}}\cdot
\end{align*}
According to \eqref{Rij} we get
\begin{align*}
1-x=&\frac{({ r_{0,i}(\phi)-r_{0,j}(\varphi)})^2+(d_i\cos(\phi)-d_j\cos(\varphi))^2}{({ r_{0,i}(\phi)+r_{0,j}(\varphi)})^2+(d_i\cos(\phi)-d_j\cos(\varphi))^2}\cdot
 \end{align*}
Thus, using {(\bf{H2})} we deduce that
$$
{
1-x\geqslant C \frac{(\phi-\varphi)^2}{(\phi+\varphi)^2}}\cdot
$$
Putting together the two last estimates yields
\begin{align*}
|\mathcal{H}^n(x)|
\lesssim&\frac{1}{n^\gamma |\phi-\varphi|^{2\beta}},
\end{align*}
which gives   the desired inequality and concludes the proof of Lemma \ref{Lem-Hndecreasing}.
\end{proof}

\section{Spectral study}\label{sec-spectral}
This section is devoted to the analysis of some  fundamental  spectral properties of the linearized operator seen before in Corollary \ref{cor-lin2} and   required  by  the Crandall--Rabinowitz theorem.  We reduce the spectral study to a countable family of one-dimensional operators $\hbox{Id}-\mathcal{T}^{n}_{i,\Omega}$ acting on the variable $\phi$. In the same spirit of  \cite{GHM}, we first succeed to symmetrize $\mathcal{T}^{n}_{i,\Omega}$  via   a suitable  use of a weighted Hilbert space and show  that it   is a self-adjoint Hilbert-Schmidt operator. Since the kernel study reduces to analyze the eigenvalues of $\mathcal{T}^{n}_{i,\Omega}$, we provide here some properties about the eigenvalues and the eigenfunctions. In particular, we show that  the largest eigenvalue denoted by $\lambda_n(\Omega)$ is simple  under suitable assumptions on the stationary profiles, however  the problem remains open for the lower  eigenvalues. A refined study on the distribution  of the sequence $\big\{\lambda_n(\Omega), n\geqslant1\big\}$ where $\Omega$ ranges in a bounded set  $(\overline\Omega_2,\overline{\Omega}_1)$ is needed to check the one-dimensionality of the kernel of $\hbox{Id}-\mathcal{T}^n_{i,\Omega}$. Moreover, we will also provide the Fredholm structure of the linearized concluding the one codimensionality of the range of the linear operator, needed to apply the Crandall-Rabinowiz theorem. Finally, we shall check the transversal condition at the end of this section which is more tricky and subtle compared to \cite{GHM}.

\subsection{Symmetrization and compactness}
We shall explore  and take advantage of  a hidden structure allowing to symmetrize the one-dimensional operator  $\hbox{Id}-\mathcal{T}^{n}_{i,\Omega}$ introduced in \mbox{Corollary \ref{cor-lin2}} through   a suitable Hilbert space, basically an  $L^2\times L^2$ space with respect to a special  Borel measure.  We show  in particular that it  a self--adjoint compact operator and analyze in the next sections some qualitative properties on the distribution of the largest eigenvalues.  
Let us start with  the  symmetrization  of  the operator $\hbox{Id}-\mathcal{T}^{n}_{i,\Omega}$ which requires to introduce suitable Hilbert spaces. First, recall the expression of $\nu_{i,\Omega}$  given in Lemma  \ref{nuOmega12},
\begin{align*}
\nu_{i,\Omega}(\phi)=(-1)^{i-1}\left(d_1\int_0^\pi H_{i,1}^1(\phi,\varphi)d\varphi-d_2\int_0^{\pi} H_{i,2}^1(\phi,\varphi)d\varphi-\Omega\right).
\end{align*}
These functions are continuous on the compact set $[0,\pi]$ and therefore they reach their extreme values. This allows to define  the following numbers playing a crucial role on the spectral study, 
\begin{equation}\label{Omega1}
 \overline\Omega_1:=\inf_{\phi\in[0,\pi]}d_1\int_0^\pi H_{1,1}^1(\phi,\varphi)d\varphi-d_2\int_0^{\pi} H_{1,2}^1(\phi,\varphi)d\varphi
\end{equation}
and
\begin{equation}\label{Omega2}
\overline\Omega_2:=\sup_{\phi\in[0,\pi]}d_1\int_0^\pi H_{2,1}^1(\phi,\varphi)d\varphi-d_2\int_0^\pi H_{2,2}^1(\phi,\varphi)d\varphi.
\end{equation}
We shall make the following important assumption 
\begin{align}\label{assump1} 
\overline\Omega_2< \overline\Omega_1.
\end{align}
\begin{rem}
The assumption \eqref{assump1} will be a technical hypothesis in our main theorem. Indeed, by virtue of  \eqref{nu-streamfunction}, it can be expressed in terms of the stream function or the velocity field as follows
\begin{align}
\overline{\Omega}_1=&\inf_{\phi\in[0,{ \pi}]} \frac{1}{r_{0,1}(\phi)}(\nabla_h \psi)(r_{0,1}(\phi)e^{i\theta},d_1\cos(\phi))\cdot e^{i\theta}\nonumber\\
=&\inf_{\phi\in[0,{\pi}]} \frac{1}{r_{0,1}(\phi)}(U(r_{0,1}(\phi)e^{i\theta},d_1\cos(\phi))\cdot (ie^{i\theta})\label{Omega1-2}
\end{align}
and
\begin{align}
\overline{\Omega}_2=&\sup_{\phi\in[0,{ \pi}]} \frac{1}{r_{0,2}(\phi)}(\nabla_h \psi)(r_{0,2}(\phi)e^{i\theta},d_2\cos(\phi))\cdot e^{i\theta}\nonumber\\
=&\sup_{\phi\in[0,{\pi}]} \frac{1}{r_{0,2}(\phi)}(U(r_{0,2}(\phi)e^{i\theta},d_2\cos(\phi))\cdot (ie^{i\theta}).\label{Omega2-2}
\end{align}
Note that $\overline{\Omega}_1$ and $\overline{\Omega}_2$ do not depend on $\theta$ due to their symmetry about the vertical axis, that can be directly checked from the expressions \eqref{Omega1}-\eqref{Omega2}.

At the  first sight, it is not clear whether this condition is generic and satisfied with  the stationary profiles  subject to  the assumptions $({\bf{H}})$ of Section $\ref{Pro-assu}.$ However, we shall show in \mbox{Proposition \ref{prop-ellipsoid}} the validity of this assumption with specific elementary shapes combining sphere  and ellipsoid and by default any of their small perturbation, since \eqref{assump1}  is stable by perturbation. Moreover, we can also check \eqref{assump1} when the shapes are very well-separated, see Proposition $\ref{prop-d1}.$
\end{rem}

For $j=1,2$ and $\Omega\in[\overline\Omega_2,\overline\Omega_1],$ we consider  the Borel  measures
\begin{align}\label{MeasureB-1}
d\mu_j(\varphi)=\sin(\varphi){ r_{0,j}^2}(\varphi)\nu_{j,\Omega}(\varphi) d\varphi.
\end{align}
Define the Hilbert space $L^2_{\mu_j}$  as the set of measurable function $f:(0,\pi)\to\R$ such that
$$
\|f\|_{\mu_j}^2=\int_0^\pi|f(\varphi)|^2d\mu_j(\varphi)<\infty,
$$
which is equipped with the standard inner product,
\begin{equation}\label{scalar-prod1}
\langle f,g\rangle_{\mu_j}=\int_0^\pi f(\varphi) g(\varphi) d\mu_j(\varphi)
\end{equation}
that induces the norm
$$
\|f\|^2_{\mu_j}=\int_0^\pi |f(\varphi)|^2d\mu_j(\varphi).
$$
Now, consider the vectorial Hilbert space $\mathbb{H}_{\Omega}:=L^2_{\mu_1}\times L^2_{\mu_2}$ endowed  with the inner product
\begin{align}\label{inner-P}
\nonumber H=(h_1,h_2), \widehat{H}=(\widehat{h}_1,\widehat{h}_2),\quad \langle H,\widehat{H}\rangle_{\Omega}&:=\langle h_1,\widehat{h}_1\rangle_{\mu_1}+{\tfrac{d_2}{d_1}}\langle h_2,\widehat{h}_2\rangle_{\mu_2}\\
&=\int_0^\pi h_1(\varphi)\widehat{h}_1(\varphi)d\mu_1(\varphi)+{\tfrac{d_2}{d_1}}\int_0^\pi h_2(\varphi)\widehat{h}_2(\varphi)d\mu_2(\varphi)
\end{align}
and  the induced norm
\begin{equation}\label{Norm-Hil1}
\|H\|^2:=\|h_1\|^2_{\mu_1}+\tfrac{d_2}{d_1}\|h_2\|^2_{\mu_2}.
\end{equation}
Let us recall the vectorial linear operator $\mathcal{T}^n_{\Omega}=(\mathcal{T}^{n}_{1,\Omega},\mathcal{T}^{n}_{2,\Omega})$ whose components are described  \mbox{by \eqref{Tnomega2}.}
Next, denote by $K_{i,j}^n$    the symmetric kernel
\begin{align}\label{K-kernel}
K_{i,j}^n(\phi,\varphi)=\frac{H_{i,j}^n(\phi,\varphi)}{\nu_{i,\Omega}(\phi)\nu_{j,\Omega}(\varphi)\sin(\varphi){r_{0,j}^2}(\varphi)},
\end{align}
then we may write
\begin{align}\label{TnomegaPP2}
\mathcal{T}_{1,\Omega}^n (h_1,h_2)(\phi)=&d_1\int_0^{\pi} K_{1,1}^n(\phi,\varphi)h_1(\varphi)d\mu_1(\varphi)-d_2\int_0^\pi {K_{1,2}^n(\phi,\varphi)}h_2(\varphi)d\mu_2(\varphi),\\
\nonumber\mathcal{T}_{2,\Omega}^n (h_1,h_2)(\phi)=& d_2\int_0^{\pi}K_{2,2}^n(\phi,\varphi) h_2(\varphi)d\mu_2(\varphi)-d_1\int_0^\pi K_{2,1}^n(\phi,\varphi)h_1(\varphi)d\mu_1(\varphi).
\end{align}
The main result of this section reads as follows.
\begin{lem}\label{lem-ssym}
Let $r_{0,1}, r_{0,2}$ satisfy {{(\bf{H})}} and $ \Omega\in(\overline\Omega_2,\overline\Omega_1)$, then for any $n\geqslant 1$ the operator $\mathcal{T}^n_{\Omega}:\mathbb{H}_\Omega\to \mathbb{H}_\Omega$ is  a self-adjoint Hilbert-Schmidt  operator.
\end{lem}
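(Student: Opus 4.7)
The plan is to verify, in order, three facts: (i) $\mathbb{H}_\Omega$ is a genuine Hilbert space under the standing assumption $\Omega\in(\overline\Omega_2,\overline\Omega_1)$; (ii) the vectorial kernel of $\mathcal{T}^n_\Omega$ is symmetric in a way tailored to the inner product \eqref{inner-P}; (iii) the weighted Hilbert–Schmidt norm is finite thanks to the decay estimate in Lemma \ref{Lem-Hndecreasing}.

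\textbf{Step 1 (positivity of the measures).} Using Lemma \ref{nuOmega12} together with the very definitions \eqref{Omega1}--\eqref{Omega2} I read the uniform lower bounds
$$
\nu_{1,\Omega}(\phi)\geqslant \overline\Omega_1-\Omega>0,\qquad \nu_{2,\Omega}(\phi)\geqslant \Omega-\overline\Omega_2>0,\qquad \phi\in[0,\pi].
$$
Since by {\bf(H1)}--{\bf(H2)} the densities $\sin(\varphi)r_{0,j}^2(\varphi)$ are nonnegative continuous functions vanishing like $\sin^3(\varphi)$ at the poles, the measures $d\mu_j$ in \eqref{MeasureB-1} are positive finite Borel measures. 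Consequently $\mathbb{H}_\Omega$ is a genuine Hilbert space with norm \eqref{Norm-Hil1}.

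\textbf{Step 2 (self-adjointness).} The crucial elementary fact is the kernel symmetry $K^n_{i,j}(\phi,\varphi)=K^n_{j,i}(\varphi,\phi)$. Indeed, from \eqref{HMM11} and $R_{i,j}(\phi,\varphi)=R_{j,i}(\varphi,\phi)$, the quantity $H^n_{i,j}(\phi,\varphi)/(\sin(\varphi)r_{0,j}^2(\varphi))$ is manifestly symmetric in $(i,\phi)\leftrightarrow (j,\varphi)$, and the $\nu$ factors in \eqref{K-kernel} are already symmetric. I then compute $\langle\mathcal{T}^n_\Omega H,\widehat H\rangle_\Omega$ from \eqref{TnomegaPP2} via Fubini and relabeling of dummy variables. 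The two diagonal contributions, weighted respectively by $d_1$ and $\tfrac{d_2}{d_1}\cdot d_2$, are already symmetric in $(h_i,\widehat h_i)$ by the kernel symmetry. The genuinely new information is in the cross terms: the first line of \eqref{TnomegaPP2} produces a cross contribution with weight $-d_2$ (from the coefficient in front of $K^n_{1,2}$), while the second line produces one with weight $\tfrac{d_2}{d_1}\cdot(-d_1)=-d_2$. The exact match of these two weights is the whole purpose of the conformal factor $d_2/d_1$ in \eqref{inner-P}; combining it with $K^n_{1,2}(\phi,\varphi)=K^n_{2,1}(\varphi,\phi)$ and swapping dummy variables identifies the two cross terms with their mirror images in $\langle H,\mathcal{T}^n_\Omega \widehat H\rangle_\Omega$, establishing self-adjointness.

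\textbf{Step 3 (Hilbert–Schmidt).} With $\nu_{j,\Omega}$ uniformly bounded below, the weighted squared norm of each kernel simplifies to
$$
\iint_{(0,\pi)^2}|K^n_{i,j}(\phi,\varphi)|^2\,d\mu_i(\phi)d\mu_j(\varphi)
\lesssim\iint\frac{|H^n_{i,j}(\phi,\varphi)|^2\sin(\phi)r_{0,i}^2(\phi)}{\sin(\varphi)r_{0,j}^2(\varphi)}\,d\phi d\varphi.
$$
Applying Lemma \ref{Lem-Hndecreasing} with $\gamma=0$ and some $\beta\in(0,\tfrac14)$ gives
$$
|H^n_{i,j}(\phi,\varphi)|^2\lesssim \sin^2(\varphi)\,\frac{r_{0,j}(\varphi)}{r_{0,i}(\phi)^3}\,|\phi-\varphi|^{-4\beta},
$$
and inserting this bound, {\bf(H2)} collapses the prefactor $\sin(\phi)\sin(\varphi)/(r_{0,i}(\phi)r_{0,j}(\varphi))$ to a bounded quantity. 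Thus the integrand is controlled by $|\phi-\varphi|^{-4\beta}$, whose diagonal singularity is integrable over $(0,\pi)^2$ precisely because $4\beta<1$. This yields finiteness of each of the four component norms and hence the Hilbert–Schmidt property (boundedness is included).

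\textbf{Main obstacle.} Step 2 is purely algebraic book-keeping once one spots the role of $d_2/d_1$; the delicate point is the Hilbert–Schmidt estimate in Step 3, because the kernel carries both a boundary singularity at $\phi,\varphi\in\{0,\pi\}$ (through the factors $1/r_{0,i}(\phi)^{3/2}$) and a diagonal singularity. The measure $d\mu_j$ has been designed to neutralize the boundary singularity, but one must extract sufficient decay from $H^n_{i,j}$ at the diagonal; the quantitative Lemma \ref{Lem-Hndecreasing} with $\beta<\tfrac14$ is exactly what is needed, and this is where the sharp balance of powers in \eqref{HMM11} is used.
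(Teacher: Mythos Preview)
Your proof is correct and structurally matches the paper: Step~1 and Step~2 (positivity of the measures and the self-adjointness computation via $K^n_{i,j}(\phi,\varphi)=K^n_{j,i}(\varphi,\phi)$, with the $d_2/d_1$ weight balancing the cross terms) are exactly what the paper does.

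The only notable difference is in Step~3. To control the Hilbert--Schmidt integral, the paper works directly with the hypergeometric representation \eqref{HMM11} and invokes the logarithmic endpoint estimate $F_n(x)\lesssim 1+|\ln(1-x)|$ from Proposition~\ref{Prop-behav}, reducing the double integral to $\iint\ln^2\big(\tfrac{\sin\phi+\sin\varphi}{|\phi-\varphi|}\big)\,d\varphi\,d\phi<\infty$. You instead invoke Lemma~\ref{Lem-Hndecreasing} with $\gamma=0$ and $\beta\in(0,\tfrac14)$, obtaining the power bound $|\phi-\varphi|^{-4\beta}$ after the $\sin/r_{0,j}$ cancellations from {\bf(H2)}. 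Both routes are valid; your argument is in fact the same estimate the paper deploys later in the proof of Proposition~\ref{prop-operatorV2}--(2) to extract the $n^{-\gamma}$ decay of $\lambda_n(\Omega)$, so you are effectively anticipating that computation. The paper's logarithmic route is slightly sharper at the diagonal (log versus power singularity) but gives no decay in $n$, while your route already encodes the $n$-dependence should one want it here.
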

\begin{proof}
In order to check that $\mathcal{T}^n_{{\Omega}}$ is a Hilbert--Schmidt operator, we need to verify 
that the kernel $K_{i,j}^n$  satisfies the integrability condition
\begin{align}\label{HS-Noo}
\interleave\mathcal{T}^n_\Omega\interleave_{\Omega}:=\sup_{i,j}\left(\int_0^\pi\int_0^\pi {|K_{i,j}^n}(\phi,\varphi)|^2d\mu_{i}(\phi)d\mu_{j}(\varphi)\right)^{\frac12}<\infty.
\end{align}
Indeed, by \eqref{K-kernel}. \eqref{MeasureB-1} and \eqref{HMM11}, one deduces that  
\begin{align*}
\interleave\mathcal{T}^n_\Omega\interleave_{\Omega}^2 =\sup_{i,j}\,& C_n\bigintsss_0^\pi\bigintsss_0^\pi\frac{\sin(\varphi)\sin(\phi)\,{r_{0,i}^{2n}(\phi)r_{0,j}^{2n}(\varphi)}}{R_{i,j}^{2n+1}(\phi,\varphi)\,\nu^i_{\Omega}(\varphi)\nu^j_{\Omega}(\phi)}\,F_n^2\left(\frac{{4r_{i,0}(\phi)r_{0,j}(\varphi)}}{R_{i,j}(\phi,\varphi)}\right)d\varphi d\phi,
\end{align*}
for some constant $C_n$ and $R_{i,j}$ was defined in \eqref{Rij}. Moreover, according to Proposition \ref{Lem-meas} the function  $\nu_{i,\Omega}(\varphi)$ is not vanishing in the interval $[0,\pi]$ provided that $\Omega\in(\overline{\Omega}_{2},\overline{\Omega}_1)$. Therefore we get 
\begin{align*}
{\interleave\mathcal{T}^n_\Omega\interleave_{\Omega}^2}\lesssim&\bigintsss_0^\pi\bigintsss_0^\pi\frac{\sin(\varphi)\sin(\phi)}
{{ R_{i,j}(\phi, \varphi)}}
F_n^2\left(\frac{{ 4r_{0,i}(\phi)r_{0,j}}(\varphi)}{R_{i,j}(\phi,\varphi)}\right)d\varphi d\phi.
\end{align*}
Hence, applying \eqref{Chord-1} and {(\bf{H2})} we find
\begin{align*}
\interleave\mathcal{T}^n_\Omega\interleave_{\Omega}^2\lesssim&\sup_{i,j}\,\bigintsss_0^\pi\bigintsss_0^\pi F_n^2\left(\frac{{ 4r_{0,i}(\phi)r_{0,j}(\varphi)}}{R_{i,j}(\phi,\varphi)}\right)d\varphi d\phi.
\end{align*}
By \eqref{estimat-1}, \eqref{Chord-1} and the assumption {\bf{(H2)}} we deduce that
\begin{align*}
\interleave\mathcal{T}^n_\Omega\interleave_{\Omega}^2\leq&C+C\sup_{i}\bigintsss_0^\pi\bigintsss_0^\pi
\ln^2\left(1-\frac{{ 4r_{0,i}(\phi)r_{0,j}(\varphi)}}{{R_{i,j}}(\phi,\varphi)}\right)d\varphi d\phi\\
\leq&C+C\sup_i\bigintsss_0^\pi\bigintsss_0^\pi
\ln^2\left(\frac{{(r_{0,i}(\phi)-r_{0,i}}(\varphi))^2+d_i^2(\cos\phi-\cos\varphi)^2}{R_{i,i}(\phi,\varphi)}\right)d\varphi d\phi.
\end{align*}
According to   \eqref{Chord} and the mean value theorem,  we get for some constant $C>0$  
\begin{equation}\label{Est-LogM}\forall\, \phi\neq \varphi\in[0,\pi],\quad 1\leqslant \frac{{(r_{0,i}(\phi)+r_{0,i}}(\varphi))^2+d_i^2(\cos\phi-\cos\varphi)^2}{(r_i(\phi)-r_i(\varphi))^2+d_i^2(\cos\phi-\cos\varphi)^2}\leqslant C\frac{(\sin\phi+\sin\varphi)^2}{(\phi-\varphi)^2}+C.
\end{equation}
It follows that 
\begin{align*}
\interleave\mathcal{T}^n_\Omega\interleave_{\Omega}^2\leqslant&C+C\bigintsss_0^\pi\bigintsss_0^\pi
\ln^2\left(\frac{\sin\phi+\sin\varphi}{|\phi-\varphi|}\right)d\varphi d\phi<\infty.
\end{align*}
This concludes that the operator $\mathcal{T}^n_{{\Omega}}$ is bounded and is of  Hilbert--Schmidt type. 
As a consequence of spectral  theory this operator is necessarily compact.\\
It remains to check that the operator $\mathcal{T}^n_\Omega$ is symmetric. Indeed,
\begin{align*}
 \langle \mathcal{T}^n_\Omega H,\widehat{H}\rangle_{\Omega} =&\langle \mathcal{T}_{1,\Omega}^n H,\widehat{h}_1\rangle_{\mu_1}+{\frac{d_2}{d_1}}\langle \mathcal{T}_{2,\Omega}^n H,\widehat{h}_2\rangle_{\mu_2}\\
 &=d_1\int_0^{\pi}\int_0^{\pi} K_{1,1}^n(\phi,\varphi)h_1(\varphi)\widehat{h}_1(\phi)d\mu_1(\varphi)d\mu_1(\phi)\\
 &\quad-d_2\int_0^{\pi}\int_0^{\pi}K_{1,2}^n(\phi,\varphi)h_2(\varphi)\widehat{h}_1(\phi)d\mu_1(\phi)d\mu_2(\varphi)\\
 &\qquad +{\frac{{d_2^2}}{d_1}}\int_0^{\pi}\int_0^{\pi}K_{2,2}^n(\phi,\varphi) h_2(\varphi)\widehat{h}_2(\phi)d\mu_2(\varphi)d\mu_2(\phi)\\
 &\qquad\quad-{d_2}\int_0^{\pi}\int_0^{\pi}K_{2,1}^n(\phi,\varphi)h_1(\varphi)\widehat{h}_2(\phi)d\mu_2(\phi)d\mu_1(\varphi).
\end{align*}
Similarly
\begin{align*}
 \langle \mathcal{T}^n_\Omega\widehat{H},H\rangle_{\Omega} &=d_1\int_0^{\pi}\int_0^{\pi} K_{1,1}^n(\phi,\varphi)\widehat{h}_1(\varphi)h_1(\phi)d\mu_1(\varphi)d\mu_1(\phi)\\
 &\quad -d_2\int_0^{\pi}\int_0^{\pi}K_{1,2}^n(\phi,\varphi)\widehat{h}_2(\varphi)h_1(\phi)d\mu_1(\phi)d\mu_2(\varphi)\\
 &\qquad+{\frac{d_2^2}{d_1}}\int_0^{\pi}\int_0^{\pi}K_{2,2}^n(\phi,\varphi) \widehat{h}_2(\varphi){h}_2(\phi)d\mu_2(\varphi)d\mu_2(\phi)\\
 &\quad\qquad -d_2\int_0^{\pi}\int_0^{\pi}K_{2,1}^n(\phi,\varphi)\widehat{h}_1(\varphi)h_2(\phi)d\mu_2(\phi)d\mu_1(\varphi).
\end{align*}
By exchanging $(\varphi,\phi)$  we infer
$$
\int_0^{\pi}\int_0^{\pi}K_{1,2}^n(\phi,\varphi)h_2(\varphi)\widehat{h}_1(\phi)d\mu_1(\phi)d\mu_2(\varphi)=\int_0^{\pi}\int_0^{\pi}K_{1,2}^n(\varphi,\phi) h_2(\phi)\widehat{h}_1(\varphi)d\mu_1(\varphi)d\mu_2(\phi).
$$
From the structure of $K_{i,j}^n$ we get
$$
K_{i,j}^n(\phi,\varphi)=K_{j,i}^n(\varphi,\phi)
$$
leading to 
$$
\int_0^{\pi}\int_0^{\pi}K_{1,2}^n(\phi,\varphi)h_2(\varphi)\widehat{h}_1(\phi)d\mu_1(\phi)d\mu_2(\varphi)=\int_0^{\pi}\int_0^{\pi}K_{2,1}^n(\phi,\varphi) h_2(\phi)\widehat{h}_1(\varphi)d\mu_1(\varphi)d\mu_2(\phi).
$$
One may also obtain using similar arguments
$$
\int_0^{\pi}\int_0^{\pi}K_{2,1}^n(\phi,\varphi)h_1(\varphi)\widehat{h}_2(\phi)d\mu_2(\phi)d\mu_1(\varphi)=\int_0^{\pi}\int_0^{\pi}K_{1,2}^n(\phi,\varphi) h_1(\phi)\widehat{h}_2(\varphi)d\mu_1(\phi)d\mu_2(\varphi).
$$
Therefore we find that $\mathcal{T}^n_\Omega$ is symmetric, that is, 
$$
\langle \mathcal{T}^n_\Omega\widehat{H},H\rangle_{\Omega}  =\langle \mathcal{T}^n_\Omega H,\widehat{H}\rangle_{\Omega}.
$$
This ends the proof of Lemma \ref{lem-ssym}
\end{proof}
\subsection{Structures of the functions $\nu_{i,\Omega}$.}
The main task is to investigate some analytical properties of the functions $\nu_{i,\Omega}$ given in Lemma \ref{nuOmega12} that will be used later on the distribution of the largest eigenvalues of the operators seen in the preceding section. We point out that the two functions do not play symmetric roles in the spectral study, only $\nu_{1,\Omega}$ will be with a crucial contribution.
\begin{pro} \label{Lem-meas}
Let  $r_{0,1},r_{0,2}$ be two profiles  satisfying the assumptions ${\bf{(H)}}$ and  $\alpha\in(0,1)$.  Then  the following properties hold true.
\begin{enumerate}
\item  
There exists $\mathtt{M}>0$ such that for any $ \Omega\in(\overline\Omega_2, \overline\Omega_1)$ we get
$$
\forall  \phi\in [0,\pi],\quad \mathtt{M}\geqslant \nu_{1,\Omega}(\phi)\geqslant\overline\Omega_1-\Omega>0,\quad  \mathtt{M}\geqslant \nu_{2,\Omega}(\phi)\geqslant\Omega-\overline\Omega_2>0.
$$
\item  The function $\phi\in[0,\pi]\mapsto \nu_{1,\Omega}(\phi)$ belongs to $\mathscr{C}^{1,\alpha}([0,\pi]).$ In addition,
$$
\nu_{1,\Omega}^\prime(0)=\nu_{1,\Omega}^\prime(\pi)=0.
$$
\item Let  $\Omega\in[\overline\Omega_2, \overline\Omega_1]$ and assume that  $\nu_{1,\Omega}$ reaches its  minimum at a point $\phi_0\in[0,\pi]$ then there exists  $C>0$ independent of $\Omega$  such that, $$
\forall \phi\in[0,\pi],\quad 0\leqslant \nu_{1,\Omega}\big(\phi\big)-\nu_{1,\Omega}\big(\phi_0\big)\leqslant C|\phi-\phi_0|^{1+\alpha}.
$$
Moreover, for  $\Omega=\overline\Omega_1$ this result becomes 
$$
\forall \phi\in[0,\pi],\quad 0\leqslant \nu_{1,\overline\Omega_1}\big(\phi\big)\leqslant C|\phi-\phi_0|^{1+\alpha}.
$$

\end{enumerate}
\end{pro}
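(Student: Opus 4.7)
The plan is to treat the three items in order.

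For (1), the estimates follow immediately by comparing Lemma \ref{nuOmega12}, which gives
$$\nu_{1,\Omega}(\phi)=\Big(d_1\int_0^\pi H^1_{1,1}(\phi,\varphi)d\varphi-d_2\int_0^\pi H^1_{1,2}(\phi,\varphi)d\varphi\Big)-\Omega,$$
$$\nu_{2,\Omega}(\phi)=\Omega-\Big(d_1\int_0^\pi H^1_{2,1}(\phi,\varphi)d\varphi-d_2\int_0^\pi H^1_{2,2}(\phi,\varphi)d\varphi\Big),$$
with the definitions \eqref{Omega1}--\eqref{Omega2} of $\overline\Omega_1,\overline\Omega_2$. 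Continuity of the integrands on the compact $[0,\pi]$, together with the boundedness of the range $(\overline\Omega_2,\overline\Omega_1)$ for $\Omega$, produces the uniform upper bound $\mathtt{M}$.

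For (2), I would treat the off-diagonal and diagonal parts of $\nu_{1,\Omega}$ separately. Assumption (H4) ensures that $R_{1,2}(\phi,\varphi)$ is bounded below by $\delta$, and by \eqref{Chord-1} the hypergeometric argument stays in a compact subset of $[0,1)$, so $\int_0^\pi H^1_{1,2}(\phi,\varphi)d\varphi$ is $\mathscr{C}^2$ in $\phi$ by direct differentiation under the integral. The diagonal contribution $\int_0^\pi H^1_{1,1}(\phi,\varphi)d\varphi$ is the delicate one: the factor $F_1=F(\tfrac32,\tfrac32;3;\cdot)$ has a logarithmic singularity at $x=1$ (Kummer-type behavior recorded in Appendix \ref{Ap-spfunctions}), and isolating the explicit logarithm in the integral representation of $F_1$ reduces matters to a smooth piece plus a one-dimensional log potential $\int\log|\phi-\varphi|\,\omega(\phi,\varphi)d\varphi$ with $\omega$ regular. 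Classical H\"older theory for such log potentials combined with the $\mathscr{C}^2$ regularity of $r_{0,1}$ from (H1) then yields $\nu_{1,\Omega}\in\mathscr{C}^{1,\alpha}([0,\pi])$ for any $\alpha\in(0,1)$. This is the main technical difficulty, since the pointwise bound of Lemma \ref{Lem-Hndecreasing} is too crude to extract H\"older control of the derivative near the diagonal $\varphi=\phi$.

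To obtain the boundary conditions $\nu_{1,\Omega}'(0)=\nu_{1,\Omega}'(\pi)=0$, the cleanest route is via Remark \ref{rema-1}: write $\nu_{1,\Omega}(\phi)=g(r_{0,1}(\phi),d_1\cos\phi)-\Omega$ with $g(R,z):=(\partial_1\psi)(R,0,z)/R$. Axisymmetry of $\psi$ forces $(\partial_1\psi)(R,0,z)$ to be odd in $R$, so $g(R,z)=G(R^2,z)$ for a function $G$ whose regularity along the curve $\phi\mapsto(r_{0,1}^2(\phi),d_1\cos\phi)$ is inherited from the $\mathscr{C}^{1,\alpha}$ regularity of $\nu_{1,\Omega}$ just established. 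The chain rule then gives
$$\nu_{1,\Omega}'(\phi)=2r_{0,1}(\phi)r_{0,1}'(\phi)\,\partial_1G\bigl(r_{0,1}^2(\phi),d_1\cos\phi\bigr)-d_1\sin(\phi)\,\partial_2G\bigl(r_{0,1}^2(\phi),d_1\cos\phi\bigr),$$
and each term vanishes at $\phi\in\{0,\pi\}$ thanks to $r_{0,1}(0)=r_{0,1}(\pi)=\sin 0=\sin\pi=0$.

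For (3), with (2) in hand the statement reduces to a quantitative Taylor bound. Any minimum $\phi_0$ of $\nu_{1,\Omega}$ on $[0,\pi]$ satisfies $\nu_{1,\Omega}'(\phi_0)=0$ (first-order optimality at interior points, or the boundary vanishing from (2) when $\phi_0\in\{0,\pi\}$), so
$$0\leqslant\nu_{1,\Omega}(\phi)-\nu_{1,\Omega}(\phi_0)=\int_{\phi_0}^\phi\bigl(\nu_{1,\Omega}'(\tau)-\nu_{1,\Omega}'(\phi_0)\bigr)d\tau\leqslant\tfrac{\|\nu_{1,\Omega}\|_{\mathscr{C}^{1,\alpha}}}{1+\alpha}|\phi-\phi_0|^{1+\alpha},$$
and the $\mathscr{C}^{1,\alpha}$ norm is uniform in $\Omega\in[\overline\Omega_2,\overline\Omega_1]$ because $\Omega$ enters only as an additive constant, giving the $\Omega$-independent $C$. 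For $\Omega=\overline\Omega_1$ the infimum in \eqref{Omega1} is attained by continuity on the compact $[0,\pi]$, so $\nu_{1,\overline\Omega_1}(\phi_0)=0$ and the estimate collapses to the stated one.
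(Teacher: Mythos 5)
Parts (1) and (3) are fine and follow the paper's own route (part (1) is indeed immediate from \eqref{Omega1}--\eqref{Omega2}, and part (3) is exactly the paper's mean-value/H\"older argument, including the $\Omega$-independence of the constant and the vanishing of $\nu_{1,\overline\Omega_1}$ at $\phi_0$). For the $\mathscr{C}^{1,\alpha}$ regularity in (2), your treatment of the off-diagonal piece (non-singular kernel by {\bf (H4)} and \eqref{Chord-1}, differentiation under the integral) matches the paper; for the diagonal piece the paper simply invokes \cite[Proposition 4.1-(3)]{GHM}, whereas you sketch re-proving it via the logarithmic singularity of $F_1$ and log-potential estimates --- that is the content of the cited result and is acceptable as a plan, though not carried out.

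The genuine gap is your argument for $\nu_{1,\Omega}'(0)=\nu_{1,\Omega}'(\pi)=0$. You write $\nu_{1,\Omega}(\phi)=G\bigl(r_{0,1}^2(\phi),d_1\cos\phi\bigr)-\Omega$ and apply the chain rule, asserting that the regularity of $G$ ``is inherited from the $\mathscr{C}^{1,\alpha}$ regularity of $\nu_{1,\Omega}$ just established.'' This is circular: knowing that the composition $\phi\mapsto G(r_{0,1}^2(\phi),d_1\cos\phi)$ is $\mathscr{C}^{1,\alpha}$ tells you nothing about differentiability of $G$ as a function of its two arguments, and the chain-rule identity you display requires $\partial_1G,\partial_2G$ to exist and be bounded in a neighborhood of the curve --- in particular at the points $(0,\pm d_1)$ corresponding to the poles, i.e.\ on the symmetry axis, which is precisely where regularity is most delicate. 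The available regularity $\psi\in\mathscr{C}^{1,\alpha}(\R^3)$ only gives $\partial_1\psi\in\mathscr{C}^{\alpha}$, so even the boundedness of $g(R,z)=(\partial_1\psi)(R,0,z)/R$ near $R=0$ does not follow from it (it requires the vanishing of the velocity on the axis plus the integral structure of the kernel), let alone the boundedness of the derivatives of $G$ that your argument needs; writing an even function of $R$ as a $C^1$ function of $R^2$ also requires more smoothness in $R$ than you have established. The paper avoids this by a direct kernel computation: for the interaction term it differentiates the hypergeometric series of $H^1_{1,2}$ and uses $r_{0,1}(0)=0$ together with $\partial_\phi R_{1,2}(0,\varphi)=2r_{0,1}'(0)r_{0,2}(\varphi)$ to see the first-order terms cancel, giving $\partial_\phi H^1_{1,2}(0,\varphi)=0$, while for the self-interaction term the vanishing of the derivative at the poles is part of the cited result in \cite{GHM}. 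To repair your proof you would need either such a direct computation or an honest proof that $G$ is $C^1$ up to the axis (equivalently, uniform bounds on the derivatives of the axisymmetric stream function in the variables $(R^2,z)$), neither of which follows from what you have written.
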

\begin{proof}
$(1)$ It is immediate and follows by definition from \eqref{Omega1}-\eqref{Omega2}.\\
$(2)$ We first make the decompose
\begin{align*}
\nu_{1,\Omega}(\phi)&=d_1\int_0^\pi H_{1,1}^1(\phi,\varphi)d\varphi-d_2\int_0^{\pi} H_{1,2}^1(\phi,\varphi)d\varphi-\Omega\\
&=:\nu^1_{1,\Omega}(\phi)-\nu^2_{1,\Omega}(\phi)-\Omega.
\end{align*}
 According to {\cite[Proposition 4.1-(3)]{GHM},} the function $\nu^1_{1,\Omega}$ is $\mathscr{C}^{1,\alpha}([0,\pi])$ and   
 $$
(\nu^1_{1,\Omega})^\prime(0)=0=(\nu^1_{1,\Omega})^\prime(\pi).
$$
{As to $\nu^2_{1,\Omega}$, we write from \eqref{HMM11}
$$
H_{1,2}^1(\phi,\varphi)=\frac{1}{2}\frac{\sin(\varphi)r_{0,2}^{2}(\varphi)}{\left[R_{1,2}(\phi,\varphi)\right]^{\frac32}} F_1\left(\frac{4r_{0,1}(\phi)r_{0,2}(\varphi)}{R_{1,2}(\phi,\varphi)}\right).
$$
This case will be easier because the function $H_{1,2}^1(\phi,\varphi)$ does not have singularities. More precisely,  the functions $\phi\in[0,\pi]\mapsto \frac{1}{
\left[R_{1,2}(\phi,\varphi)\right]^{\frac32}}, \frac{4r_{0,1}(\phi)r_{0,2}(\varphi)}{R_{1,2}(\phi,\varphi)}$ are $\mathscr{C}^{1,\alpha}([0,\pi])$ uniformly in the variable $\varphi.$
On the other hand, by \eqref{Chord-1}  
$\frac{4r_{0,1}(\phi)r_{0,2}(\varphi)}{R_{1,2}(\phi,\varphi)}\leqslant  \overline\delta_{1,2}<1$ and the hypergeometric function $F_1$ is analytic in the open unit disc. Then by the chain rule $\phi\mapsto F_1\left(\frac{4r_{0,1}(\phi)r_{0,2}(\varphi)}{R_{1,2}(\phi,\varphi)}\right)$ is $\mathscr{C}^{1,\alpha}([0,\pi])$. Since $\mathscr{C}^{1,\alpha}([0,\pi])$  is an algebra  we get that the function $\phi\mapsto H_{1,2}^1(\phi,\varphi)\in \mathscr{C}^{1,\alpha}([0,\pi])$ uniformly in $\varphi\in[0,\pi].$ Therefore  $\phi\mapsto \nu^2_{1,\Omega}(\phi))$ is $\mathscr{C}^{1,\alpha}([0,\pi])$.\\
To compute the derivatives of the function $\nu^1_{1,\Omega}(\phi)$ at the points $0$ and $\pi$ we use the definition of the hypergeometric function as a power series (see \eqref{GaussF}), 
\begin{align*}
H_{1,2}^1(\phi,\varphi)&=\frac{1}{2}\frac{\sin(\varphi)r_{0,2}^{2}(\varphi)}{\left[R_{1,2}(\phi,\varphi)\right]^{\frac32}} F_1\left(\frac{4r_{0,1}(\phi)r_{0,2}(\varphi)}{R_{1,2}(\phi,\varphi)}\right)\\
&=\frac{1}{2}\frac{\sin(\varphi)r_{0,2}^{2}(\varphi)}{\left[R_{1,2}(\phi,\varphi)\right]^{\frac32}} \left(1+3\frac{r_{0,1}(\phi)r_{0,2}(\varphi)}{R_{1,2}(\phi,\varphi)}+\sum_{n\ge 2}\frac{(\frac{3}{2})_n(\frac{3}{2})_n}{(3)_n}\frac{4^n r_{0,1}(\phi)^n r_{0,2}(\varphi)^n}{n! R_{1,2}(\phi,\varphi)^n}\right).
\end{align*}
Since $r_{0,1}(0)=0$ and 
$\partial_\phi R_{1,2}(0,\varphi)=2r_{0,1}^\prime(0)r_{0,2}(\varphi),$
 we obtain that 
$$
\partial_\phi H_{1,2}^1(0,\varphi)
= \frac{1}{2}\sin(\varphi)r_{0,2}^2(\varphi)\Big(-\frac{3}{2} \frac{2r^\prime_{0.1}(0)r_{0,2}(\varphi)}{(R_{1,2}(0,\varphi))^{5/2}}+3 \frac{r^\prime_{0,1}(0)r_{0,2}(\varphi)}{(R_{1,2}(0,\varphi))^{5/2}}  \Big)=0.
$$
Repeating the same arguments one can prove that 
\begin{align*}
\partial_\phi H_{1,2}^1(\pi,\varphi)=&0.
\end{align*}
Hence
$$
(\nu^2_{1,\Omega})^\prime(0)=0=(\nu^2_{1,\Omega})^\prime(\pi).
$$
}
$(3)$
By the extreme value theorem,  the function  $\nu_{1,\Omega}$ reaches its absolute  minimum at some \mbox{  point $\phi_0\in[0,\pi]$.}  If this point belongs to the open set $(0,\pi)$ then necessary $\nu_{1,\Omega}^\prime(\phi_0)=0$.  However when $\phi_0\in\{0,\pi\}$, using
 point {\bf{(2),}} we deduce  that the derivative is also vanishing at $\phi_0.$ Hence, using the mean value theorem, we obtain for any $\phi\in[0,\pi]$
\begin{align*}
\nu_{1,\Omega}(\phi)=&\nu_{1,\Omega}(\phi_0)+\nu_{1,\Omega}^\prime\big(\overline\phi\big)(\phi-\phi_0)=\nu_{1,\Omega}(\phi_0)+\big(\nu_{1,\Omega}^\prime\big(\overline\phi\big)-\nu_{1,\Omega}^\prime\big(\phi_0\big)\big)(\phi-\phi_0),
\end{align*}
for some $\overline\phi\in(\phi_0,\phi)$. Since $\nu_{1,\Omega}^\prime\in \mathscr{C}^\alpha,$ then 
$$
\Big|\nu_{1,\Omega}^\prime\big(\overline\phi\big){ -}\nu_{1,\Omega}^\prime\big(\phi_0\big)\Big|\leqslant \|\nu_{1,\Omega}^\prime\|_{\mathscr{C}^\alpha}|\phi-\phi_0|^\alpha.
$$
Notice that $\|\nu_{1,\Omega}^\prime\|_{\mathscr{C}^\alpha}$ is independent of $\Omega.$ Consequently
$$
\forall \phi\in[0,\pi],\quad 0\leqslant \nu_{1,\Omega}\big(\phi\big)-\nu_{1,\Omega}\big(\phi_0\big)\leqslant  C|\phi-\phi_0|^{1+\alpha},
$$
for some absolute constant $C$. In the particular case $\Omega=\overline\Omega_1$ we get from the definition \eqref{Omega1} that  $\nu_{1,\overline\Omega_1}(\phi_0)=0$ and therefore the preceding result becomes
$$
\forall \phi\in[0,\pi],\quad 0\leq \nu_{1,\overline\Omega_1}\big(\phi\big)\leqslant C|\phi-\phi_0|^{1+\alpha}.
$$
The proof is now complete. 
\end{proof}
\subsection{Largest eigenvalues distribution}
In what follows we shall perform refined estimates on the distribution of the largest eigenvalues  with respect to the modes $n$. This is the cornerstone part in checking the assumptions of Crandall-Rabinowitz theorem. The results are summarized in the following proposition. For the different expressions of  the operator $\mathcal{T}^n_{{\Omega}},$ we refer to \eqref{Tnomega2}  and \eqref{TnomegaPP2}.

\begin{pro}\label{prop-operatorV2}
Let $\Omega\in(\overline\Omega_2, \overline\Omega_1)$ and { $r_{0,1},r_{0,2}$} satisfy the assumptions ${\bf{(H)}}$. Then, the following assertions hold true.
\begin{enumerate}
\item For any $n\geqslant1$, the eigenvalues of $\mathcal{T}^n_{{\Omega}}$ form a countable family of real numbers. Let  $\lambda_n(\Omega)$ be  the largest eigenvalue, then it is strictly positive   and satisfies
\begin{align*}
&{d_1}\tiny{\iint_{[0,\pi]^2}\frac{H_{1,1}^n(\phi,\varphi)}{{\nu_{1,\Omega}}(\varphi)^{\frac12}{\nu_{1,\Omega}}(\phi)^{\frac12}}\frac{\sin^\frac12(\phi){ r_{0,1}}(\phi)}{\sin^\frac12(\varphi){r_{0,1}}(\varphi)} \varrho(\varphi)\varrho(\phi)d\varphi d\phi}\leqslant \lambda_n(\Omega)\\
& \qquad \qquad \leqslant2{\big(d_1+d_2\big)}\max_{i,j}{\left(\iint_{[0,\pi]^2}|K_{i,j}^n(\phi,\varphi)|^2d\mu_{i}(\phi)d\mu_{j}(\varphi)\right)^\frac12,}
\end{align*}
for any function $\varrho$ such that $\displaystyle{\int_0^\pi \varrho^2(\varphi)d\varphi=1.}$
\item We have the following decay: for any $\gamma\in[0,{\frac14})$ there exists $C>0$ such that 
$$
\forall\, \Omega\in(\overline\Omega_2,\overline\Omega_1),\,\forall\, n\geqslant1,\quad \ \lambda_n(\Omega)\leqslant {Cn^{-\gamma}\max_{i,j}\left|(\Omega-\overline\Omega_i)(\Omega-\overline\Omega_j)\right|^{-\frac12} }.
$$
\item The eigenvalue $\lambda_n(\Omega)$ is simple {{and the components of any  associated nonzero 
 eigenfunctions $(h_1,h_2)$  are with  constant opposite signs.}} 
\item For any $\Omega\in(\overline\Omega_2,\overline\Omega_1)$, the sequence $n\geqslant1\mapsto \lambda_n(\Omega)$ is strictly decreasing.
\item For any $n\geqslant1$ the map $\Omega\in(\overline\Omega_2,\overline\Omega_1)\mapsto \lambda_n(\Omega)$ is differentiable and strictly increasing.
\end{enumerate}
\end{pro}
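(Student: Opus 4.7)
The overall plan is to treat $\mathcal{T}^n_\Omega:\mathbb{H}_\Omega\to\mathbb{H}_\Omega$ as a compact self-adjoint operator (Lemma \ref{lem-ssym}), so that its spectrum is a countable real set accumulating at $0$, and to extract the five claims from the Courant--Fischer characterization $\lambda_n(\Omega)=\sup_{\|H\|_\Omega=1}\langle\mathcal{T}^n_\Omega H,H\rangle_\Omega$.

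\smallskip
\noindent\textbf{Parts (1) and (2).} For the lower bound I would test the Rayleigh quotient against $H=(h_1,0)$ with $h_1(\phi)=\varrho(\phi)/\bigl(\sin^{1/2}(\phi)\,r_{0,1}(\phi)\,\nu_{1,\Omega}^{1/2}(\phi)\bigr)$: direct substitution into \eqref{TnomegaPP2} gives $\|H\|_\Omega^2=\int_0^\pi\varrho^2$ and recovers the stated integral involving $H_{1,1}^n$, with strict positivity $\lambda_n(\Omega)>0$ following from $H_{1,1}^n>0$ by choosing $\varrho>0$. The upper bound comes from $\lambda_n(\Omega)\leq\|\mathcal{T}^n_\Omega\|_{\mathrm{op}}\leq\|\mathcal{T}^n_\Omega\|_{\mathrm{HS}}$, decomposing the vectorial Hilbert--Schmidt norm into the four block norms $\iint|K_{i,j}^n|^2 d\mu_i d\mu_j$. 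For Part (2), I would combine this HS bound with the decay \eqref{estim-asym} of Lemma \ref{Lem-Hndecreasing}, the lower bounds $\nu_{1,\Omega}\geq\overline\Omega_1-\Omega$ and $\nu_{2,\Omega}\geq\Omega-\overline\Omega_2$ from Proposition \ref{Lem-meas}, and the size condition {\bf(H2)}: the resulting integral of $|\phi-\varphi|^{-4\beta}$ on $[0,\pi]^2$ converges precisely for $\beta<1/4$, and any choice $\gamma<\beta<1/4$ produces the claimed $n^{-\gamma}$ rate with the explicit prefactor in $\Omega$.

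\smallskip
\noindent\textbf{Part (3).} The heart of simplicity is a symmetrization into the invariant cone $\mathscr{C}^+=\{(h_1,h_2):h_1\geq 0,\ h_2\leq 0\}$. For arbitrary $H=(h_1,h_2)$ I introduce $\widetilde H=(|h_1|,-|h_2|)$: since every $K_{i,j}^n$ is strictly positive on $(0,\pi)^2$, term-by-term comparison using the identities $|ab|\geq ab$ and $|h_1 h_2|\geq -h_1h_2$ yields $\|\widetilde H\|_\Omega=\|H\|_\Omega$ together with $\langle\mathcal{T}^n_\Omega\widetilde H,\widetilde H\rangle_\Omega\geq\langle\mathcal{T}^n_\Omega H,H\rangle_\Omega$, with equality iff $h_1$ and $h_2$ are of constant and opposite signs a.e. Hence every maximizer of the Rayleigh quotient is, up to an overall sign, in $\mathscr{C}^+$. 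Plugging a nonzero $(h_1,h_2)\in\mathscr{C}^+$ into the eigenvalue equation and using strict positivity of the kernels bootstraps to $h_1>0$ and $h_2<0$ strictly on $(0,\pi)$. For simplicity itself I would suppose two linearly independent eigenfunctions $H^{(1)},H^{(2)}\in\mathscr{C}^+$ with strict positivity, set $t=\inf_{(0,\pi)}h_1^{(1)}/h_1^{(2)}>0$, and form $\widetilde H=H^{(1)}-t H^{(2)}$: its first component is $\geq 0$ and vanishes at some $\phi_0$, yet $\widetilde H$ is still an eigenfunction for $\lambda_n$ and, if non-zero, is itself a maximizer, hence belongs to $\mathscr{C}^+$ with strictly positive first component --- a contradiction unless $\widetilde H\equiv 0$, forcing linear dependence.

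\smallskip
\noindent\textbf{Parts (4) and (5).} For (4), Lemma \ref{Lem-Hndecreasing} gives strict decrease in $n$ of each $H_{i,j}^n$, hence of $K_{i,j}^n$. In the variables $(p,q)=(h_1,-h_2)\geq 0$ associated to $\mathscr{C}^+$, the quadratic form $\langle\mathcal{T}^n_\Omega H,H\rangle_\Omega$ becomes a positive sum of three terms with the strictly positive kernels $K_{1,1}^n,\,K_{1,2}^n,\,K_{2,2}^n$, all strictly decreasing in $n$; evaluating the Rayleigh quotient for $\mathcal{T}^n_\Omega$ on the maximizer of $\mathcal{T}^{n+1}_\Omega$ then gives $\lambda_n(\Omega)>\lambda_{n+1}(\Omega)$. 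For (5) I would recast the problem as the generalized eigenvalue problem $\mathcal{S}^n H=\lambda\, L_\Omega H$ on the $\Omega$-independent Hilbert space $L^2\bigl(\sin(\cdot)r_{0,1}^2 d\phi\bigr)\times L^2\bigl(\tfrac{d_2}{d_1}\sin(\cdot)r_{0,2}^2 d\phi\bigr)$, where $\mathcal{S}^n H:=(\nu_{1,\Omega}\mathcal{T}^n_{1,\Omega}H,\nu_{2,\Omega}\mathcal{T}^n_{2,\Omega}H)$ is an $\Omega$-independent symmetric compact operator (the $\nu_i$'s cancel explicitly) and $L_\Omega$ is multiplication by $(\nu_{1,\Omega},\nu_{2,\Omega})$; both depend analytically on $\Omega$, and the simplicity from (3) allows Kato's analytic perturbation theory to produce the differentiability of $\lambda_n(\Omega)$. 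A Feynman--Hellmann calculation then yields
\begin{align*}
\frac{d\lambda_n}{d\Omega}=\frac{\lambda_n(\Omega)}{\|H^*\|_\Omega^2}\bigg[\int_0^\pi (h_1^*)^2\sin(\phi)r_{0,1}^2(\phi)\,d\phi-\tfrac{d_2}{d_1}\int_0^\pi (h_2^*)^2\sin(\phi)r_{0,2}^2(\phi)\,d\phi\bigg],
\end{align*}
and the strict monotonicity reduces to positivity of the bracket for the normalized eigenfunction $H^*=(h_1^*,h_2^*)$. I expect this to be the main technical obstacle of the whole proof, and I would address it by combining the strict sign structure $h_1^*>0,\ h_2^*<0$ from (3) with the two scalar eigenvalue relations (I)--(II) written in the positive variables $(p^*,q^*)=(h_1^*,-h_2^*)$, together with the symmetry identity $H_{1,2}^n(\phi,\varphi)\sin(\phi)r_{0,1}^2(\phi)=H_{2,1}^n(\varphi,\phi)\sin(\varphi)r_{0,2}^2(\varphi)$, to produce the needed comparison between the two weighted $L^2$ norms.
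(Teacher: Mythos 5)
Your parts (1), (2) and (4) are essentially the paper's own arguments: the same test vector $H=(h_1,0)$ with $h_1=\varrho/(\sin^{1/2}r_{0,1}\,\nu_{1,\Omega}^{1/2})$ for the lower bound, the same Cauchy--Schwarz/Hilbert--Schmidt bound for the upper bound and for the decay in $n$ via Lemma \ref{Lem-Hndecreasing} and Proposition \ref{Lem-meas}, and the same evaluation of the $\mathcal{T}^n_\Omega$-form on the maximizer of $\mathcal{T}^{n+1}_\Omega$ for strict decrease in $n$. In (3) the sign-structure step coincides with the paper's; your uniqueness mechanism, however, has a gap as written: $t=\inf_{(0,\pi)}h_1^{(1)}/h_1^{(2)}$ need not be attained and may even be $0$ (at this stage the eigenfunctions are only elements of $\mathbb{H}_\Omega$, and even after the later continuity result they vanish at the poles, so the ratio can degenerate there), and without an interior point where the first component of $H^{(1)}-tH^{(2)}$ actually vanishes no contradiction arises. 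The repair is cheap and uses exactly the strict-positivity bootstrap you already established: kill the first component at one fixed interior point by an appropriate linear combination (this is in essence the paper's choice of $\alpha H_0+\beta H_1$ evaluated at a single $\phi_0$), and then the strict sign property of any nonzero eigenfunction gives the contradiction.

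Part (5) is where you genuinely depart from the paper, and where the substantive gap lies. Your reduction to the generalized problem $\mathcal{S}^nH=\lambda L_\Omega H$ and the Feynman--Hellmann computation give $\lambda_n'(\Omega)$ as $\lambda_n(\Omega)$ times the \emph{difference} $\int_0^\pi (h_1^*)^2\sin\phi\,r_{0,1}^2\,d\phi-\tfrac{d_2}{d_1}\int_0^\pi (h_2^*)^2\sin\phi\,r_{0,2}^2\,d\phi$, and you do not prove that this bracket is positive; this is not a loose end but the whole of assertion (5). Note that this bracket is precisely the quantity \eqref{transversality-cond}, which the paper itself only controls for large $m$ and at the specific values $\Omega=\Omega_m$ in Proposition \ref{prop-transversal}, through a quantitative argument showing the second component is small when $\Omega_m$ approaches $\overline\Omega_1$; the sign structure from (3) and the kernel identity $H^n_{1,2}(\phi,\varphi)\sin\phi\,r_{0,1}^2(\phi)=H^n_{2,1}(\varphi,\phi)\sin\varphi\,r_{0,2}^2(\varphi)$ (which is nothing but the self-adjointness already used) give no comparison between the two weighted norms, so your sketched plan does not close for all $n\geqslant1$ and all $\Omega\in(\overline\Omega_2,\overline\Omega_1)$. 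The paper argues differently: using the splitting $\mathcal{T}^n_\Omega=\mathcal{T}^n_{\Omega_0}+(\Omega-\Omega_0)\mathscr{R}^{\Omega_0,\Omega}_n$ of \eqref{Iden-t1} in the difference quotient of \eqref{normaliz-1}, it lands on $\lambda_n'(\Omega_0)=\lambda_n(\Omega_0)\bigl(\int_0^\pi\nu_{1,\Omega_0}^{-1}h_{1,\Omega_0}^2\,d\mu_1+\tfrac{d_2}{d_1}\int_0^\pi\nu_{2,\Omega_0}^{-1}h_{2,\Omega_0}^2\,d\mu_2\bigr)$, a \emph{sum} of nonnegative terms, so positivity is immediate from $\nu_{i,\Omega_0}\leqslant\mathtt{M}$. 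Your formula and the paper's differ exactly by the sign of the second term; the discrepancy sits in the bookkeeping of the second component ($\partial_\Omega\nu_{1,\Omega}=-1$ while $\partial_\Omega\nu_{2,\Omega}=+1$, and $(H_{\Omega_0})_2=-h_{2,\Omega_0}$ in the eigen-relation used to evaluate $\mathscr{R}^{\Omega_0,\Omega_0}_nH_{\Omega_0}$), and you must resolve it: a direct envelope-theorem computation on the quotient $Q(H)/\|H\|_\Omega^2$, whose numerator is $\Omega$-independent, supports your difference form, so this is a substantive point and not a slip on your side only. Either way, as it stands your proof of (5) is incomplete: if the sum form is correct your obstacle disappears and you should simply follow the paper's computation, while if the difference form is correct the claimed monotonicity does not follow from anything in your proposal.
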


\begin{proof}

\medskip
\noindent
{\bf (1)}  \medskip
\noindent
Using  the spectral theorem on self-adjoint compact operators, we get  that the eigenvalues of $\mathcal{T}^n_{{\Omega}}$ form a countable family of real numbers. Set
$$
m=\inf_{\|H\|=1} \langle\mathcal{T}^n_{{\Omega}} H, H\rangle_{{\Omega}} \quad \textnormal{and} \quad M=\sup_{\|H\|=1} \langle\mathcal{T}^n_{{\Omega}} H, H\rangle_{{\Omega}}.
$$
Since $\mathcal{T}^n_\Omega$ is self-adjoint, we get that the spectrum of $\mathcal{T}^n_{{\Omega}}$ denoted by $\sigma(\mathcal{T}^n_{{\Omega}})$ satisfies   $\sigma (\mathcal{T}^n_{{\Omega}})\subset[m,M]$, with $m\in\sigma (\mathcal{T}^n_{{\Omega}})$ and $M\in\sigma(\mathcal{T}^n_{{\Omega}})$. By definition of   the largest eigenvalue $\lambda_n(\Omega)$ we infer
\begin{equation}\label{eigenvPP}
\lambda_n(\Omega)=M=\sup_{\|H\|=1} \langle\mathcal{T}^n_{{\Omega}} H, H\rangle_{{\Omega}}.
\end{equation}
We shall prove that $M>0$ and $|m|\leq M$. Indeed, for any $H=\begin{pmatrix}
h_1\\
h_2
\end{pmatrix}\in \mathbb{H}_\Omega$, 

\begin{align*}
 \langle\mathcal{T}^n_{{\Omega}} H, H\rangle_{{\Omega}}
 =&d_1{\int_0^{\pi}}\int_0^{\pi} K_{1,1}^n(\phi,\varphi)h_1(\varphi)h_1(\phi)d\mu_1(\varphi)d\mu_1(\phi)\\
 &\quad-d_2{\int_0^{\pi}}\int_0^\pi K_{1,2}^n(\phi,\varphi)h_2(\varphi)h_1(\phi)d\mu_1(\phi)d\mu_2(\varphi)\\
 &\qquad +\frac{d_2^2}{d_1}{\int_0^{\pi}}\int_0^{\pi}K_{2,2}^n(\phi,\varphi) h_2(\varphi)h_2(\phi)d\mu_2(\varphi)d\mu_2(\phi)\\
 &\quad\qquad -d_2{\int_0^{\pi}}\int_0^\pi K_{2,1}^n(\phi,\varphi)h_1(\varphi)h_2(\phi)d\mu_2(\phi)d\mu_1(\varphi).
\end{align*}
By symmetry of $K_{i,j}^n$ we infer
\begin{align}\label{Tit1}
 \nonumber\langle\mathcal{T}^n_{{\Omega}} H, H\rangle_{{\Omega}}
 =&d_1{\int_0^{\pi}}\int_0^{\pi} K_{1,1}^n(\phi,\varphi)h_1(\varphi)h_1(\phi)d\mu_1(\varphi)d\mu_1(\phi)\\
  \nonumber&\quad+\frac{d_2^2}{d_1}{\int_0^{\pi}}\int_0^{\pi}K_{2,2}^n(\phi,\varphi) h_2(\varphi)h_2(\phi)d\mu_2(\varphi)d\mu_2(\phi)\\
 &\qquad -2d_2{\int_0^{\pi}}\int_0^\pi K_{1,2}^n(\phi,\varphi)h_2(\varphi)h_1(\phi)d\mu_1(\phi)d\mu_2(\varphi).
 \end{align}
Denote $H^+:=\begin{pmatrix}
|h_1|\\
-|h_2|
\end{pmatrix}$, then it belongs to  $ \mathbb{H}_\Omega$ with  the same norm as $H$. Notice that
\begin{align}\label{Tit2}
  \nonumber\langle\mathcal{T}^n_{{\Omega}} H_+, H_+\rangle_{{\Omega}}
 =&d_1{\int_0^{\pi}}\int_0^{\pi} K_{1,1}^n(\phi,\varphi)|h_1(\varphi)h_1(\phi)|d\mu_1(\varphi)d\mu_1(\phi)\\
  \nonumber&\quad+\frac{d_2^2}{d_1}{\int_0^{\pi}}\int_0^{\pi}K_{2,2}^n(\phi,\varphi) |h_2(\varphi)h_2(\phi)|d\mu_2(\varphi)d\mu_2(\phi)\\
 &\qquad+2d_2{\int_0^{\pi}}\int_0^\pi K_{1,2}^n(\phi,\varphi)|h_2(\varphi)h_1(\phi)|d\mu_1(\phi)d\mu_2(\varphi).
 \end{align}
  Combined with the positivity of the kernels $K_{i,j}^n$ it implies
$$
|\langle\mathcal{T}^n_{{\Omega}} H, H\rangle_{{\Omega}}|\leqslant \langle\mathcal{T}^n_{{\Omega}} H_+, H_+\rangle_{{\Omega}}.
$$
Therefore 
$$
\sup_{\|H\|=1} \langle\mathcal{T}^n_{{\Omega}} H, H\rangle_{{\Omega}}=\sup_{ \|H_+\|=1} \langle\mathcal{T}^n_{{\Omega}} H_+, H_+\rangle_{{\Omega}}.
$$
Using once again  the positivity of the kernels one deduces that 
$$
 \|H_+\|=1\Longrightarrow \langle\mathcal{T}^n_{{\Omega}} H_+, H_+\rangle_{{\Omega}}>0.
$$
Consequently, we obtain that $M>0$ and  $|m|\le M$. By Cauchy-Schwarz inequality we get  for
$$\|H_+\|^2=\|h_1\|_{\mu_1}^2+\tfrac{d_2}{d_1}\|h_2\|_{\mu_2}^2=1
$$
that
\begin{align*}
 \langle\mathcal{T}^n_{{\Omega}} H_+, H_+\rangle_{{\Omega}}
 \leqslant& d_1\left(\int_0^{\pi}\int_0^{\pi} |K_{1,1}^n(\phi,\varphi)|^2d\mu_1(\varphi)d\mu_1(\phi)\right)^{\frac12}\|h_1\|_{\mu_1}^2\\
 &+\tfrac{d_2^2}{d_1}\left(\int_0^{\pi}\int_0^{\pi}|K_{2,2}^n(\phi,\varphi)|^2d\mu_2(\varphi)d\mu_2(\phi)\right)^{\frac12}\|h_2\|_{\mu_2}^2\\
 &+2d_2\left(\int_0^{\pi}\int_0^{\pi}|K_{1,2}^n(\phi,\varphi)|^2d\mu_1(\phi)d\mu_2(\varphi)\right)^{\frac12}\|h_1\|_{\mu_1}\|h_2\|_{\mu_2}.
 \end{align*}
It follows that 
$$
\lambda_n({{\Omega}})
\leqslant 2{\big(d_1+d_2\big)}\max_{i,j}\left(\int_0^\pi\int_0^\pi|K_{i,j}^n(\phi,\varphi)|^2d\mu_{i}(\phi)d\mu_{j}(\varphi)\right)^{\frac12}.
$$
To get  the lower bound,  it suffices to  work with the particular  function
{$$
H=\begin{pmatrix}
h\\
0
\end{pmatrix},\quad h(\varphi)=\frac{\varrho(\varphi)}{\sin(\varphi)^\frac12 {r_{0,1}}(\varphi)\nu_{1,\Omega}(\varphi)^{\frac12}},\quad \varphi\in(0,\pi),
$$}
with the  normalized condition $\|H\|=1$ which is equivalent to 
$$
\int_0^\pi\varrho^2(\varphi)d\varphi=1,
$$
and by definition we infer
$$
\lambda_n(\Omega)\geqslant\langle\mathcal{T}^n_{{\Omega}}H,H\rangle={d_1}\bigintsss_0^\pi\bigintsss_0^\pi\frac{H_n(\phi,\varphi)}{\nu_{1,\Omega}^{\frac12}(\varphi)\nu_{1,\Omega}^{\frac12}(\phi)}\frac{\sin^\frac12(\phi) { r_{0,1}}(\phi)}{\sin^\frac12(\varphi){ r_{0,1}}(\varphi)} \varrho(\varphi)\varrho(\phi)d\varphi d\phi.
$$
This gives the result on lower bound for the largest eigenvalue.

\medskip
\noindent
{\bf (2)}
Applying  \eqref{K-kernel} and \eqref{HS-Noo} we easily get
$$
 \interleave\mathcal{T}^n_\Omega\interleave_{\Omega}^2=\sup_{i,j}\bigintsss_0^\pi\bigintsss_0^\pi 
 \frac{|H_{i,j}^n(\phi,\varphi)|^2}{\nu_{i,\Omega}(\phi)\nu_{j,\Omega}(\varphi)}\frac{\sin(\phi){r_{0,i}^2}(\phi)}{\sin(\varphi){ r_{0,j}^2}(\varphi)}d\phi d\varphi.
$$
Using Proposition \ref{Lem-meas}-(1) we infer 
$$
\forall \Omega\in (\overline\Omega_2, \overline\Omega_1),\quad \forall \, \phi\in[0,\pi],\quad \nu_{i,\Omega}(\phi)\geq |\overline\Omega_i-\Omega
|$$
and therefore we obtain
$$
 \interleave\mathcal{T}^n_\Omega\interleave_{\Omega}^2\lesssim \sup_{i,j}|\overline\Omega_i-\Omega
|^{-1}|\overline\Omega_j-\Omega
|^{-1}
\bigintsss_0^\pi\bigintsss_0^\pi {|H_{i,j}^n(\phi,\varphi)|^2}\frac{\sin(\phi){ r_{0,i}^2(\phi)}}{\sin(\varphi){ r_{0,j}^2}(\varphi)}d\phi  d\varphi.
$$ 
Applying Lemma \ref{Lem-Hndecreasing} combined with the assumption {\bf{(H2)}} yields for any $0\leq \gamma<\beta\leq1$
\begin{align*}
\interleave\mathcal{T}^n_\Omega\interleave_{\Omega}^2\lesssim& \sup_{i,j} |\overline\Omega_i-\Omega
|^{-1}|\overline\Omega_j-\Omega
|^{-1}
 n^{-2\gamma}\bigintsss_0^\pi\bigintsss_0^\pi|\phi-\varphi|^{-4\beta}d\phi d\varphi.
\end{align*}
By taking $\beta<\frac14$ we get the convergence of the integral and consequently we obtain the desired result,
\begin{align*}
{\interleave\mathcal{T}^n_\Omega\interleave_{\Omega}^2\lesssim \sup_{i,j}|\overline\Omega_i-\Omega
|^{-1}|\overline\Omega_j-\Omega
|^{-1} n^{-2\gamma}.}
\end{align*}
It follows that
\begin{align}\label{Est-kern-PM}
\lambda_n(\Omega)\lesssim \sup_{i,j}|\overline\Omega_i-\Omega
|^{-\frac12}|\overline\Omega_j-\Omega
|^{-\frac12} n^{-\gamma}.
\end{align}
Since $\beta<\frac14$ and $\gamma\leq \beta$, we get $\gamma<\frac14$.

\medskip
\noindent
{\bf (3)} 
Consider a  normalized  eigenfunction $H=(h_1,h_2)\in\mathbb{H}_\Omega$ associated to the largest eigenvalue $\lambda_n(\Omega)$, then
$$
\mathcal{T}^n_\Omega H=\lambda_n(\Omega) H\quad\hbox{with}\quad \|H\|=1.
$$
Moreover, for $H_+=(|h|,-|k|)$ 
$$
 \lambda_n(\Omega)=\langle\mathcal{T}^n_{{\Omega}} H, H\rangle_{{\Omega}}= \langle\mathcal{T}^n_{{\Omega}} H_+, H_+\rangle_{{\Omega}}.
$$
Therefore, combining this with \eqref{Tit1} and \eqref{Tit2} we infer from the positivity of the kernels
\begin{align*}
 \nonumber\int_0^{\pi}\int_0^{\pi} K_{1,1}^n(\phi,\varphi)\Big(|h_1(\varphi)h_1(\phi)|-h_1(\varphi)h_1(\phi)\Big)d\mu_1(\varphi)d\mu_1(\phi)&=0
 \\
\int_0^{\pi}\int_0^{\pi}K_{2,2}^n(\phi,\varphi) \Big(|h_2(\varphi)h_2(\phi)|-h_2(\varphi)h_2(\phi)\Big)d\mu_2(\varphi)d\mu_2(\phi)&=0\\
\int_0^{\pi}\int_0^{\pi} K_{1,2}^n(\phi,\varphi)\Big(|h_2(\varphi)h_1(\phi)|+h_2(\varphi)h_1(\phi)\Big)d\mu_1(\phi)d\mu_2(\varphi)&=0.
 \end{align*}
This implies that almost everywhere in $\varphi,\phi\in(0,\pi)$
$$
h_1(\varphi)h_1(\phi)\geqslant 0, h_2(\varphi)h_2(\phi)\geqslant 0\quad\hbox{and}\quad h_2(\varphi)h_1(\phi)\leqslant 0.
$$
{Finally, we shall  check  that the subspace generated by the eigenfunctions associated to $\lambda_n(\Omega)$ is one--dimensional. Assume that we have two independent normalized eigenfunctions $H_0=\begin{pmatrix}
h_{0}\\
k_0
\end{pmatrix}$ and $H_1=\begin{pmatrix}
h_{1}\\
k_1
\end{pmatrix}$. Then $h_i,k_i$ are with constant opposite signs (we may assume  that $h_i$ positive and $k_i$ negative) and we may find from the independence that $\phi_0\in(0,\pi)$ such that the functions are non vanishing at this point and 
$$
\frac{h_1(\phi_0)}{h_0(\phi_0)}\neq \frac{k_1(\phi_0)}{k_0(\phi_0)}\cdot
$$ 
From this latter property one can find two numbers $\alpha, \beta$ such tghat
$$
-\tfrac{h_1(\phi_0)}{h_0(\phi_0)}\beta<\alpha<-\tfrac{k_1(\phi_0)}{k_0(\phi_0)}\beta.
$$
Since the linear combination $\alpha H_0+\beta H_1=$ is an eigenfunction then its components $\alpha h_0+\beta h_1$ $\alpha k_0+\beta k_1$ should be  with constant and opposite signs that can be checked at $\phi_0$. However, at $\phi_0$ we have 
$$
\alpha h_0(\phi_0)+\beta h_1(\phi_0)>0\quad \hbox{and}\quad  \alpha k_0(\phi_0)+\beta k_1(\phi_0)>0
$$
which is a contradiction. }Therefore, the eigenspace associated to the largest eigenvalue is of dimension one.\\
\medskip
\noindent
{\bf (4)} Let $H_+=\begin{pmatrix}
h_{1}\\
{-}h_{2}
\end{pmatrix}$ be a unit eigenfunction associated to $\lambda_{n+1}(\Omega)$ with $h_1,h_2\geqslant0$.  Then
$$
\lambda_{n+1}(\Omega)=
\langle\mathcal{T}^{n+1}_{{\Omega}} H_+, H_+\rangle_{{\Omega}}.
$$
From \eqref{Tit2} we infer
\begin{align*}
  \nonumber\lambda_{n+1}(\Omega)
 =&d_1\int_0^\pi\int_0^{\pi} K_{1,1}^{n+1}(\phi,\varphi)h_1(\varphi)h_1(\phi)d\mu_1(\varphi)d\mu_1(\phi) \\
 &+\tfrac{d_2^2}{d_1}\int_0^{\pi}\int_0^{\pi}K_{2,2}^{n+1}(\phi,\varphi) h_2(\varphi)h_2(\phi)d\mu_2(\varphi)d\mu_2(\phi)\\
 &\qquad+{2d_2}\int_0^\pi \int_0^\pi K_{1,2}^{n+1}(\phi,\varphi)h_2(\varphi)h_1(\phi)d\mu_1(\phi)d\mu_2(\varphi).
 \end{align*}
  {Applying Lemma \ref{Lem-Hndecreasing} with \eqref{K-kernel} implies that  $n\in\N^{\star}\mapsto K_{i,j}^n(\phi,\varphi)$ is strictly decreasing for any $\varphi\neq \phi\in(0,\pi)$. Then, from the positivity of $h_i$ we infer for any $\Omega\in(\overline\Omega_2,\overline\Omega_1)$ 
  \begin{align*}
  \nonumber\lambda_{n+1}(\Omega)
 <&d_1\int_0^{\pi}\int_0^{\pi} K_{1,1}^{n}(\phi,\varphi)h_1(\varphi)h_1(\phi)d\mu_1(\varphi)d\mu_1(\phi)\\
 &\quad+\frac{d_2^2}{d_1}\int_0^{\pi}\int_0^{\pi}K_{2,2}^{n}(\phi,\varphi) h_2(\varphi)h_2(\phi)d\mu_2(\varphi)d\mu_2(\phi)\\
 &\qquad+{2d_2}\int_0^{\pi}\int_0^{\pi}K_{1,2}^{n}(\phi,\varphi)h_2(\varphi)h_1(\phi)d\mu_1(\phi)d\mu_2(\varphi)=\langle\mathcal{T}^{n}_{{\Omega}} H_+, H_+\rangle_{{\Omega}}\\
 &\quad\qquad <\lambda_{n}(\Omega).\end{align*}}
This achieves the desired result.

\medskip
\noindent
{\bf (5)} {  Fix $\Omega_0\in(\overline\Omega_2,\overline\Omega_1)$ and denote by $H_\Omega=\begin{pmatrix}
h_{1,\Omega}\\
{-}h_{2,\Omega}
\end{pmatrix}$ a normalized  eigenfunction of $\mathcal{T}^n_\Omega$  associated to the eigenvalue $\lambda_n(\Omega)$ with $h_{1,\Omega},h_{2,\Omega}\geqslant0$. Using the definition of the eigenfunction yields
\begin{align}\label{normaliz-1}
\lambda_n(\Omega)=\frac{\langle \mathcal{T}^n_{{\Omega}} H_\Omega, H_{\Omega_0}\rangle_{{\Omega_0}}}{\langle H_\Omega, H_{\Omega_0}\rangle_{{\Omega_0}}}, \quad \|H_\Omega\|_{{\Omega_0}}=1.
\end{align}
Remark that in this part  we use the notation  $\|\cdot\|_{{\Omega_0}} $ to denote the norm defined in \eqref{Norm-Hil1}. The regularity follows from the general theory using the fact that this eigenvalue is simple. However we can in our  case give a direct proof for its differentiability in a similar way to \cite[Proposition 4.2]{GHM}. 
From the decomposition 
\begin{align*}
\frac{1}{\nu_{1,\Omega}(\phi)}&=\frac{1}{\nu_{1,\Omega_0}(\phi)}+\frac{\Omega-\Omega_0}{\nu_{1,\Omega}(\phi)\nu_{1,\Omega_0}(\phi)},\\
\frac{1}{\nu_{2,\Omega}(\phi)}&=\frac{1}{\nu_{2,\Omega_0}(\phi)}-\frac{\Omega-\Omega_0}{\nu_{2,\Omega}(\phi)\nu_{2,\Omega_0}(\phi)},
\end{align*} 
we get according to the expression of $\mathcal{T}^n_{{1,\Omega}}$ that for any $H\in\mathbb{H}_\Omega$
\begin{align*}
 \mathcal{T}^n_{{1,\Omega}} H
=&\mathcal{T}^n_{{1,\Omega_0}} H+{\frac{(\Omega-\Omega_0)}{\nu_{1,\Omega_0}(\phi)}\mathcal{T}^n_{1,{\Omega}} H},\\
\mathcal{T}^n_{{2,\Omega}} H
=&\mathcal{T}^n_{{2,\Omega_0}} H-{\frac{(\Omega-\Omega_0)}{\nu_{2,\Omega_0}(\phi)}\mathcal{T}^n_{2,{\Omega}} H.}
\end{align*}
Consequently
\begin{align}\label{Iden-t1}
\mathcal{T}^n_{\Omega}=\mathcal{T}^n_{{\Omega_0}}+(\Omega-\Omega_0)\mathscr{R}_n^{\Omega_0,\Omega}, \quad \mathscr{R}_n^{\Omega_0,\Omega}:=\begin{pmatrix}
\nu_{1,\Omega_0}^{-1}\mathcal{T}^n_{1,\Omega}\\
-\nu_{2,\Omega_0}^{-1}\mathcal{T}^n_{2,\Omega}
\end{pmatrix}.
\end{align}
Therefore we obtain
\begin{align*}
\lambda_n(\Omega)=\frac{\langle \mathcal{T}^n_{{\Omega_0}} H_\Omega, H_{\Omega_0}\rangle_{{\Omega_0}}}{\langle H_\Omega, H_{\Omega_0}\rangle_{{\Omega_0}}}+(\Omega-\Omega_0)\frac{\langle\mathscr{R}_n^{\Omega_0,\Omega}H_\Omega, H_{\Omega_0}\rangle_{{\Omega_0}}}{\langle H_\Omega, H_{\Omega_0}\rangle_{{\Omega_0}}}.
\end{align*}
{As $\mathcal{T}^n_{{\Omega_0}}$ is self-adjoint on the Hilbert space $\mathbb{H}_{{\Omega_0}}$  then
$$
\frac{\langle \mathcal{T}^n_{{\Omega_0}} H_\Omega, H_{\Omega_0}\rangle_{{\Omega_0}}}{\langle H_\Omega, H_{\Omega_0}\rangle_{{\Omega_0}}}=\frac{\langle H_\Omega, \mathcal{T}^n_{{\Omega_0}}H_{\Omega_0}\rangle_{{\Omega_0}}}{\langle H_\Omega, H_{\Omega_0}\rangle_{{\Omega_0}}}=\lambda_n(\Omega_0).
$$}
Arguing as in \cite{GHM}, we can prove that
\begin{align}\label{con-R}
{\lim_{\Omega\to \Omega_0}\frac{\langle \mathscr{R}_n^{\Omega_0,\Omega}H_\Omega, H_{\Omega_0}\rangle_{{\Omega_0}}}{\langle H_\Omega, H_{\Omega_0}\rangle_{{\Omega_0}}}={\langle \mathscr{R}_n^{\Omega_0,\Omega_0}H_{\Omega_0}, H_{\Omega_0}}\rangle_{{\Omega_0}}}.
\end{align}
Then we deduce that  $\Omega\mapsto \lambda_n(\Omega)$ is differentiable at $\Omega_0$ with 
\begin{align*}
\lambda_n^\prime(\Omega_0)=&{\langle \mathscr{R}_n^{\Omega_0,\Omega_0}H_{\Omega_0}, H_{\Omega_0}\rangle_{{\Omega_0}}}.
\end{align*}
From \eqref{Iden-t1} we infer
\begin{align*}
 \mathscr{R}_n^{\Omega_0,\Omega_0}H_{\Omega_0}&=\begin{pmatrix}
\nu_{1,\Omega_0}^{-1}\mathcal{T}^n_{1,\Omega_0}H_{\Omega_0}\\
-\nu_{2,\Omega_0}^{-1}\mathcal{T}^n_{2,\Omega_0}H_{\Omega_0}
\end{pmatrix}\\
 &=\lambda_n(\Omega_0)
 \begin{pmatrix}
\nu_{1,\Omega_0}^{-1}h_{1,\Omega_0}\\
{-}\nu_{2,\Omega_0}^{-1}h_{2,\Omega_0}
\end{pmatrix}.
\end{align*}
Combining the preceding identities with  \eqref{inner-P} and \eqref{MeasureB-1}
\begin{align*}
\lambda_n^\prime(\Omega_0)=&\lambda_n(\Omega_0)\left(\int_0^\pi \nu_{1,\Omega_0}^{-1}(\phi)h_{1,\Omega_0}^2(\phi)d\mu_1(\phi)+{\tfrac{d_2}{d_1}}\int_0^\pi \nu_{2,\Omega_0}^{-1} h_{2,\Omega_0}^2(\phi)d\mu_2(\phi) \right).
\end{align*}
Then using Proposition \ref{Lem-meas}-(i) since $H_{\Omega_0}$ is normalized and $\lambda_n(\Omega_0)>0$, we find
\begin{align*}
\lambda_n^\prime(\Omega_0)\geqslant &\mathtt{M}^{-1}\lambda_n(\Omega_0)>0.
\end{align*}
This implies  that 
$ \lambda_n^\prime(\Omega_0)>0$ for any $\Omega_0\in(\overline\Omega_2,\overline\Omega_2)$, and thus $\Omega\in (\overline\Omega_2,\overline\Omega_1)\mapsto \lambda_n(\Omega)$ is strictly increasing. Therefore the proof of Proposition \ref{prop-operatorV2} is now achieved.}
 \end{proof}
Next we shall establish  the following result.
\begin{pro}\label{prop-kernel-onedim}
Let $n\geqslant 1$  and $r_{0,1},r_{0,2}$ satisfy the assumptions ${\bf{(H)}}$. Set
\begin{equation}\label{set-eigenvalues}
\mathscr{S}_n:=\Big\{\Omega\in[\overline\Omega_2,\overline\Omega_1] \quad\textnormal{s.t.}\quad \lambda_n(\Omega)=1\Big\}.
\end{equation} Then the following holds true
\begin{enumerate}
\item There exists $n_0\in\N$ such that for any $n\geqslant n_0$, the set $\mathscr{S}_n$ is nonempty and contains a single point denoted by   $\Omega_n$.
\item The  sequence $(\Omega_n)_{n\geqslant n_0}$ is strictly increasing and satisfies 
$$
\lim_{n\to\infty}\Omega_n=\overline\Omega_1.
$$
\end{enumerate}

\end{pro}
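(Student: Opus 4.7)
The plan is to assemble Proposition \ref{prop-kernel-onedim} from the quantitative bounds on $\lambda_n(\Omega)$ collected in Proposition \ref{prop-operatorV2}. Throughout, I will use that $\Omega\mapsto \lambda_n(\Omega)$ is continuous on $(\overline\Omega_2,\overline\Omega_1)$ (being differentiable by part (5)) and strictly increasing. The uniqueness half of (1) is then immediate: the equation $\lambda_n(\Omega)=1$ can have at most one solution.

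For existence, I would first fix the interior test point $\Omega_\star:=\tfrac12(\overline\Omega_1+\overline\Omega_2)$. The upper bound of Proposition \ref{prop-operatorV2}(2), with any $\gamma\in(0,\tfrac14)$, yields $\lambda_n(\Omega_\star)\leq Cn^{-\gamma}\max_{i,j}|(\Omega_\star-\overline\Omega_i)(\Omega_\star-\overline\Omega_j)|^{-1/2}$, which tends to $0$ since $\Omega_\star$ is uniformly separated from both endpoints. Choose $n_0$ large enough so that $\lambda_n(\Omega_\star)<1/2$ for all $n\geq n_0$. Next I would show that for each such $n$, $\lambda_n(\Omega)\to+\infty$ as $\Omega\to\overline\Omega_1^-$. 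The crucial observation is the linear identity $\nu_{1,\Omega}(\phi)=\nu_{1,\overline\Omega_1}(\phi)+(\overline\Omega_1-\Omega)$, which follows from the very definition of $\nu_{1,\Omega}$. Combined with the degeneracy estimate in Proposition \ref{Lem-meas}(3), this gives, near the minimum point $\phi_0$ of $\nu_{1,\overline\Omega_1}$, the two-sided control $\overline\Omega_1-\Omega\leq \nu_{1,\Omega}(\phi)\leq C|\phi-\phi_0|^{1+\alpha}+(\overline\Omega_1-\Omega)$. Inserting into the lower bound of Proposition \ref{prop-operatorV2}(1) a bump function $\varrho$ of unit $L^2$ norm supported in an interval of width $\delta$ around $\phi_0$ and then tuning $\delta^{1+\alpha}\sim\overline\Omega_1-\Omega$, one produces a lower bound of the form $\lambda_n(\Omega)\gtrsim C_n(\overline\Omega_1-\Omega)^{-\alpha/(1+\alpha)}$, which diverges as $\Omega\to\overline\Omega_1^-$. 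The intermediate value theorem, applied to the continuous increasing function $\Omega\mapsto \lambda_n(\Omega)$ on $(\Omega_\star,\overline\Omega_1)$, then produces the required $\Omega_n$.

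For part (2), the strict monotonicity of $(\Omega_n)_{n\geq n_0}$ follows from combining parts (4) and (5) of Proposition \ref{prop-operatorV2}: since $\lambda_{n+1}(\Omega_n)<\lambda_n(\Omega_n)=1=\lambda_{n+1}(\Omega_{n+1})$ and $\lambda_{n+1}$ is strictly increasing in $\Omega$, one concludes $\Omega_{n+1}>\Omega_n$. Being increasing and bounded above by $\overline\Omega_1$, the sequence converges to some $\Omega_\infty\in[\Omega_{n_0},\overline\Omega_1]$. Assume for contradiction that $\Omega_\infty<\overline\Omega_1$. Then all $\Omega_n$ lie in the compact subinterval $[\Omega_{n_0},\Omega_\infty]\Subset(\overline\Omega_2,\overline\Omega_1)$, on which $\min_{i,n}|\Omega_n-\overline\Omega_i|$ is positive. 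The upper bound of Proposition \ref{prop-operatorV2}(2) then gives $1=\lambda_n(\Omega_n)\leq C'n^{-\gamma}$, which is violated for $n$ large. Hence $\Omega_\infty=\overline\Omega_1$.

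The technical obstacle, and the only step that is not an immediate consequence of the previous section, is the divergence $\lambda_n(\Omega)\to+\infty$ as $\Omega\to\overline\Omega_1^-$. It requires combining the degeneracy of $\nu_{1,\overline\Omega_1}$ at its minimum with a judicious concentration of the test function $\varrho$, and care must be taken because $H^n_{1,1}(\phi,\varphi)$ has a (locally integrable) logarithmic singularity on the diagonal; the joint scaling in $\delta$ and $\Omega$ described above is designed to absorb this mild singularity while still producing an unbounded lower bound.
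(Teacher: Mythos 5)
Your proof follows the same overall architecture as the paper's: the upper bound of Proposition \ref{prop-operatorV2}(2) at the midpoint $\Omega_\star$ to make $\lambda_n(\Omega_\star)\le 1/2$ for $n\ge n_0$, a blow-up of $\lambda_n(\Omega)$ as $\Omega\to\overline\Omega_1^-$, the intermediate value theorem for existence, strict monotonicity in $\Omega$ for uniqueness, and for part (2) exactly the paper's monotonicity argument plus the contradiction $1=\lambda_n(\Omega_n)\lesssim n^{-\gamma}$ if $\Omega_n\not\to\overline\Omega_1$ (evaluating the upper bound at $\Omega_n$ rather than at the limit point, an immaterial variation). The genuine difference is the blow-up step: the paper fixes an $\Omega$-independent singular weight $\varrho(\phi)=c_\beta|\phi-\phi_0|^{-\beta}$, passes to the limit by Fatou, and shows the limiting integral diverges, whereas you use $\Omega$-dependent bump functions at scale $\delta^{1+\alpha}\sim\overline\Omega_1-\Omega$, which (using $\nu_{1,\Omega}=\nu_{1,\overline\Omega_1}+\overline\Omega_1-\Omega$ and Proposition \ref{Lem-meas}(3)) gives the quantitative rate $(\overline\Omega_1-\Omega)^{-\alpha/(1+\alpha)}$; this is a legitimate and in fact slightly more informative alternative.

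There is, however, one point where your sketch is incomplete. Your bound implicitly uses that the factor $H^n_{1,1}(\phi,\varphi)\,\tfrac{\sin^{1/2}(\phi)\,r_{0,1}(\phi)}{\sin^{1/2}(\varphi)\,r_{0,1}(\varphi)}$ is bounded below by a positive constant on the support of $\varrho\otimes\varrho$. That is true only if the minimizer $\phi_0$ of $\nu_{1,\overline\Omega_1}$ lies in the open interval $(0,\pi)$; nothing in Proposition \ref{Lem-meas} excludes $\phi_0\in\{0,\pi\}$, and at a pole the weight $r_{0,1}$ vanishes, so by {\bf (H2)} the factor behaves like $\phi^{n+\frac12}\varphi^{n+\frac12}/(\phi+\varphi)^{2n+1}$, which tends to $0$ along directions $\varphi\ll\phi$ and admits no pointwise lower bound. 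This is precisely the case the paper treats separately (with the condition $\alpha+2\beta>1$ and the change of variables $\varphi=\phi\theta$). Your scheme survives because this ratio is homogeneous of degree zero, so its integral over $[0,\delta]^2$ is $c_n\delta^2$ and the same scaling in $\delta$ still yields divergence, but this case must be addressed explicitly. Conversely, the diagonal logarithmic singularity you single out as the delicate point is harmless for a lower bound: the kernel is positive, and one may simply discard the singular enhancement via $F_n\ge 1$.
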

\begin{proof}
\medskip
\noindent
{\bf (1)}
To verify that the set $\mathscr{S}_n$ is non empty for large $n$ we shall use the mean value theorem.
 From \eqref{Est-kern-PM} we find that 
 $$
 0<\lambda_n(\Omega)\leqslant\sup_{i,j}|\overline\Omega_i-\Omega
|^{-\frac12}|\overline\Omega_j-\Omega
|^{-\frac12} n^{-\gamma},
 $$
for any $\gamma\in(0,\frac14)$. Thus by  taking $\Omega=\Omega_{\textnormal{av}}=\frac{\overline\Omega_1+\overline\Omega_2}{2}$ we deduce that 
$$
 0<\lambda_n(\Omega_{\textnormal{av}})\leqslant\tfrac{{2}}{\overline\Omega_1-\overline\Omega
_2} n^{-\gamma}.
 $$
 Consequently, there exists $n_0$ such that for any $n\geqslant n_0$
\begin{equation}\label{liminfTT0}
0\leq\lambda_n(\Omega_{\textnormal{av}})\leqslant\tfrac12\cdot
\end{equation}
Next, we intend to show that 
\begin{equation}\label{liminfPP}
\lim_{\Omega\to \overline\Omega_1}\lambda_n(\Omega)=\infty.
\end{equation}
Using the lower bound of $\lambda_n(\Omega)$ in Proposition \ref{prop-operatorV2}--(2),  we find by virtue of Fatou Lemma
\begin{equation*}
{d_1}\bigintsss_0^\pi\bigintsss_0^\pi\frac{H_{1,1}^n(\phi,\varphi)}{\nu_{1,\overline\Omega_1}^{\frac12}(\varphi)\nu_{1,\overline\Omega_1}^{\frac12}(\phi)}\frac{\sin^\frac12(\phi){ r_{0,1}(\phi)}}{\sin^\frac12(\varphi){ r_{0,1}(\varphi)}} \varrho(\phi)\varrho(\varphi)d\varphi d\phi\leqslant \liminf_{\Omega\rightarrow \overline\Omega_1} \lambda_n(\Omega),
\end{equation*}
for any nonnegative $\varrho$ satisfying $\displaystyle{\int_0^\pi\varrho^2(\phi)d\phi=1}$.  According to Proposition \ref{Lem-meas}--$(3)$, the function $\nu_{1,\overline\Omega_1}$ reaches its minimum at a point $\phi_0\in[0,\pi]$ and
$$
\forall\, \phi\in[0,\pi],\quad 0\leqslant \nu_{1,\overline\Omega_1}(\phi)\leq C|\phi-\phi_0|^{1+\alpha}.
$$
There are two possibilities: $\phi_0\in(0,\pi)$ or $\phi_0\in\{0,\pi\}$. Let us start with the first case and we shall  take $\varrho$ as follows
$$
\varrho(\phi)=\frac{c_\beta}{|\phi-\phi_0|^\beta},
$$
with $\beta<\frac12$ and the constant $c_\beta$ is chosen such that $\varrho$ is normalized. Hence using the preceding estimates  we get
\begin{equation}\label{Tramb1}
C\bigintsss_0^\pi\bigintsss_0^\pi\frac{H_{1,1}^n(\phi,\varphi)}{|\phi-\phi_0|^{\frac{1+\alpha}{2}+\beta}|\varphi-\phi_0|^{\frac{1+\alpha}{2}+\beta}}\frac{\sin^\frac12(\phi) r_1(\phi)}{\sin^\frac12(\varphi) r_1(\varphi)} d\varphi d\phi\leqslant\liminf_{\Omega\rightarrow\overline\Omega_1} \lambda_n(\Omega).
\end{equation}
Let $\varepsilon>0$ such that $[\phi_0-\varepsilon,\phi_0+\varepsilon]\subset (0,\pi)$. According to \eqref{HMM11} the function    $H_{1,1}^n$ is strictly positive in the domain $(0,\pi)^2$, hence  there exists $\delta>0$ such 
$$
\forall\, (\phi,\varphi)\in[\phi_0-\varepsilon,\phi_0+\varepsilon]^2, \quad \frac{{ H^n_{1,1}(\phi,\varphi)}\sin^\frac12(\phi){r_{0,1}}(\phi)}{\sin^\frac12(\varphi){ r_{0,1}}(\varphi)}\geqslant \delta.
$$
Thus we obtain
\begin{equation*}
C\bigintsss_{\phi_0-\varepsilon}^{\phi_0+\varepsilon}\bigintsss_{\phi_0-\varepsilon}^{\phi_0+\varepsilon}\frac{d\phi\,d\varphi}{|\phi-\phi_0|^{\frac{1+\alpha}{2}+\beta}|\varphi-\phi_0|^{\frac{1+\alpha}{2}+\beta}}\leqslant \liminf_{\Omega\rightarrow \overline\Omega_1} \lambda_n(\Omega).
\end{equation*}
By taking $\frac{1+\alpha}{2}+\beta>1$, which is an admissible configuration, we
find
$$
\lim_{\Omega\rightarrow \overline\Omega_1} \lambda_n(\Omega)=\infty.
$$
Next, let us move to the second alternative   where $\phi_0\in\{0,\pi\}$ and without 
any loss of generality we can only deal with the case $\phi_0=0.$ From \eqref{HMM11} and using the inequality
$$
\forall \, x\in[0,1),\quad F_n(x)\geqslant 1,
$$
we obtain
\begin{align*}
\forall \phi,\varphi\in(0,\pi),\quad H_{1,1}^n(\phi,\varphi)\geqslant&c_n\frac{\sin(\varphi){r_{0,1}^{n-1}(\phi)}{r_{0,1}^{n+1}}(\varphi)}{\left[R_{1,1}(\phi,\varphi)\right]^{n+\frac12}}.
\end{align*}
Combined with the assumption ${\bf{(H2)}}$, it implies
\begin{align*}
\forall \phi,\varphi\in(0,\pi),\quad H_{1,1}^n(\phi,\varphi)\geqslant&c_n\frac{\sin^{n+2}(\varphi)\sin^{n-1}(\phi)}{\left[R_{1,1}(\phi,\varphi)\right]^{n+\frac12}}.
\end{align*}
 Plugging this into \eqref{Tramb1} we find
\begin{equation*}
C_n\bigintsss_0^\pi\bigintsss_0^\pi\frac{1}{\phi^{\frac{1+\alpha}{2}+\beta}\varphi^{\frac{1+\alpha}{2}+\beta}} \frac{\sin^{n+\frac12}(\varphi)\sin^{n+\frac12}(\phi)}{\left[R_{1,1}(\phi,\varphi)\right]^{n+\frac12}}d\varphi d\phi\leqslant \liminf_{\Omega\rightarrow \overline\Omega_1} \lambda_n(\Omega).
\end{equation*}
By virtue of ({\bf{H2}}) and the mean value theorem we find according to \eqref{Rij},
$$
\forall \phi,\varphi\in[0,\tfrac\pi2],\quad  R_{1,1}(\phi,\varphi)\leqslant C(\phi+\varphi)^2.
$$
Thus
 \begin{equation*}
C_n\bigintsss_0^{\frac\pi2}\bigintsss_0^{\frac\pi2}\frac{1}{\phi^{\frac{1+\alpha}{2}+\beta}\varphi^{\frac{1+\alpha}{2}+\beta}} \frac{\varphi^{n+\frac12}\phi^{n+\frac12}}{(\phi+\varphi)^{2n+1}}d\varphi d\phi\leqslant \liminf_{\Omega\rightarrow \overline\Omega_1} \lambda_n(\Omega)
\end{equation*}
which yields  after simplification
\begin{equation*}
C_n\bigintsss_0^{\frac\pi2}\bigintsss_0^{\frac\pi2}\frac{\varphi^{n-\frac\alpha2-\beta}\phi^{n-\frac\alpha2-\beta}}{(\phi+\varphi)^{2n+1}}d\varphi d\phi\leqslant \liminf_{\Omega\rightarrow \overline\Omega_1} \lambda_n(\Omega).
\end{equation*}
Making the change of variables $\varphi=\phi \theta$ we obtain
\begin{align*}
\bigintsss_0^{\frac\pi2}\bigintsss_0^{\frac\pi2}\frac{\varphi^{n-\frac\alpha2-\beta}\phi^{n-\frac\alpha2-\beta}}{(\phi+\varphi)^{2n+1}}d\varphi d\phi=&\bigintsss_0^{\frac\pi2}\phi^{-\alpha-2\beta}\bigintsss_0^{\frac{\pi}{2\phi}}\frac{\theta^{n-\frac\alpha2-\beta}}{(1+\theta)^{2n+1}}d\theta d\phi.
\end{align*}
This integral diverges provided that $\alpha+2\beta>1$ and thus  under this admissible assumption we obtain \eqref{liminfPP}.
By the { intermediate value theorem} , we achieve in view of \eqref{liminfTT0} the existence of at least one solution in the range $(\overline\Omega_2,\overline\Omega_1)$  for the equation  
$$
\lambda_n(\Omega)=1,
$$
provided that $n\geqslant n_0$.
Consequently, using Proposition \ref{prop-operatorV2}-$(5)$ we deduce  that  the set $\mathscr{S}_n$ contains only one single element for any  $n\geqslant n_0$.

\medskip
\noindent
{\bf (2)} By the preceding construction  $\Omega_n$ satisfies  for  $n\geqslant n_0$ the equation
$$
\lambda_n(\Omega_n)=1.
$$
According to Proposition \ref{prop-operatorV2}--$(4)$ the sequence $k\mapsto \lambda_k(\Omega_n)$ is strictly decreasing. It implies in particular that
$$
\lambda_{n+1}(\Omega_n)<\lambda_n(\Omega_n)=1.
$$
Hence by \eqref{liminfPP} one may apply { the intermediate value theorem} and find  an element of the set $\mathscr{S}_{n+1}$
in the interval $(\Omega_n,\overline\Omega_1)$. This means that $\overline\Omega_1>\Omega_{n+1}>\Omega_n$ and thus this sequence is strictly increasing and therefore it converges to  some element $\overline{\Omega}_2<\overline{\Omega}\leqslant \overline\Omega_1.$
It remains to prove that $\overline{\Omega}=\overline\Omega_1.$  For this purpose, we shall argue by contradiction by assuming that $\overline{\Omega}_2<\overline{\Omega}<\overline\Omega_1.$
By Proposition \ref{prop-operatorV2}--$(5),$ we know that  $\Omega\mapsto \lambda_n({\Omega})$ is strictly increasing and thus  
\begin{align}\label{low-contra1} 
\forall n\geq n_0,\quad \lambda_n(\overline{\Omega})> \lambda_n({\Omega}_n)=1.
\end{align}
Using the upper-bound estimate stated in Proposition \ref{prop-operatorV2}--$(2)$ 
we obtain for any $\gamma\in(0,\tfrac14)$
\begin{align*}
\forall\, n\geq1,\quad 0< \lambda_n(\overline{\Omega})&\leqslant C  n^{-\gamma}\max_{i,j}\left|(\overline\Omega-\overline\Omega_i)(\overline\Omega-\overline\Omega_j)\right|^{-\frac12}\\
&\leqslant C  n^{-\gamma}.
\end{align*}
By taking the limit as $n\to\infty$ we find
$$
\lim_{n\to\infty}\lambda_n(\overline{\Omega})=0.
$$
This contradicts \eqref{low-contra1} and the proof is complete.
\end{proof}
\subsection{Regularity of the eigenfunctions}\label{Reg-EigenF}
This main concern of this section is to perform  strong regularity  of the eigenfunctions associated to the operator $\mathcal{T}_\Omega^n$ and  
constructed in Lemma \ref{lem-ssym} and  \mbox{Proposition \ref{prop-operatorV2}.} Notice that we have already  seen that these eigenfunctions belongs to a weak function space $\mathbb{H}_{\Omega}$. 
In what follows,  we shall prove  first their continuity and later their H\"{o}lder regularity.

\subsubsection{Continuity}
The main goal is to establish the following result. 
\begin{pro}\label{prop-higher-regM}
Let $\Omega\in(\overline\Omega_2,\overline\Omega_1),$  $r_{0,1},r_{0,2}$ satisfy the assumptions ${\bf{(H)}}$. Let  $H=(h_1,h_2)$ be
an eigenfunction for $\mathcal{T}^n_{{\Omega}}$ associated to a non-vanishing  eigenvalue, for $n\geqslant2$. Then $H$ is continuous
\mbox{on $[0,\pi]$}  and  satisfies the boundary condition $H(0)=H(\pi)=0$.  
\end{pro}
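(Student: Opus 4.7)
The plan is to use the eigenvalue identity $\lambda H = \mathcal{T}^n_{\Omega} H$ (with $\lambda\neq 0$) as a bootstrap mechanism: each $h_i$ is represented as an integral operator acting on $(h_1,h_2)$, so any regularity of the integral transfers to $h_i$. By Proposition \ref{Lem-meas}, the map $1/\nu_{i,\Omega}$ is continuous on $[0,\pi]$ and uniformly bounded under the hypothesis $\Omega\in(\overline\Omega_2,\overline\Omega_1)$, hence is a continuous bounded multiplier; consequently it suffices to establish the desired continuity and boundary vanishing for the maps $\phi\mapsto\int_0^\pi H_{i,j}^n(\phi,\varphi) h_j(\varphi)\,d\varphi$. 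For \emph{continuity on the open interval} $(0,\pi)$, fix a compact sub-interval $J\subset(0,\pi)$ and apply Cauchy--Schwarz against the measure $d\mu_j$:
$$\Big|\int_0^\pi H_{i,j}^n(\phi,\varphi) h_j(\varphi)\,d\varphi\Big|^2 \leq \|h_j\|_{\mu_j}^2 \int_0^\pi \frac{|H_{i,j}^n(\phi,\varphi)|^2}{\sin(\varphi)\,r_{0,j}^2(\varphi)\,\nu_{j,\Omega}(\varphi)}\,d\varphi.$$
Using the decay estimate of Lemma \ref{Lem-Hndecreasing} with some $\beta\in(0,1/4)$, together with ${\bf(H2)}$ and the lower bound for $\nu_{j,\Omega}$, the integrand is dominated by $C\,|\phi-\varphi|^{-4\beta}/r_{0,i}^3(\phi)$, which is uniformly integrable for $\phi\in J$ (since $r_{0,i}(\phi)$ is bounded below on $J$). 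Dominated convergence combined with the pointwise continuity of $H_{i,j}^n$ on $(0,\pi)^2$ yields continuity of the integrals at every interior point, hence of each $h_i$ on $(0,\pi)$.

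For the \emph{boundary values} $h_i(0)=h_i(\pi)=0$, we exploit the factor $r_{0,i}^{n-1}(\phi)$ appearing in the explicit formula \eqref{HMM11} for $H_{i,j}^n$. Since $n\geq 2$ and $r_{0,i}(0)=r_{0,i}(\pi)=0$ by ${\bf(H2)}$, this prefactor vanishes at the endpoints. Writing $H_{i,j}^n(\phi,\varphi)=r_{0,i}^{n-1}(\phi)\,\widetilde H_{i,j}^n(\phi,\varphi)$, the task is to control $\int_0^\pi \widetilde H_{i,j}^n(\phi,\varphi) h_j(\varphi)\,d\varphi$ for $\phi\to 0$ (the case $\phi\to\pi$ being analogous by ${\bf(H3)}$). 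For $i\neq j$, ${\bf(H4)}$ keeps $R_{i,j}$ bounded below, so $\widetilde H_{i,j}^n$ is uniformly bounded and Cauchy--Schwarz against $d\mu_j$ directly yields a uniform bound. For $i=j$, we split $\int_0^\pi=\int_0^{2\phi}+\int_{2\phi}^\pi$: on the far region $\varphi\geq 2\phi$ one has $1-x\geq c>0$, so $F_n$ is bounded and the integral is controlled by $\|h_i\|_{\mu_i}$ times an integrable $\varphi$-factor; on the near region, the exact asymptotic $\mathcal{H}^n(x)=O(x^{n+1/2})$ at $x=0$ (visible in \eqref{trax1}) provides the necessary additional smallness. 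Assembling these estimates yields $|h_i(\phi)|\leq C\,r_{0,i}^{n-1}(\phi)\,\Phi(\phi)$ with $\Phi(\phi)$ growing strictly slower than $r_{0,i}^{-(n-1)}(\phi)$, so that $h_i(\phi)\to 0$ at the poles. Combined with the interior continuity, this also upgrades $h_i$ to a continuous function on the closed interval $[0,\pi]$.

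The \emph{main obstacle} is the boundary analysis. The naive Cauchy--Schwarz using Lemma \ref{Lem-Hndecreasing} produces an estimate of order $r_{0,i}^{-3}(\phi)$, which masks the genuine smallness of $H_{i,j}^n$ at the poles and is therefore insufficient (especially for $n=2$) when multiplied by the vanishing prefactor $r_{0,i}^{n-1}(\phi)$. Recovering the correct behaviour requires going back to the integral representation \eqref{trax1} — exploiting $\mathcal{H}^n(x)=x^{n+1/2}\cdot(\text{bounded integral})$, which combined with $x=4r_{0,i}(\phi)r_{0,j}(\varphi)/R_{i,j}(\phi,\varphi)$ effectively produces the extra factor $r_{0,i}^{n+1/2}(\phi)$ — and carefully splitting the $\varphi$-integration to separate the near-diagonal logarithmic singularity of $F_n$ from the far region where the kernel behaves nicely. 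This fine structural information beyond Lemma \ref{Lem-Hndecreasing} is what enables the conclusion $h_i(0)=h_i(\pi)=0$.
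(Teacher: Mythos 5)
Your reduction to the operators $\phi\mapsto\int_0^\pi H_{i,j}^n(\phi,\varphi)h_j(\varphi)\,d\varphi$, the interior continuity via Cauchy--Schwarz against $d\mu_j$ plus dominated convergence, and the treatment of the cross terms $i\neq j$ (where {\bf(H4)} makes $R_{i,j}\geqslant\delta$ and the kernel uniformly smooth) are all sound and consistent with the paper. The gap is in the boundary analysis of the self-interaction term $i=j$. There the dangerous regime is $\varphi\sim\phi\to0$, where the argument $x=4r_{0,i}(\phi)r_{0,i}(\varphi)/R_{i,i}(\phi,\varphi)$ is of order one (it equals $1$ on the diagonal), so the smallness $\mathcal{H}^n(x)=O(x^{n+1/2})$ you invoke from \eqref{trax1} is simply not available in the near region $[0,2\phi]$, and in the far region $\varphi\geqslant 2\phi$ it only helps when $\varphi$ stays of order one. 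Quantitatively: writing $H_{i,i}^n(\phi,\varphi)=r_{0,i}^{n-1}(\phi)\widetilde H_{i,i}^n(\phi,\varphi)$ and using {\bf(H2)} and \eqref{estimat-1}, one has $\widetilde H_{i,i}^n(\phi,\varphi)\lesssim \varphi^{\,n+2}(\phi+\varphi)^{-2n-1}\bigl(1+\log\tfrac{\phi+\varphi}{|\phi-\varphi|}\bigr)$ for $\phi,\varphi$ small, so the Cauchy--Schwarz weight $\int_0^{\pi/2}|\widetilde H_{i,i}^n(\phi,\varphi)|^2\,\bigl(\sin\varphi\,r_{0,i}^2(\varphi)\nu_{i,\Omega}(\varphi)\bigr)^{-1}d\varphi$ is of order $\phi^{-2n}$ (substitute $\varphi=\phi\theta$). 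After taking the square root and restoring the prefactor $r_{0,i}^{n-1}(\phi)$ you only obtain $|h_i(\phi)|\lesssim\phi^{-1}$, which is neither bounded nor vanishing; your claimed bound $|h_i(\phi)|\leqslant C\,r_{0,i}^{n-1}(\phi)\Phi(\phi)$ with $\Phi$ growing slower than $r_{0,i}^{-(n-1)}$ does not follow from the estimates you put forward.

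The structural reason is that with only the information $h_j\in L^2_{\mu_j}$, no single application of Cauchy--Schwarz can produce boundedness near the poles, however the kernel is split; a genuine bootstrap is needed, and this is exactly how the paper proceeds. It first rewrites the eigenvalue equations for $\mathtt h_j=\sin^{3/2}(\cdot)h_j\in L^2(d\varphi)$ and shows the symmetrized kernel $\sin^{3/2}(\phi)\sin^{-3/2}(\varphi)H_{i,j}^n(\phi,\varphi)$ is bounded by $1+\log\tfrac{\sin\phi+\sin\varphi}{|\phi-\varphi|}$, so Cauchy--Schwarz gives $\mathtt h_j\in L^\infty$, i.e.\ $|h_j(\phi)|\lesssim\sin^{-3/2}(\phi)$. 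This pointwise bound is then re-inserted into the equation (an $L^1$-type kernel estimate, with the change of variables $\varphi=\phi\theta$), improving it to $|h_j(\phi)|\lesssim\sin^{-1/2}(\phi)$, and a third pass yields $|H(\phi)|\lesssim\sin^{n-1}(\phi)+\sin^{1/2}(\phi)$, which for $n\geqslant2$ gives boundedness, continuity up to $[0,\pi]$ by dominated convergence, and $H(0)=H(\pi)=0$. To repair your argument you would need to iterate in this way (or supply some other a priori pointwise decay of $h_j$ near the poles) before the extracted factor $r_{0,i}^{n-1}(\phi)$ can be exploited.
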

 
\begin{proof}
Let $H=(h_1,h_2)\in \mathbb{H}_{{\Omega}}$ be any non trivial eigenfunction of the operator  $\mathcal{T}^n_{{\Omega}}$ defined through the coupled integral equations  \eqref{Tnomega2} and associated to an eigenvalue $\lambda\neq0$, then for almost $\varphi\in[0,\pi],$

\begin{align}\label{Tnomega1}
d_1\int_0^{\pi} \tfrac{H_{1,1}^n(\phi,\varphi)}{\nu_{1,\Omega}(\phi)}h_1(\varphi)d\varphi-d_2\int_0^\pi \tfrac{H_{1,2}^n(\phi,\varphi)}{\nu_{1,\Omega}(\phi)}h_2(\varphi)d\varphi&=\lambda h_1(\phi)\nonumber \\
-d_1\int_0^{\pi} \tfrac{H_{2,1}^n(\phi,\varphi)}{\nu_{2,\Omega}(\phi)}h_1(\varphi)d\varphi+d_2\int_0^\pi \tfrac{H_{2,2}^n(\phi,\varphi)}{\nu_{2,\Omega}(\phi)}h_2(\varphi)d\varphi&=\lambda h_2(\phi).
\end{align}
From \eqref{MeasureB-1} combined with the fact 
that  $r_{0,i}\in L^2_{\mu_i}$ and the assumptions $({\bf{H}})$ we deduce that  the functions $\mathtt{h}_1: \varphi\in [0,\pi]\mapsto \sin^{\frac32}(\varphi)h_1(\varphi)$ and $\mathtt{h}_2: \varphi\in [0,\pi]\mapsto \sin^{\frac32}(\varphi)h_1(\varphi) $ belong to  $ L^2((0,\pi);d\varphi)$. Therefore the equations \eqref{Tnomega1} can be written in terms of $\mathtt{h}_1$ and $\mathtt{h}_2$ as follows
\begin{align}\label{Tnomega2-0}
\tfrac{d_1}{\nu_{1,\Omega}(\phi)}\int_0^{\pi} \tfrac{H_{1,1}^n(\phi,\varphi)\sin^{\frac32}(\phi)}{\sin^{\frac32}(\varphi)}\mathtt{h}_1(\varphi)d\varphi-\tfrac{d_2}{\nu_{1,\Omega}(\phi)}\int_0^\pi \tfrac{H_{1,2}^n(\phi,\varphi)\sin^{\frac32}(\phi)}{\sin^{\frac32}(\varphi)}\mathtt{h}_2(\varphi)d\varphi&=\lambda \mathtt{h}_1(\phi),\nonumber \\
-\tfrac{d_1}{\nu_{2,\Omega}(\phi)}\int_0^{\pi} \tfrac{H_{2,1}^n(\phi,\varphi)\sin^{\frac32}(\phi)}{\sin^{\frac32}(\varphi)}\mathtt{h}_1(\varphi)d\varphi+\tfrac{d_2}{\nu_{2,\Omega}(\phi)}\int_0^\pi \tfrac{H_{2,2}^n(\phi,\varphi)\sin^{\frac32}(\phi)}{\sin^{\frac32}(\varphi)}\mathtt{h}_2(\varphi)d\varphi&=\lambda \mathtt{h}_2(\phi).
\end{align}
According  to the definition of $H_{i,j}^n$ in \eqref{HMM11} combined with the  assumption {\bf{(H2)}} we obtain for some constant $c_n$ the formula
 \begin{align*}
\sin^{-\frac32}(\varphi){\sin^{\frac32}(\phi)}H^n_{i,j}(\phi,\varphi)\leqslant &c_n\frac{r_{0,i}^{n+\frac12}(\phi)r_{0,j}^{n+\frac12}(\varphi)}{\left[{R_{i,j}}(\phi,\varphi)\right]^{n+\frac12}} F_n\left(\frac{4{r_{0,i}}(\phi){r_{0,j}}(\varphi)}{R_{i,j}(\phi,\varphi)}\right).
\end{align*}
Using \eqref{estimat-1} and the assumptions {\bf{(H2)}}- {\bf{(H4)}}  yields
\begin{align}\label{H-XL1}
\sin^{-\frac32}(\varphi){\sin^{\frac32}(\phi)}H_{i,j}^n(\phi,\varphi)\lesssim&\frac{{r_{0,i}^{n+\frac12}}(\phi){r_{0,j}^{n+\frac12}}(\varphi)}{{R_{i,j}}^{n+\frac12}(\phi,\varphi)}\left(1+\ln\left(\frac{\sin(\phi)+\sin(\varphi)}{|\phi-\varphi|}\right) \right)\\
\nonumber\lesssim&1+\ln\left(\frac{\sin(\phi)+\sin(\varphi)}{|\phi-\varphi|}\right). 
\end{align}
Then inserting this estimate into \eqref{Tnomega2-0} and  using Cauchy-Schwarz inequality  combined with  the fact that $\nu_{i,\Omega}$ is  bounded  bounded away from zero according to Proposition \ref{Lem-meas}-(1)
\begin{align*}
 (|\mathtt{h}_1|+|\mathtt{h}_2|)(\phi)\lesssim& \bigintsss_0^\pi\left(1+\ln\left(\frac{\sin(\phi)+\sin(\varphi)}{|\phi-\varphi|}\right)\right)(|\mathtt{h}_1|+|\mathtt{h}_2|)(\varphi)d\varphi\\
 \lesssim& \||\mathtt{h}_1|+|\mathtt{h}_2|\|_{L^2(d\varphi)}\left(1+\bigintsss_0^\pi\ln^2\left(\frac{\sin(\phi)+\sin(\varphi)}{|\phi-\varphi|}\right)d\varphi\right)^{\frac12}\\
  \lesssim& \|H\|\quad \forall \phi\in (0,\pi)\,\textnormal{a.e.}
\end{align*}
It follows that $(\mathtt{h}_1,\mathtt{h}_2)$ is bounded. Therefore, we infer from  \eqref{Tnomega1}, \eqref{HMM11} and  {\bf{(H2)}} 
\begin{align}\label{split}
 |H(\phi)|\lesssim&\|(\mathtt{h}_1,\mathtt{h}_2)\|_{L^\infty}\sup_{i,j}\int_0^\pi  \sin^{-\frac32}(\varphi)H_{i,j}^n(\phi,\varphi) d\varphi,\nonumber\\
 \lesssim&  \|H\|\sup_{i,j}\bigintss_0^\pi\frac{{\sin^{n+\frac12}}(\varphi)\sin^{n-1}(\phi)}{\left[R_{i,j}(\phi,\varphi)\right]^{n+\frac12}} F_n\left(\frac{4{r_{0,i}}(\phi){r_{0,j}}(\varphi)}{R_{i,j}(\phi,\varphi)}\right)d\varphi.
\end{align}
Using once again \eqref{estimat-1} and the assumptions {\bf{(H2)}}- {\bf{(H4)}}  we deduce that
 \begin{align*}
 |H(\phi)|\lesssim& \|H\|\bigintss_0^\pi\frac{\sin^{n-1}(\phi)\sin^{n+\frac12}(\varphi)}{\big((\sin(\phi)+\sin(\varphi))^2+(\cos\phi-\cos\varphi)^2\big)^{n+\frac12}}\Big(1+\ln\Big(\frac{\sin(\phi)+\sin(\varphi)}{|\phi-\varphi|}\Big)\Big)d\varphi.
\end{align*}
By symmetry we may  restrict the analysis to $\phi\in[0,\frac\pi2]$. Thus, from elementary trigonometric identities  we get

\begin{eqnarray*}
\inf_{\varphi\in[\pi/2,\pi]
\atop\phi\in[0,\pi/2]}\big[(\sin(\phi)+\sin(\varphi))^2+(\cos\phi-\cos\varphi)^2&=&{\inf_{\varphi\in[\pi/2,\pi]
\atop\phi\in[0,\pi/2]}}2(1-\cos(\phi+\varphi))\big]\\
&\geqslant& 2.
\end{eqnarray*}
It follows that
\begin{align*}
 |H(\phi)|\lesssim& \|H\|\bigintss_0^{\frac\pi2}\frac{\sin^{n-1}(\phi)\sin^{n+\frac12}(\varphi)}{\big(\sin(\phi)+\sin(\varphi)
 \big)^{2n+1}}\Big(1+\ln\Big(\frac{\sin(\phi)+\sin(\varphi)}{|\phi-\varphi|}\Big)\Big)d\varphi\\
 +&\|H\|\bigintss_{\frac\pi2}^\pi\Big(1+\ln\Big(\frac{\sin(\phi)+\sin(\varphi)}{|\phi-\varphi|}\Big)\Big)d\varphi.
\end{align*}
Thus
\begin{align*}
 |H(\phi)|\lesssim& \|H\|\bigintss_0^{\frac\pi2}\frac{\phi^{n-1}\varphi^{n+\frac12}}{\big(\phi+\varphi\big)^{2n+1}}\Big(1+\ln\Big(\frac{\phi+\varphi}{|\phi-\varphi|}\Big)\Big)d\varphi
 +\|H\|.
\end{align*}
Applying  the change of variables $\varphi=\phi \theta$ we get
\begin{align*}
 |H(\phi)|\lesssim& \|H\|_{\Omega}\phi^{-\frac12}\bigintss_0^{\frac{\pi}{2\phi}}\frac{\theta^{n+\frac12}}{\big(1+\theta\big)^{2n+1}}\Big(1+\ln\Big(\frac{1+\theta}{|1-\theta|}\Big)\Big)d\theta
 +\|H\|\\
 \lesssim& \|H\|\phi^{-\frac12}.
\end{align*}
Consequently we find 
$$
\sup_{\phi\in(0,\pi)}\sin^{\frac12}(\phi)| H(\phi)|\lesssim  \|H\|.
$$
Inserting  this estimate into \eqref{Tnomega1} and using the first line in \eqref{H-XL1} yields
\begin{align*}
|H(\phi)|\lesssim& \|H\| \sup_{i,j}\bigintsss_0^\pi \sin^{-\frac12}(\varphi) H_{i,j}^n(\phi,\varphi) d\varphi\\
\lesssim& \|H\|  \sup_{i,j}\bigintss_0^\pi\frac{\sin^{n+\frac32}(\varphi)\sin^{n-1}(\phi)}{R_{i,j}^{n+\frac12}(\phi,\varphi)}\left(1+\ln\left(\frac{\sin(\phi)+\sin(\varphi)}{|\phi-\varphi|}\right) \right) d\varphi.
\end{align*}
As before we can restrict $\phi\in [0,\frac\pi2]$ and by using the fact
$$
\inf_{\varphi\in[\pi/2,\pi]\\
\atop\phi\in[0,\pi/2]}R_{i,j}(\phi,\varphi)>0,
$$
we deduce by  splitting the integral
\begin{align*}
|H(\phi)|\lesssim&  \|H\|\sin^{n-1}(\phi)+\|H\| \bigintss_0^{\frac\pi2}\frac{\varphi^{n+\frac32}\phi^{n-1}}{(\phi+\varphi)^{2n+1}}\left(1+\ln\left(\frac{\phi+\varphi}{|\phi-\varphi|}\right) \right) d\varphi.
\end{align*}
Applying  the change of variables $\varphi=\phi \theta$ leads for $n\geq2$ to 
\begin{align*}
\forall \phi\in[0,\pi/2],\quad |H(\phi)|\lesssim& \|H\| \sin^{n-1}(\phi)+\|H\|\phi^{\frac12} \bigintss_0^{\frac{\pi}{2\phi}}
\frac{\theta^{n+\frac32}}{(1+\theta)^{2n+1}}\left(1+\ln\left(\frac{\theta+1}{|\theta-1|}\right) \right) d\theta\\
\lesssim& \|H\|\big(\phi^{n-1}+{ \phi^{\frac{1}{2}}}\big).
\end{align*}
Consequently we get
\begin{align*}
\forall \phi\in(0,\pi),\quad |H(\phi)|\lesssim& \|H\|\big( \sin^{n-1}(\phi)+ \sin^{\frac12}(\phi)\big).
\end{align*}
Therefore, we find that $H$ is bounded on  $(0,\pi)$ and using  the dominated convergence theorem one may  show that $H=(h_1,h_2)$ is in fact continuous on $[0,\pi]$ and satisfies for $n\geqslant2$ 
the boundary condition
$$
H(0)=H(\pi)=0.
$$
This ends the proof of Proposition \ref{prop-higher-regM}.
\end{proof}

\subsubsection{H\"older continuity of the eigenvalues}
We shall focus in   this section on  the H\"{o}lder regularity of the eigenfunctions that will be proved using classical arguments from potential theory as in \cite{GHM}.
\begin{pro}\label{prop-holder}
Let $n\geqslant 2$, $\Omega\in(\overline\Omega_2,\overline\Omega_1),$  $r_{0,1},r_{0,2}$ satisfy the assumptions {\bf{(H)}}. Then any solution  $H=(h_1,h_2)$ to
\begin{equation}\label{eigenf-eq}
\mathcal{T}^n_{\Omega} H(\phi)=\lambda H(\phi), \quad\phi\in(0,\pi),
\end{equation}
 with $\lambda\neq 0$, belongs to $\mathscr{C}^{1,\alpha}(0,\pi)$, where $\mathcal{T}^n_{\Omega}=(\mathcal{T}^n_{1,\Omega},\mathcal{T}^n_{2,\Omega})$ is defined in \eqref{Tnomega2}.
\end{pro}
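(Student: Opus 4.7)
\textbf{Proof plan for Proposition \ref{prop-holder}.}
The strategy is a bootstrap argument that feeds the continuity of $H$ (already established in Proposition \ref{prop-higher-regM}) into the fixed-point identity
\begin{equation*}
h_i(\phi)=\frac{1}{\lambda\,\nu_{i,\Omega}(\phi)}\left[(-1)^{i-1}d_1\!\int_0^\pi\! H^n_{i,1}(\phi,\varphi)h_1(\varphi)d\varphi+(-1)^{i}d_2\!\int_0^\pi\! H^n_{i,2}(\phi,\varphi)h_2(\varphi)d\varphi\right],
\end{equation*}
obtained from \eqref{eigenf-eq} and \eqref{Tnomega2}. Since $\nu_{i,\Omega}\in\mathscr{C}^{1,\alpha}([0,\pi])$ with a strictly positive lower bound by Proposition \ref{Lem-meas}, and since $\mathscr{C}^{1,\alpha}$ is an algebra, it suffices to show that each of the two integral operators sends a continuous input to an element of $\mathscr{C}^{1,\alpha}((0,\pi))$.

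First I would split the problem according to whether $i=j$ or not. For the off--diagonal terms $i\neq j$, assumption {\bf (H4)} together with \eqref{Chord-1} forces $x=4r_{0,i}(\phi)r_{0,j}(\varphi)/R_{i,j}(\phi,\varphi)\leqslant\overline\delta_{i,j}<1$, so the Gauss hypergeometric factor $F_n(x)$ is analytic in a neighborhood of the compact range of $x$, and the full kernel $H^n_{i,j}(\phi,\varphi)$ is smooth in $\phi$ uniformly in $\varphi$. Differentiation under the integral sign then shows that $\phi\mapsto\int_0^\pi H^n_{i,j}(\phi,\varphi)h_j(\varphi)d\varphi$ is in $\mathscr{C}^\infty((0,\pi))$ for any bounded $h_j$. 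The real work is on the diagonal terms, where $x$ may approach $1$ as $\varphi\to\phi$ and the kernel develops a logarithmic singularity.

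For the diagonal piece I would use the integral representation \eqref{trax1}, writing $H^n_{i,i}(\phi,\varphi)=\tfrac{1}{4\pi}\sin(\varphi)r_{0,i}(\varphi)^{1/2}r_{0,i}(\phi)^{-3/2}\mathcal{H}^n(x)$, together with the bound $|\mathcal{H}^n(x)|\lesssim|\ln(1-x)|+1$ that comes from \eqref{estimat-1}. Using the arc--chord estimate \eqref{Chord} and the comparison \eqref{Est-LogM}, one obtains $1-x\simeq(\phi-\varphi)^2/(\sin\phi+\sin\varphi)^2$, so on compact subsets of $(0,\pi)$ the kernel is pointwise dominated by $C(1+|\ln|\phi-\varphi||)$, a standard weakly singular kernel. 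Classical potential--theoretic estimates (of the type found in the proof of the analogous statement in \cite{GHM}) give that convolution with such a kernel maps $L^\infty_{\mathrm{loc}}$ into $\mathscr{C}^\alpha_{\mathrm{loc}}$ for every $\alpha\in(0,1)$. Feeding the resulting $\mathscr{C}^\alpha$ regularity of $H$ back into the identity above, I would then differentiate once in $\phi$: the $\phi$-derivatives of the smooth factors $r_{0,i}(\phi)^{n-1}$, $R_{i,j}(\phi,\varphi)^{-n-1/2}$ and $F_n(\cdot)$ produce again kernels that are at worst of the same logarithmic type (using the formulas $F_n'=$ another hypergeometric series and the chain rule, plus the bound $|\ln(1-x)|\lesssim|\ln|\phi-\varphi||+1$), while a careful integration by parts in $\varphi$ — exploiting the vanishing of $h_j$ at the endpoints, which follows from Proposition \ref{prop-higher-regM} — absorbs the remaining principal value part and yields a second convolution of a $\mathscr{C}^\alpha$ function against a log--singular kernel. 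Such a convolution lands in $\mathscr{C}^\alpha_{\mathrm{loc}}$, and this produces the required $\mathscr{C}^{1,\alpha}$ regularity of $h_i$ on $(0,\pi)$.

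The main obstacle I anticipate is the precise handling of the derivative step: one must show that differentiating inside the singular integral is legitimate and that the differentiated kernel remains integrable with the correct log--type decay, which requires performing the integration by parts in $\varphi$ delicately so that the boundary terms vanish (thanks to $h_j(0)=h_j(\pi)=0$) and no new singularities are introduced by differentiating $F_n$ at $x$ close to $1$. Once this technical step is under control, the bootstrap is straightforward and yields $H\in\mathscr{C}^{1,\alpha}((0,\pi))$ for every $\alpha\in(0,1)$.
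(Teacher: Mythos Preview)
Your overall bootstrap strategy—start from the continuity of $H$ (Proposition~\ref{prop-higher-regM}), factor out $\nu_{i,\Omega}$, split into diagonal and off--diagonal pieces, and use smoothing properties of the integral operators—is exactly what the paper does. For the off--diagonal terms your use of the hypergeometric representation is actually cleaner than the paper's route: since $H^n_{i,j}$ for $i\neq j$ carries the factor $r_{0,i}(\phi)^{n-1}$ with $n\geqslant2$ and $F_n$ stays away from its branch point by \eqref{Chord-1}, you can differentiate under the integral directly. The paper instead works with the angular representation $\mathcal{H}^n_{i,j}$ of Proposition~\ref{Prop-MMlin1}, which contains an explicit $1/r_{0,i}(\phi)$ and therefore needs a preliminary integration by parts in $\eta$ before differentiating.

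The gap is in your diagonal derivative step. The claim that the $\phi$--derivative of $F_n$ ``produces kernels that are at worst of the same logarithmic type'' is false: since $F_n'(x)=\tfrac{(n+1/2)^2}{2n+1}F\big(n+\tfrac32,n+\tfrac32;2n+2;x\big)\sim c\,(1-x)^{-1}$ near $x=1$ while $\partial_\phi x\sim c'(\phi-\varphi)$, the product $F_n'(x)\partial_\phi x$ behaves like $(\phi-\varphi)^{-1}$, a genuine Cauchy--type singularity, not a logarithm. Your proposed remedy via integration by parts in $\varphi$ does not work as written, because at that stage of the bootstrap $h_j$ is only $\mathscr{C}^\alpha$ and $\partial_\varphi h_j$ is not available. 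The paper circumvents this entirely by switching to the two--variable kernel $\mathcal{H}^n_{i,i}(\phi,\varphi,\eta)\sim A_{i,i}^{-1/2}$ of \eqref{Hcal}: this is a Riesz potential in $(\varphi,\eta)$, and its $\phi$--derivative is a standard Calder\'on--Zygmund kernel, so the $\mathscr{C}^\alpha\to\mathscr{C}^\alpha$ mapping follows from classical theory (this is precisely what the reference to \cite[Proposition~4.5]{GHM} provides). If you want to stay in your one--dimensional picture, the correct move is not integration by parts but the usual subtraction: for the $(\phi-\varphi)^{-1}$ part of the differentiated kernel $K$, write $\int K(\phi,\varphi)h_i(\varphi)\,d\varphi=\int K(\phi,\varphi)[h_i(\varphi)-h_i(\phi)]\,d\varphi+h_i(\phi)\int K(\phi,\varphi)\,d\varphi$ and use $h_i\in\mathscr{C}^\alpha$ to make the first integral absolutely convergent.
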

\begin{proof}
In this proof, we shall privilege  the expression of $\mathcal{T}^n_\Omega$ detailed in Proposition \ref{Prop-MMlin1} instead of \eqref{Tnomega2}, and then  perform  some classical potential theory estimates. We first write
\begin{align*}
\mathcal{T}_{i,\Omega}^n(\phi)=&\frac{(-1)^{i-1}}{\nu_{i,\Omega}(\phi)}\left\{\int_0^\pi\int_0^{2\pi}\mathcal{H}^n_{i,1}(\phi,\varphi,\eta)h_1(\varphi)d\eta d\varphi-\int_0^\pi\int_0^{2\pi}\mathcal{H}^n_{i,2}(\phi,\varphi,\eta)h_2(\varphi)d\eta d\varphi\right\}\\
:=&\frac{1}{\nu_{i,\Omega}(\phi)}\mathcal{F}^n_{i}(h_1,h_2)(\phi),
\end{align*}
and $\mathcal{H}^n_{i,j}$ is defined in \eqref{Hcal}.
Let us denote $\mathcal{F}^n=\begin{pmatrix}
\mathcal{F}^n_1\\
\mathcal{F}^n_2
\end{pmatrix}$, then it is obvious that   \eqref{eigenf-eq} is equivalent to
$$
\begin{pmatrix}
h_1\\
h_2
\end{pmatrix}=\frac{1}{\lambda} \begin{pmatrix}
\frac{1}{\nu_{1,\Omega}(\phi)}\mathcal{F}^n_{1}(h_1,h_2)(\phi)\\
\frac{1}{\nu_{2,\Omega}(\phi)}\mathcal{F}^n_{2}(h_1,h_2)(\phi)
\end{pmatrix}.
$$
Furthermore, we have seen in  Proposition \ref{Lem-meas}  that $\nu_{i,\Omega}\in\mathscr{C}^{1,\alpha}(0,\pi)$ and it does not vanish if $\Omega\in(\overline{\Omega}_2,\overline{\Omega}_1)$. Hence, we should check that $\mathcal{F}^n_i(h_1,h_2)\in\mathscr{C}^{1,\alpha}(0,\pi)$ in order to achieve the regularity result.
Applying \eqref{A11} and the  assumption {\bf (H4)} on $r_{0,i}$ we find for  $i\neq j,$
\begin{align}\label{reg-eigen-1}
A_{i,j}(\phi,\varphi,\eta)
\geqslant &(r_{0,i}(\phi)-r_{0,j}(\varphi))^2+(d_i\cos(\phi)-d_j\cos(\varphi))^2\geqslant  \delta>0.
\end{align}
On the other hand, from \cite[Proposition 4.5, (4.108)]{GHM} we get 
\begin{equation}\label{reg-eigen-2}
A_{i,i}(\phi,\varphi,\eta)\geqslant C\left\{(\phi-\varphi)^2+(\sin^2(\phi)+\sin^2(\varphi))\sin^2(\eta/2)\right\}, 
\end{equation}
which is a consequence of {\bf (H2)}. Next, let us decompose $\mathcal{F}^n_i$ as follows
\begin{align*}
\mathcal{F}^n_{i}(h_1,h_2)=&{(-1)^{i-1}}\mathcal{F}^n_{i,1}(h_1)+{(-1)^i}\mathcal{F}^n_{i,2}(h_2),\\
\mathcal{F}^n_{i,j}(h_j):=&\int_0^\pi\int_0^{2\pi}\mathcal{H}_{i,j}^n(\phi,\varphi,\eta)h_j(\varphi)d\eta d\varphi.
\end{align*}
Notice that  the terms  generated from the induced effects of each boundary, corresponding to $\mathcal{F}^n_{i,i}$,  are similar to the paper   \cite{GHM}  and hence we can borrow and perform the same analysis  with slight adaptation, however  the interaction terms are smooth  enough since the kernel does not admit a diagonal singularity according to \eqref{reg-eigen-1}. Below we give some details where the regularity is shown by taking advantage of an elliptic smoothing effect from  the integral representation. We point out that we shall start with the result of Proposition \ref{prop-higher-regM} stating that $H=(h_1,h_2)\in L^\infty(0,\pi)$. More precisely,  $H$ is continuous on $[0,\pi]$  and satisfies the Dirichlet boundary \mbox{condition $H(0)=H(\pi)=0.$}
\vspace{0.3cm}
\\
{
\noindent\medskip \ding{228} {\bf Self-induced terms.} They correspond to the terms $\mathcal{F}^n_{i,i}(h_i)$. Their treatment is detailed in the proof of \cite[Proposition 4.5]{GHM} where we first show that   the smoothing effect  $h_i\in L^\infty$ implies  $\mathcal{F}^n_{i,i}(h_i)\in\mathscr{C}^\alpha(0,\pi)$ which in turns implies after the  differentiation that $\mathcal{F}^n_{i,i}(h_i)\in\mathscr{C}^{1,\alpha}(0,\pi)$.
\\
\noindent\medskip \ding{228} {\bf Interaction terms.} They correspond to the terms $\mathcal{F}^n_{i,j}(h_j)$ for $i\neq j.$
By \eqref{reg-eigen-1} and  \eqref{Hcal}, the denominator of the integral is never vanishing inside $(0,\pi)$ and the singularities are located  at the boundary $\phi=0, \pi$ due to the term $r_{0,i}$. To remove these singularities, we integrate by parts in the variable $\eta$ leading to,
\begin{align*}
\mathcal{F}^n_{i,j}(h_j)(\phi)=-\frac{1}{n}\bigintsss_0^\pi\bigintsss_0^{2\pi}\frac{\sin(\varphi) r_{0,j}(\varphi)^2\sin(n\eta)\sin(\eta)h_j(\varphi)}{|A_{i,j}(\phi,\varphi,\eta)|^\frac32}d\eta d\varphi.
\end{align*}
Therefore  using \eqref{reg-eigen-1} we can easily check t that $\mathcal{F}^n_{i,j}(h_j)\in\mathscr{C}^{\alpha}$ for $i\neq j$. 
Now, coming back to \eqref{eigenf-eq} we infer  that $H\in\mathscr{C}^\alpha(0,\pi)$.\\
Next, we shall check that $h_j\in\mathscr{C}^\alpha(0,\pi)$ ( $L^\infty$ is enough) implies under the assumption {({\bf{H1}})}
$$\big(\mathcal{F}^n_{i,j}(h_j)\big)^\prime, \big(\mathcal{F}^n_{i,j}(h_j)\big)^{\prime\prime}\in L^\infty(0,\pi),\quad i\neq j.
$$
which is of course more stronger that $\mathcal{F}^n_{i,j}(h_j)\in\mathscr{C}^{1,\alpha}(0,\pi)$ according to Sobolev embeddings.
By differentiation inside the integral, we deduce that
\begin{align*}
\mathcal{F}^n_{ij}(h_j)'(\phi)=\frac{3}{2n}\bigintsss_0^\pi \bigintsss_0^{2\pi}\frac{\sin(\varphi)r_{0,j}(\varphi)^2\sin(n\eta)\sin(\eta)h_j(\varphi)\partial_\phi A_{ij}(\phi,\varphi,\eta)}{|A_{ij}(\phi,\varphi,\eta)|^\frac52}\cdot
\end{align*}
Then by virtue of   \eqref{reg-eigen-1} we easily get $\mathcal{F}^n_{i,j}(h_j)'\in L^\infty(0,\pi)$. A second differentiation combined with  \eqref{reg-eigen-1} and the assumption {({\bf{H1}})}  yield  $\mathcal{F}^n_{i,j}(h_j)^{\prime\prime}\in L^\infty(0,\pi)$. Finally, we deduce from the preceding steps  that $\mathcal{F}^n H \in\mathscr{C}^{1,\alpha}(0,\pi),$ that we combine with the equation \eqref{eigenf-eq} in order to get  $H \in\mathscr{C}^{1+\alpha}(0,\pi).$ This ends the proof of the desired results.
}
\end{proof}

\subsection{Fredholm structure}
The main purpose of this section is to establish  the Fredholm structure of the linearized operator defined through Proposition \ref{Prop-MMlin1} and required by Theorem \ref{CR}. Our   result reads as follows.

\begin{pro}\label{prop-fredholmTT} 
Let  $m\geqslant2,\Omega\in(\overline\Omega_2,\overline\Omega_1),$  $r_{0,1},r_{0,2}$ satisfy the assumptions {\bf{(H)}}. 
Then the  operator
$$\partial_{f_1,f_2}\tilde{F}(\Omega,0,0):X_m^\alpha\times X_m^\alpha\rightarrow X_m^\alpha\times X_m^\alpha
$$ is a Fredholm operator with zero index. {Moreover, there exists $m_0\in\N$ such that for  $m\geqslant m_0$ and  for $\Omega=\Omega_m$, the dimension of its  kernel and the codimension of its range  are equal to one.}
\end{pro}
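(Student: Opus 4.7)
The plan is to diagonalize the linearized operator along the Fourier basis in $\theta$, reducing the Fredholm question to a sequence of spectral problems for the self-adjoint compact operators $\mathcal{T}^{nm}_\Omega$ studied in Section \ref{sec-spectral}. By Corollary \ref{cor-lin2}, any $(h_1,h_2)\in X_m^\alpha\times X_m^\alpha$ with expansion $h_i(\phi,\theta)=\sum_{n\geqslant 1}h_{i,n}(\phi)\cos(nm\theta)$ satisfies
$$
\partial_{f_1,f_2}\tilde{F}_i(\Omega,0,0)(h_1,h_2)(\phi,\theta)=(-1)^{i-1}\nu_{i,\Omega}(\phi)\sum_{n\geqslant 1}\cos(nm\theta)\bigl(h_{i,n}-\mathcal{T}^{nm}_{i,\Omega}(h_{1,n},h_{2,n})\bigr)(\phi).
$$
By Proposition \ref{Lem-meas}(1)--(2), $\nu_{i,\Omega}\in\mathscr{C}^{1,\alpha}([0,\pi])$ and stays uniformly away from zero on $(\overline\Omega_2,\overline\Omega_1)$, so multiplication by $\nu_{i,\Omega}$ is a topological isomorphism of $X_m^\alpha$. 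Hence the Fredholm property of $\partial_{f_1,f_2}\tilde F(\Omega,0,0)$ is equivalent to that of $\mathrm{Id}-\mathcal{K}$, where $\mathcal{K}$ is the operator acting mode-by-mode through the $\mathcal{T}^{nm}_\Omega$.

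Next, I would establish that $\mathcal{K}:X_m^\alpha\times X_m^\alpha\to X_m^\alpha\times X_m^\alpha$ is compact, which immediately yields the Fredholm property with index zero. The bootstrap arguments of Propositions \ref{prop-higher-regM} and \ref{prop-holder} show that each $\mathcal{T}^n_\Omega$ smooths $L^2_{\mu_\Omega}$ profiles into $\mathscr{C}^{1,\alpha}(0,\pi)$ profiles with vanishing Dirichlet trace, so the image indeed lies in the underlying space of $X_m^\alpha$; combined with the compact embedding $\mathscr{C}^{1,\alpha}\hookrightarrow\mathscr{C}^{1,\alpha'}$ for $\alpha'<\alpha$, this gives compactness mode-by-mode. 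To control the Fourier summation, I would combine the operator-norm decay $\|\mathcal{T}^{nm}_\Omega\|\lesssim (nm)^{-\gamma}$ from Proposition \ref{prop-operatorV2}(2) with the quantitative potential-theoretic bounds behind Proposition \ref{prop-holder}, ensuring that the truncations of $\mathcal{K}$ to finitely many modes (compact as finite sums of compact mode-operators) converge to $\mathcal{K}$ in operator norm; since compact operators form a closed subspace of bounded operators, $\mathcal{K}$ is itself compact.

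Finally, I would identify the kernel at $\Omega=\Omega_m$ by a mode-by-mode analysis. An element of $X_m^\alpha\times X_m^\alpha$ lies in $\ker\partial_{f_1,f_2}\tilde F(\Omega_m,0,0)$ iff for every $n\geqslant 1$ the pair $(h_{1,n},h_{2,n})$ is a fixed point of $\mathcal{T}^{nm}_{\Omega_m}$, i.e.\ $1\in\sigma(\mathcal{T}^{nm}_{\Omega_m})$. For $n\geqslant 2$, Proposition \ref{prop-kernel-onedim}(2) gives $\Omega_m<\Omega_{nm}$, and the strict monotonicity from Proposition \ref{prop-operatorV2}(5) then yields $\lambda_{nm}(\Omega_m)<\lambda_{nm}(\Omega_{nm})=1$; since every eigenvalue of $\mathcal{T}^{nm}_{\Omega_m}$ is bounded in absolute value by $\lambda_{nm}(\Omega_m)$ (cf.\ the proof of Proposition \ref{prop-operatorV2}(1)), this rules $1$ out of the spectrum, so only $n=1$ can contribute. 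At $n=1$, $\lambda_m(\Omega_m)=1$ is simple by Proposition \ref{prop-operatorV2}(3), hence the eigenspace for the eigenvalue $1$ is one-dimensional, with its generator sitting in $\mathscr{C}^{1,\alpha}$ and obeying the Dirichlet condition by Propositions \ref{prop-higher-regM}--\ref{prop-holder}. This yields a one-dimensional kernel for $\partial_{f_1,f_2}\tilde F(\Omega_m,0,0)$, and the codimension-one range assertion follows from the vanishing of the Fredholm index. The main obstacle I foresee is promoting the $\mathbb{H}_\Omega$-compactness of $\mathcal{T}^n_\Omega$ to a uniform-in-$n$ compactness on the Hölder product space: one must track how the constants in Proposition \ref{prop-holder} depend on $n$ and show that they are dominated by the $(nm)^{-\gamma}$ decay, so that the Fourier tail of $\mathcal{K}$ is small in the $\mathscr{C}^{1,\alpha}$ operator norm.
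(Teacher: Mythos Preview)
Your overall strategy matches the paper's: reduce to an isomorphism plus a compact perturbation, then analyze the kernel mode-by-mode at $\Omega=\Omega_m$. Your kernel argument is correct and in fact uses a slightly different (but equally valid) route than the paper: you rule out modes $n\geqslant2$ via the $\Omega$-monotonicity of $\lambda_{nm}$ combined with $\Omega_m<\Omega_{nm}$, whereas the paper uses the $n$-monotonicity of $\lambda_n(\Omega_m)$ directly to get $\lambda_{nm}(\Omega_m)<\lambda_m(\Omega_m)=1$. Both rely on the same underlying Propositions~\ref{prop-operatorV2} and~\ref{prop-kernel-onedim}.

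The place where your approach diverges materially from the paper is the compactness step, and the obstacle you flag at the end is real. The paper does \emph{not} establish compactness by summing mode-wise compactness with $(nm)^{-\gamma}$ decay; indeed, the decay in Proposition~\ref{prop-operatorV2}(2) is only in the weighted $L^2$ norm $\mathbb{H}_\Omega$, and there is no reason the $\mathscr{C}^{1,\alpha}$ operator norms of $\mathcal{T}^{nm}_\Omega$ should inherit that decay---the bootstrap constants in Proposition~\ref{prop-holder} are not tracked in $n$. Moreover, in H\"older spaces the Fourier modes $\cos(nm\theta)$ are not an orthogonal basis, so even if each mode-operator were small you could not conclude that the tail $\sum_{n>N}$ is small in $\mathscr{C}^{1,\alpha}$ operator norm by a supremum argument. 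The paper sidesteps all of this: it writes the full linearized operator (before any Fourier expansion in $\theta$) as a multiplication by $(\nu_{1,\Omega},-\nu_{2,\Omega})$ minus a genuine two-variable integral operator in $(\phi,\theta)$, and then shows---via the potential-theoretic estimates underlying Proposition~\ref{prop-holder} and \cite[Proposition~4.6]{GHM}---that this integral operator maps $X_m^\alpha\times X_m^\alpha$ boundedly into $\mathscr{C}^{1,\beta}\times\mathscr{C}^{1,\beta}$ for \emph{any} $\beta\in(0,1)$. Taking $\beta>\alpha$ and invoking the compact embedding $\mathscr{C}^{1,\beta}\hookrightarrow\mathscr{C}^{1,\alpha}$ then gives compactness in one stroke, with no need to control $n$-dependence at all.
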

\begin{proof}
{Let us recall that $\Omega_m$ is defined in \eqref{prop-kernel-onedim}. The idea is to write the linear operator $\partial_{f_1,f_2}\tilde{F}(\Omega,0,0)$ as a compact perturbation of an isomorphism. Indeed, by virtue of Proposition \ref{Prop-MMlin1} it can be decomposed as
\begin{align*}
\partial_{f_1,f_2}\tilde{F}(\Omega,0,0)(h_1,h_2)(\phi,\theta)=&\sum_{n\geqslant 1}\cos(n\theta)\Big(\mathcal{L}_{1,\Omega}(h_{1,n},h_{2,n})(\phi)-\mathcal{L}^n_2(h_{1,n},h_{2,n})(\phi)\Big)\\
:=&\sum_{n\geqslant 1}\cos(n\theta)\mathcal{L}^n_\Omega (h_{1,n},h_{2,n})(\phi),
\end{align*}
where 
\begin{align*}
\mathcal{L}_{1,\Omega}(h_{1,n},h_{2,n}):=&\begin{pmatrix}
\nu_{1,\Omega}(\phi)h_{1,n}(\phi)\\
- \nu_{2,\Omega}(\phi)h_{2,n}(\phi)
\end{pmatrix},\\
\mathcal{L}_2^n(h_{1,n},h_{2,n}):=
&\begin{pmatrix}
\nu_{1,\Omega}\mathcal{T}^n_{1,\Omega}(h_{1,n},h_{2,n})\\
-\nu_{2,\Omega}\mathcal{T}^n_{2,\Omega}(h_{1,n},h_{2,n})
\end{pmatrix}.
\end{align*}
By Proposition \ref{Lem-meas} we have that $\nu_{i,\Omega}$ is not vanishing if $\Omega\in(\overline{\Omega}_2,\overline{\Omega}_1)$ and belongs to the class $\mathscr{C}^{1,\alpha}(0,\pi)$. Then we can check that $\mathcal{L}_{1,\Omega}$ is an isomorphism as in the proof of \cite[Proposition 4.6]{GHM}. On the other hand, we infer from the proofs of Proposition \ref{prop-holder} and \cite[Proposition 4.6]{GHM}  that $\mathcal{F}^n(h_{1,n},h_{2,n})\in\mathscr{C}^{1,\beta}$ if ${h_{i,n}\in\mathscr{C}^{1,\alpha}}$, for any $\beta\in(0,1)$. In particular, taking $\beta>\alpha$ we have that the embedding 
$$
\mathcal{L}_2^n:X_m^\alpha\times X_m^\alpha\rightarrow \big(X_m^\alpha\cap \mathscr{C}^{1,\beta}\big)\times \big(X_m^\alpha\cap \mathscr{C}^{1,\beta}\big)\hookrightarrow X_m^\alpha\times X_m^\alpha,
$$
is compact since $X_m^\alpha\cap \mathscr{C}^{1,\beta}\hookrightarrow X_m^\alpha$ is  compact. Since compact perturbations of Fredholm operator remains Fredholm with the same index (in particular, an isomorphism is Fredholm of zero index), then we deduce  the first statement of the proposition.
\\
Fix $m\geqslant m_0:=n_0$, where $n_0$ was introduced in  Proposition \ref{prop-kernel-onedim}.  Then the kernel of $\partial_{f_1,f_2}\tilde{F}(\Omega_m,0,0):X_m^\alpha\times X_m^\alpha\rightarrow X_m^\alpha\times X_m^\alpha$ is generated by 
$$
(\theta,\phi)\mapsto H(\phi)\cos(n m\theta),\quad \hbox{with}\quad \mathcal{T}^{nm}_{\Omega_m}(H)=H
$$ 
for some $n\geqslant 1.$ 
Then, Proposition \ref{prop-kernel-onedim} gives us that the dimension of the kernel of $\mathcal{L}^{nm}_\Omega$ is one and is generated by the eigenfunction associated to the eigenvalue $\lambda_{nm}(\Omega_m)=1$. Notice that such eigenvalue belongs to $X_m^\alpha\times X_m^\alpha$ due to Proposition \ref{prop-holder}. Now, we have by construction $\lambda_{m}(\Omega_m)=1$ and from   {Proposition \ref{prop-operatorV2}-(4) } we get  for $n>1$,  $\lambda_{nm}(\Omega_m)<\lambda_m(\Omega_m)=1$. Therefore,  $1$ can not be an eigenvalue of $\mathcal{T}^{nm}_{\Omega_m}$, for $n>1.$ 
 Hence, the kernel of $\partial_{f_1,f_2}\tilde{F}(\Omega_m,0,0):X_m^\alpha\times X_m^\alpha\rightarrow X_m^\alpha\times X_m^\alpha$ is one-dimensional and it is generated by the eigenfunction
\begin{equation}\label{fmstar}
(\phi,\theta)\mapsto { f_m^\star(\phi,\theta):=H_m^\star(\phi)\cos(m\theta)},
\end{equation}
with $H_m^\star$ a nonzero vector satisfying
$$ \mathcal{T}^{nm}_{\Omega_m}H_m^\star=H_m^\star.$$
Finally, since $\partial_{f_1,f_2}\tilde{F}(\Omega_m,0,0)$ is a Fredholm operator of zero index, we get that the codimension of its range is one.
}
\end{proof}

\subsection{Transversality}
The transversal assumption which amounts to checking 
$$
\partial_\Omega \partial_{f_1,f_2} \tilde{F}(\Omega_m,0,0)f_m^\star\notin \textnormal{Im}(\partial_{f_1,f_2} \tilde{F}(\Omega_m,0,0)),
$$
where $f_m^\star$ is a generator  of the kernel of $\partial_{{f_1,f_2}} \tilde{F}(\Omega_m,0,0)$ given in \eqref{fmstar}, and $\Omega_m$ is given in Proposition \ref{prop-kernel-onedim}. By Proposition \ref{Prop-MMlin1} we get
$$
\partial_\Omega \partial_{f_1,f_2} \tilde{F}(\Omega,0,0)(h_1,h_2)=-(h_1,h_2).
$$
Our main result of this section reads as follows.
\begin{pro}\label{prop-transversal}
There exists $m_0\in\N$ such that for any  $m\geqslant m_0,$  the transversality condition holds true, that is, 
$$
\partial_\Omega \partial_{f_1,f_2} \tilde{F}(\Omega_m,0,0)f_m^\star\notin \textnormal{Im}(\partial_{f_1,f_2} \tilde{F}(\Omega_m,0,0)),
$$
where $f_m^\star$ is a generator  of the kernel of $\partial_{f_1,f_2} \tilde{F}(\Omega_m,0,0)$ given in \eqref{fmstar}.
\end{pro}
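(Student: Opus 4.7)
The plan is to encode the transversality as the non-vanishing of an explicit integral through the Fredholm alternative, and then to verify the non-vanishing by exhibiting an asymptotic mass concentration of the eigenfunction $H_m^\star$ on its first component. I begin by using the Fourier decoupling from Proposition \ref{Prop-MMlin1}: since $f_m^\star(\phi,\theta)=H_m^\star(\phi)\cos(m\theta)$ activates only one harmonic, the question whether $-f_m^\star$ belongs to the image of $\partial_{f_1,f_2}\tilde F(\Omega_m,0,0)$ reduces to solving in $\mathbb{H}_{\Omega_m}$ the equation $(\mathrm{Id}-\mathcal T^m_{\Omega_m})H=G_m$ with $G_m=\bigl(-h_{1,m}^\star/\nu_{1,\Omega_m},\,h_{2,m}^\star/\nu_{2,\Omega_m}\bigr)$. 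Lemma \ref{lem-ssym} together with Proposition \ref{prop-operatorV2}-(3) makes $\mathrm{Id}-\mathcal T^m_{\Omega_m}$ a self-adjoint Fredholm operator on $\mathbb{H}_{\Omega_m}$ with one-dimensional kernel spanned by $H_m^\star$, so solvability is equivalent to $\langle G_m,H_m^\star\rangle_{\Omega_m}=0$. Unpacking this inner product via $d\mu_j=\sin\phi\,r_{0,j}^2\,\nu_{j,\Omega_m}\,d\phi$, the $\nu_{j,\Omega_m}$ factors cancel and the criterion becomes the vanishing of
\[
J_m:=\int_0^\pi (h_{1,m}^\star)^2\sin\phi\,r_{0,1}^2\,d\phi-\tfrac{d_2}{d_1}\int_0^\pi (h_{2,m}^\star)^2\sin\phi\,r_{0,2}^2\,d\phi.
\]
In contrast with the single-interface case of \cite{GHM}, the two summands carry opposite signs, which is precisely the hyperbolic feature highlighted in the introduction.

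I will then prove that $J_m>0$ for all sufficiently large $m$ by establishing that the normalized eigenfunction concentrates its mass on its first component, namely $\|h_{2,m}^\star\|_{\mu_2}\to 0$ as $m\to\infty$. Once this is granted, the normalization $\|H_m^\star\|_{\Omega_m}=1$ forces $\|h_{1,m}^\star\|_{\mu_1}\to 1$, and the uniform upper bound $\nu_{1,\Omega_m}\leq \mathtt{M}$ from Proposition \ref{Lem-meas}-(1) yields
\[
\int_0^\pi (h_{1,m}^\star)^2\sin\phi\,r_{0,1}^2\,d\phi\;\geq\;\frac{\|h_{1,m}^\star\|_{\mu_1}^2}{\mathtt{M}}\;\longrightarrow\;\frac{1}{\mathtt{M}}>0,
\]
while $\nu_{2,\Omega_m}\geq \Omega_m-\overline\Omega_2\to \overline\Omega_1-\overline\Omega_2>0$ combined with the concentration makes the second summand of $J_m$ tend to zero. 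Hence $J_m>0$ for all large $m$, which establishes the transversality.

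To prove the concentration, I will substitute $h_{2,m}^\star=\mathcal T^m_{2,\Omega_m}H_m^\star$ into its own $L^2_{\mu_2}$-norm and apply Cauchy--Schwarz to each of the two integral operators appearing in \eqref{TnomegaPP2}, obtaining
\[
\|h_{2,m}^\star\|_{\mu_2}\;\leq\;d_1\sqrt{\alpha_m}\,\|h_{1,m}^\star\|_{\mu_1}+d_2\sqrt{\beta_m}\,\|h_{2,m}^\star\|_{\mu_2},
\]
with $\alpha_m=\iint |K^m_{2,1}|^2\,d\mu_1\,d\mu_2$ and $\beta_m=\iint |K^m_{2,2}|^2\,d\mu_2\,d\mu_2$. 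The bound on $\beta_m$ is easy, because $\nu_{2,\Omega_m}$ stays uniformly bounded away from zero, so mimicking the Hilbert--Schmidt computation from the proof of Proposition \ref{prop-operatorV2}-(2) restricted to the second row gives $\beta_m = O(m^{-2\gamma})$ for any $\gamma<1/4$. The delicate part is $\alpha_m$, where the factor $1/\nu_{1,\Omega_m}$ appearing inside $K^m_{2,1}$ quasi-degenerates as $\Omega_m\to \overline\Omega_1$: this will be handled by exploiting the strict inequality $\overline\delta_{2,1}<1$ from \eqref{Chord-1}, which confines the hypergeometric argument of $H^m_{2,1}$ to a compact subset of $[0,1)$ and produces additional decay in $F_m$ sufficient to absorb the integrable singularity of $1/\nu_{1,\Omega_m}$ (whose order is quantified in Proposition \ref{Lem-meas}-(3)). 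Once $\alpha_m\to 0$ is secured, rearranging the displayed inequality closes the argument.

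The hardest step in this program will therefore be the quantitative control of $\alpha_m$: it is the unique place where the coupling between the two interfaces and the near-vanishing of $\nu_{1,\Omega_m}$ at $\Omega=\overline\Omega_1$ must be simultaneously tamed, and where the separation hypothesis $(\textbf{H4})$ becomes indispensable through the hypergeometric gain $\overline\delta_{2,1}<1$.
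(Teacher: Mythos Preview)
Your reduction to the scalar criterion $J_m\neq 0$ via the Fredholm alternative and the self-adjointness of $\mathcal T^m_{\Omega_m}$ is correct and coincides with the paper's first step. The strategic idea that the eigenfunction must concentrate on the first component for large $m$ is also the right intuition.

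The gap lies in your control of $\alpha_m$. After expanding $|K^m_{2,1}|^2\,d\mu_1\,d\mu_2$ you obtain a factor $1/\nu_{1,\Omega_m}(\varphi)$ that does \emph{not} disappear. Your appeal to Proposition~\ref{Lem-meas}-(3) is inverted: that result gives the \emph{upper} bound $\nu_{1,\overline\Omega_1}(\varphi)\le C|\varphi-\phi_0|^{1+\alpha}$, which means $1/\nu_{1,\overline\Omega_1}$ has a singularity of order at least $1+\alpha>1$ and is therefore \emph{not} integrable. For finite $m$ one only has $\int_0^\pi \nu_{1,\Omega_m}^{-1}\,d\varphi\le \pi/(\overline\Omega_1-\Omega_m)$, and the paper provides no lower bound on $\overline\Omega_1-\Omega_m$; the available estimate $\lambda_m(\Omega_m)=1$ together with Proposition~\ref{prop-operatorV2}-(2) only yields an \emph{upper} bound on this quantity. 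Thus the exponential gain from $\overline\delta_{2,1}<1$ cannot be balanced against the degeneration without an additional (non-trivial) quantitative statement about the rate $\Omega_m\to\overline\Omega_1$.

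The paper sidesteps this entirely by never passing through the $\mu$-weighted Hilbert--Schmidt norms. It works directly from the representation \eqref{Tnomega2} for $\mathcal T^m_{2,\Omega_m}$, uses only the uniform decay \eqref{estim-asym} (valid for all $H^m_{i,j}$, no need for $\overline\delta_{2,1}<1$), and exploits the one lower bound that \emph{is} stable, namely $\nu_{2,\Omega_m}\ge \Omega_m-\overline\Omega_2\to\overline\Omega_1-\overline\Omega_2>0$. This produces
\[
(\Omega_m-\overline\Omega_2)^2\int_0^\pi |h_{2,m}^\star|^2\sin^3\phi\,d\phi\;\le\;Cm^{-2\gamma}\Bigl(\int_0^\pi|h_{1,m}^\star|^2\sin^3\phi\,d\phi+\int_0^\pi|h_{2,m}^\star|^2\sin^3\phi\,d\phi\Bigr),
\]
an inequality between the \emph{unweighted} quantities that actually appear in $J_m$. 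The paper then closes by contradiction (assuming $J_m=0$ to bound the first integral by the second), but the same inequality also yields a direct proof: absorbing the $h_{2,m}^\star$-term on the left gives $\int|h_{2,m}^\star|^2\sin^3\le C'm^{-2\gamma}\int|h_{1,m}^\star|^2\sin^3$, and hence $J_m\ge c\int|h_{1,m}^\star|^2\sin^3>0$ for $m$ large. The fix to your argument is therefore to abandon the $L^2_{\mu_j}$ norms at this stage and compare the two $\sin^3\phi\,d\phi$-integrals directly; this uses only $\nu_{2,\Omega_m}$, never $\nu_{1,\Omega_m}$, and closes without any information on $\overline\Omega_1-\Omega_m$.
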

\begin{proof}
Recall from the proof of Proposition \ref{prop-fredholmTT}  that the function $f_m^\star$ has the form
$$
f_m^\star(\phi,\theta)=H_m^\star(\phi)\cos(m\theta),
$$
where $H_m^\star=(h_{1,m}^\star, h_{2,m}^\star)$ is a unit vector in $\mathbb{H}_{\Omega_m}$ satisfying  the equation
$$ 
\mathcal{T}^m_{\Omega_m} H_m^\star(\phi)=H_m^\star(\phi).
$$
It follows that 
$$
\partial_\Omega\partial_{f_1,f_2} \tilde{F}(\Omega_m,0,0)f_m^\star(\phi,\theta)=-H_m^\star(\phi)\cos(m\theta).
$$
Assume that this element belongs to the range of $\partial_{f_1,f_2} \tilde{F}(\Omega_m,0,0)$. Then we can find $H_m=(h_{1,m}, h_{2,m})$ such that
$$
\left(\tfrac{h_{1,m}^\star}{\nu_{1,\Omega_m}},-\tfrac{h_{2,m}^\star}{\nu_{2,\Omega_m}}\right)= H_{m}(\phi)-\mathcal{T}^m_{\Omega_m} (H_m)(\phi).
$$
 Taking the inner product of this vector with $H_{m}^{\star}$, with respect to $\langle\cdot ,\cdot \rangle_{\Omega_m}$ defined in \eqref{scalar-prod1}    yields by the symmetry of $ \mathcal{T}^m_{\Omega_m}$
\begin{align*}
\Big\langle \left(\tfrac{h_{1,m}^\star}{\nu_{1,\Omega_m}},-\tfrac{h_{2,m}^\star}{\nu_{2,\Omega_m}}\right),H_{m}^{\star}\Big\rangle_{\Omega_m}=&\Big\langle H_{m},H_{m}^{\star}\Big\rangle_{\Omega_m}-\Big\langle \mathcal{T}^m_{\Omega_m} (H_m),H_{m}^{\star}\Big\rangle_{\Omega_m}\\
=&\Big\langle H_{m},H_{m}^{\star}\Big\rangle_{\Omega_m}-\Big\langle  H_{m}, \mathcal{T}^m_{\Omega_m}(H_m^{\star})\Big\rangle_{\Omega_m}\\
=&\Big\langle H_{m},H_{m}^{\star}-\mathcal{T}^m_{\Omega_m}(H_m^{\star})\Big\rangle_{\Omega_m}\\
=&0.
\end{align*}
Coming back to the definition of the inner product \eqref{scalar-prod1} and \eqref{MeasureB-1}, we find
{\begin{align*}
\Big\langle \left(\tfrac{h_{1,m}^\star}{\nu_{1,\Omega_m}},-\tfrac{h_{2,m}^\star}{\nu_{2,\Omega_m}}\right),(h_{1,m}^\star,h_{2,m}^\star\Big\rangle_{\Omega_m}=&\Big\langle \tfrac{h_{1,m}^\star}{\nu_{1,\Omega_m}},h_{1,m}^\star\Big\rangle_{\mu_1}\\
&-\tfrac{d_2}{d_1}\Big\langle \tfrac{h_{2,m}^\star}{\nu_{2,\Omega_m}},h_{2,m}^\star\Big\rangle_{\mu_2}\\
=&\int_0^\pi |h_{1,m}^\star(\phi)|^2\sin(\phi)r_{0,1}^2(\phi)d\phi\\
&-\tfrac{d_2}{d_1}\int_0^\pi |h_{2,m}^\star(\phi)|^2\sin(\phi)r_{0,2}^2(\phi)d\phi.
\end{align*}}
{
To check the transversality we will need 
\begin{equation}\label{transversality-cond}
\int_0^\pi |h_{1,m}^\star(\phi)|^2\sin(\phi)r_{0,1}^2(\phi)d\phi-\tfrac{d_2}{d_1}\int_0^\pi |h_{2,m}^\star(\phi)|^2\sin(\phi)r_{0,2}^2(\phi)d\phi\neq 0.\end{equation}
}
{
We will argue by contradiction and assume that 
\begin{equation}\label{Trans-1}
\int_0^\pi |h_{1,m}^\star(\phi)|^2\sin(\phi)r_{0,1}^2(\phi)d\phi=\tfrac{d_2}{d_1}\int_0^\pi |h_{2,m}^\star(\phi)|^2\sin(\phi)r_{0,2}^2(\phi)d\phi.
\end{equation}
On the other hand, the vector $h_m^\star=(h_{1,m}^\star, h_{2,m}^\star)$ can be chosen to be  unit, that is, 
$$
\|h_{1,m}^\star\|_{\mu_1}^2+\tfrac{d_2}{d_1}\|h_{2,m}^\star\|_{\mu_2}^2=1
$$
and therefore we deduce from \eqref{MeasureB-1}
$$
\int_0^\pi |{h^\star_{1,m}}(\phi)|^2\sin(\phi) r_{0,1}^2(\phi)\nu_{1,\Omega_m}(\phi)d\phi+\tfrac{d_2}{d_1}\int_0^\pi |{h_{2,m}^\star}(\phi)|^2\sin(\phi) r_{0,2}^2(\phi)\nu_{2,\Omega_m}(\phi)d\phi=1.
$$
According to Proposition \ref{Lem-meas}-(1)  we infer
$$
\mathtt{M}^{-1}\leqslant \int_0^\pi |h^\star_{1,m}(\phi)|^2\sin(\phi)r_{0,1}^2(\phi)\ d\phi+\tfrac{d_2}{d_1}\int_0^\pi |{h_{2,m}^\star}(\phi)|^2\sin(\phi)r_{0,2}^2(\phi)\ d\phi.
$$
Combining this estimate with the identity \eqref{Trans-1} and ({\bf{H2}})we find
\begin{align}\label{Est-cle}
\tfrac{d_1C^{-2}}{2d_2\mathtt{M}}\leqslant \int_0^\pi |h_{2,m}^\star(\phi)|^2\sin^3(\phi)d\phi.
\end{align}
Recall that
$$
h_{m}^{\star}=\mathcal{T}^m_{\Omega_m}(h_m^{\star})
$$
and in particular we infer from the definition \eqref{Tnomega2}
\begin{align*}
{\nu_{2,\Omega_m}(\phi)}h_{2,m}^{\star}(\phi)
=&d_2\int_0^\pi {H_{2,2}^m(\phi,\varphi)}h_{2,m}^{\star}(\varphi)d\varphi-d_1\int_0^{\pi} {H_{2,1}^m(\phi,\varphi)}h_{1,m}^{\star}(\varphi)d\varphi.
\end{align*} 
Using once again Proposition \ref{Lem-meas}-(1) together with \eqref{estim-asym} we obtain
\begin{align*}
(\Omega_m-\overline{\Omega}_2)|h_{2,m}^{\star}(\phi)|\sin^{\frac32}(\phi)
\leqslant &Cm^{-\alpha}\int_0^\pi |\phi-\varphi|^{-2\beta}|h_{2,m}^{\star}(\varphi)|\sin^{\frac32}(\varphi)d\varphi\\
+&Cm^{-\alpha}\int_0^{\pi} |\phi-\varphi|^{-2\beta}|h_{1,m}^{\star}(\varphi)|\sin^{\frac32}(\varphi)d\varphi.
\end{align*}
Applying Cauchy-Schwarz inequality on the right-hand side yields for $\beta<\frac14$
\begin{align*}
(\Omega_m-\overline{\Omega}_2)^2|h_{2,m}^{\star}(\phi)|^2\sin^{3}(\phi)
\leqslant &Cm^{-2\alpha}\int_0^\pi |h_{2,m}^{\star}(\varphi)|^2\sin^{3}(\varphi)d\varphi\\
+&Cm^{-2\alpha}\int_0^{\pi} |h_{1,m}^{\star}(\varphi)|^2\sin^{3}(\varphi)d\varphi.
\end{align*}
Integrating in $\phi$ implies
\begin{align*}
(\Omega_m-\overline{\Omega}_2)^2\int_0^\pi|h_{2,m}^{\star}(\phi)|^2\sin^{3}(\phi)d\phi
\leqslant &Cm^{-2\alpha}\int_0^\pi |h_{2,m}^{\star}(\varphi)|^2\sin^{3}(\varphi)d\varphi\\
+&Cm^{-2\alpha}\int_0^{\pi} |h_{1,m}^{\star}(\varphi)|^2\sin^{3}(\varphi)d\varphi.
\end{align*}
Coming back to \eqref{Trans-1} and using ({\bf{H2}})  we deduce that
 \begin{equation}\label{Tita--1}
\int_0^\pi |h_{1,m}^\star(\phi)|^2\sin^3(\phi)d\phi\leqslant C\int_0^\pi |h_{2,m}^\star(\phi)|^2\sin^3(\phi)d\phi.
\end{equation}
Putting together the previous two estimates allows to get
\begin{align*}
(\Omega_m-\overline{\Omega}_2)^2\int_0^\pi|h_{2,m}^{\star}(\phi)|^2\sin^{3}(\phi)d\phi
\leqslant &Cm^{-2\alpha}\int_0^\pi |h_{2,m}^{\star}(\varphi)|^2\sin^{3}(\varphi)d\varphi.
\end{align*}
Applying Proposition \ref{prop-kernel-onedim}-(2), together with $\overline{\Omega}_2<\overline{\Omega}_1$ we may find $m_0$ large enough such that
$$
\forall\, m\geqslant m_0,\quad (\Omega_m-\overline{\Omega}_2)^2-Cm^{-2\alpha}\geqslant \tfrac12 (\overline{\Omega}_1-\overline{\Omega}_2)^2>0.
$$
Consequently,
\begin{align*}
\int_0^\pi|h_{2,m}^{\star}(\phi)|^2\sin^{3}(\phi)d\phi
\leqslant 0.
\end{align*}
Since $h_{2,m}^{\star}$ is continuous on $[0,\pi]$ then it should vanish everywhere. Similarly we infer from \eqref{Tita--1} that $h_{1,m}^{\star}$ is identically zero. This contradicts the fact that $h^\star_m$ is a unit vector. Therefore the transversality assumption \ref{transversality-cond} is satisfied for any $m\geqslant m_0.$
Finally,  we deduce that $f_m^\star$  does not belong to the range of $\partial_{f_1,f_2} \tilde{F}(\Omega_m,0,0)$ and then the transversality condition is satisfied.
}
\end{proof}

\section{Nonlinear action}\label{sec-regularity}

The aim of this section is to check that the nonlinear functional $\tilde{F}:\R\times B_{X_m^\alpha}(\varepsilon)\times B_{X_m^\alpha}(\varepsilon)\rightarrow X_m^\alpha\times X_m^\alpha$ is well-defined  and smooth. Recall that this functional is defined in $\eqref{Ftilde}$ as

\begin{align*}
\tilde{F}(\Omega,f_1,f_2)(\phi,\theta)&=(\tilde{F}_1(\Omega,f_1,f_2)(\phi,\theta),\tilde{F}_2(\Omega,f_1,f_2)(\phi,\theta))\\
&=\left(\frac{F^{\bf  s}_1(\Omega,f_1,f_2)(\phi,\theta)}{r_{0,1}(\phi)}, \frac{F^{\bf  s}_2(\Omega,f_1,f_2)(\phi,\theta)}{r_{0,2}(\phi)} \right),
\end{align*}
where { $F_j^s$ is defined in \eqref{nonlinearfunction2} as}
$$
{F}^{\bf  s}_j(\Omega,f_1,f_2)(\phi,\theta)=\psi(r_j(\phi,\theta)e^{i\theta},d_j\cos(\phi))-\tfrac{\Omega}{2}r^2_j(\phi,\theta)-m_j(\Omega,f_1,f_2)(\phi),  
$$
{ and $r_j$ is given in \eqref{f}}. 
In order to check the regularity of $\tilde{F}$, we can restrict it only for $\tilde{F}_1$ since $\tilde{F}_2$ follows similarly. Note that in this case the stream function $\psi$ can be written as follows
\begin{align}
\psi(r_1(\phi,\theta)e^{i\theta},d_1\cos(\phi))=&-\frac{d_1}{4\pi}\bigintsss_{0}^{\pi}\bigintsss_0^{2\pi}\bigintsss_0^{r_1(\varphi,\eta)}\frac{\sin(\varphi)rdrd\eta d\varphi}{|(re^{i\eta},d_1\cos(\varphi))-(r_1(\phi,\theta)e^{i\theta},d_1\cos(\phi))|}\nonumber\\
&+\frac{d_2}{4\pi}\bigintsss_{0}^{\pi}\bigintsss_0^{2\pi}\bigintsss_0^{r_2(\varphi,\eta)}\frac{\sin(\varphi)rdrd\eta d\varphi}{|(re^{i\eta},d_2\cos(\varphi))-(r_1(\phi,\theta)e^{i\theta},d_1\cos(\phi))|}\nonumber\\
=:&\mathscr{T}_1(f_1,f_2)(\phi,\theta)-\mathscr{T}_2(f_1,f_2)(\phi,\theta),\label{streamfunction-decomp}
\end{align}
where
\begin{equation}\label{mathscrIi}
\mathscr{T}_i(f_1,f_2)(\phi,\theta)=-\frac{d_i}{4\pi}\bigintsss_{0}^{\pi}\bigintsss_0^{2\pi}\bigintsss_0^{r_i(\varphi,\eta)}\frac{\sin(\varphi)rdrd\eta d\varphi}{|(re^{i\eta},d_i\cos(\varphi))-(r_1(\phi,\theta)e^{i\theta},d_1\cos(\phi))|}\cdot
\end{equation}
Note that we have decomposed the stream function $\psi$ into two functions: $\mathscr{T}_1$ and $\mathscr{T}_2$. In this decomposition, $\mathscr{T}_1$ represents the interaction of the outer patch with itself and then { it turns out to be an integral with a singular kernel}. This term is the stream function that appeared when studying rotating patches with only one surface boundary in \cite{GHM}, and then we can use the previous study here. On the other hand, $\mathscr{T}_2$ describes the action of the outer patch on the inner one. We will observe in Lemma \ref{lemma-estimdenominator} that since the two patches are well-separated by the assumptions {\bf (H)}, we have that the denominator in $\mathscr{T}_2$ is never vanishing:
\begin{equation}\label{den-2}
|(re^{i\eta},d_2\cos(\varphi))-(r_1(\phi,\theta)e^{i\theta},d_1\cos(\phi))|\geq \delta>0,
\end{equation}
for $r\in(0,r_2(\varphi,\eta))$, $\varphi,\phi\in[0,\pi]$ and $\theta,\eta\in[0,2\pi]$. Here, we need to assume that $r_i$ are small perturbations of $r_{i,0}$. Hence the kernel in { $\mathscr{T}_2$ } is not singular and we can check that it is a nonlinear smooth operator as we will see in Lemma \ref{lemma-estimdenominator}-(2)-(3). 

First, we shall  study the deformed Euclidean norm via the spherical coordinates type  and check \eqref{den-2}. After that, we will be able to prove that $\tilde{F}$ is well-defined and $\mathscr{C}^1$.

\subsection{Deformation of the Euclidean norm}
{The Green function that defines the stream function is deformed through the spherical coordinates, and then also the associated velocity field. 
In the new coordinates system  the associated kernel is anisotropic and we find that the singularities are located at the boundary (the north and south poles) and at the diagonal line. This fact is not completely new and was revealed in the recent study  in \cite{GHM} where a suitable treatment was devised. Notice that, we should here deal with the new terms related to  the interaction between the  two surfaces and then the associated stream function contributes with an  extra term compared to \cite{GHM}: $\mathscr{T}_2$ in }\eqref{streamfunction-decomp}. In order to study the denominator in the kernel, define
$$
J_{ij}(s,\phi,\theta,\varphi,\eta):=(r_j(\varphi,\eta)-sr_i(\phi,\theta))^2+2sr_i(\phi,\theta)r_j(\varphi,\eta)(1-\cos(\theta-\eta))+(d_i\cos(\phi)-d_j\cos(\varphi))^2,
$$
for any $\phi,\varphi\in[0,\pi]$, $\theta,\eta\in[0,2\pi]$ and $s\in[0,1]$. Note that if $r_i\in X_m^\alpha$, then from the symmetry assumption we have that 
$$
J_{ij}(s,\pi-\phi,\theta,\pi-\varphi,\eta)=J_{ij}(s,\phi,\theta,\varphi,\eta),
$$
and hence we can work under the restriction $\phi\in[0,\pi/2]$ instead of $\phi\in[0,\pi]$. Such function will be studied in the following lemma.

\begin{lem}\label{lemma-estimdenominator}
Let $m\geq1, \alpha\in(0,1)$ and $\varepsilon$ small enough. Consider $r_i=r_{0,i}+f_i$, where $r_{0,i}$ satisfies ${\bf (H)}$, with  $f_i \in X_m^\alpha$ and $\|f_i\|\leq \varepsilon$, for $i=1,2$.  There exists two constants  $C,\delta>0$ such that
\begin{enumerate}
\item For { $i\in\{1,2\}$} we have
 \begin{equation*}
 |J_{ii}(0,\phi,\theta,\varphi,\eta)|\geqslant C\sin^2(\varphi)
 \end{equation*}
and 
\begin{equation*}
|J_{ii}(s,\phi,\theta,\varphi,\eta)|\geqslant C\Big(\big(\sin^2(\varphi)+s^2\phi^2\big){\sin^2((\theta-\eta)/2)}+(\varphi+\phi)^2(\phi-\varphi)^2\Big). 
\end{equation*}
\item For $i\neq j$ we have
\begin{equation*}
|J_{ij}(1,\phi,\theta,\varphi,\eta)|\geqslant \delta.
\end{equation*}
\item For $i\neq j$, there exists $\phi_0>0$ small enough such that
\begin{equation*}
|J_{ij}(s,\phi,\theta,\varphi,\eta)|\geqslant\delta,\quad {\phi\in [0,\phi_0]}. 
\end{equation*}
\end{enumerate}
\end{lem}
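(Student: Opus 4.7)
The plan is to exploit the fact that $J_{ij}$ is essentially a squared Euclidean distance. A direct expansion of the first two summands in its definition gives
\begin{equation*}
J_{ij}(s,\phi,\theta,\varphi,\eta) = \bigl|r_j(\varphi,\eta)e^{i\eta} - sr_i(\phi,\theta)e^{i\theta}\bigr|^2 + (d_i\cos\phi - d_j\cos\varphi)^2,
\end{equation*}
and the elementary identity $A^2+B^2-2AB\cos\beta = (A+B)^2\sin^2(\beta/2) + (A-B)^2\cos^2(\beta/2)$ applied with $\beta=\theta-\eta$ further rewrites this as the manifestly non-negative sum
\begin{equation*}
J_{ij} = (r_j(\varphi,\eta)+sr_i(\phi,\theta))^2\sin^2\tfrac{\theta-\eta}{2} + (r_j(\varphi,\eta)-sr_i(\phi,\theta))^2\cos^2\tfrac{\theta-\eta}{2} + (d_i\cos\phi-d_j\cos\varphi)^2.
\end{equation*}
All three assertions will be read off by bounding individual summands. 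Throughout, the smallness of $\varepsilon$ together with \eqref{platitL1} and \textbf{(H2)} secures the two-sided bound $C^{-1}\sin\phi \leq r_i(\phi,\theta) \leq C\sin\phi$, which I use freely.

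For assertion (1) (the case $i=j$), I would first dispose of $s=0$ via the immediate $J_{ii}(0,\cdot) \geq r_i^2(\varphi,\eta) \geq C\sin^2\varphi$. For the main inequality I would discard the $\cos^2$ term, use $(r_i(\varphi,\eta)+sr_i(\phi,\theta))^2 \geq r_i^2(\varphi,\eta)+s^2 r_i^2(\phi,\theta)$ together with $\sin\phi\geq C\phi$ on the reduced range $\phi\in[0,\pi/2]$ (permitted by the symmetry noted before the lemma) to obtain the $(\sin^2\varphi+s^2\phi^2)\sin^2((\theta-\eta)/2)$-part; and for the remaining summand expand $\cos\phi-\cos\varphi = -2\sin\tfrac{\phi+\varphi}{2}\sin\tfrac{\phi-\varphi}{2}$ and invoke $|\sin x|\geq C|x|$ on the bounded ranges involved to extract the factor $C(\phi+\varphi)^2(\phi-\varphi)^2$.

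Assertions (2) and (3), corresponding to $i\neq j$, rest on the separation hypothesis \textbf{(H4)} in place of any diagonal-type singularity. The common strategy is a perturbative comparison in $\R^2$: writing $v := (r_j(\varphi,\eta)-sr_i(\phi,\theta),\, d_i\cos\phi-d_j\cos\varphi)$, so that $J_{ij}(s,\cdot)\geq |v|^2$, I would choose a reference vector $u$ whose norm is bounded below independently of the perturbations and of the small parameters, and then estimate $|v-u|$ by $O(\varepsilon)$ plus small geometric corrections. For (2), taking $s=1$ and $u=(r_{0,j}(\varphi)-r_{0,i}(\phi),\,d_i\cos\phi-d_j\cos\varphi)$, the bound $|u|^2\geq\delta$ is exactly \textbf{(H4)} while $|v-u|\leq |f_j|+|f_i|\leq C\varepsilon$, so the reverse triangle inequality delivers $|v|^2\geq\delta/2$ once $\varepsilon$ is small.

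The main obstacle, and the only genuinely new point, lies in (3): one must produce a reference with a lower bound that remains uniform in $\varphi\in[0,\pi]$ even when $\phi$ collapses to $0$. My plan is to take $u=(r_{0,j}(\varphi),\,d_i-d_j\cos\varphi)$ and argue that the continuous function $\varphi\mapsto r_{0,j}^2(\varphi)+(d_i-d_j\cos\varphi)^2$ is strictly positive on the compact set $[0,\pi]$: it could vanish only where $r_{0,j}(\varphi)=0$, forcing $\varphi\in\{0,\pi\}$ by \textbf{(H2)}, and simultaneously $d_i=\pm d_j$, which is incompatible with $d_1>d_2>0$. Continuity and compactness then yield $|u|^2\geq\delta_0>0$, while $|v-u|$ is controlled by $|f_j|$, by $sr_i(\phi,\theta)\leq C\phi_0$ (from \textbf{(H2)}), and by $d_i|1-\cos\phi|\leq C\phi_0^2$, all of which are $O(\varepsilon+\phi_0)$. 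Choosing $\varepsilon$ and $\phi_0$ small enough to keep $|v-u|<\sqrt{\delta_0}/2$ finishes the proof.
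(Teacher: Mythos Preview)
Your proposal is correct. For part~(1) the paper simply cites \cite[Lemma~5.2]{GHM}, so your sketch is in fact supplying what the paper omits; and for part~(2) your vector formulation via the reverse triangle inequality is exactly the paper's argument (it uses the scalar form $(A-B)^2\geq\tfrac12 A^2-B^2$ followed by {\bf(H4)}, which is the same content).

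The only genuine divergence is in part~(3). The paper keeps the reference point $(r_{0,i}(\phi)-r_{0,j}(\varphi),\,d_i\cos\phi-d_j\cos\varphi)$ and absorbs the defect $(1-s)r_i(\phi,\theta)\leq C\sin\phi_0$ into the perturbation, so that {\bf(H4)} again furnishes the uniform lower bound. You instead shift the reference all the way to $\phi=0$, taking $u=(r_{0,j}(\varphi),\,d_i-d_j\cos\varphi)$, and obtain the lower bound from a compactness argument that uses only {\bf(H2)} and $d_1\neq d_2$. Both are equally short; your route has the small advantage of not invoking {\bf(H4)} for this part, while the paper's has the conceptual uniformity of deriving (2) and (3) from the same separation hypothesis.
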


\begin{proof}

\noindent\medskip
{\bf(1)}
We refer to the proof of  \cite[Lemma 5.2]{GHM}.

\noindent\medskip 
{\bf(2)}
Take $i\neq j$, then
$$
|r_i(\phi,\theta)-r_j(\varphi,\eta)|\geq |r_{0,i}(\phi)-r_{0,j}(\varphi)|-|f_i(\phi,\theta)-f_j(\varphi,\eta)|.
$$
By using {\bf (H4)} and  $(A-B)^2\geq \frac{A^2}{2}-B^2$, that follows from to Young inequality, one has
\begin{align*}
(r_i(\phi,\theta)-r_j(\varphi,\eta))^2+&(d_i\cos(\phi)-d_j\cos(\varphi))^2\\
\geqslant& {\frac{1}{2}}(r_{0,i}(\phi)-r_{0,j}(\varphi))^2+(d_i\cos(\phi)-d_j\cos(\varphi))^2-|f_i(\phi)-f_j(\varphi)|^2\\
\geqslant & {\frac{\delta}{2}}-4\|f_i\|_{L^\infty}^2.
\end{align*}
Since $\|f_i\|_{L^\infty}<\varepsilon$ we can choose $\varepsilon$ small { enough such that}
\begin{align*}
(r_i(\phi,\theta)-r_j(\varphi,\eta))^2+(d_i\cos(\phi)-d_j\cos(\varphi))^2\geqslant  \tfrac{\delta}{4}.
\end{align*}
Then, for any $i\neq j$ we have
$$
J_{ij}(1,\phi,\theta,\varphi,\eta)\geqslant \tfrac{\delta}{4}.
$$

\noindent\medskip 
{\bf(3)}
According to the assumption {\bf (H2)} one has for any  $\phi_0\in(0,\pi)$ 
$$
r_{0,i}(\phi)\leqslant C\sin(\phi_0), \quad \forall \phi\in[0,\phi_0].
$$
 Note also that
\begin{align*}
|sr_i(\phi,\theta)-r_j(\varphi,\eta)|\geq& |r_{0,i}(\phi)-r_{0,j}(\varphi)|-|f_i(\phi,\theta)-f_j(\varphi,\eta)|-(1-s)r_{0,i}(\phi)-(1-s)|f_i(\phi,\theta)|\\
\geq& |r_{0,i}(\phi)-r_{0,j}(\varphi)|-|f_i(\phi,\theta)-f_j(\varphi,\eta)|-r_{0,i}(\phi)-|f_i(\phi,\theta)|\\
\geq& |r_{0,i}(\phi)-r_{0,j}(\varphi)|-2\varepsilon-C\sin(\phi_0)-\varepsilon,
\end{align*}
where we have used that $\|f_i\|_{L^\infty}\leq \varepsilon$. Moreover, by using {\bf (H4)} one has for $i\neq j$
\begin{align*}
&\left({(sr_i(\phi,\theta)-r_j(\varphi,\eta))^2+(d_i\cos(\phi)-d_j\cos(\varphi))^2}\right)^{\frac12}\\
&\qquad \geqslant {\tfrac{1}{2}}|sr_i(\phi,\theta)-r_j(\varphi,\eta)|+ {\frac{1}{2}}|d_i\cos(\phi)-d_j\cos(\varphi)|\\
&\qquad\qquad \geqslant {\tfrac{1}{2}}|r_{0,i}(\phi)-r_{0,j}(\varphi)|+ {\tfrac{1}{2}}|d_i\cos(\phi)-d_j\cos(\varphi)|-\tfrac{3}{2}\varepsilon-\tfrac{C}{2}\sin(\phi_0)\\
&\qquad\qquad\qquad\qquad \geqslant {\frac{1}{2}}\delta-\tfrac{3}{2}\varepsilon-\frac{1}{2}C\sin(\phi_0).
\end{align*}
Choosing $\varepsilon$ and $\phi_0$ small enough we have that
$$
 {\tfrac{\delta}{2}}-\tfrac{3}{2}\varepsilon-\tfrac{C}{2}\sin(\phi_0)>\tfrac{\delta}{4},
$$
concluding the proof.
\end{proof}

\subsection{Regularity persistence}
{In this section we shall investigate the regularity of the function $\tilde{F}$ introduced in \eqref{Ftilde}. Due to their similar structure we shall restrict the discussion  to  $\tilde{F}_1$, and the same arguments can be implemented to $\tilde{F}_2$. 
In the proof of the forthcoming Proposition \ref{prop-wellpos} we shall study the regularity of the velocity field. For this task, we need   first to analyze some algebraic properties  related to  the velocity field at the vertical axis. We shall see that  the revolution shape invariance of the the surface of the vortices  induces a vanishing  velocity at the symmetry axis.} From \eqref{U} we find that
\begin{align}\label{U-2}
U(Re^{i\theta},z)=&\frac{d_1}{4\pi}\bigintsss_{0}^{\pi}\bigintsss_0^{2\pi}\frac{\sin(\varphi)(\partial_{\eta}r_1(\varphi,\eta)e^{i\eta}+ir_1(\varphi,\eta)e^{i\eta})}{|(Re^{i\theta},z)-(r_1(\varphi,\eta)e^{i\eta},d_1\cos(\varphi))|}d\eta d\varphi\\
&-\frac{d_2}{4\pi}\bigintsss_{0}^{\pi}\bigintsss_0^{2\pi}\frac{\sin(\varphi)(\partial_{\eta}r_2(\varphi,\eta)e^{i\eta}+ir_2(\varphi,\eta)e^{i\eta})}{|(Re^{i\theta},z)-(r_2(\varphi,\eta)e^{i\eta},d_2\cos(\varphi))|}d\eta d\varphi.\nonumber
\end{align}
Recall that by using \eqref{I1} and \eqref{I2} we get 
$$
U(Re^{i\theta},z)=\mathcal{I}_1(R,z)-\mathcal{I}_2(R,z).
$$

In the next lemma, we state that the velocity field is vanishing at the vertical axis. We omit the proof due to its similarity with \cite[Lemma 5.1]{GHM}.
{\begin{lem}\label{lemma-velocidad0}
If { the initial profile  $r_{0,i}, \, i=1,2$ satisfy} ${\bf{(H)}}$ and $f_1,f_2\in B_{X_m^\alpha}(\varepsilon)$, with $m\geqslant 2$ and $\varepsilon$ small enough, then 
\begin{align*}
\forall\, z\in\R,\quad \bigintsss_0^\pi\bigintsss_0^{2\pi}\tfrac{\sin(\varphi)\partial_\eta(r_i(\varphi,\eta)e^{i\eta})d\eta d\varphi}{\big(r^2_i(\varphi,\eta)+\big(z-d_i\cos\varphi\big)^2\big)^\frac12}=&0.
\end{align*}
As a consequence, the velocity field $(U,0)=(u,v,0)$ defined  in \eqref{U-2} is vanishing at the vertical axis, that is, 
{
$$
\forall\, z\in\R,\quad U(0,0,z)=(0,0).
$$
} 
\end{lem}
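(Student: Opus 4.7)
The plan is to establish the integral identity by exploiting the $m$-fold rotational symmetry inherited from $f_1,f_2\in X_m^\alpha$ with $m\geqslant 2$, and then to specialize the Biot--Savart representation \eqref{U-2} at the vertical axis.

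First I would fix $i\in\{1,2\}$, $\varphi\in(0,\pi)$ and $z\in\R$, and consider the inner integral
$$
\mathcal{J}_i(\varphi,z):=\int_0^{2\pi}\frac{\partial_\eta\big(r_i(\varphi,\eta)e^{i\eta}\big)}{\sqrt{r_i^2(\varphi,\eta)+(z-d_i\cos\varphi)^2}}\,d\eta.
$$
The key observation is that since $r_{0,i}$ is independent of $\eta$ and $f_i\in X_m^\alpha$ has the Fourier expansion $f_i(\varphi,\eta)=\sum_{n\geqslant 1}f_{i,n}(\varphi)\cos(nm\eta)$, both $r_i(\varphi,\cdot)$ and $\partial_\eta r_i(\varphi,\cdot)$ are invariant under the translation $\eta\mapsto \eta+\tfrac{2\pi}{m}$. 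Performing this change of variable in $\mathcal{J}_i$, using $2\pi$-periodicity to restore the integration interval, the denominator is unchanged while the identity $\partial_\eta(r_ie^{i\eta})\big|_{\eta+2\pi/m}=e^{2\pi i/m}\partial_\eta(r_ie^{i\eta})$ provides a global prefactor $e^{2\pi i/m}$. Hence $\mathcal{J}_i(\varphi,z)=e^{2\pi i/m}\mathcal{J}_i(\varphi,z)$, and because $m\geqslant 2$ gives $e^{2\pi i/m}\neq 1$, we conclude $\mathcal{J}_i(\varphi,z)=0$. Multiplying by $\sin(\varphi)$ and integrating in $\varphi\in[0,\pi]$ yields the claimed identity.

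For the consequence on the velocity field, I would specialize \eqref{U-2} at the vertical axis $R=0$. In that case the denominators simplify to $\sqrt{r_j^2(\varphi,\eta)+(z-d_j\cos(\varphi))^2}$, independent of $\eta$, and the numerators coincide with $\sin(\varphi)\partial_\eta\bigl(r_j(\varphi,\eta)e^{i\eta}\bigr)$ for $j=1,2$. Applying the identity just proved to both $j=1$ and $j=2$ makes each of the two summands in the Biot--Savart formula vanish, giving $U(0,0,z)=(0,0)$.

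The only technical point is the convergence of the integrals near the poles $\varphi\in\{0,\pi\}$ when $z$ is close to $\pm d_i$, since then both the denominator and $\sin(\varphi)$ degenerate simultaneously. Under \textbf{(H2)} together with the smallness of $f_i\in B_{X_m^\alpha}(\varepsilon)$, combined with the flatness estimates \eqref{platitL1} which give $r_i(\varphi,\eta)\sim \sin\varphi$ and $|\partial_\eta(r_ie^{i\eta})|\lesssim \sin^\alpha\varphi$, one checks that the integrand is controlled by $\sin^\alpha(\varphi)$ uniformly in $\eta$, ensuring integrability even in the worst case $z=\pm d_i$. This is not really an obstacle but rather a routine verification; the core structural argument is the symmetry cancellation, which is essentially the one used in \cite[Lemma 5.1]{GHM}. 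The presence of a second surface introduces no new difficulty at this stage: the two boundary contributions in \eqref{U-2} decouple in the vertical-axis evaluation, and each vanishes independently by the same symmetry mechanism.
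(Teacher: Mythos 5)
Your proof is correct and takes essentially the same route as the paper, which omits the argument and simply refers to \cite[Lemma 5.1]{GHM}: the mechanism is exactly the one you use, namely the $\tfrac{2\pi}{m}$-periodicity of $r_i(\varphi,\cdot)$ coming from $f_i\in X_m^\alpha$, the shift $\eta\mapsto\eta+\tfrac{2\pi}{m}$ producing the prefactor $e^{2\pi i/m}\neq 1$ for $m\geqslant 2$, and the bound of the integrand by $\sin^\alpha(\varphi)$ (via {\bf (H2)} and \eqref{platitL1}) to justify integrability near the poles. One harmless slip: at $R=0$ the denominator $\big(r_j^2(\varphi,\eta)+(z-d_j\cos\varphi)^2\big)^{1/2}$ is not independent of $\eta$ (it depends on $\eta$ through $r_j$), but this does not matter since the identity you proved already covers that dependence.
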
}
We are ready to prove the  $\tilde{F}$ is well-defined  and acts smoothly on a small ball of the function space $X_m^\alpha$ introduced in \eqref{space} and \eqref{spaceXL1}.
\begin{pro}\label{prop-wellpos}
Let  $m\geqslant 2, \alpha\in(0,1)$ and $r_{0,i}$ satisfy ${\bf{(H)}}$ for $i=1,2.$ There exists { $\varepsilon>0$ small enough} such that the functional 
$$\tilde{F}:\R\times B_{X_m^\alpha}(\varepsilon)\times B_{X_m^\alpha}(\varepsilon)\rightarrow X_m^\alpha\times X_m^\alpha,
$$ is well-defined and of class $\mathscr{C}^1$. 
\end{pro}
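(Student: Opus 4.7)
The plan is to split $\tilde F=(\tilde F_1,\tilde F_2)$ into self-interaction and cross-interaction pieces. For $\tilde F_1$, use the decomposition $\psi=\mathscr{T}_1-\mathscr{T}_2$ from \eqref{streamfunction-decomp}, and analogously for $\tilde F_2$. The self-interaction part of $\tilde F_i$ involves only the $i$-th surface interacting with itself; after subtraction of the mean $m_i$ and division by $r_{0,i}(\phi)$, this is exactly the functional analyzed in \cite{GHM} for the one-surface problem, controlled by the elliptic-type estimate in Lemma \ref{lemma-estimdenominator}(1). The strategy is therefore to quote that analysis verbatim for the self-interaction terms and to focus the work on the cross-interaction terms, which are genuinely new.

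For the cross-interaction contributions, the key point is that Lemma \ref{lemma-estimdenominator}(2)--(3) gives a uniform lower bound $J_{ij}(s,\phi,\theta,\varphi,\eta)\geq \delta/4>0$ for $i\neq j$, both for the stream-function integrand ($s$ up to $1$, integration variable in $[0,r_j(\varphi,\eta)]$) and uniformly near the poles $\phi\in[0,\phi_0]\cup[\pi-\phi_0,\pi]$, provided $\varepsilon$ is small. Consequently the integrands are $\mathscr{C}^\infty$ in all arguments and jointly smooth in the parameters $(f_1,f_2)$ on a small ball of $X_m^\alpha\times X_m^\alpha$. Differentiation under the integral sign then shows that the cross-interaction terms are $\mathscr{C}^\infty$ (in particular $\mathscr{C}^1$) as maps into $\mathscr{C}^{1,\alpha}((0,\pi)\times\T)$, with no singular potential-theoretic analysis required.

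It remains to check that the output lies in $X_m^\alpha\times X_m^\alpha$, i.e.\ that it has the right symmetries and boundary behavior. The $m$-fold symmetry in $\theta$ is preserved by each integral operator via the change of variables $\eta\mapsto\eta+2\pi/m$, and the equatorial symmetry $\phi\mapsto \pi-\phi$ is inherited from assumption ${\bf(H3)}$ and the symmetry of $f_i$ via the joint substitution $(\phi,\varphi)\mapsto(\pi-\phi,\pi-\varphi)$ (which leaves $d_i\cos\phi-d_j\cos\varphi$ unchanged up to sign inside a square). Subtracting $m_j(\Omega,f_1,f_2)(\phi)$ kills the zero-th Fourier mode in $\theta$, so the Fourier series of $F^{\bf s}_j$ starts at $n=1$.

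The main obstacle is the Dirichlet boundary condition at $\phi=0,\pi$ after dividing by $r_{0,j}(\phi)\sim \sin\phi$. We need $F^{\bf s}_j(\Omega,f_1,f_2)(\phi,\theta)=\mathcal{O}(\sin\phi)$ as $\phi\to 0,\pi$, together with the $\mathscr{C}^{1,\alpha}$-behavior of the quotient. This is where Lemma \ref{lemma-velocidad0} is crucial: the velocity vanishes on the vertical axis, so the stream function is differentiable at the poles with gradient parallel to the axis, and a Taylor expansion gives $\psi(r_j(\phi,\theta)e^{i\theta},d_j\cos\phi)-\psi(0,0,d_j)=\mathcal{O}(r_j(\phi,\theta))=\mathcal{O}(\sin\phi)$. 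The $\theta$-independent piece is cancelled by $m_j$, and $r_j^2(\phi,\theta)-\tfrac{1}{2\pi}\int_0^{2\pi}r_j^2d\theta=\mathcal{O}(\sin^2\phi)$ because $f_j\in X_m^\alpha$ satisfies \eqref{platitL1}. Thus $F^{\bf s}_j/r_{0,j}$ extends continuously up to the boundary; the $\mathscr{C}^{1,\alpha}$ estimates for the quotient follow by the same divided-difference arguments used for the one-surface case in \cite{GHM}, adapted componentwise to each of the four blocks (two self-interaction and two cross-interaction) of the decomposition. The $\mathscr{C}^1$ dependence on $(\Omega,f_1,f_2)$ then reduces to combining these regularity estimates with the smooth parameter dependence of the integrands, which is immediate for the cross terms and borrowed from \cite{GHM} for the self-interaction terms.
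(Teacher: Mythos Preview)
Your proposal is correct and follows essentially the same route as the paper: split into self-interaction pieces (quoted from \cite{GHM}) and cross-interaction pieces (non-singular kernels by Lemma~\ref{lemma-estimdenominator}(2)--(3)), handle the division by $r_{0,j}(\phi)$ near the poles via Lemma~\ref{lemma-velocidad0} plus a Taylor expansion of $\psi$ along the axis, and verify the $m$-fold, even-in-$\theta$, and equatorial symmetries by the obvious changes of variables. One small imprecision worth flagging: the cross-interaction integrands are not literally $\mathscr{C}^\infty$ (the data are only $\mathscr{C}^2$ and $\mathscr{C}^{1,\alpha}$), and the paper in fact computes $\partial_\theta$ and $\partial_\phi$ of the quotient explicitly in terms of the velocity field (introducing a cutoff $\chi$ near the poles and using Lemma~\ref{lemma-estimdenominator}(3) for $s\in[0,1]$ there) rather than invoking a generic ``divided-difference'' argument---but the substance of your outline matches.
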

\begin{proof}
We shall establish the desired  results  for  $\tilde{F}_1$ and  the same kind of  arguments can be applied to  $\tilde{F}_2$. Let us start with checking the regularity properties and we will postpone  the symmetry to the end of the proof.
We decompose $\tilde{F}_1$ as in \eqref{streamfunction-decomp},
$$
\tilde{F}_1(\Omega,f_1,f_2)(\phi,\theta)=\frac{1}{r_{0,1}(\phi)}\left\{\mathscr{T}_1(f_1,f_2)(\phi,\theta)-\mathscr{T}_2(f_1,f_2)(\phi,\theta)-\frac{\Omega}{2}r_1^2(\phi,\theta)-m_1(\Omega,f_1,f_2)(\phi)\right\},
$$
where $\mathscr{T}_i$ is defined in \eqref{mathscrIi}. Moreover, let us recall the identity
$$
\mathscr{T}_1-\mathscr{T}_2=\psi((r_{0,1}(\phi)+f_1(\phi,\theta))e^{i\theta},d_1\cos(\phi)).
$$
Thus  we may  split the functional  $\tilde{F}_1$ into two parts
$$
\tilde{F}_1(\Omega,f_1,f_2)(\phi,\theta)=G_1(f_1,f_2)(\phi,\theta)-\frac{\Omega}{2} G_2(f_1)(\phi,\theta)-\frac{1}{2\pi}\bigintsss_0^{2\pi}\left[G_1(f_1,f_2)(\phi,\theta)-\frac{\Omega}{2} G_2(f_1)(\phi,\theta)\right]d\theta,
$$
with
\begin{align*}
G_1(f_1,f_2)(\phi,\theta)=&\frac{\mathscr{T}_1(f_1,f_2)(\phi,\theta)-\mathscr{T}_2(f_1,f_2)(\phi,\theta)}{r_{0,1}(\phi)},\\
G_2(f_1)(\phi,\theta)=&2f_1(\phi,\theta)+\frac{f^2_1(\phi,\theta)}{r_{0,1}(\phi)}\cdot
\end{align*}
Define also 
\begin{align*}
\mathscr{G}_1(f_1,f_2)(\phi,\theta):=&G_1(f_1,f_2)(\phi,\theta)-\frac{1}{2\pi}\bigintsss_0^{2\pi}G_1(f_1,f_2)(\phi,\theta)d\theta,\\
\mathscr{G}_2(f_1)(\phi,\theta):=&G_2(f_1)(\phi,\theta)-\frac{1}{2\pi}\bigintsss_0^{2\pi}G_2(f_1)(\phi,\theta)d\theta.
\end{align*}
By \eqref{streamfunction-decomp} one has
\begin{equation}\label{F1-X1}
G_1(f_1,f_2)(\phi,\theta)=\frac{\psi\left(\big(r_{0,1}(\phi)+f_1(\phi,\theta)\big)e^{i\theta}, d_1\cos\phi\right)}{r_{0,1}(\phi)}\cdot
\end{equation}
The regularity of $\mathscr{G}_2(f_1)$ was already studied in \cite[Proposition 5.2, Step 1]{GHM}. It remains to consider $\mathscr{G}_1$ whose  proof will be divided into two steps. In step 1 we shall analyze  the regularity, however in step 2   we  will prove the persistence of the symmetry.

\medskip
\noindent
{{{\bf Step 1:} $(f_1,f_2)\mapsto \mathscr{G}_1(f_1,f_2)$ {\it  is well-defined}.} We will check that the functional $\mathscr{G}_1$  is symmetric with respect to $\phi=\frac{\pi}{2}$ and therefore it suffices to check the desired regularity in the range $\phi\in(0,\pi/2)$ { and check that the derivative is vanishing at $\pi/2$}. Let us emphasize  that   we need to check the regularity not for  $G_1$ but for its fluctuation with respect to the average, in terms of  $\mathscr{G}_1$. Notice that  we have to include the average in $\mathscr{G}_1$ in order to satisfy   the boundary conditions. In addition and we will see later at some steps of the proof,  some singular parts can be removed by subtracting the average.}
First, we shall check that $\mathscr{G}_1$ is bounded and satisfies the boundary condition $\mathscr{G}_1(0,\theta)=\mathscr{G}_1(\pi,\theta)=0$, for any $\theta\in(0,2\pi)$. 
 {Let us remark that since the kernel $\frac{1}{|x|}$ has homogeneity $-2,$ 
from the general potential theory, if $D$ is the characteristic function of a bounded domain, one has that $\psi=\frac{1}{|x|}\star \chi_D
\in \mathscr{C}^{1,\alpha}(\R^3).$ } Hence, we can write by virtue of Taylor's formula
\begin{align}\label{psi0X}
\forall x_h\in\R^2,\quad \psi(x_h,d_1\cos\phi)
=&\psi(0,0,d_1\cos\phi)+x_h\cdot\int_0^1\nabla_h\psi\big(\tau x_h,d_1\cos\phi\big)d\tau.
\end{align}
Making the substitution $x_h=(r_{0,1}(\phi)+f_1(\phi,\theta))e^{i\theta}$ and using \eqref{F1-X1} we infer
\begin{align*}
G_1(f_1,f_2)(\phi,\theta)=&\tfrac{\psi(0,0,d_1\cos\phi)}{r_{0,1}(\phi)}+\left(1+\tfrac{f_1(\phi,\theta)}{r_{0,1}(\phi)}\right)e^{i\theta}\cdot\int_0^1\nabla_h\psi\left(\tau(r_{0,1}(\phi)+f_1(\phi,\theta))e^{i\theta},d_1\cos\phi\right)d\tau\\
=:&\tfrac{\psi(0,0,d_1\cos\phi)}{r_{0,1}(\phi)}+\mathscr{G}_{1,1}(\phi,\theta).
\end{align*}
We observe that  the $\cdot$ denotes  the usual Euclidean  inner product  of $\R^2.$
Consequently, we obtain
\begin{equation}\label{Simplifi1}
\mathscr{G}_1(\phi,\theta)=\mathscr{G}_{1,1}(\phi,\theta)-{\frac{1}{2\pi}\int_0^{2\pi}\mathscr{G}_{1,1}(\phi,\theta)d\theta.
}
\end{equation}
Let us analyze the  term $ \mathscr{G}_{1,1}$ and check its continuity and the Dirichlet boundary condition. First we observe from the assumption ${\bf{(H2)}}$ that $0$ is a simple zero for $r_{0,1}$ and we know that $f_1(0,\theta)=0$, then    one may easily  obtain the bound 
$$
|\mathscr{G}_{1,1}(\phi,\theta)|\leq C(1+\|\partial_\phi f_1\|_{L^\infty})\|\nabla_h\psi\|_{L^\infty(\R^3)}.
$$
Notice that $\psi$ is $\mathscr{C}^{1,\alpha}$ in the physical variables, and then same tool gives the continuity of $ \mathscr{G}_{1,1}$ in $[0,\pi/2]\times[0,2\pi].$ Let us move now to the boundary conditions. According to  Lebesgue dominated convergence theorem  we infer
$$
\lim_{\phi\to 0}\mathscr{G}_{1,1}(\phi,\theta)=\left(1+\tfrac{\partial_\phi f_1(0,\theta)}{r_{0,1}^\prime(0)}\right)e^{i\theta}\cdot\nabla_h\psi\left(0,0,{d_1}\right),
$$
and this convergence is uniform in $\theta\in(0,2\pi)$. Now, applying  Lemma \ref{lemma-velocidad0} we get {$\nabla_h\psi\left(0,0,d_1\right)=(0,0)$}, and therefore
$$
\forall \,\theta\in(0,2\pi),\quad \lim_{\phi\to 0}\mathscr{G}_{1,1}(\phi,\theta)=\lim_{\phi\to 0}\langle\mathscr{G}_{1,1}\rangle_\theta=0.
$$
This implies that $\mathscr{G}_1$ is  continuous in $[0,\pi]\times[0,2\pi]$ and it  satisfies the required  Dirichlet  boundary condition $\mathscr{G}_1(0,\theta)=\mathscr{G}_1(\pi,\theta)=0$.

The next step is to establish that  $\partial_\theta\mathscr{G}_1$ and $\partial_\phi \mathscr{G}_1$ are $\mathscr{C}^\alpha$. We will relate such derivatives to the two-components { of the} velocity field $U=\nabla_h^\perp \psi$. Differentiating \eqref{F1-X1} with respect to $\theta$  yields
\begin{align}
 &\qquad\qquad\qquad\partial_\theta\mathscr{G}_1(\phi,\theta)=
\partial_\theta G_1(f_1,f_2)(\phi,\theta)\label{F1theta}\\
=&r_{0,1}^{-1}(\phi)\,\nabla_h \psi(r_1(\phi,\theta)e^{i\theta},d_1 \cos(\phi))\cdot\left(r_1(\phi,\theta)ie^{i\theta}+\partial_\theta r_1(\phi,\theta)e^{i\theta}\right)\nonumber\\
=&-\frac{r_1(\phi,\theta)}{r_{0,1}(\phi)} U(r_1(\phi,\theta)e^{i\theta}, d_1\cos(\phi))\cdot e^{i\theta}{{+}} \partial_\theta r_1(\phi,\theta) 
\frac{U(r_1(\phi,\theta)e^{i\theta},d_1\cos(\phi))}{r_{0,1}(\phi)}\cdot ie^{i\theta}.\nonumber
\end{align}
{As  to the partial derivative  in $\phi$, we achieve}
\begin{align}
\partial_\phi G_1(f_1,f_2)(\phi,\theta)=&-\tfrac{r_{0,1}'(\phi)}{r_{0,1}^2(\phi)}\psi(r_1(\phi,\theta)e^{i\theta}, d_1\cos(\phi))+
\tfrac{\partial_\phi r_1(\phi,\theta)}{r_{0,1}(\phi)}\nabla_h \psi(r_1(\phi,\theta)e^{i\theta},d_1\cos(\phi))\cdot e^{i\theta}\nonumber\\
&-\tfrac{{d_1}\sin(\phi)}{r_{0,1}(\phi)}\partial_z\psi(r_1(\phi,\theta)e^{i\theta},d_1\cos(\phi))\nonumber\\
=&-\tfrac{r_{0,1}'(\phi)}{r_{0,1}^2(\phi)}\psi(r_1(\phi,\theta)e^{i\theta}, d_1\cos(\phi))+
\partial_\phi r_1(\phi,\theta)\tfrac{U(r_1(\phi,\theta)e^{i\theta},d_1\cos(\phi))}{r_{0,1}(\phi)} \cdot ie^{i\theta}\nonumber\\
&-\tfrac{{d_1}\sin(\phi)}{r_{0,1}(\phi)}\partial_z\psi(r_1(\phi,\theta)e^{i\theta},d_1\cos(\phi)).\label{F1phi}
\end{align}
Define
\begin{equation}\label{J1-0}
\mathscr{J}_{1}(\phi,\theta):=\frac{r_{0,1}'(\phi)}{r_{0,1}^2(\phi)}\psi(r_1(\phi,\theta)e^{i\theta}, d_1\cos(\phi)),
\end{equation}
and
\begin{equation}\label{J2-0}
\mathscr{J}_{2}(\phi,\theta):=\frac{U(r_1(\phi,\theta)e^{i\theta},d_1\cos(\phi))}{r_{0,1}(\phi)} \cdot ie^{i\theta}.
\end{equation}
Then from \eqref{F1theta} and \eqref{F1phi} we may write
\begin{equation}\label{partialthetaG1}
\partial_\theta\mathscr{G}_1(\phi,\theta)= {-}\frac{r_1(\phi,\theta)}{r_{0,1}(\phi)} U(r_1(\phi,\theta)e^{i\theta}, d_1\cos(\phi))\cdot e^{i\theta}      {{+}}\partial_\theta r_1(\phi,\theta) \mathscr{J}_2(\phi,\theta),
\end{equation}
and
\begin{equation}\label{partialphiG1}
\partial_\phi G_1(f_1,f_2)(\phi,\theta)=-\mathscr{J}_{1}(\phi,\theta)+\partial_\phi r_1(\phi,\theta)\mathscr{J}_{2}(\phi,\theta)-\tfrac{{d_1}\sin(\phi)}{r_{0,1}(\phi)}\partial_z\psi(r_1(\phi,\theta)e^{i\theta},d_1\cos(\phi)).
\end{equation}
{Observe that the last term in \eqref{partialphiG1} belongs to $\mathscr{C}^\alpha((0,\pi)\times\T)$ (here we do not need to restrict ourselves to $\phi\in[0,\pi/2]$). Indeed, as  $(\phi,\theta)\mapsto \big(r_1(\phi,\theta)e^{i\theta},d_1\cos(\phi)\big)$ belongs to $\mathscr{C}^{1,\alpha}$ and $\partial_z\psi\in \mathscr{C}^{\alpha}(\R^3)$ then by composition we infer $(\phi,\theta)\mapsto \partial_z\psi\big(r_1(\phi,\theta)e^{i\theta},d_1\cos(\phi)\big)$ is in $\mathscr{C}^{\alpha}((0,\pi)\times{\T}\big)$. On the  other hand, the function  {$\phi\mapsto \frac{\sin(\phi)}{r_{0,1}(\phi)}$}  belongs to the algebra $ \mathscr{C}^\alpha$ obtaining the desired result. {
Since $\phi\mapsto \frac{r_1(\phi,\theta)}{r_{0,1}(\phi)}$ is a function in $\mathscr{C}^{\alpha},$} then to get the desired regularity it is enough   from \eqref{partialthetaG1}--\eqref{partialphiG1} to check that }
 $\mathscr{J}_1$,  $\mathscr{J}_2$  and the function 
\begin{equation}\label{U-partialtheta}
U(r_1(\phi,\theta)e^{i\theta}, d_1\cos(\phi))\cdot e^{i\theta}
\end{equation}
belong to $\mathscr{C}^\alpha([0,\pi/2]\times\T).$
Let us start with the last one  $U(r_1(\phi,\theta)e^{i\theta},d_1\cos(\phi))\cdot e^{i\theta}$. First, we note that $U$ can be written as \eqref{U-2},
\begin{align}\label{U1U2}
U(Re^{i\theta},z)=&\frac{d_1}{4\pi}\bigintsss_{0}^{\pi}\bigintsss_0^{2\pi}\frac{\sin(\varphi)(\partial_{\eta}r_1(\varphi,\eta)e^{i\eta}+ir_1(\varphi,\eta)e^{i\eta})}{|(Re^{i\theta},z)-(r_1(\varphi,\eta)e^{i\eta},d_1\cos(\varphi))|}d\eta d\varphi\\
&-\frac{d_2}{4\pi}\bigintsss_{0}^{\pi}\bigintsss_0^{2\pi}\frac{\sin(\varphi)(\partial_{\eta}r_2(\varphi,\eta)e^{i\eta}+ir_2(\varphi,\eta)e^{i\eta})}{|(Re^{i\theta},z)-(r_2(\varphi,\eta)e^{i\eta},d_2\cos(\varphi))|}d\eta d\varphi\nonumber\\
=&\mathcal{I}_1(R,z)-\mathcal{I}_2(R,z).\nonumber
\end{align}
Remark  that $\mathcal{I}_1$ refers to the induced effect generated by  the outer domain, which is similar to the velocity field  analyzed in \cite{GHM}. Actually, from \cite[Proposition 5.2, Step 2]{GHM} we have that the functional
$$
(\phi,\theta)\mapsto {\mathcal{I}_1(r_1(\phi,\theta), d_1\cos(\phi))}\cdot e^{i\theta},
$$
belong to $\mathscr{C}^\alpha([0,\pi/2]\times\T)$. It remains to study $\mathcal{I}_2$, where
\begin{equation}\label{U2}
\mathcal{I}_2(r_1(\phi,\theta),d_1\cos(\phi))=\frac{d_2}{4\pi}\bigintsss_{0}^{\pi}\bigintsss_0^{2\pi}\frac{\sin(\varphi)(\partial_{\eta}r_2(\varphi,\eta)e^{i\eta}+ir_2(\varphi,\eta)e^{i\eta})\,\,d\eta d\varphi}{|(r_1(\phi,\theta)e^{i\theta},d_1 \cos(\phi))-(r_2(\varphi,\eta)e^{i\eta},d_2\cos(\varphi))|}\cdot
\end{equation}
By Lemma \ref{lemma-estimdenominator}-(2) dealing with  $J_{12}(1,\phi,\theta,\varphi,\eta)$,  we have that
$$
|(r_1(\phi,\theta)e^{i\theta},d_1 \cos(\phi))-(r_2(\varphi,\eta)e^{i\eta},d_2\cos(\varphi))|\geqslant \delta,
$$
for some $\delta>0$.    Hence, the function  in $\mathcal{I}_2$ is not singular and, {since the denominator in the above integral is in $\mathscr{C}^\alpha,$}   we can easily prove that
$$
(\phi,\theta)\mapsto \mathcal{I}_2(r_1(\phi,\theta),d_1\cos(\phi))\in \mathscr{C}^\alpha.
$$
Let us now  move  to the term $\mathscr{J}_2$ in \eqref{J2-0}. From \cite[Proposition 5.2, Step 2]{GHM} we infer  that 
$$
(\phi,\theta)\mapsto  \frac{\mathcal{I}_1(sr_1(\phi,\theta), d_1\cos(\phi))}{r_{0,1}(\phi)}\cdot i\,e^{i\theta},
$$
belongs to $\mathscr{C}^\alpha([0,\pi/2]\times\T)$,  for any $s\in[0,1]$.  Therefore, in order to achieve that $\mathscr{J}_2$  
belongs to $\mathscr{C}^\alpha$ we need to prove
\begin{equation}\label{nonlinear-est-1}
(\phi,\theta)\mapsto  \frac{\mathcal{I}_2(r_1(\phi,\theta), d_1\cos(\phi))}{r_{0,1}(\phi)}\cdot i\,e^{i\theta}\in \mathscr{C}^\alpha((0,\pi/2)\times{\T}).
\end{equation}
For this aim,  we write by virtue of \eqref{U2}
\begin{align}\label{U2-est-1}
\frac{\mathcal{I}_2(s{r_1(\phi,\theta)},d_1\cos(\phi))}{r_{0,1}(\phi)}\cdot i e^{i\theta}=\frac{d_2}{4\pi r_{0,1}(\phi)}\bigintsss_0^\pi\bigintsss_0^{2\pi}\frac{\sin(\varphi)\partial_\eta (r_2(\varphi,\eta)\sin(\eta-\theta))}{J_{1,2}(s,\phi,\theta,\varphi,\eta)^\frac{1}{2}}d\eta d\varphi.
\end{align}
According to Lemma \ref{lemma-velocidad0} we have the identity
$$
\bigintsss_0^\pi\bigintsss_0^{2\pi}\frac{\sin(\varphi)\partial_\eta (r_2(\varphi,\eta)\sin(\eta-\theta))}{J_{1,2}(0,\phi,\theta,\varphi,\eta)^\frac{1}{2}}d\eta d\varphi{=0}.
$$
Hence, adding such term to \eqref{U2-est-1} we find
{\footnotesize\begin{align*}
&\qquad\qquad\qquad\qquad\qquad\qquad\frac{\mathcal{I}_2(s{ r_1(\phi,\theta)},d_1\cos(\phi))}{r_{0,1}(\phi)}\cdot i e^{i\theta}\\
&=-d_2\frac{r_1(\phi,\theta)}{4\pi r_{0,1}(\phi)}\int_0^\pi\int_0^{2\pi}\frac{\sin(\varphi)\partial_\eta (r_2(\varphi,\eta)\sin(\eta-\theta))s\left\{sr_1(\phi,\theta)-2r_2(\varphi,\eta)\cos(\theta-\eta)\right\}}{J_{1,2}(s,\phi,\theta,\varphi,\eta)^\frac{1}{2}J_{1,2}(0,\phi,\theta,\varphi,\eta)^\frac{1}{2}(J_{1,2}(s,\phi,\theta,\varphi,\eta)^\frac{1}{2}+J_{1,2}(0,\phi,\theta,\varphi,\eta)^\frac{1}{2})}d\eta d\varphi.\nonumber
\end{align*}}
To establish  \eqref{nonlinear-est-1}, it is enough to show that
{\footnotesize\begin{align*}
&\qquad\qquad\qquad\qquad\qquad\qquad\frac{\mathcal{I}_2({r_1(\phi,\theta)},d_1\cos(\phi))}{r_{0,1}(\phi)}\cdot i e^{i\theta}\\
&=-d_2\frac{r_1(\phi,\theta)}{4\pi r_{0,1}(\phi)}\int_0^\pi\int_0^{2\pi}\frac{\sin(\varphi)\partial_\eta (r_2(\varphi,\eta)\sin(\eta-\theta))\left\{r_1(\phi,\theta)-2r_2(\varphi,\eta)\cos(\theta-\eta)\right\}}{J_{1,2}(1,\phi,\theta,\varphi,\eta)^\frac{1}{2}J_{1,2}(0,\phi,\theta,\varphi,\eta)^\frac{1}{2}(J_{1,2}(1,\phi,\theta,\varphi,\eta)^\frac{1}{2}+J_{1,2}(0,\phi,\theta,\varphi,\eta)^\frac{1}{2})}d\eta d\varphi,\nonumber
\end{align*}}
belongs to $\mathscr{C}^\alpha((0,\pi/2)\times(0,2\pi))$. By Lemma \ref{lemma-estimdenominator}-(2), we have the estimate below
$$
J_{1,2}(1,\phi,\theta,\varphi,\eta)\geqslant \delta,
$$
and hence we can check the desired regularity since the denominator is not singular by applying classical  law products.

The term $\mathscr{J}_{1}$ in \eqref{J1-0} is more delicate. Fix $\phi_0$ as in Lemma \ref{lemma-estimdenominator}-(3) and let $\chi\in \mathscr{C}^\infty$ be a cut-off function such that 

\begin{equation*}
\chi(\phi)=\left\{
\begin{array}{ll}
1,&\phi\in[0,\phi_{0}/2]\\
0, &\phi\in[\phi_{0},\pi/2].
\end{array}
\right.
\end{equation*}
Then we may  obtain the  decomposition
$$
\mathscr{J}_1=\mathscr{J}_1\chi+\mathscr{J}_1(1-\chi).
$$
Let us analyze $\mathscr{J}_1(1-\chi)$. We have that { this new function} is vanishing in $[0,\phi_{0}/2]$ by definition and then the denominator $r_{0,1}^2$ is not vanishing. Consequently, using the classical law products combined with the fact that $\psi$ is {$\mathscr{C}^{1+\alpha}$} in the physical coordinates we obtain that  $\mathscr{J}_1(1-\chi)\in\mathscr{C}^\alpha((0,\pi/2)\times\T)$. As to the term 
 $\mathscr{J}_1\chi,$ we shall use Taylor formula for the stream function $\psi$ as in \eqref{psi0X} leading to 
\begin{align*}
&\qquad \qquad\qquad \mathscr{J}_{1}(\phi,\theta){\chi(\phi)}=\tfrac{r_{0,1}'(\phi)\psi(0,0,d_1\cos\phi)}{r_{0,1}^2(\phi)}\chi(\phi)\\
&+\chi(\phi)r_{0,1}'(\phi)r_{0,1}^{-1}(\phi)\left(1+\tfrac{f_1(\phi,\theta)}{r_{0,1}(\phi)}\right)\,\bigintsss_0^1\nabla_h\psi\big(s\, r_1(\phi,\theta)e^{i\theta},d_1\cos\phi\big)ds\cdot e^{i\theta}\\
=&\tfrac{r_{0,1}'(\phi)\psi(0,0,d_1\cos\phi)}{r_{0,1}^2(\phi)}\chi(\phi){+}\chi(\phi)r_{0,1}'(\phi)\left(1+\tfrac{f_1(\phi,\theta)}{r_{0,1}(\phi)}\right)\,r_{0,1}^{-1}(\phi) \bigintsss_0^1U\big(s\, r_1(\phi,\theta)e^{i\theta},d_1\cos\phi\big)ds\cdot ie^{i\theta}.
\end{align*}
We observe that the first term is singular at the poles but it  depends only on $\phi$ and therefore it does not contribute in the fluctuation $\mathscr{J}_{1}-\langle \mathscr{J}_{1}\rangle_\theta.$ In addition,  $(\phi,\theta)\mapsto \frac{f_1(\phi,\theta)}{r_{0,1}(\phi)}$ belongs to $\mathscr{C}^\alpha$ then to get  $\mathscr{J}_{1}-\langle \mathscr{J}_{1}\rangle_\theta\in \mathscr{C}^\alpha$ it suffices to prove that
\begin{equation}\label{CondXX1}
(\phi,\theta)\mapsto\chi(\phi)\bigintsss_0^1 \frac{U(s r_1(\phi,\theta)e^{i\theta}, d_1\cos(\phi))}{r_{0,1}(\phi)}\cdot ie^{i\theta} d s\in \mathscr{C}^\alpha((0,\pi/2)\times \T).
\end{equation}
We can once again decompose $U$ in $\mathcal{I}_1$ and $\mathcal{I}_2$. The term related to $\mathcal{I}_1$ was already studied in \cite[Proposition 5.2, Step 2]{GHM} and it remains to check the contribution of $\mathcal{I}_2$, that is, to check
\begin{equation}\label{nonlinear-est-2}
(\phi,\theta)\mapsto  \chi(\phi)\frac{\mathcal{I}_2(sr_1(\phi,\theta), d_1\cos(\phi))}{r_{0,1}(\phi)}\cdot i\,e^{i\theta}\in \mathscr{C}^\alpha((0,\pi/2)\times\T).
\end{equation}
For this purpose,  we write
{\footnotesize\begin{align*}
&\qquad\qquad \qquad\qquad \chi(\phi)\frac{\mathcal{I}_2(s{r_1(\phi,\theta)},d_1\cos(\phi))}{r_{0,1}(\phi)}\cdot i e^{i\theta}\\
&=-d_2\chi(\phi)\frac{r_1(\phi,\theta)}{4\pi r_{0,1}(\phi)}\int_0^\pi\int_0^{2\pi}\frac{\sin(\varphi)\partial_\eta (r_2(\varphi,\eta)\sin(\eta-\theta))s\left\{sr_1(\phi,\theta)-2r_2(\varphi,\eta)\cos(\theta-\eta)\right\}}{J_{1,2}(s,\phi,\theta,\varphi,\eta)^\frac{1}{2}J_{1,2}(0,\phi,\theta,\varphi,\eta)^\frac{1}{2}(J_{1,2}(s,\phi,\theta,\varphi,\eta)^\frac{1}{2}+J_{1,2}(0,\phi,\theta,\varphi,\eta)^\frac{1}{2})}d\eta d\varphi.\nonumber
\end{align*}}
Again by virtue of Lemma \ref{lemma-estimdenominator}-(3) and since the cut-off function $\chi$ is vanishing in $ [\phi_0,\frac\pi2]$ we have that the denominator is never vanishing,
\begin{align*}
J_{1,2}(s,\phi,\theta,\varphi,\eta)\geqslant \delta>0, \quad s\in[0,1], \phi\in[0,\phi_0],
\end{align*}
for any $\theta,\eta\in\T$ and $\varphi\in[0,\pi]$. In that way, since the integral is not singular we can easily find that it belongs to $\mathscr{C}^\alpha((0,\pi/2)\times\T)$ for any $s\in[0,1]$. 
This ends the proof of \eqref{nonlinear-est-1} and \eqref{nonlinear-est-2} concluding that 
$$
\partial_\theta \mathscr{G}_1,\partial_\phi \mathscr{G}_1\in \mathscr{C}^\alpha((0,\pi/2)\times\T).
$$
Finally, let us mention that $\partial_{f_i}\mathscr{G}_1(f_1,f_2)$ is continuous by using the results developed in \cite[Proposition 5.2]{GHM}.

\medskip
\noindent
{{{{\bf Step 2:} \it Symmetry persistence.}}} We will check the symmetry properties for $\tilde{F}_1$, and $\tilde{F}_2$ follows similarly. { By the decomposition of the stream function in \eqref{streamfunction-decomp} one has}
\begin{equation}\label{F1tilde}
\tilde{F}_1(\Omega,f_1,f_2)=\frac{1}{r_{0,1}(\phi)}\left\{\mathscr{T}_1(f_1,f_2)-\mathscr{T}_2(f_1,f_2)-\frac{\Omega}{2}r_1^2(\phi,\theta)-m_1(\Omega,f_1,f_2)(\phi)\right\}.
\end{equation}
To check the symmetry we proceed in different steps.

\medskip
\noindent
 \ding{232} We begin by checking the equatorial symmetry, that is,
$$
\tilde{F}(\Omega,f_1,f_2)(\pi-\phi,\theta)=\tilde{F}(\Omega,f_1,f_2)(\phi,\theta), \quad\forall (\phi,\theta)\in[0,\pi]\times\T.
$$
From the expression of $\tilde{F}$ in \eqref{F1tilde}, it  suffices to check the property for  $\mathscr{T}_i(f_1,f_2)$. One can easily verify using the symmetry of the functions $\cos$, $r_1$ and $r_2$ combined with the change of variables $\varphi\mapsto \pi-\varphi$
\begin{align*}
\mathscr{T}_i(f_1,f_2)\left(\pi-\phi,\theta\right)=&-\frac{d_i}{4\pi}\bigintsss_{0}^{\pi}\bigintsss_0^{2\pi}
\bigintsss_0^{r_i(\varphi,\eta)}\frac{r\sin(\varphi)drd\eta d\varphi}{|(re^{i\eta},d_i\cos(\varphi))-(r_1(\pi-\phi,\theta)e^{i\theta},d_1\cos(\pi-\phi))|}\\
=&-\frac{d_i}{4\pi}\bigintsss_{0}^{\pi}\bigintsss_0^{2\pi}\bigintsss_0^{r_i(\pi-\varphi,\eta)}
\frac{r\sin(\pi-\varphi)drd\eta d\varphi}{|(re^{i\eta},-d_i\cos(\varphi))-(r_1(\phi,\theta)e^{i\theta},-d_1\cos(\phi))|}\\
=&-\frac{d_i}{4\pi}\bigintsss_{0}^{\pi}\bigintsss_0^{2\pi}\bigintsss_0^{r_i(\varphi,\eta)}
\frac{r\sin(\varphi)drd\eta d\varphi}{|(re^{i\eta},d_i\cos(\varphi))-(r_1(\phi,\theta)e^{i\theta},d_1\cos(\phi))|}\\
=&\mathscr{T}_i(f_1,f_2)\left(\phi,\theta\right).
\end{align*}

\medskip
\noindent
 \ding{232} We shall check that $\tilde{F}$ can be written as a Fourier series in terms of $\cos$. In order to get  the desired structure, { using that $\frac{1}{2\pi}\int_0^{2\pi}\tilde{F}_i(\Omega,f_1,f_2)\left(\phi,\theta\right)d\theta=0$ and the definition of $r_j$ in \eqref{f}, } it suffices to check the following symmetry 
$$
\mathscr{T}_i(f_1,f_2)(\phi,-\theta)=\mathscr{T}_i(f_1,f_2)(\phi,\theta),\quad \forall \, (\phi,\theta)\in[0,\pi]\times\T.
$$
To do that, we use the symmetry of $r_i$, that is  $r_i(\varphi,-\theta)=r_i(\varphi,\theta),$ combined with  the change of variables $\eta\mapsto-\eta$ allowing to get
\begin{align*}
\mathscr{T}_i(f_1,f_2)(\phi,-\theta)=&-\frac{d_i}{4\pi}\bigintsss_{0}^{\pi}\bigintsss_0^{2\pi}\bigintsss_0^{r_i(\varphi,\eta)}\frac{r\sin(\varphi)drd\eta d\varphi}{|(re^{i\eta},d_i\cos(\varphi))-(r_1(\phi,-\theta)e^{-i\theta},d_1\cos(\phi))|}\\
=&-\frac{d_i}{4\pi}\bigintsss_{0}^{\pi}\bigintsss_0^{2\pi}\bigintsss_0^{r_i(\varphi,-\eta)}\frac{r\sin(\varphi)drd\eta d\varphi}{|(re^{-i\eta},d_i\cos(\varphi))-(r_1(\phi,\theta)e^{-i\theta},d_1\cos(\phi))|}\\
=&-\frac{d_i}{4\pi}\bigintsss_{0}^{\pi}\bigintsss_0^{2\pi}\bigintsss_0^{r_i(\varphi,\eta)}\frac{r\sin(\varphi)drd\eta d\varphi}{|(re^{i\eta},d_i\cos(\varphi))-(r_1(\phi,\theta)e^{i\theta},d_1\cos(\phi))|}\\
=&\mathscr{T}_i(f_1,f_2)(\phi,\theta).
\end{align*}

\medskip
\noindent
 \ding{232} Finally, we check the $m-$fold symmetry of $\tilde{F}$. Since the functions $r_i$ belong to $X_m^\alpha$, then  
 they satisfy the symmetry  $r_i(\varphi, \theta+\frac{2\pi}{m})=r_i(\varphi,\theta).$ Thus we get by the change of variables $\eta\mapsto \eta+\frac{2\pi}{m}$
\begin{align*}
&\mathscr{T}_i(f_1,f_2)\left(\phi,\theta+\tfrac{2\pi}{m}\right)
=-\frac{d_i}{4\pi}\int_{0}^{\pi}\int_0^{2\pi}\int_0^{r_i(\varphi,\eta)}\tfrac{r\sin(\varphi)drd\eta d\varphi}{|(re^{i\eta},d_i\cos(\varphi))-(r_1(\phi,\theta+\frac{2\pi}{m})e^{i(\theta+\frac{2\pi}{m})},d_1\cos(\phi))|}\\
&=-\frac{d_i}{4\pi}\int_{0}^{\pi}\int_0^{2\pi}\int_0^{r_i(\varphi,\eta+\frac{2\pi}{m})}\tfrac{r\sin(\varphi)drd\eta d\varphi}{|(re^{i(\eta+\frac{2\pi}{m})},d_i\cos(\varphi))-(r_1(\phi,\theta)e^{i(\theta+\frac{2\pi}{m})},d_1\cos(\phi))|}\\
&=-\frac{d_i}{4\pi}\int_{0}^{\pi}\int_0^{2\pi}\int_0^{r_i(\varphi,\eta)}\tfrac{r\sin(\varphi)drd\eta d\varphi}{|(re^{i\eta},d_i\cos(\varphi))-(r_1(\phi,\theta)e^{i\theta},d_1\cos(\phi))|}\\
&=\mathscr{T}_i(f_1,f_2)(\phi,\theta).
\end{align*}
Notice that we have used the fact that the Euclidean distance {in} $\C$ is invariant by the rotation action $z\mapsto e^{i\frac{2\pi}{m}} z$.
\end{proof}

\section{Main result and examples}\label{result}
{This section is devoted to  the main statement of this paper  together with the details of the proof. The results concerns specific domains subject to the  constraints {\bf (H)} and \eqref{assump1}. In Section \ref{sec-examples} we will give precise examples where all these conditions are satisfied and the main theorem can apply.}

\subsection{Main result}
This section aims to state the main result of this work together with its proof using all the previous results. Let us remind  that our main task is the search of time periodic solutions to the system \eqref{equation-model} taking the form 
\begin{equation}\label{rotating-sol}
q(t,x)=q_0(e^{-i\Omega t}(x_1,x_2),x_3), \quad q_0={\bf 1}_{D_1}-{\bf 1}_{D_2}={\bf 1}_{D_1\backslash D_2},
\end{equation}
for two bounded domains $D_1$ and $D_2$ of $\R^3$ such that $D_2\Subset D_1$ and each one is surrounded by a single surface. By Lemma \ref{Prop-stat} we have stationary solutions given by revolution shape domains $D_j=D_{0,j}$.  Hence, our purpose is to find non trivial (meaning non revolution shape domains) $D_j$ {\it close} to the trivial one $D_{0,j}$. They will be  described by means of the spherical type parametrization: 
$$
\partial D_j=\Big\{(r_j(\phi,\theta), d_j\cos(\phi));\, \theta\in\T, 0\leqslant\phi\leqslant\pi\Big\},
$$
with $r_j(0,\theta)=r_j(\pi,\theta)=0$, $d_1>d_2$, and 
$$
r_j(\phi,\theta)=r_{0,j}(\phi)+f_j(\phi,\theta),\quad f_j(\phi,\theta)=\sum_{j\geqslant 1}f_{j,n}(\phi)\cos(n\theta).
$$
 Hence, in the case $f_j=0$ we have a revolution shape domain and then a stationary patch. In that case, $D=D_1\backslash D_2$ describes a rotating solution with angular velocity $\Omega$ if and only if
$$
\tilde{F}(\Omega,f_1,f_2)(\phi,\theta)=0, \quad (\phi,\theta)=[0,\pi]\times\T,
$$
where $\tilde{F}$ is defined in \eqref{Ftilde}.

\begin{theo}\label{theorem}
Let $\alpha\in(0,1), j=1,2$ and $r_{0,j}:[0,\pi]\rightarrow \R$ satisfy   {\bf (H)} and \eqref{assump1}.
There exists $m_0\geqslant 2$ such that for any  $m\geqslant  m_0$ the following assertion occurs. There exists $\delta>0$ and one dimensional $\mathscr{C}^1$-curve $s\in(-\delta,\delta)\mapsto (\Omega(s), f_1(s),f_2(s))\in \R\times X_m^\alpha\times X_m^\alpha $, with
$$
f_j(0)=0, \quad f_j(s)\neq 0, \, \forall s\neq 0, \quad \textnormal{and} \quad \Omega(0)=\Omega_m,
$$
where $\Omega_m$ is defined in Proposition \ref{prop-kernel-onedim}, such that
$$
\tilde{F}(\Omega(s),f_1(s),f_2(s))=0, \quad \forall s\in(-\delta,\delta).
$$
\end{theo}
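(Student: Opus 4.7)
My plan is to prove Theorem \ref{theorem} by a direct application of the Crandall--Rabinowitz theorem (recalled in Appendix \ref{Ap-bif}) to the nonlinear functional $\tilde{F}:\R\times X_m^\alpha\times X_m^\alpha\to X_m^\alpha\times X_m^\alpha$ at the trivial branch $(\Omega,0,0)$. The heavy lifting has already been carried out in the earlier sections, so the present step amounts essentially to assembling the four hypotheses of the theorem.

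First, I would fix $m_0$ large enough so that the conclusions of Proposition \ref{prop-kernel-onedim}, Proposition \ref{prop-fredholmTT} and Proposition \ref{prop-transversal} hold simultaneously, and take $m\geqslant m_0$. Then I would check the four hypotheses in turn: (i) the trivial branch $\{(\Omega,0,0),\Omega\in\R\}$ lies in the zero set of $\tilde{F}$; this follows from Lemma \ref{Prop-stat} since revolution shape domains are stationary solutions, so that $F^{\bf s}(\Omega,0,0)\equiv 0$ and hence $\tilde{F}(\Omega,0,0)\equiv 0$ by \eqref{Ftilde}. (ii) The functional $\tilde{F}$ is of class $\mathscr{C}^1$ on a neighborhood of $\R\times\{0\}\times\{0\}$ in $\R\times X_m^\alpha\times X_m^\alpha$, which is exactly the content of Proposition \ref{prop-wellpos}. (iii) The linearized operator $\partial_{f_1,f_2}\tilde{F}(\Omega_m,0,0)$ is a Fredholm operator of index zero whose kernel is one-dimensional, generated by the profile $f_m^\star$ introduced in \eqref{fmstar}; this is Proposition \ref{prop-fredholmTT}. (iv) The transversality condition
\[
\partial_\Omega\partial_{f_1,f_2}\tilde{F}(\Omega_m,0,0)f_m^\star\notin \textnormal{Im}\bigl(\partial_{f_1,f_2}\tilde{F}(\Omega_m,0,0)\bigr)
\]
is exactly Proposition \ref{prop-transversal}.

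Once these four hypotheses are verified, Crandall--Rabinowitz directly produces the $\mathscr{C}^1$ curve $s\in(-\delta,\delta)\mapsto (\Omega(s),f_1(s),f_2(s))$ of solutions of $\tilde{F}=0$ with $\Omega(0)=\Omega_m$, $(f_1(0),f_2(0))=(0,0)$, and tangent direction at $s=0$ proportional to $f_m^\star=H_m^\star(\phi)\cos(m\theta)$. The nontriviality $f_j(s)\neq 0$ for $s\neq 0$ follows because the tangent vector $f_m^\star$ is a nonzero element of $X_m^\alpha\times X_m^\alpha$ with both components nonzero (indeed, by Proposition \ref{prop-operatorV2}-(3), the components of $H_m^\star$ have opposite constant signs, and in particular neither is identically zero).

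The main obstacle, honestly, has already been overcome upstream; at the level of the present statement the only subtlety is bookkeeping, namely choosing $m_0$ large enough so that every upstream proposition whose validity required ``$m$ large'' applies, and verifying that the abstract bifurcation result can indeed be invoked in the H\"older framework $X_m^\alpha$ (which is a Banach space satisfying the boundary and symmetry constraints preserved by $\tilde{F}$, as checked in Step 2 of Proposition \ref{prop-wellpos}). I would also add a brief comment at the end recalling that the resulting patches have the desired geometric structure \eqref{param-intro}--\eqref{f-intro} with $m$-fold horizontal sections, since this is built into the definition of the space $X_m^\alpha$ in \eqref{space}.
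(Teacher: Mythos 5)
Your proposal is correct and follows essentially the same route as the paper: both proofs simply assemble the Crandall–Rabinowitz hypotheses from Lemma \ref{Prop-stat} (trivial branch), Proposition \ref{prop-wellpos} (regularity of $\tilde{F}$), Proposition \ref{prop-fredholmTT} (Fredholm of index zero with one-dimensional kernel at $\Omega=\Omega_m$ for $m\geqslant m_0$), and Proposition \ref{prop-transversal} (transversality). Your extra remark on $f_j(s)\neq 0$ via the sign structure of $H_m^\star$ is a harmless refinement not spelled out in the paper.
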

\begin{proof}
The main tool of the proof is the Crandall-Rabinowitz theorem which can be found in Appendix \ref{Ap-bif}. From Proposition \ref{prop-wellpos} we have that $\tilde{F}:\R\times X_m^\alpha\times X_m^\alpha$ is well-defined and is of class $\mathscr{C}^1$ for any $m\geqslant  2$ and $\alpha\in(0,1)$. Moreover, by Lemma \ref{Prop-stat} we get that $\tilde{F}(\Omega,0,0)=0$ for any $\Omega\in\R$. Thus,  it remains to check the required spectral properties of the linearized operator at $(\Omega,0,0)$. Notice that the expression of the linearized operator is given in Proposition \ref{Prop-MMlin1}. Furthermore, by Proposition \ref{prop-fredholmTT} we have that $\partial_{f_1,f_2}\tilde{F}(\Omega,0,0)$ is Fredholm of zero index, and hence the dimension of the kernel coincides with the codimension of the range. Moreover, again Proposition \ref{prop-fredholmTT} states that there exists $m_0\geqslant 2$ such that for $m\geqslant m_0$ and for $\Omega=\Omega_m$ defined in Proposition \ref{prop-kernel-onedim}, the dimension of the kernel of $\partial_{f_1,f_2}\tilde{F}(\Omega_m,0,0)$ equals to one. As to the transversality condition it is verified according to  Proposition \ref{prop-transversal}. This achieves the proof.
\end{proof}

\subsection{Spectral condition}\label{sec-examples}
In what follows we shall exhibit some explicit and implicit domains for which all the assumptions  {\bf (H)} and \eqref{assump1} are satisfied and Theorem \ref{theorem} may apply. The first discussion is relative to some conical domains. Remark  that any ellipsoid satisfies {\bf (H1)}--{\bf (H4)} since it agrees with $r_{0,j}(\phi)=a_j \sin(\phi)$ for some $a_j\in\R^+$. The case $a_j=d_j$ corresponds  to the sphere. The most subtle assumption is \eqref{assump1} and related to some nonlocal effects of the separation between the two domains. It was used in a crucial way along the spectral study developed in Section \ref{sec-spectral}. In the following proposition, we check that \eqref{assump1} is verified for the case that $D_{1}$ is an ellipsoid and $D_{2}$ is a sphere.

\begin{pro}\label{prop-ellipsoid}
Let $d_1>d_2>0,\, a>d_2$, $r_{0,1}(\phi)=a\sin(\phi)$ and $r_{0,2}(\phi)=d_2\sin(\phi)$. Denote by $D_2$ the ball centered at $0$ of radius $d_2$ and $D_1$ the ellipsoid with horizontal semiaxes $a$ and vertical semiaxis $d_1$. Then, the assumption  \eqref{assump1} holds true.
\end{pro}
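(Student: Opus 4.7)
The strategy is to exploit the explicit polynomial/Newton form of the stream functions for ellipsoids and spheres, which reduces the verification of \eqref{assump1} to an elementary calculation in which the only non-trivial constant (the interior ellipsoidal coefficient $\beta_1$) cancels. Since $q_0={\bf 1}_{D_1}-{\bf 1}_{D_2}$ splits as a difference, by linearity of the Poisson equation $\Delta\psi_0=q_0$ one has $\psi_0=\psi_{D_1}-\psi_{D_2}$, where $\psi_{D_j}$ is the Newton potential of ${\bf 1}_{D_j}$. The classical closed form for the Newton potential of a spheroid gives, inside $D_1$,
$$\psi_{D_1}(x)=\tfrac{1}{2}\bigl(\beta_1(x_1^2+x_2^2)+\beta_3 x_3^2\bigr)+C,$$
with $2\beta_1+\beta_3=1$ (from $\Delta\psi_{D_1}=1$ and the axial symmetry $\beta_1=\beta_2$); the precise value of $\beta_1$ is irrelevant since it will cancel. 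For the ball $D_2$ of radius $d_2$, radial integration yields
$$\psi_{D_2}(x)=\tfrac{|x|^2-3d_2^2}{6}\ \text{if}\ |x|\le d_2,\qquad \psi_{D_2}(x)=-\tfrac{d_2^3}{3|x|}\ \text{if}\ |x|\ge d_2.$$

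Next I would evaluate $U_0^2=\partial_1\psi_0=\partial_1\psi_{D_1}-\partial_1\psi_{D_2}$ at the two families of points appearing in \eqref{assump-omega-intro}. At a point $(d_2\sin\phi,0,d_2\cos\phi)\in\partial D_2$, which lies in the interior of $D_1$, the interior formulas give $\partial_1\psi_{D_1}=\beta_1 d_2\sin\phi$ and $\partial_1\psi_{D_2}=\tfrac{d_2\sin\phi}{3}$, so
$$\tfrac{U_0^2(r_{0,2}(\phi),0,d_2\cos\phi)}{r_{0,2}(\phi)}=\beta_1-\tfrac{1}{3},$$
independent of $\phi$, hence $\overline\Omega_2=\beta_1-\tfrac{1}{3}$. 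At a point $(a\sin\phi,0,d_1\cos\phi)\in\partial D_1$ note that $(a\sin\phi)^2+(d_1\cos\phi)^2\ge\min(a,d_1)^2>d_2^2$, so this point is exterior to $\overline{D_2}$; the exterior formula gives $\partial_1\psi_{D_2}=\tfrac{d_2^3\,a\sin\phi}{3(a^2\sin^2\phi+d_1^2\cos^2\phi)^{3/2}}$ while the (continuous) interior limit yields $\partial_1\psi_{D_1}=\beta_1 a\sin\phi$, whence
$$\tfrac{U_0^2(r_{0,1}(\phi),0,d_1\cos\phi)}{r_{0,1}(\phi)}=\beta_1-\tfrac{d_2^3}{3\bigl(a^2\sin^2\phi+d_1^2\cos^2\phi\bigr)^{3/2}}.$$

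Finally, an elementary one-variable study of $\phi\mapsto a^2\sin^2\phi+d_1^2\cos^2\phi$ on $[0,\pi]$ shows that its minimum equals $\min(a^2,d_1^2)$, attained at $\phi\in\{0,\pi\}$ if $a\ge d_1$ and at $\phi=\pi/2$ otherwise. Therefore $\overline\Omega_1=\beta_1-\tfrac{d_2^3}{3\min(a,d_1)^3}$, and the spectral condition \eqref{assump1} becomes
$$\overline\Omega_1-\overline\Omega_2=\tfrac{1}{3}\Bigl(1-\tfrac{d_2^3}{\min(a,d_1)^3}\Bigr)>0,$$
equivalent to $d_2<\min(a,d_1)$, which is precisely the standing hypothesis $d_1>d_2$ and $a>d_2$. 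The only mildly delicate step is recalling the quadratic form of the ellipsoidal Newton potential; all remaining work is the cancellation of $\beta_1$ and a one-line monotonicity argument.
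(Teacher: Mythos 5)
Your proof is correct and follows essentially the same route as the paper: decompose $\psi=\psi_{D_1}-\psi_{D_2}$ by linearity, use the classical quadratic interior potential of the spheroid and the explicit ball potential, evaluate $\partial_1\psi/r_{0,j}$ on the two boundaries so that the $\phi$-independent ellipsoidal coefficient cancels, and reduce \eqref{assump1} to $d_2<\min(a,d_1)$. The only (harmless) difference is that you leave the ellipsoidal coefficient $\beta_1$ implicit rather than writing the elliptic-integral expression $2\alpha_1(a)$ as the paper does, which is a purely cosmetic simplification.
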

\begin{proof}
In this particular case we can  compute $\nu_{j,\Omega}$   and hence find the explicit values of   $\overline{\Omega}_1$ and $\overline{\Omega}_2$ defined in \eqref{Omega1}--\eqref{Omega2}, which also verify \eqref{Omega1-2}--\eqref{Omega2-2}. In fact, by using \eqref{nu-streamfunction} we have that
$$
\nu_{1,\Omega}(\phi)=\frac{1}{r_{0,1}(\phi)}(\nabla_h \psi)(r_{0,1}(\phi)e^{i\theta},d_1\cos(\phi))\cdot e^{i\theta}-\Omega,
$$
and
$$
\nu_{2,\Omega}(\phi)=\Omega-\frac{1}{r_{0,2}(\phi)}(\nabla_h \psi)(r_{0,2}(\phi)e^{i\theta},d_2\cos(\phi))\cdot e^{i\theta}.
$$
Here $\psi$ satisfies $\Delta \psi={\bf 1}_{D_1}-{\bf 1}_{D_2}$ and  $D_2\subset D_1$ by the condition on the parameters. Since the stream function is linear with respect to the vorticity, we have that 
$$
\psi=\psi_1-\psi_2,
$$
where $\psi_i$ is the stream function associated to ${\bf 1}_{D_i}$, for $i=1,2$. Note that the stream function associated to an ellipsoid or sphere is well-known in the literature, see for instance \cite[Chapter 7, Section 6]{Kellog}.  Thus for the ball we find
\begin{align*}
\psi_2(x)=\frac16(x_1^2+x_2^2+x_3^2-3 d_2^2), \quad x\in D_2,\\
\psi_2(x)=-\frac{d_2^3}{3\sqrt{x_1^2+x_2^2+x_3^2}}, \quad x\in D_2^c,
\end{align*} 
and for the ellipsoid 
$$
\psi_1(x)=\alpha_1(a)(x_1^2+x_2^2)+\alpha_2(a)x_3^2+\alpha_3(a), \quad x\in D_1,
$$
with
\begin{align*}
\alpha_1(a)=\frac{a^2d_1}{4}\int_0^\infty \frac{ds}{(a^2+s)^2\sqrt{d_1^2+s}},\\
\alpha_2(a)=\frac{a^2d_1}{4}\int_0^\infty \frac{ds}{(a^2+s)(d_1^2+s)^\frac32},\\
\alpha_3(a)=-\frac{a^2d_1}{4}\int_0^\infty \frac{ds}{(a^2+s)\sqrt{d_1^2+s}}\cdot
\end{align*}
Therefore, we deduce that
\begin{align*}
\psi(x)=&\alpha_1(a)(x_1^2+x_2^2)+\alpha_2(a)x_3^2+\alpha_3(a)-\frac16(x_1^2+x_2^2+x_3^2-3 d_2^2) \quad x\in D_2,\\
\psi(x)=&\alpha_1(a)(x_1^2+x_2^2)+\alpha_2(a)x_3^2+\alpha_3(a)+\frac{\frac{d_2^3}{3}}{\sqrt{x_1^2+x_2^2+x_3^2}}, \quad x\in D_1\backslash D_2.
\end{align*} 
Computing the horizontal gradient we infer
\begin{align*}
\nabla_h\psi(x)=&\left(2\alpha_1(a)-\frac{1}{3}\right)(x_1,x_2), \quad x\in D_2,\\
\nabla_h\psi(x)=&2\alpha_1(a)(x_1,x_2)-\frac{\frac{d_2^3}{3}}{(x_1^2+x_2^2+x_3^2)^\frac32}(x_1,x_2), \quad x\in _1 D_1\backslash D_2,
\end{align*} 
which implies
\begin{align*}
\frac{1}{a\sin(\phi)}\nabla_h\psi(a\sin(\phi)e^{i\theta},d_1\cos(\phi))\cdot e^{i\theta}=&2\alpha_1(a)-\frac{\frac{d_2^3}{3}}{(d_1^2\cos^2(\phi)+a^2\sin^2(\phi))^\frac32},\\
\frac{1}{d_2\sin(\phi)}\nabla_h\psi(d_2\sin(\phi)e^{i\theta},d_2\cos(\phi))\cdot e^{i\theta}=&2\alpha_1(a)-\frac13\cdot
\end{align*} 
Then, $\nu_{1,\Omega}$ and $\nu_{2,\Omega}$ can be expressed as
\begin{align*}
\nu_{1,\Omega}=&2\alpha_1(a)-\frac{\frac{d_2^3}{3}}{(d_1^2\cos^2(\phi)+a^2\sin^2(\phi))^\frac32}-\Omega,\\
\nu_{2,\Omega}=&\Omega-2\alpha_1(a)+\frac13.
\end{align*}
We finally get that $\overline{\Omega}_2<\overline{\Omega}_1$ if 
$$
2\alpha_1(a)-\frac13<\inf_{\phi\in[0,\pi]} \left\{2\alpha_1(a)-\frac{d_2^3}{3}\frac{1}{(d_1^2\cos^2(\phi)+a^2\sin^2(\phi))^\frac32}\right\},
$$
or equivalently
$$
-\frac13<\inf_{\phi\in[0,\pi]} \left\{-\frac{d_2^3}{3}\frac{1}{(d_1^2\cos^2(\phi)+a^2\sin^2(\phi))^\frac32}\right\}.
$$
In the case of a prolate ellipsoid corresponding to $d_1>a$, we have
$$
-\frac13<\inf_{\phi\in[0,\pi]} \left\{-\frac{d_2^3}{3}\frac{1}{((d_1^2-a^2)\cos^2(\phi)+a^2)^\frac32}\right\}=-\frac{d_2^3}{3 a^3},
$$
which is satisfied since $a>d_2$. Otherwise if $d_1<a$ , which refers to an oblate ellipsoid, we find
$$
-\frac13<\inf_{\phi\in[0,\pi]} \left\{-\frac{d_2^3}{3}\frac{1}{(d_1^2+(a^2-d_1^2)\sin^2(\phi))^\frac32}\right\}=-\frac{d_2^3}{3 d_1^3},
$$
which hold true  since $d_1>d_2$.
\end{proof}
In the previous proposition we checked that \eqref{assump1} is satisfied for the case of the ellipsoid with a sphere. Moreover, since the inequality is stable under small perturbations, then  \eqref{assump1} holds for small perturbations of ellipsoids.
In the following, we will perform an asymptotic analysis to prove that when the  domains are  well-separated then  we achieve \eqref{assump1} together with ${\bf (H4)}$.

\begin{pro}\label{prop-d1}
Let $\gamma>0, d_2>0$ and $r_{0,2}$ satisfy ${\bf (H)}$. Assume that $r_{0,1}(\phi)=d_1^{1+\gamma}\overline{r}_{0,1}(\phi)$ satisfy  {\bf (H1)-(H3)}. Then, there exists $\overline{d}_1$ such that for $d_1\geqslant \overline{d}_1$, the conditions \eqref{assump1} and {\bf (H4)} are satisfied. 
\end{pro}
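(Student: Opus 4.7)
The statement splits into two independent assertions for $d_1$ large enough: the geometric separation \textbf{(H4)} of the two interfaces, and the spectral inequality \eqref{assump1} between $\overline\Omega_1$ and $\overline\Omega_2$. My strategy is to treat \textbf{(H4)} by elementary case analysis, and to prove \eqref{assump1} via an asymptotic analysis of $\partial_1\psi$ at each boundary as $d_1\to\infty$, using the stream-function form \eqref{Omega1-2}--\eqref{Omega2-2} of $\overline\Omega_i$ rather than the kernel form \eqref{Omega1}--\eqref{Omega2}.

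For \textbf{(H4)}, fix $c_0\in(0,1)$ and split the analysis of
$(r_{0,1}(\phi)-r_{0,2}(\varphi))^2+(d_1\cos\phi-d_2\cos\varphi)^2$ according to whether $|\cos\phi|\geqslant c_0$ or $|\cos\phi|<c_0$. In the first case the vertical separation dominates: $|d_1\cos\phi-d_2\cos\varphi|\geqslant c_0 d_1-d_2$, which tends to $\infty$. In the second case $\sin\phi\geqslant\sqrt{1-c_0^2}$; since $\overline r_{0,1}$ satisfies \textbf{(H2)}, $r_{0,1}(\phi)\geqslant C d_1^{1+\gamma}\sqrt{1-c_0^2}$ while $r_{0,2}(\varphi)$ is globally bounded, so the horizontal separation $|r_{0,1}(\phi)-r_{0,2}(\varphi)|$ tends to $\infty$. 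Hence \textbf{(H4)} holds with any prescribed $\delta$, uniformly in $\phi,\varphi$, once $d_1\geqslant\overline d_1$.

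For \eqref{assump1}, I will establish uniform-in-$\phi$ asymptotics of the four pieces of $\partial_1\psi=\partial_1\psi_{D_1}-\partial_1\psi_{D_2}$ evaluated at the two boundaries:
\begin{itemize}
\item[(a)] the inner self-speed $\omega_2(\phi):=r_{0,2}(\phi)^{-1}\partial_1\psi_{D_2}(r_{0,2}(\phi),0,d_2\cos\phi)$ is $d_1$-independent, continuous, and bounded below by some $\omega_2^\star>0$: indeed, by Lemma \ref{nuOmega12} applied to the single patch $D_2$, it equals $d_2\int_0^\pi H_{2,2}^1(\phi,\varphi)d\varphi$, whose integrand is a strictly positive kernel;
\item[(b)] the cross term $r_{0,1}(\phi)^{-1}\partial_1\psi_{D_2}(r_{0,1}(\phi),0,d_1\cos\phi)\to 0$, by Newtonian far-field decay: the evaluation point has $|x|\gtrsim d_1$ and $D_2$ is of fixed finite volume;
\item[(c)] the outer self-speed $r_{0,1}(\phi)^{-1}\partial_1\psi_{D_1}(r_{0,1}(\phi),0,d_1\cos\phi)=O(d_1^{-\gamma})$, and
\item[(d)] the cross term $r_{0,2}(\phi)^{-1}\partial_1\psi_{D_1}(r_{0,2}(\phi),0,d_2\cos\phi)=O(d_1^{-\gamma})$.
\end{itemize}
The estimates (c) and (d) are the main content. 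For both, I would rescale $y=(d_1^{1+\gamma}\tilde y_1,d_1^{1+\gamma}\tilde y_2,d_1\tilde y_3)$ in the kernel representation $\partial_1\psi_{D_1}(x)=\frac{1}{4\pi}\int_{D_1}\frac{x_1-y_1}{|x-y|^3}dy$. This sends $D_1$ to a fixed domain $\widetilde D_1$ (independent of $d_1$) with volume element $d_1^{2(1+\gamma)+1}d\tilde y$ and denominator $d_1^{3(1+\gamma)}\mathcal R(\tilde x,\tilde y;d_1^{-2\gamma})^{3/2}$, where $\mathcal R(\tilde x,\tilde y;\epsilon)=(\tilde x_1-\tilde y_1)^2+(\tilde x_2-\tilde y_2)^2+\epsilon(\tilde x_3-\tilde y_3)^2$. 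Combined with the $d_1^{1+\gamma}$ from the numerator, this leaves an overall prefactor $d_1$, and the result is of the form $\partial_1\psi_{D_1}(x)=d_1\cdot G(\tilde x;d_1^{-2\gamma})$. Dividing by $r_{0,1}(\phi)\sim d_1^{1+\gamma}\overline r_{0,1}(\phi)$ yields (c); for (d) one uses in addition that the rescaled evaluation point $\tilde x_0=(r_{0,2}(\phi)/d_1^{1+\gamma},0,d_2\cos\phi/d_1)$ tends to the origin, which by the axial symmetry of $\widetilde D_1$ (odd-in-$\tilde y_1$ cancellation of the leading piece) forces $G(\tilde x;\epsilon)$ to be of order $|\tilde x|$, giving $G(\tilde x_0;d_1^{-2\gamma})=O(r_{0,2}(\phi)/d_1^{1+\gamma})$ and thus (d). Once (a)--(d) are in hand, $\overline\Omega_2\leqslant \sup_\phi\{(\text{d-term})-\omega_2(\phi)\}\leqslant o(1)-\omega_2^\star$ and $\overline\Omega_1\geqslant\inf_\phi\{(\text{c-term})-(\text{b-term})\}\geqslant -o(1)$, so $\overline\Omega_1-\overline\Omega_2\geqslant \omega_2^\star/2>0$ for $d_1\geqslant\overline d_1$.

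The main obstacle I anticipate is making the scaling estimates (c) and (d) uniform up to the poles, where $r_{0,1}(\phi)\sim d_1^{1+\gamma}\sin\phi$ degenerates despite the global largeness of $D_1$; one must ensure that the rescaled integrand is integrable and that the axial-symmetry cancellation genuinely produces the advertised $d_1^{-\gamma}$ rate. The explicit ellipsoid computation in Proposition \ref{prop-ellipsoid}, which yields $\alpha_1(d_1^{1+\gamma},d_1)\sim d_1^{-\gamma}$, is the guiding calculation: the general case should follow the same pattern because only the axial symmetry and bounded-aspect-ratio structure of the rescaled domain $\widetilde D_1$ enter the argument.
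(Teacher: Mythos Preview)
Your overall architecture matches the paper's exactly: the case split for \textbf{(H4)} is the same, and by Remark~\ref{rema-1} and Lemma~\ref{nuOmega12} your four pieces (a)--(d) are precisely the quantities $d_2\!\int H_{2,2}^1$, $d_2\!\int H_{1,2}^1$, $d_1\!\int H_{1,1}^1$, $d_1\!\int H_{2,1}^1$ that the paper estimates. The difference is in \emph{how} you propose to estimate (c) and (d): the paper stays with the hypergeometric kernel~\eqref{HMM11} and uses $F_1(x)\leqslant C(1+|\ln(1-x)|)$ to obtain the explicit uniform rate $O(d_1^{-\gamma}\log d_1)$ for (c), (d), whereas you rescale the Newtonian integral.

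Your rescaling argument has a real gap that you only partially anticipate. After rescaling, (c) becomes $d_1^{-\gamma}\,\overline r_{0,1}(\phi)^{-1}\,G(\tilde x;d_1^{-2\gamma})$, and the conclusion $O(d_1^{-\gamma})$ requires $G(\tilde x;\epsilon)$ to stay bounded as $\epsilon\to0$. But the anisotropic kernel $\big[|\tilde x_h-\tilde y_h|^2+\epsilon(\tilde x_3-\tilde y_3)^2\big]^{-3/2}$ degenerates; a direct estimate of the integral yields $|G|\lesssim\log(1/\epsilon)$, not $O(1)$---which in fact reproduces the paper's logarithmic correction. The ellipsoid computation you cite gives $O(d_1^{-\gamma})$ with no log, but that works because $\psi_{D_1}$ is an explicit polynomial inside an ellipsoid, not because of the scaling per se. Similarly, in (d) your claim $G(\tilde x_0;\epsilon)=O(|\tilde x_0|)$ from the axial cancellation $G(0;\epsilon)=0$ presupposes that $\partial_{\tilde x_1}G$ is bounded along the axis uniformly in $\epsilon$; this is a second-derivative-type bound on a degenerating kernel and is not automatic. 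None of this is fatal---accepting the log loss, your approach can be pushed through to $O(d_1^{-\gamma}\log d_1)$, which still vanishes---but as written you assert uniform bounds on $G$ that you have not established, and the ``main obstacle'' you flag (behavior near the poles) is not the only one: the degeneration of the kernel as $\epsilon\to0$ is an issue at every boundary point.
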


\begin{rem}
The condition $r_{0,1}(\phi)=d_1^{1+\gamma}\overline{r}_{0,1}(\phi)$ can be relaxed in the proof but we keep it for the sake of clarifying. Indeed, we just need $r_{0,1}(\phi)=d_1 f(d_1) \overline{r}_{0,1}(\phi)$, with 
$$
\lim_{d_1\rightarrow \infty} \frac{\log(d_1)}{f(d_1)}=0.
$$
\end{rem}

\begin{proof}
Let us start  with checking \eqref{assump1}. Recall  from \eqref{Omega1}--\eqref{Omega2} the expressions of $\overline{\Omega}_1$ and $\overline{\Omega}_2$,
\begin{align*}
\overline{\Omega}_1=&\inf_{\phi\in[0,\pi]}d_1\int_0^\pi H_{1,1}^1(\phi,\varphi)d\varphi-d_2\int_0^{\pi} H_{1,2}^1(\phi,\varphi)d\varphi,\\
\overline\Omega_2=&\sup_{\phi\in[0,\pi]}d_1\int_0^\pi H_{2,1}^1(\phi,\varphi)d\varphi-d_2\int_0^\pi H_{2,2}^1(\phi,\varphi)d\varphi,
\end{align*}
and let us analyze each term. Using the expression of $H_{i,j}^1$ in \eqref{HMM11} we find that
$$
H_{i,j}^1(\phi,\varphi)=\frac14\frac{\sin(\varphi)r_{0,j}^2(\varphi)}{R_{i,j}^\frac32(\phi,\varphi)}F_1\left(\frac{4 r_{0,i}(\phi)r_{0,j}(\varphi)}{R_{i,j}(\phi,\varphi)}\right),
$$
where
$$
R_{i,j}(\phi,\varphi)=(r_{0,i}(\phi)+r_{0,j}({\varphi}))^2+(d_i\cos(\phi)-d_j\cos{ (\varphi)})^2.
$$
Throughout all the proof, the parameter  $d_2>0$ is fixed and $d_1$ will grow to infinity. First, note that
\begin{align}\label{H22-limit}
\inf_{\phi\in(0,\pi)}d_2\int_0^\pi H_{2,2}^1(\phi,\varphi)d\varphi {>0,}
\end{align}
and this infimum  does not depend on $d_1$. Next, we shall  move to $d_1 H_{1,1}^1$ and observe  that
\begin{align*}
d_1 \frac{\sin(\varphi)r_{0,1}^2(\varphi)}{[R_{1,1}(\phi,\varphi)]^\frac32}=& \frac{d_1\sin(\varphi)r_{0,1}^2(\varphi)}{[ (r_{0,1}(\phi)+r_{0,1}({\varphi}))^2+(d_1\cos(\phi)-d_1\cos{ (\varphi)})^2 ]^\frac32}\\
=& \frac{d_1^{3+2\gamma}\sin(\varphi)\overline{r}_{0,1}^2(\varphi)}{[ (d_1^{1+\gamma}\overline{r}_{0,1}(\phi)+d_1^{1+\gamma}\overline{r}_{0,1}({\varphi}))^2+(d_1\cos(\phi)-d_1\cos{ (\varphi)})^2 ]^\frac32}\\
\leqslant &d_1^{3+2\gamma} \frac{\sin(\varphi)\overline{r}_{0,1}^2(\varphi)}{d_1^{3+3\gamma}\overline{r}_{0,1}^3(\varphi)}
\leqslant C d_1^{-\gamma},
\end{align*}
where the constant $C$ is uniform in $d_1$. Consequently,
\begin{align}\label{H11}
d_1\int_0^\pi H_{1,1}^1(\phi,\varphi)d\varphi\leq Cd_1^{-\gamma}\int_0^\pi F_1\left(\tfrac{4 r_{0,1}(\phi)r_{0,1}(\varphi)}{R_{1,1}(\phi,\varphi)}\right)d\varphi.
\end{align}
Moreover, from { \eqref{estimat-1}}, we have that for $x\in[0,1)$
$$
F_1(x)\leq C+C|\ln(1-x)|,
$$
and therefore
\begin{align}\label{F1-d1}
\nonumber F_1\left(\tfrac{4 r_{0,1}(\phi)r_{0,1}(\varphi)}{R_{1,1}(\phi,\varphi)}\right)\leqslant& C+C\log\left(\tfrac{d_1^{2+2\gamma}(\overline{r}_{0,1}(\phi)+\overline{r}_{0,1}({\varphi}))^2+d_1^2(\cos(\phi)-\cos{ (\varphi)})^2}{ d_1^{2+2\gamma}(\overline{r}_{0,1}(\phi)-\overline{r}_{0,1}({\varphi}))^2+d_1^2(\cos(\phi)-\cos{ (\varphi)})^2}\right)\\
\leqslant &C+C\log\left(1+\tfrac{d_1^{2+2\gamma}}{ d_1^{2+2\gamma}(\overline{r}_{0,1}(\phi)-\overline{r}_{0,1}({\varphi}))^2+d_1^2(\cos(\phi)-\cos{ (\varphi)})^2}\right).
\end{align}
Using \eqref{Chord} we find a constant $C>0$ independent of $d_1$ such that for any  $\phi,\varphi\in[0,\pi]$ 
$$
d_1^{2+2\gamma}(\overline{r}_{0,1}(\phi)-\overline{r}_{0,1}({\varphi}))^2+d_1^2(\cos(\phi)-\cos{ (\varphi)})^2\geqslant C d_1^2|\phi-\varphi|^2.
$$
Coming back to \eqref{F1-d1} we achieve
\begin{align}
\nonumber F_1\left(\frac{4 r_{0,1}(\phi)r_{0,1}(\varphi)}{R_{1,1}(\phi,\varphi)}\right)\leqslant &C+C\log\left(1+d_1^{2\gamma}|\phi-\varphi|^{-2}\right)\\
\leqslant &C+C\log(d_1)+C\log\left(1+|\phi-\varphi|^{-1}\right).\label{F1-d1-2}
\end{align}
Inserting \eqref{F1-d1-2} into  \eqref{H11} we finally obtain
\begin{align*}
d_1\int_0^\pi H_{1,1}^1(\phi,\varphi)d\varphi\leqslant Cd_1^{-\gamma}\int_0^\pi (\log(d_1)+|\log|\phi-\varphi||) d\varphi\leqslant C d_1^{-\gamma}\log(d_1),
\end{align*}
where $C$ is independent on  $d_1$. Hence, we conclude that
\begin{equation}\label{H11-limit}
\lim_{d_1\rightarrow \infty } d_1\sup_{\phi\in[0,\pi]}\int_0^\pi H_{1,1}^1(\phi,\varphi)d\varphi=0.
\end{equation}
To estimate $d_1 H_{2,1}^1$ we proceed as follows. First, we write
\begin{align*}
d_1 \frac{\sin(\varphi)r_{0,1}^2(\varphi)}{[R_{2,1}(\phi,\varphi)]^\frac32}=&d_1^{3+2\gamma} \frac{\sin(\varphi)\overline{r}_{0,1}^2(\varphi)}{[ ({r}_{0,2}(\phi)+d_1^{1+\gamma}\overline{r}_{0,1}({\varphi}))^2+(d_2\cos(\phi)-d_1\cos{ (\varphi)})^2 ]^\frac32}\\
\leqslant &d_1^{3+2\gamma} \frac{\sin(\varphi)\overline{r}_{0,1}^2(\varphi)}{d_1^{3+3\gamma}\overline{r}_{0,1}^3(\varphi)}\\
\leqslant &C d_1^{-\gamma},
\end{align*}
and then
\begin{align}\label{H21-d1-gamma}
d_1\int_0^\pi H_{2,1}^1(\phi,\varphi)d\varphi\leq Cd_1^{-\gamma}\int_0^\pi F_1\left(\tfrac{4 r_{0,2}(\phi)r_{0,1}(\varphi)}{R_{2,1}(\phi,\varphi)}\right)d\varphi.
\end{align}
Let us now work with the hypergeometric function. From straightforward computations we infer
\begin{align}\label{H21-hyp}
\nonumber F_1\left(\frac{4 r_{0,2}(\phi)r_{0,1}(\varphi)}{R_{2,1}(\phi,\varphi)}\right)\leq& C+C\log \left(\tfrac{({r}_{0,2}(\phi)+d_1^{1+\gamma}\overline{r}_{0,1}({\varphi}))^2+(d_2\cos(\phi)-d_1\cos{ (\varphi)})^2}{({r}_{0,2}(\phi)-d_1^{1+\gamma}\overline{r}_{0,1}({\varphi}))^2+(d_2\cos(\phi)-d_1\cos{ (\varphi)})^2}\right)\\\
\leqslant& C+C\log \left(1+\tfrac{d_1^{2+2\gamma}}{({r}_{0,2}(\phi)-d_1^{1+\gamma}\overline{r}_{0,1}({\varphi}))^2+(d_2\cos(\phi)-d_1\cos{ (\varphi)})^2}\right).
\end{align}
Take $\varphi\in[0,\pi/2]$ and let us separate in cases. If $\varphi\in[0,\pi/4)$ then for $d_1$ large enough we find that $\cos(\varphi)\geq \frac{\sqrt{2}}{2}$ and therefore
$$
d_1\cos(\varphi)-d_2\cos(\phi)= d_1\left(\cos(\varphi)-\frac{d_2}{d_1}\cos(\phi)\right)\geqslant\tfrac12 d_1,
$$
for $d_1$ large enough. On the other hand,  if $\varphi\in[\pi/4,\pi/2]$ then by   {\bf (H2)} we get  $\overline{r}_{0,1}(\varphi)>\delta$, for some constant $\delta$. That amounts to
$$
d_1^{1+\gamma}\overline{r}_{0,1}({\varphi})-{r}_{0,2}(\phi)= d_1^{1+\gamma}\left(\overline{r}_{0,1}({\varphi})-\frac{1}{d_1^{1+\gamma}}{r}_{0,2}(\phi)\right)\geqslant \frac{\delta}{2} d_1^{1+\gamma},
$$
for $d_1$ large enough. Then, combining the foregoing estimates and using the equatorial  symmetry of the profiles  we  find that for any $\phi,\varphi\in[0,\pi]$ 
\begin{equation}\label{H21-den}
({r}_{0,2}(\phi)-d_1^{1+\gamma}\overline{r}_{0,1}({\varphi}))^2+(d_2\cos(\phi)-d_1\cos{ (\varphi)})^2\geqslant C d_1^2,
\end{equation}
for some constant $C$ uniform in $d_1$. Plugging \eqref{H21-den} into \eqref{H21-hyp} we find
\begin{align*}
\nonumber F_1\left(\frac{4 r_{0,2}(\phi)r_{0,1}(\varphi)}{R_{2,1}(\phi,\varphi)}\right)\leqslant& C+C\log \left(1+d_1^{2\gamma}\right)\\
\leqslant& C+C \log d_1.
\end{align*}
Coming back to \eqref{H21-d1-gamma} we get
\begin{align*}
d_1\int_0^\pi H_{2,1}^1(\phi,\varphi)d\varphi\leqslant C d_1^{-\gamma }\log d_1.
\end{align*}
It follows that
\begin{align}\label{H21-limit}
\lim_{d_1\rightarrow \infty }d_1\sup_{\phi\in[0,\pi]}\int_0^\pi H_{2,1}^1(\phi,\varphi)d\varphi=0.
\end{align}
Let us  now finish with the estimate of  $H_{1,2}^1$. We write
\begin{align*}
 \frac{\sin(\varphi)r_{0,2}^2(\varphi)}{[R_{1,2}(\phi,\varphi)]^\frac32}=&\frac{\sin(\varphi)r_{0,2}^2(\varphi)}{[ (r_{0,1}(\phi)+r_{0,2}({\varphi}))^2+(d_1\cos(\phi)-d_2\cos{ (\varphi)})^2 ]^\frac32}\\
=&\frac{\sin(\varphi){r}_{0,2}^2(\varphi)}{[ (d_1^{1+\gamma}\overline{r}_{0,1}(\phi)+{r}_{0,2}({\varphi}))^2+(d_1\cos(\phi)-d_2\cos{ (\varphi)})^2 ]^\frac32}\cdot
\end{align*}
As before, we may use the equatorial symmetry of the integral and therefore reduce the analysis to $\phi\in[0,\pi/2]$.  In the case  $\phi\in(\pi/4,\pi/2]$, we may write
\begin{align*}
 \frac{\sin(\varphi)r_{0,2}^2(\varphi)}{[R_{1,2}(\phi,\varphi)]^\frac32}\leqslant &\frac{\sin(\varphi){r}_{0,2}^2(\varphi)}{d_1^{3+3\gamma}\overline{r}_{0,1}(\phi)}\leqslant C d_1^{-3-3\gamma},
\end{align*}
where the constant $C$ is uniform in $d_1$. On the other hand, if $\phi\in[0,\pi/4]$, we have that $\cos(\phi)\geq \frac{\sqrt{2}}{2}$, implying that
$$
d_1\cos(\phi)-d_2\cos(\varphi)=d_1 \left(\cos(\phi)-\frac{d_2}{d_1}\cos(\varphi)\right)\geqslant  \frac12 d_1,
$$
for $d_1$ large enough. Therefore, we deduce that
\begin{align*}
 \frac{\sin(\varphi)r_{0,2}^2(\varphi)}{[R_{1,2}(\phi,\varphi)]^\frac32}\leqslant C d_1^{-3}.
\end{align*}
Consequently, we obtain for any $\phi,\varphi\in[0,\pi]$
\begin{align*}
 \frac{\sin(\varphi)r_{0,2}^2(\varphi)}{[R_{1,2}(\phi,\varphi)]^\frac32}\leqslant C d_1^{-3}.
\end{align*}
Thus, we achieve
\begin{align*}
d_2\int_0^\pi H_{1,2}^1(\phi,\varphi)d\varphi\leqslant Cd_1^{-3}\int_0^\pi F_1\left(\tfrac{4 r_{0,1}(\phi)r_{0,2}(\varphi)}{R_{1,2}(\phi,\varphi)}\right)d\varphi.
\end{align*}
Using  the properties of Gauss hypergeometric function as for the $H_{2,1}^1$  we get
\begin{equation}\label{H12-limit}
\lim_{d_1\rightarrow \infty} d_2\sup_{\phi\in[0,\pi]}\int_0^\pi H_{1,2}^1(\phi,\varphi)d\varphi=0.
\end{equation}
Hence, by using \eqref{H22-limit}-\eqref{H11-limit}-\eqref{H21-limit}-\eqref{H12-limit} we find that as $d_1\rightarrow +\infty$, the condition $\overline{\Omega}_2<\overline{\Omega}_1$ agrees with
$$
-\inf_{\phi\in[0,\pi]} d_2\int_0^\pi H_{2,2}^1(\phi,\varphi)d\varphi<0,
$$
which is trivially satisfied. Therefore,  we find $\overline{d}_1$ large enough such that $\overline{\Omega}_2<\overline{\Omega}_1$ is verified  provided that $d_1\geq \overline{d}_1$.\\
It remains to check the validity of   ${\bf (H4)}$, that is, 
\begin{equation}\label{H4-2}
(d_1^{1+\gamma}\overline{r}_{0,1}(\phi)-r_{0,2}(\varphi))^2+(d_1\cos(\phi)-d_2\cos(\varphi))^2\geqslant \delta.
\end{equation}
If $\phi\in[0,\pi/4)$, then we easily get
\begin{align*}
(d_1^{1+\gamma}\overline{r}_{0,1}(\phi)-r_{0,2}(\varphi))^2+(d_1\cos(\phi)-d_2\cos(\varphi))^2\geqslant& (d_1\cos(\phi)-d_2\cos(\varphi))^2\\
\geqslant& d_1^2\left(\cos(\phi)-\frac{d_2}{d_1}\cos(\varphi)\right)\\
\geqslant & \delta.
\end{align*}
However, in the case $\phi\in[\pi/4,\pi/2]$ we may use the  first  term in the sum  in order to get
\begin{align*}
(d_1^{1+\gamma}\overline{r}_{0,1}(\phi)-r_{0,2}(\varphi))^2+(d_1\cos(\phi)-d_2\cos(\varphi))^2\geq& (d_1^{1+\gamma}\overline{r}_{0,1}(\phi)-r_{0,2}(\varphi))^2\\
\geq& d_1^{2+2\gamma}(\overline{r}_{0,1}(\phi)-d_1^{-1-\gamma}r_{0,2}(\varphi))^2\\
\geq & \delta,
\end{align*}
and achieving by this way the proof  \eqref{H4-2}. Thus, the proof of Proposition \ref{prop-d1} is now complete.
\end{proof}

\appendix

\section{Gauss Hypergeometric function}\label{Ap-spfunctions}
In this section, we intend to recall  the Gauss hypergeometric functions and collect  some of their basic properties. They were  needed before to recover  a compact and tractable form  of the linearized operator around the equilibrium state detailed  in Lemma \ref{nuOmega12}. Define for any real numbers $a,b\in \mathbb{R},\, c\in \mathbb{R}\backslash(-\mathbb{N})$ the Gauss hypergeometric function $z\mapsto F(a,b;c;z)$ on the open unit disc $\mathbb{D}$ by the power series
\begin{equation}\label{GaussF}
F(a,b;c;z)=\sum_{n=0}^{\infty}\frac{(a)_n(b)_n}{(c)_n}\frac{z^n}{n!}, \quad \forall z\in \mathbb{D},
\end{equation}
where the Pochhammer's  symbol $(x)_n$ is defined by
$$
(x)_n = \begin{cases}   1,   & n = 0, \\
 x(x+1) \cdots (x+n-1), & n \geq1,
\end{cases}
$$
and verifies
\begin{equation*}
(x)_n=x\,(1+x)_{n-1},\quad (x)_{n+1}=(x+n)\,(x)_n.
\end{equation*}
The series converges absolutely for all values of $|z|<1.$ For $|z|=1$ we have  the absolute convergence  if $\textnormal{Re} (a+b-c)<0$ and it diverges if $1\leqslant \textnormal{Re}(a+b-c)$. See \cite{Erdelyi} for more details. Moreover, let us recall the integral representation of the hypergeometric function, see for instance  \cite[p. 47]{Rainville}. Assume that  $ \textnormal{Re}(c) > \textnormal{Re}(b) > 0,$ then 
\begin{equation}\label{Ap-spfunctions-integ}
\hspace{1cm}F(a,b;c;z)=\frac{\Gamma(c)}{\Gamma(b)\Gamma(c-b)}\int_0^1 x^{b-1} (1-x)^{c-b-1}(1-zx)^{-a}~ dx,\quad \forall{z\in \C\backslash[1,+\infty)}.
\end{equation}

Next, we shall give the following classical result whose proof can be found in \cite{GHM}.
\begin{lem}\label{Lem-integral}
Let $n\in\N$, $\beta\geq0$ and $A>1$, then
$$
\bigintsss_0^{2\pi}\frac{\cos(n\theta)}{(A-\cos(\theta))^{\frac{\beta}{2}}}d\theta=\frac{2\pi}{(1+A)^{\frac{\beta}{2}+n}}\frac{\left(\frac{\beta}{2}\right)_n 2^n\left(\frac12\right)_n}{(2n)!}F\left(n+\frac{\beta}{2}, n+\frac12; 2n+1; \frac{2}{1+A}\right ).
$$
\end{lem}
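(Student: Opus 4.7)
My plan is to expand the integrand as a power series, integrate term by term using the orthogonality of cosines, and then repackage the resulting series into a Gauss hypergeometric function via Pochhammer identities.

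First, I would factor out the dominant piece by writing $A-\cos\theta = (1+A)\bigl(1-\tfrac{1+\cos\theta}{1+A}\bigr)$ and using the half-angle identity $1+\cos\theta = 2\cos^{2}(\theta/2)$. Setting $x := \tfrac{2}{1+A}$, which lies in $(0,1)$ because $A>1$, the integrand becomes
\begin{equation*}
\frac{1}{(A-\cos\theta)^{\beta/2}} = \frac{1}{(1+A)^{\beta/2}}\bigl(1 - x\cos^{2}(\theta/2)\bigr)^{-\beta/2}.
\end{equation*}
Since $x\cos^{2}(\theta/2)<1$, the generalised binomial series
\begin{equation*}
\bigl(1-x\cos^{2}(\theta/2)\bigr)^{-\beta/2} = \sum_{k\geqslant 0}\frac{(\beta/2)_{k}}{k!}\,x^{k}\cos^{2k}(\theta/2)
\end{equation*}
converges uniformly in $\theta$, so I can interchange sum and integral. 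The remaining task is to evaluate $\int_{0}^{2\pi}\cos(n\theta)\cos^{2k}(\theta/2)\,d\theta$.

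For this, I would substitute $\theta=2\phi$ and use the standard Fourier expansion $\cos^{2k}\phi = 2^{-2k}\binom{2k}{k} + 2^{1-2k}\sum_{j=1}^{k}\binom{2k}{k-j}\cos(2j\phi)$. Orthogonality on $[0,\pi]$ isolates the $j=n$ term, giving
\begin{equation*}
\int_{0}^{2\pi}\cos(n\theta)\cos^{2k}(\theta/2)\,d\theta = \frac{\pi}{2^{2k-1}}\binom{2k}{k-n},
\end{equation*}
which vanishes for $k<n$. Shifting the summation index $k=n+m$ therefore yields
\begin{equation*}
\int_{0}^{2\pi}\frac{\cos(n\theta)}{(A-\cos\theta)^{\beta/2}}d\theta = \frac{\pi\,x^{n}}{(1+A)^{\beta/2}\,2^{2n-1}}\sum_{m\geqslant 0}\frac{(\beta/2)_{n+m}}{(n+m)!}\,\binom{2n+2m}{m}\,\frac{x^{m}}{2^{2m}}.
\end{equation*}

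The final step is purely algebraic: I would use the duplication identity $(2k)! = 4^{k}k!\,(1/2)_{k}$ to rewrite $\binom{2n+2m}{m} = \tfrac{4^{n+m}(n+m)!(1/2)_{n+m}}{m!(2n+m)!}$, apply the shift rule $(a)_{n+m}=(a)_{n}(a+n)_{m}$ to both $(\beta/2)_{n+m}$ and $(1/2)_{n+m}$, and use $(2n+m)!=(2n)!(2n+1)_{m}$. After these manipulations the sum is exactly
\begin{equation*}
\frac{4^{n}(\beta/2)_{n}(1/2)_{n}}{(2n)!}\sum_{m\geqslant 0}\frac{(\beta/2+n)_{m}(n+1/2)_{m}}{(2n+1)_{m}\,m!}\,x^{m} = \frac{4^{n}(\beta/2)_{n}(1/2)_{n}}{(2n)!}\,F\!\left(n+\tfrac{\beta}{2},n+\tfrac{1}{2};2n+1;x\right),
\end{equation*}
and combining the prefactors ($x^{n}=2^{n}(1+A)^{-n}$ and $4^{n}/2^{2n-1}=2$) reproduces exactly the right-hand side of the claimed identity. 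I do not expect any serious obstacle: the only point to be careful about is justifying the term-by-term integration (uniform convergence on $\theta\in[0,2\pi]$ since $x<1$) and bookkeeping the Pochhammer shifts correctly; everything else is mechanical.
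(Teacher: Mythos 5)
Your proposal is correct: the factorization $A-\cos\theta=(1+A)\bigl(1-x\cos^{2}(\theta/2)\bigr)$ with $x=\tfrac{2}{1+A}\in(0,1)$, the binomial expansion, the evaluation $\int_{0}^{2\pi}\cos(n\theta)\cos^{2k}(\theta/2)\,d\theta=\tfrac{\pi}{2^{2k-1}}\binom{2k}{k-n}$, and the Pochhammer bookkeeping ($(2k)!=4^{k}k!(1/2)_{k}$, $(a)_{n+m}=(a)_{n}(a+n)_{m}$, $(2n+m)!=(2n)!(2n+1)_{m}$) all check out and reproduce the stated prefactor exactly, with term-by-term integration justified by the Weierstrass test since $x<1$; the edge cases $n=0$ and $\beta=0$ are also covered by your unified formula. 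Note that the present paper gives no proof of this lemma, referring instead to \cite{GHM}, so there is no in-paper argument to compare against; your derivation is a complete, self-contained proof of the identity along entirely standard lines.
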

Finally, let us describe the boundary behaviour of the Gauss Hypergeometric function $F(a,a;2a;x)$ at $1$, the proof can be found in \cite{GHM}.
\begin{pro}\label{Prop-behav} 
For $a>1$, there exists $C>0$ such that 
\begin{equation}\label{estimat-1}
\forall x\in[0,1),\quad F\left(a,a;2a; x\right)\le C\frac{|\ln(1-x)|}{x}\leq C+C|\ln(1-x)|.
\end{equation}
\end{pro}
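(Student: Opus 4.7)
The approach is to apply the Euler integral representation \eqref{Ap-spfunctions-integ} with $b=a$ and $c=2a$ (the conditions $\textnormal{Re}(c)>\textnormal{Re}(b)>0$ are satisfied since $a>1$), which gives
\begin{equation*}
F(a,a;2a;x)=\frac{\Gamma(2a)}{\Gamma(a)^2}\bigintsss_0^1 [t(1-t)]^{a-1}(1-xt)^{-a}dt.
\end{equation*}
Since $a-1>0$, the factor $(1-t)^{a-1}$ is bounded by $1$ on $[0,1]$, so it suffices to extract the singular behavior of $\int_0^1 t^{a-1}(1-xt)^{-a}dt$ as $x\to 1^-$ and check that it is controlled by $|\ln(1-x)|/x$ up to a multiplicative constant depending on $a$.

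The plan is to change variables first via $u=1-t$ and then via $v=xu/(1-x)$, which rewrites the integral as $x^{-a}\int_0^{x/(1-x)} v^{a-1}(1+v)^{-a}dv$ and isolates all the $x$-dependence into the upper endpoint. Splitting this integral at $v=1$ shows that the piece on $[0,1]$ contributes at most $1/a$ (bounded, as $a-1>-1$), while on $[1,x/(1-x)]$ the integrand is bounded by $1/v$, yielding the logarithmic contribution $\ln(x/(1-x))\leqslant |\ln(1-x)|$ (using $\ln x\leqslant 0$). This produces a bound of the form $F(a,a;2a;x)\leqslant C_a(1+|\ln(1-x)|)$ in the regime $x\in[1/2,1)$ where $x^{-a}\leqslant 2^a$.

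To transfer this bound to the required form $C_a|\ln(1-x)|/x$, I would observe that on $[1/2,1)$ one has $|\ln(1-x)|\geqslant \ln 2$ and $1/x\leqslant 2$, so $1+|\ln(1-x)|\lesssim |\ln(1-x)|/x$. For the complementary range $x\in[0,1/2]$, the hypergeometric function is continuous on a compact subset of its disc of analyticity and hence bounded, while the elementary power series expansion $-\ln(1-x)/x=\sum_{n\geqslant 0}x^n/(n+1)\geqslant 1$ forces $|\ln(1-x)|/x\geqslant 1$, making the desired bound automatic. The second inequality $|\ln(1-x)|/x\leqslant C+C|\ln(1-x)|$ is immediate after splitting into the same two regimes: on $[1/2,1)$ one uses $1/x\leqslant 2$, while on $[0,1/2]$ the function $-\ln(1-x)/x$ is bounded by $2\ln 2$ via the series expansion.

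The only modest obstacle is identifying the correct normalization of the change of variables: the scaling $v=xu/(1-x)$ is what linearizes the denominator $(1-x+xu)^{-a}$ near $u=0$ and exposes the logarithmic divergence cleanly, while any other scaling would require more delicate splitting. Once this step is set up, the remaining estimates reduce to a routine split-and-majorize argument on a Beta-type integral.
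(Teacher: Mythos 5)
The paper itself does not give a proof of this proposition (it refers to \cite{GHM}), so the question is whether your argument stands on its own, and as written it does not: the reduction step is the wrong way around. After invoking the Euler representation with integrand $t^{a-1}(1-t)^{a-1}(1-xt)^{-a}$, you discard the factor $(1-t)^{a-1}\le 1$ and claim it suffices to bound $\int_0^1 t^{a-1}(1-xt)^{-a}\,dt$ by $C|\ln(1-x)|/x$. That integral is \emph{not} logarithmic: since $t^{a-1}$ stays bounded away from $0$ near $t=1$, it behaves like $\int_0^1(1-xt)^{-a}dt=\frac{(1-x)^{1-a}-1}{x(a-1)}$, which diverges polynomially like $(1-x)^{1-a}$ for $a>1$, so it cannot be controlled by $C|\ln(1-x)|/x$. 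The factor $(1-t)^{a-1}$, which vanishes exactly at the point $t=1$ where $(1-xt)^{-a}$ concentrates as $x\to1^-$, is precisely what turns the polynomial singularity into a logarithm and must be kept. Consistently with this, the identity you display after the substitutions, $x^{-a}\int_0^{x/(1-x)}v^{a-1}(1+v)^{-a}\,dv$, is not what $u=1-t$, $v=xu/(1-x)$ produces from $\int_0^1 t^{a-1}(1-xt)^{-a}dt$ (that would carry the prefactor $(1-x)^{1-a}/x$ and the factor $\bigl(1-\tfrac{(1-x)v}{x}\bigr)^{a-1}$ in the integrand); it is what these substitutions produce from $\int_0^1(1-t)^{a-1}(1-xt)^{-a}dt$.

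The fix is one line: since $a>1$, bound $t^{a-1}\le 1$ (not $(1-t)^{a-1}$), reduce to $\int_0^1 u^{a-1}(1-x+xu)^{-a}du$ with $u=1-t$, and then your substitution $v=xu/(1-x)$ gives exactly $x^{-a}\int_0^{x/(1-x)}v^{a-1}(1+v)^{-a}dv$; alternatively keep both factors and note that $\bigl(1-\tfrac{(1-x)v}{x}\bigr)^{a-1}\le1$ on the integration range. From there the rest of your argument is correct and complete: the piece over $[0,1]$ is at most $1/a$, the piece over $[1,x/(1-x)]$ is at most $\ln\tfrac{x}{1-x}\le|\ln(1-x)|$, the factor $x^{-a}\le 2^a$ for $x\ge\tfrac12$, the range $x\in[0,\tfrac12]$ is handled by continuity of $F$ together with $|\ln(1-x)|/x\ge1$, and the second inequality follows from $1/x\le 2$ on $[\tfrac12,1)$ and $-\ln(1-x)/x\le 2\ln 2$ on $[0,\tfrac12]$. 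So the strategy is sound and standard, but the specific reduction you wrote down would fail and needs the swap indicated above.
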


\section{Bifurcation theory}\label{Ap-bif}
Bifurcation theory focuses on the topological transitions of the phase portrait through the variation of some parameters. More specifically, it deals with the stationary problem    $F(\lambda,x)=0,$ where $F:\R\times X\rightarrow Y$ is a  smooth  function between Banach spaces  $X$ and $Y$. 
Assuming that one has a trivial  solution,  $F(\lambda,0)=0$ for any $\lambda\in\R$, we would like to explore the bifurcation diagram in the neighborhood  of this elementary solution,  and see whether multiple branches of solutions may bifurcate  
from  a given point  $(\lambda_0,0)$, called  a bifurcation point. 

When the linearized operator around this point generates a Fredholm operator, then one  may  use Lyapunov--Schmidt 
reduction in order to reduce the infinite-dimensional problem to a finite-dimensional one, known as  the bifurcation equation. For this latter problem we need some specific  transversality conditions so that the    Implicit Function Theorem can be applied.  For more discussion in this subject, 
we refer to see \cite{Kato, Kielhofer}. Let us first recall some basic results on Fredholm operators.

\begin{defi}
Let $X$ and $Y$ be  two Banach spaces. A continuous linear mapping $T:X\rightarrow Y,$  is a  Fredholm operator if it fulfills the following properties,
\begin{enumerate}
\item $\textnormal{dim Ker}\,  T<\infty$,
\item $\textnormal{Im}\, T$ is closed in $Y$,
\item $\textnormal{codim Im}\,  T<\infty$.
\end{enumerate}
The integer $\textnormal{dim Ker}\, T-\textnormal{codim Im}\, T$ is called the Fredholm index of $T$.
\end{defi}
Next, we shall discuss  the index persistence through compact perturbations, see  \cite{Kato, Kielhofer}.
\begin{pro}
The index of a Fredholm operator remains unchanged under compact perturbations.
\end{pro}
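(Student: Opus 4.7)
The plan is to prove the proposition via the classical parametrix (Atkinson) characterization of Fredholm operators, together with the multiplicativity of the Fredholm index.

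First, I would establish Atkinson's characterization: a bounded operator $T:X\to Y$ is Fredholm if and only if there exists a bounded operator $S:Y\to X$ (a parametrix) such that $ST-I_X$ and $TS-I_Y$ are both compact; in fact, for a Fredholm $T$ one can take $ST-I_X$ and $TS-I_Y$ to be finite-rank projections onto $\ker T$ and onto a complement of $\operatorname{Im} T$, respectively. The construction uses the splittings $X=\ker T\oplus X_1$ and $Y=\operatorname{Im} T\oplus Y_1$ with $\dim\ker T,\dim Y_1<\infty$ (which follow from the assumption that $T$ is Fredholm), and the bounded inverse theorem applied to the restriction $T|_{X_1}:X_1\to \operatorname{Im} T$, noting that $\operatorname{Im} T$ is closed by hypothesis.

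Next, I would let $K:X\to Y$ be compact and show that $T+K$ is Fredholm. With $S$ as above, one has
\[
S(T+K)=I_X+K_1,\qquad (T+K)S=I_Y+K_2,
\]
where $K_1:=SK-(I_X-ST)$ and $K_2:=KS-(I_Y-TS)$ are compact (sums and compositions of bounded operators with compact operators). By the classical Riesz–Schauder theory, operators of the form $I+\text{compact}$ on a Banach space are Fredholm of index zero. From the identities above, $\ker(T+K)\subset \ker(S(T+K))=\ker(I_X+K_1)$ is finite-dimensional, and $\operatorname{Im}(T+K)\supset \operatorname{Im}((T+K)S)=\operatorname{Im}(I_Y+K_2)$ has finite codimension in $Y$; closedness of $\operatorname{Im}(T+K)$ then follows from standard arguments (a bounded operator whose range has finite codimension has closed range). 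Hence $T+K$ is Fredholm.

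Finally, I would invoke the multiplicativity of the Fredholm index, $\operatorname{ind}(AB)=\operatorname{ind}(A)+\operatorname{ind}(B)$ for composable Fredholm operators, whose proof is a routine linear-algebra bookkeeping on the four-term exact sequence relating kernels and cokernels. Applying it to $S(T+K)$ and to $ST$, both of which are of the form $I+\text{compact}$ and therefore have index $0$, yields
\[
\operatorname{ind}(S)+\operatorname{ind}(T+K)=0=\operatorname{ind}(S)+\operatorname{ind}(T),
\]
so $\operatorname{ind}(T+K)=\operatorname{ind}(T)$, which is the claim. The one step that deserves real care, and which I expect to be the principal technical hurdle, is the construction of the parametrix and the accompanying verification that $\operatorname{Im}(T+K)$ is closed; everything else is formal algebra once Riesz–Schauder (for $I+\text{compact}$) is granted.
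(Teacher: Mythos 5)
Your proposal is mathematically correct, but there is no in-paper proof to compare it with: the paper states this proposition as a classical fact and refers to the books of Kato and Kielh\"ofer, so your argument is filling a gap the authors deliberately left to the literature. The route you take --- Atkinson's parametrix characterization, Riesz--Schauder theory for $I+\text{compact}$, and multiplicativity of the index --- is the standard textbook proof, and your outline is sound. The two places that genuinely require care are exactly the ones you flag. For the parametrix, you need that $\ker T$ (finite-dimensional) and a finite-dimensional complement of $\operatorname{Im}T$ are topologically complemented, and the open mapping theorem applied to $T|_{X_1}:X_1\to\operatorname{Im}T$, which uses the hypothesis that $\operatorname{Im}T$ is closed. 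For the closedness of $\operatorname{Im}(T+K)$, the precise lemma is that the range of a bounded operator with finite \emph{algebraic} codimension is automatically closed; this is proved by applying the open mapping theorem to the bounded bijection $([x],f)\mapsto (T+K)x+f$ on $X/\ker(T+K)\oplus F$, where $F$ is a finite-dimensional algebraic complement of $\operatorname{Im}(T+K)$ --- and it is important that this is special to ranges of bounded operators, since an arbitrary finite-codimensional subspace of a Banach space need not be closed. Granted these two lemmas, your index bookkeeping is valid: $S$ is itself Fredholm (by Atkinson, with $T$ serving as its parametrix), so multiplicativity gives $\operatorname{ind}(S)+\operatorname{ind}(T+K)=\operatorname{ind}(I_X+K_1)=0$ and likewise $\operatorname{ind}(S)+\operatorname{ind}(T)=\operatorname{ind}(ST)=0$, whence $\operatorname{ind}(T+K)=\operatorname{ind}(T)$. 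An alternative classical route, closer to what Kato does, deduces the statement from the stability of the index under small-norm perturbations together with a homotopy $t\mapsto T+tK$ exploiting compactness; your algebraic argument is shorter and avoids any continuity-of-index lemma.
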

Now, we recall the classical Crandall-Rabinowitz Theorem whose proof can be found  in \cite{CrandallRabinowitz}.

\begin{theo}[Crandall-Rabinowitz Theorem] 	\label{CR}
    Let $X, Y$ be two Banach spaces, $V$ be a neighborhood of $0$ in $X$ and $F:\mathbb{R}\times V\rightarrow Y$ be a function with the properties,
    \begin{enumerate}
        \item $F(\lambda,0)=0$ for all $\lambda\in\mathbb{R}$.
        \item The partial derivatives  $\partial_\lambda F_{\lambda}$, $\partial_fF$ and  $\partial_{\lambda}\partial_fF$ exist and are continuous.
        \item The operator $\partial_f F(0,0)$ is Fredholm of zero index and $\textnormal{Ker}(F_f(0,0))=\langle f_0\rangle$ is one-dimensional. 
                \item  Transversality assumption: $\partial_{\lambda}\partial_fF(0,0)f_0 \notin \textnormal{Im}(\partial_fF(0,0))$.
    \end{enumerate}
    If $Z$ is any complement of  $\textnormal{Ker}(\partial_fF(0,0))$ in $X$, then there is a neighborhood  $U$ of $(0,0)$ in $\mathbb{R}\times X$, an interval  $(-a,a)$, and two continuous functions $\Phi:(-a,a)\rightarrow\mathbb{R}$, $\beta:(-a,a)\rightarrow Z$ such that $\Phi(0)=\beta(0)=0$ and
    $$F^{-1}(0)\cap U=\{(\Phi(s), s f_0+s\beta(s)) : |s|<a\}\cup\{(\lambda,0): (\lambda,0)\in U\}.$$
\end{theo}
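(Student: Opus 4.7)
The plan is to apply a Lyapunov--Schmidt reduction that replaces the abstract equation $F(\lambda,f)=0$ by a scalar bifurcation equation amenable to the ordinary implicit function theorem.

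First I would split the spaces into kernel and range components: pick a closed complement $Z$ of $\Ker(\partial_f F(0,0))=\langle f_0\rangle$ in $X$ and, exploiting that $\partial_f F(0,0)$ is Fredholm of index zero, a closed one-dimensional complement $W$ of $\Img(\partial_f F(0,0))$ in $Y$. Let $P\colon Y\to \Img(\partial_f F(0,0))$ and $Q:=\mathrm{Id}-P\colon Y\to W$ be the associated continuous projections. Parametrizing $f=sf_0+z$ with $(s,z)\in\R\times Z$, the equation $F(\lambda,f)=0$ becomes the pair
\begin{align*}
PF(\lambda,sf_0+z)&=0,\\
QF(\lambda,sf_0+z)&=0.
\end{align*}

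Next, I would solve the range equation for $z$ in terms of $(\lambda,s)$ via the classical implicit function theorem. Its $z$-derivative at the origin is $P\,\partial_f F(0,0)|_Z\colon Z\to\Img(\partial_f F(0,0))$, a continuous bijection between Banach spaces (injective because $\Ker\partial_f F(0,0)\cap Z=\{0\}$, surjective by construction), hence an isomorphism by the open mapping theorem. This yields a continuous function $z=z(\lambda,s)$ defined near $(0,0)$, with $z(\lambda,0)=0$ by uniqueness and the identity $F(\lambda,0)=0$. What remains is the reduced scalar (one-dimensional) equation $\Psi(\lambda,s):=QF(\lambda,sf_0+z(\lambda,s))=0$, which vanishes identically on $\{s=0\}$ and can therefore be factored as $\Psi(\lambda,s)=s\,H(\lambda,s)$ with $H(\lambda,s):=\int_0^1\partial_s\Psi(\lambda,ts)\,dt$.

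The decisive and only delicate step is computing $\partial_\lambda H(0,0)$ and seeing the transversality hypothesis enter. Differentiating the range equation in $s$ at $(0,0)$ and invoking the injectivity of $P\,\partial_f F(0,0)|_Z$ forces $\partial_s z(0,0)=0$, so $H(0,0)=Q\,\partial_f F(0,0)f_0=0$. Differentiating $\partial_s\Psi$ once more in $\lambda$ at $(0,0)$, the term $Q\,\partial_f F(0,0)\,\partial_\lambda\partial_s z(0,0)$ is annihilated because its argument lies in $\Img(\partial_f F(0,0))=\Ker Q$, leaving $\partial_\lambda H(0,0)=Q\,\partial_\lambda\partial_f F(0,0)\,f_0$, which is nonzero precisely by the transversality hypothesis. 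A final application of the implicit function theorem to $H(\lambda,s)=0$ in a neighborhood of $(0,0)$ produces a continuous curve $\lambda=\Phi(s)$ with $\Phi(0)=0$. Since $z(\lambda,0)=0$ one may write $z(\lambda,s)=s\,\beta_0(\lambda,s)$ and set $\beta(s):=\beta_0(\Phi(s),s)$; then $\beta(0)=\partial_s z(0,0)=0$, and the uniqueness clauses from the two implicit function theorem applications identify the local zero set of $F$ with the union of the trivial branch and the curve $\{(\Phi(s),sf_0+s\beta(s))\colon|s|<a\}$, as claimed.
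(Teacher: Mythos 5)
You should note first that the paper does not actually prove this statement: it is quoted as the classical Crandall--Rabinowitz theorem and its proof is delegated to \cite{CrandallRabinowitz}, the appendix only sketching in words the Lyapunov--Schmidt philosophy that you follow. Your reduction itself is set up correctly: $P\,\partial_fF(0,0)|_Z\colon Z\to \Img(\partial_fF(0,0))$ is an isomorphism, so the range equation is solvable; the factorization $\Psi(\lambda,s)=sH(\lambda,s)$ is legitimate because $\Psi(\lambda,0)\equiv 0$; the computations $\partial_s z(0,0)=0$, $H(0,0)=0$ and $\partial_\lambda H(0,0)=Q\,\partial_\lambda\partial_fF(0,0)f_0$ are correct; and transversality enters exactly where it should. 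If one adds the hypothesis $F\in\mathscr{C}^2$, this is a complete and standard proof.

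The gap is that the theorem, as stated, assumes only that $\partial_\lambda F$, $\partial_f F$ and $\partial_\lambda\partial_f F$ exist and are continuous, and your final application of the implicit function theorem to $H(\lambda,s)=0$ cannot be justified from that. To solve for $\lambda$ you need $\partial_\lambda H$ to exist and be continuous on a neighbourhood of $(0,0)$, not merely at the origin; since $\partial_s\Psi(\lambda,s)=Q\,\partial_fF\bigl(\lambda,sf_0+z(\lambda,s)\bigr)\bigl(f_0+\partial_sz(\lambda,s)\bigr)$, differentiating in $\lambda$ produces terms of the form $\partial_f^2F(\lambda,\cdot)[\partial_\lambda z,\cdot]$ and $\partial_\lambda\partial_s z(\lambda,s)$, and neither $\partial_f^2F$ nor the mixed second derivative of $z$ is available under the stated hypotheses (the first IFT only gives $z\in\mathscr{C}^1$). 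This is not a cosmetic issue for the present paper, because Proposition \ref{prop-wellpos} only establishes that $\tilde F$ is of class $\mathscr{C}^1$, so one cannot simply strengthen the theorem's hypotheses to $\mathscr{C}^2$. The classical proof avoids the problem by a different reduction: write $f=s(f_0+z)$ with $z\in Z$, define $G(\lambda,z,s)=s^{-1}F\bigl(\lambda,s(f_0+z)\bigr)$ for $s\neq0$, extended by $G(\lambda,z,0)=\partial_fF(\lambda,0)(f_0+z)$, and apply a single implicit function theorem solving for the pair $(\lambda,z)$ as a function of $s$. Thanks to the scaling, $\partial_zG(\lambda,z,s)=\partial_fF\bigl(\lambda,s(f_0+z)\bigr)$ involves no second derivative, $\partial_\lambda G$ is controlled using only $\partial_\lambda\partial_fF$ together with $F(\lambda,0)\equiv0$, and the joint derivative at the origin, $(\mu,w)\mapsto \mu\,\partial_\lambda\partial_fF(0,0)f_0+\partial_fF(0,0)w$, is an isomorphism of $\R\times Z$ onto $Y$ precisely by the Fredholm and transversality hypotheses. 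Reworking your last step along these lines (or establishing the extra regularity of the functional) would close the gap.
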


\end{document}